\documentclass[preprint,10pt]{elsarticle}

\usepackage{ulem}
\usepackage{bm}
\usepackage{mathtools}
\usepackage{graphicx}%
\usepackage{multirow}%
\usepackage{hyperref} 
\usepackage{amsmath, amsfonts}%
\usepackage{amsthm}%
\usepackage{mathrsfs}%
\usepackage{soul}
\usepackage{enumerate}
\usepackage{enumitem}
\usepackage[title]{appendix}%
\usepackage{xcolor}%
\usepackage{textcomp}%
\usepackage{manyfoot}%
\usepackage{booktabs}%
\usepackage{algorithm}%
\usepackage{algorithmicx}%
\usepackage{algpseudocode}%
\usepackage{listings}%
\usepackage{nicefrac}%
\usepackage{caption}
\usepackage{dsfont}
\usepackage{subcaption}
\usepackage{wrapfig}
\usepackage[left=3.5cm,right=3.5cm,top=3.0cm,bottom=3.0cm]{geometry}
\usepackage[T1]{fontenc}
\usepackage[utf8]{inputenc}

\newcommand{\bbeta}{{\bm \beta}}
\newcommand{\Rmu}{{\bm {\mathcal M}}}
\newcommand{\Rmuc}{{\mathcal   M}}
\newcommand{\tv}{{\tilde v}}
\newcommand{\e}{{\bm e}}
\newcommand{\E}{\mathcal{E}}
\newcommand{\Q}{\mathcal{Q}}
\newcommand{ \Fyl}{Filippov }
\newcommand{\extracontent}[1]{}

\newcommand{\rv}{\textcolor{blue}}
\newcommand{\ind}{\mathds{1}} 
\newcommand{\hide}[1]{}
\renewcommand{\t}{\theta}
\newcommand{\at}{{\theta'}}
\renewcommand{\S}{{\mathcal S}}
\newcommand{\A}{{\mathcal A}}
\renewcommand{\AA}{{\mathbb A}}
\renewcommand{\SS}{{\mathbb S}}
\newcommand{\VV}{{\mathbb V}}
\newcommand{\EE}{{\mathbb E}}
\newcommand{\LL}{{\mathbb L}}
\newcommand{\pert}{\bm{\mathcal U}}
\newcommand{\pertc}{{\mathcal U}^a}
\newcommand{\F}{{\mathcal F}}
\renewcommand{\L}{{\mathcal L}}
\newcommand{\B}{\overline{\mathcal B}}
\newcommand{\supp}{\mbox{supp}}
\newcommand{\Fl}{{Filippov }}

\newcommand{\bmu}{{\bm \mu}}
\renewcommand{\P}{{\mathcal P}}
\newcommand{\N} {{\mathcal N}}
\newcommand{\C} {{\mathcal C}}

\newcommand{\z}{{\bf z}}
\newcommand{\nhd}{neighborhood }

\newcommand{\Removecyclic}[1]{}

\newcommand{\bromgac}{{\omega}}
\newcommand{\bromga}{{{{\bm \omega}}}}
\newcommand{\Romga}{{\bm{\mathcal W}}}
\newcommand{\Romgac}{{{\mathcal W}}} 
\newcommand{\balpha}{{\bm \alpha}}
\newcommand{\co}{\overline{\operatorname{co}}}
\newcommand{\inuc}{ \tilde{\Romgac} }
\newcommand{\inu}{ \tilde{\Romga} }
\newcommand{\persolu}{\hat{\Romga}}
\newcommand{\eop}{\hfill \textcolor{black}{\rule{1.5ex}{1.5ex}}}
\newcommand{\D}{{\mathcal D}}
\renewcommand{\H}{{\mathcal H}}
\newcommand{\R}{{\mathcal R}}
\newcommand{\G}{{\mathcal G}}
\newcommand{\ta}{{\tilde a}}

\newcommand{\I}{{\mathcal I}}
\newcommand{\g}{\bm g}
\renewcommand{\b}{\bm b}

\newcommand{\ICT}{\mathbb{I}}

\newcommand{\modl}{\oplus}

\newcommand{\Z}{{\bm{\mathcal Z}}}
\newcommand{\V}{{\bm{\mathcal V}}}
\newcommand{\J}{{\mathcal J}}

\renewcommand{\part}{ \frac{\partial h}{\partial \bromga} }
\newcommand{\CMr}{{c_m}}
\newcommand{\CSr}{{c_s}}
\newcommand{\nuu}{{a_c}}
\newcommand{\nablah}{\nabla_\bromga}
\newcommand{\nablaht}{\nabla_{\bromga(t)}}
\newcommand{ \RR}{\mathbb{R}}
\newcommand{ \CC}{\mathbb{C}}
\newtheorem{definition}{Definition}%
\newtheorem{thm}{Theorem}
\newtheorem{lem}{Lemma}

\begin{document}

\begin{frontmatter}

\title{Multi-type random game dynamics: limits at discontinuities and cyclic limits}


\author[a]{Raghupati Vyas\corref{cor1}}
\ead{raghupati.vyas@iitb.ac.in}
\author[a]{Kousik Das}
\ead{20004007@iitb.ac.in}
\author[a]{Veeraruna Kavitha}
\ead{vkavitha@iitb.ac.in}
\author[b]{Souvik Roy}
\ead{souvik.2004@gmail.com}

\cortext[cor1]{Corresponding author}

\affiliation[a]{
  organization={Indian Institute of Technology Bombay},
  addressline={Powai},
  city={Mumbai},
  postcode={400076},
  state={Maharashtra},
  country={India}
}

\affiliation[b]{
  organization={Indian Statistical Institute},
  addressline={Kolkata},
  postcode={700108},
  state={West Bengal},
  country={India}
}

\begin{abstract}
We consider (random) strategic interactions in a large population consisting of a variety of players. A rational player chooses actions that maximize certain utility functions, while a behavioral player chooses actions based on preferences such as avoid-the-crowd or follow-the-majority. We specifically study a turn-by-turn dynamic process in which players choose their actions sequentially and once; the utilities are realized either immediately or at the end of the game.

In the literature, such dynamical systems are often analyzed using an appropriate approximating ordinary differential equation (ODE). However, the  ODEs approximating the dynamics with pure actions are typically discontinuous. We adopt a differential inclusion (DI) based stochastic-approximation framework to derive the limiting analysis. The limits of the dynamics are characterized through the internally chain transitive (ICT) sets. We identify the presence of non-classical zeros as potential limits of the dynamics, a phenomenon not observed in classical settings involving continuous ODEs. These new limits arise precisely at the points of discontinuity of the dynamics. We further provide the conditions under which cyclic outcomes may occur at the limit.

Finally, we study a queuing game with differential priority-based services  and examine the impact of the proportions of avoid-the-crowd and two types of rational populations on the long-run outcomes of the  strategic interactions. We identify potential point limits and establish the possibility of cyclic outcomes for certain parameter configurations.
\end{abstract}
\begin{keyword}
population games, heterogeneous players, Bounded rationality, turn-by-turn dynamics,
stochastic approximation, differential inclusions, discontinuous mean-field dynamics,
internally chain transitive sets, non-classical equilibria, queuing games
\end{keyword}
\end{frontmatter}

\hide{
\section{Introduction}
The initial strands of the literature investigating strategic interactions among agents predominantly describe the outcomes via static concepts, mainly the Nash Equilibrium (\citep{nash1950equilibrium}).
In later developments, there has been a shift in focus towards dynamic strategic interactions that more faithfully represent the real-world behavior (see, e.g., \cite{sandholm2010population,webb2007game}). A substantial body of literature captures scenarios where finitely many agents continually learn each other’s strategies in pursuit of optimal utilities; for instance, when agents focus on the recent moves we have best-response dynamics, and when they track a significant history of the same, the resultant is referred to as fictitious play (e.g., \cite{brown1951iterative,shapley1963some,binmore2001does,zusai2018tempered}). 
This process unfolds iteratively as the  players respond to the system (or the actions of others) continually  over time.
 
Another prominent strand of literature examines large-population games, where the agents adapt their strategies gradually over time. Here, the individuals revise their actions intermittently---reflecting inertia --- and make decisions in a myopic fashion, relying primarily on the prevailing social state (empirical measure on  population choices) rather than long-term foresight. The resulting   evolution of population measures over strategies is often modeled as replicator dynamics (see, e.g., \citep{taylor1978ESS, hofbauer1998evolutionary, sandholm2010population, Mertikopoulosetal}). Basically,  in replicator dynamics,  the strategies with above-average payoffs become more prevalent, while less successful ones decline, in each type of population.

We also consider sequential decision-making scenarios  with a large number of agents choosing actions based on their types and the current social state. However, our model is drastically different --- agents make their choices one after the other and only once --- this is referred to as turn-by-turn dynamics in \cite{turn_by_turn}, where the authors analyze a two-action game; see also~\cite{alon2012sequential}.  Such game dynamics have been studied relatively less in the literature, but they have many important applications.
For instance, customer service lines like concert queues (where customers arrive sequentially and choose a queue or service level based on the prevailing occupancy), online reviews (users make purchase or rating decisions influenced by earlier reviews), 
traffic routing (drivers sequentially select routes based on observed congestion), or healthcare access (patients select treatment options once, based on availability or perceived popularity). In all these examples, agents act once and move forward, capturing the essence ofthe turn-by-turn-one-shot framework of this paper. Clearly, in this framework, there is no learning or iterative refinement for any agent. We rather focus on the system-level evolution driven by these one-shot decisions and the corresponding inferences related to the system.

\hide{We also consider sequential decision-making scenarios  with a large number of agents choosing actions based on their types and the current social state.  However, our model is drastically different --- agents make their choices  one after the other and only once --- this is referred to as turn-by-turn dynamics in \cite{turn_by_turn}, see also \cite{alon2012sequential}.   Such game dynamics have been relatively less studied in the literature, but have many important applications.
For instance, customer service lines like concert queues, where customers arrive sequentially and choose a queue or service level based on the prevailing occupancy; online reviews, where users make  purchase or rating decisions influenced by earlier reviews; 
traffic routing, where drivers sequentially select routes based on observed congestion; and healthcare access, where patients select treatment options once, based on availability or perceived popularity. In all these examples, agents act once and move forward, capturing the essence of turn-by-turn-one-shot framework of this paper. Clearly, in this framework there is no learning or iterative refinement for any agent. We rather focus on the system level evolution driven by these one-shot decisions and the corresponding inferences related to the system. }

From a modeling perspective, the decision-making framework of this paper gives rise to a discrete-time  stochastic process  describing the evolution of the empirical distribution of actions; this process evolves  in a finite-dimensional space, with dimension proportional to the  number of actions. Agents arrive sequentially, each agent observes the current social state, selects a pure action based on its `type-wise' utility or behaviour and then exits the system. Such a dynamical process involving `type-wise' choice of pure actions, is in general, discontinuous (even a negligible  change in the empirical distribution can result  in a  completely different set of choices). The authors in \cite{turn_by_turn}, \cite{alon2012sequential} consider this kind of discontinuous dynamics; however restriction to two actions facilitated the analysis in one dimension.  While for the general framework considered in this paper, the evolution is in higher dimensions and  as in the classical smooth dynamical systems, we encounter more variety of asymptotically relevant outcomes  (e.g.,  limit cycles in classical smooth dynamical systems are absent in one-dimensional systems, see \cite{turn_by_turn}). 
Towards asymptotic study of such higher-dimensional, `discontinuous' and random process, we use a differential inclusion (DI)-based stochastic approximation framework---here  the limiting dynamics are  captured  by the solutions (defined in the Filippov sense) and the associated limit sets (or internally chain transitive sets)  of an appropriate DI  (see, \cite{benaim2005stochastic,benaim2006stochastic,benaim2012, rothsandholm2013}).

In particular, using \cite{benaim2005stochastic}, we first show that the stochastic iterates of the empirical population measures are almost surely approximated by solutions of the associated DI. Furthermore, the limit set of these iterates is almost surely an internally chain transitive (ICT) set of the DI. These results provide a qualitative characterization of the asymptotic behavior of the process. The main contributions of this paper go beyond this qualitative description by explicitly characterizing the structure of the limiting ICT sets for turn-by-turn dynamics with linear utility functions, and are  summarized below.

\hide{
The work in \cite{turn_by_turn}, \cite{alon2012sequential} is restricted to two actions and two types, while we generalize to any finite number of actions and types; this generalization requires a significant shift in analysis, for example we had to deal with discontinuous dynamical systems in two or higher dimensions, while works in \cite{turn_by_turn}, \cite{alon2012sequential} deal with two dimensional systems.

{\color{blue}In this paper, we also consider a different regime of interaction, namely sequential one-shot decision making in a multi-type population --- agents make their decisions sequentially and only once, based on their type and the current social state, summarized by the empirical distribution of past actions. This paradigm is referred to as turn-by-turn dynamics in \cite{turn_by_turn}, {\color{red} where the analysis is confined to a two-action, two-type game setting and relies on probabilistic arguments specific to this case. In contrast, we develop a general framework to analyze games with any finite number of actions and types.} where the authors analyze a restricted two-action game with two types of agents, namely myopic rational and herding agents. Here, we extend the analysis to games with any finite number of actions and agent types; see also \cite{alon2012sequential} for related sequential decision models. Although such dynamics have received relatively limited attention in the literature, they arise naturally in a variety of important applications.

Examples include customer service systems such as concert queues, where customers arrive sequentially and choose among service options based on current congestion levels; online review platforms, where users make one-shot purchase or rating decisions influenced by existing reviews; traffic routing problems, in which drivers sequentially select routes based on observed congestion; and healthcare access systems, where patients choose treatment options once, based on availability or perceived popularity. In all these settings, agents make a single irreversible decision, capturing the essence of the turn-by-turn, one-shot framework studied in this paper. Clearly, in this framework there is no learning or iterative refinement for any agent.

From a modeling perspective, the turn-by-turn, one-shot decision-making framework gives rise to a discrete-time, measure-valued stochastic process describing the evolution of the empirical distribution of actions. At each step, a single agent observes the current empirical distribution and selects a pure action based on its type. Consequently, the increment of the empirical process depends on both the current population state and the type of the agent. As a result, the expected drift of the empirical process is, in general, discontinuous. To study the asymptotic behavior of the process, we use a stochastic approximation framework with discontinuous drift and represent the limiting dynamics by a differential inclusion (DI) in the sense of Filippov.
}}

\sout{The core contribution of our work begins with the identification of a differential inclusion (DI) that effectively captures the random dynamics of the game (using results of \cite{benaim2005stochastic}). Building on this, we further establish the following:}

$\bullet$ \sout{We first show that the solutions of the DI approximate the  stochastic iterates of the empirical measures of population  choices, almost surely.  Further,  with probability one,  the limit set of the stochastic iterates is an  internally  chain transitive set (ICTs) of the DI. The remaining results are derived for linear utility functions.}

\sout{$\bullet$ We derived some classical and some non-classical Filippov solutions to characterize some important  `limit ICT sets'. }

\sout{$\bullet$  For two-action games, singletons are the only ICTs and so have established  almost sure convergence to point limits. }
 
\sout{$\bullet$ Interestingly, for games with more than two-actions (and any finite number of types), non-singleton ICTs can arise, that  exhibit cyclic behavior. Besides, singleton ICTs can also emerge at discontinuities. }

\sout{ $\bullet$ Strikingly, all identified singleton ICT sets are  among  multi-type mean-field Nash equilibria --- thus the random dynamics converge to one among the equilibrium, when the limit set is such a singleton.}
    
 \sout{$\bullet$ Finally,  we provide a numerical procedure  to identify the singleton ICT sets and verify the possibility of cyclic ICTs using some graph theoretic tools, for any given finite generic game (with finite types and actions).}

\sout{$\bullet$ Lastly, a concert queue   is analyzed, where three types of agents choose among three priority-based service options,  differentiated by offered prices. Such systems can have cyclic preferences --- some agents may perceive higher-priced  options as   better, while free entry can be preferred over the highest price  ---  cyclic ICTs exist for such games.  
Interestingly,  cycles exist even for smaller fraction of  cyclic-preference agents, in presence of   avoid-the-crowd population.}

$\bullet$ We derive both classical and  non-classical Filippov solutions of the DI and use them to characterize some important classes of limiting ICT sets. 

$\bullet$  For two-action games, we show that singletons are the only possible ICT sets and so have established  almost sure convergence to point limits. 
 
$\bullet$ Interestingly, for games with more than two-actions (and with any finite number of types), we show that non-singleton ICT sets may arise,  leading to cyclic behavior:  the empirical measures do not converge to a single limit but instead cycle among distinct action distributions. We also show that singleton ICT sets may emerge at points of discontinuity of the DI.

 $\bullet$ Strikingly, all identified singleton ICT sets are  among  multi-type mean-field Nash equilibria --- thus the random dynamics converge to one of the equilibrium, when the limit set is such a singleton.
    
 $\bullet$ We further provide a numerical, graph-theoretic procedure to identify singleton ICT sets and to verify the existence of cyclic ICT sets for any given game with finite types and actions.

$\bullet$ Finally, a concert queue   is analyzed, where three types of agents choose among three priority-based service options,  differentiated by offered prices. Such systems can have cyclic preferences --- some agents may perceive higher-priced  options as   better, while free entry can be preferred over the highest price  ---  cyclic ICT sets exist for such games.  
Interestingly,  cycles exist even for smaller fraction of  cyclic-preference agents, in presence of   avoid-the-crowd population. 


In contrast to our work, most existing studies on game dynamics (see, e.g., \cite{boone2019darwin,juneja2013concert,Cressman2014,Mertikopoulosetal,KraTar24}) focus on   deterministic processes to model the evolution of population measures; further they typically consider mixed strategies and directly analyze using  (continuous) ordinary differential equations (ODEs).
These frameworks often overlook the stochastic nature of real-world decision-making --- particularly the randomness   inherent because of  random arrival of heterogeneous agents; furthermore, the limiting DI has discontinuities   due to realistic consideration of      pure-action choices (the real-world agents seldom choose according to mixed strategies). {\color{red}Authors in \cite{turn_by_turn} do consider   the random system evolution with pure action choices; however they have a restricted study   with only two actions and two types. They utilize some direct probabilistic arguments, while  we analyze any finite system  in complete generality  using  DI-based stochastic approximation framework,   where the population measures evolve in $(k-1)$-dimensional spaces with $k$-actions.}  Authors in \cite{zusai2018tempered,rothsandholm2013}  do consider evolution because of pure action choices and analyze using DIs, 
however, they do not consider random evolution. 

\hide{
{\color{blue}Our work differs from most existing studies on game dynamics (see, e.g., \cite{boone2019darwin,juneja2013concert,Cressman2014,Mertikopoulosetal,KraTar24}), which typically model the evolution of population measures via deterministic processes and analyze smooth ODE limits, often allowing mixed strategies. These frameworks overlook the stochastic nature of real-world decision-making --- particularly the randomness   inherent because of  random arrival of heterogeneous agents; furthermore, the limiting DI has discontinuities   due to realistic consideration of pure-action choices (the real-world agents seldom choose according to mixed strategies).  {\color{red}The model of \cite{turn_by_turn} departs from this deterministic paradigm by explicitly modeling random turn-by-turn evolution with pure actions, but its analysis is confined to a two-action, two-type setting and relies on arguments specific to that case. In contrast, we develop a general DI-based stochastic approximation framework applicable to games with any finite number of actions and types, where the population measures evolve in a $(k-1)$-dimensional simplex for a $k$-action game.} While works such as \cite{zusai2018tempered,rothsandholm2013} consider pure-action dynamics and differential inclusions, they focus on deterministic evolution and do not address random turn-by-turn population processes.}}

\hide{we construct an aggregate population measure of strategies that evolves over time; our approach directly models the stochastic iterates of action measures evolution, preserving the randomness from agent types \rv{and action selection}. Basically, we use a differential inclusions (DI) based stochastic approximation framework to analyze the limiting behavior, which generalize ODEs to accommodate set-valued dynamics. It is essential because agents use pure strategies, i.e., they choose single action rather than a mixture --- introduces discontinuities in the evolution of the empirical distribution. This approach enables the analysis of such a significantly general games with any finite number of actions and types where population measures reside in higher dimensions. In contrast, \cite{turn_by_turn} focuses on a restricted setting with two actions and two types, i.e., a one dimension case, and relies on direct probabilistic arguments for the limiting analysis, whereas we adopt the DI-based approach.}

{\color{red}Seminal works such as \cite{benaim2005stochastic,benaim2006stochastic,benaim2012, rothsandholm2013} highlight the significance of the DI framework in understanding complex dynamical systems that arise in stochastic approximation.
In particular, \cite{benaim2005stochastic} shows that the limit set of  population measure iterates is an  ICT set,   almost surely. In this paper, we extend this line of research by identifying the explicit structure of certain ICT sets (as discussed above), given the game under consideration. For example, we derive cyclic ICT sets in any  finite dimensional space, by constructing non-classical solutions of the DI, that pass through discontinuities (in DI) and  exhibit cyclic nature};
while works like \cite{brown1951iterative,shapley1963some,matsui1992best,samuelson1997evolutionary,binmore2001does,boone2019darwin}
dealing with mixed strategies derive the cyclic outcomes (like Shapley polygons and limit cycles around Nash equilibria)  for three-action games, or two dimensional space,  using the well-known  Poincaré–Bendixson theorem.  To the best of our knowledge, we have not come across any  literature that extensively characterizes the limit-sets of   DIs to this extent (e.g., identifying cycles  through discontinuities). 

{\color{blue}{\color{red}Seminal works such as \cite{benaim2005stochastic,benaim2006stochastic,benaim2012, rothsandholm2013} highlight the significance of the DI framework  for analyzing the asymptotic behavior of stochastic approximation algorithms.
In particular, \cite{benaim2005stochastic} shows that under general conditions, the limit set of  population measure iterates is an  ICT set of the DI,   almost surely. This result is qualitative in nature and does not identify the explicit structure of the limiting ICT sets. In this paper, we extend this line of research by identifying the explicit structure of certain ICT sets (as discussed above), given the game under consideration. For example, we derive cyclic ICT sets in any  finite dimensional space, by constructing non-classical solutions of the DI, that pass through discontinuities (in DI) and  exhibit cyclic nature;} this mechanism is fundamentally different from classical sources of cyclic behavior studied in the mixed-strategy literature --- such as Shapley polygons and limit cycles around Nash equilibria -- which are typically confined to two-dimensional settings or three-action games and rely on smooth ODE dynamics using the Poincaré–Bendixson theorem; see, e.g., \cite{brown1951iterative,shapley1963some,matsui1992best,samuelson1997evolutionary,binmore2001does,boone2019darwin}.
To the best of our knowledge, we have not come across any  literature that extensively characterizes the limit-sets of   DIs to this extent (e.g., identifying cycles  through discontinuities). 
}

\hide{Similar to the cyclic ICT sets characterized in our work, several learning-based studies have reported the existence of cyclical behaviors --- such as Shapley polygons and limit cycles around Nash equilibria (shown through the Poincaré–Bendixson theorem) (see, for example \cite{brown1951iterative,shapley1963some,matsui1992best,samuelson1997evolutionary,binmore2001does,boone2019darwin}). However, this line of research predominantly focuses on mixed-strategy dynamics and, more importantly, overlooks the inherent randomness present in the underlying evaluations.}

{\color{magenta} Most existing studies on game dynamics \cite{boone2019darwin,Cressman2014,Mertikopoulosetal,KraTar24}, which typically model population evolution through deterministic dynamics and analyze smooth ordinary differential equation limits, often allowing mixed strategies. Also \cite{zusai2018tempered,rothsandholm2013} study differential inclusions arising from pure-action dynamics, their analyses are restricted to deterministic processes and do not consider random turn-by-turn population evolution. In contrast, our framework explicitly incorporates stochastic arrivals, heterogeneous agent types, and discontinuities induced by pure-action decision rules, and provides a structural characterization of the limiting ICT sets.}

Our work also relates to another branch of the literature see, e.g., \cite{sandholm2010population,gigerenzer2002bounded,simon1955behavioral,camerer2003behavioral} that studies games with bounded rational and/or behavioral  agents; such a consideration is important, as once again real-world agents seldom make completely rational decisions based on rigorous computations. We discuss  this more in  subsection \ref{sub_sec_some_examples_app}; for now, we would like to mention that our dynamics involve  populations spanning  fully rational, bounded-rational, and behavioral agents. 

\hide{
In other context, the existing literature considers  population game dynamics typically focuses on rational agents who optimize their type dependent utility functions or on bounded rational agents with limited decision-making capabilities. However, in reality agents with different behavioral patterns exists and interact within a multi-type population (see subsection \ref{sub_sec_some_examples_app} for details). Only a few works, such as \cite{turn_by_turn}, include 
behavioral agents, but their analysis but with two actions and two agent types. Our framework extends beyond these limitations by explicitly modeling heterogeneous populations with multiple actions and types. In particular, the two types of rational agents and a third type representing avoid-the-crowd behavior is considered in our queuing game.}

{\color{blue} 
Several strands of the literature connect some of the above-mentioned static outcome sets to the asymptotically relevant set-valued outcomes of appropriate evolutionary dynamics, such as minimally asymptotically stable faces or sets closed under specific dynamics. As already mentioned, such dynamics are often represented via ODEs (e.g., \cite{izquierdo2023strategy,ritzberger1995evolutionary}) or via DIs (e.g., \cite{balkenborg2013refined}). For example, \cite{balkenborg2013refined,izquierdo2023strategy} show that certain CURB sets or prep-sets are asymptotically stable under the corresponding dynamics. 
These works predominantly focus on mixed strategies and consider continuous utility functions. 
Authors in \cite{kets2008learning} consider a Markov process representing dynamics based on finite history and establish similar results for games with finite action sets. 
As discussed earlier, our work considers more general games with uncountably many pure strategies and potentially discontinuous utility functions, and focuses only on describing the outcome via a new static notion. 
The study of dynamics for such discontinuous games, and the relevance of equilibrium cycles for these dynamics, would be an interesting direction for future investigation. 
There also exist algorithmic approaches to compute CURB sets \citep{benisch2006algorithms,benisch2010algorithms}; developing algorithms to compute equilibrium cycles would again be an interesting topic for future work.
}

{\color{red} \textbf{Summery of  \cite{kets2008learning}:} Ours is a large population game and hence the importance on the population measure over action set. 

In this paper, the authors study a finite strategic game with a finite set of players and actions, where players interact repeatedly in discrete time. Players form beliefs based on a finite memory of past play of length $T$ and choose best responses with a behavioral bias toward recently played actions. The resulting learning dynamics are modeled as discrete-time adjustment processes, and it is shown that, for sufficiently large $T$, play converges with probability one to a minimal prep set. The analysis is entirely discrete and does not rely on ordinary differential equations or differential inclusions.
}}

\section{Introduction}
The initial strands of the literature investigating strategic interactions among agents predominantly describe the outcomes via static concepts, mainly the Nash Equilibrium (\citep{nash1950equilibrium}).
In later developments, there has been a shift in focus toward the dynamic strategic interactions that more faithfully represent the real-world behavior (see e.g., \cite{sandholm2010population,webb2007game}). A substantial body of literature captures scenarios where finitely many agents continually learn each other’s strategies in pursuit of optimal utilities; for instance, when agents focus on the recent moves, we have best-response dynamics, and when they track a significant history of the same, the resultant is referred to as fictitious play (e.g., \cite{brown1951iterative,shapley1963some,binmore2001does,zusai2018tempered}). 
This process unfolds iteratively as the  players respond to the system (or the actions of others) continually  over time.
 
Another prominent strand of literature examines large-population games, where the agents adapt their strategies gradually over time. Here, the individuals revise their actions intermittently--reflecting inertia--and make decisions in a myopic fashion, relying primarily on the prevailing social state (empirical measure on  population choices) rather than long-term foresight. The resulting   evolution of population measures over strategies is often modeled as replicator dynamics (see, e.g., \citep{taylor1978ESS, hofbauer1998evolutionary, sandholm2010population, Mertikopoulosetal}). Basically,  in replicator dynamics,  the strategies with above-average payoffs become more prevalent, while less successful ones decline, in each type of population .

We also consider sequential decision-making scenarios  with a large number of agents choosing actions based on their types and current social state. However, our model is drastically different --- agents make their choices one after the other and only once --- this is referred to as turn-by-turn dynamics in \cite{turn_by_turn}, where the authors analyze a two-action game, see also~\cite{alon2012sequential}.  Such game dynamics have been relatively less studied in the literature, but have many important applications.
For instance, customer service lines like concert queues (where customers arrive sequentially and choose a queue or service level based on the prevailing occupancy), online reviews (where users make purchase or rating decisions influenced by earlier reviews), 
traffic routing (where drivers sequentially select routes based on observed congestion), or healthcare access (where patients select treatment options once, based on availability or perceived popularity). In all these examples, agents act once and move forward, capturing the essence of turn-by-turn-one-shot framework of this paper. Clearly, in this framework there is no learning or iterative refinement for any agent. We rather focus on the system level evolution driven by these one-shot decisions and the corresponding inferences related to the system.

The core contribution of our work begins with the identification of a differential inclusion (DI) that effectively captures the random dynamics of the game (using results of \cite{benaim2005stochastic}). Building on this, we further establish the following:

$\bullet$ We first show that the solutions of the DI approximate the  stochastic iterates of the empirical measures of population  choices (almost surely).  Further,  with probability one,  the limit set of the stochastic iterates is an  internally  chain transitive set (ICTs) of the DI. The remaining results are derived for linear utility functions.

$\bullet$ We  derived some classical and some non-classical Filippov solutions to characterize some important  `limit ICT sets'. 

  $\bullet$  For two-action games, singletons are the only ICTs and so have established  almost sure convergence to point limits. 
 
 $\bullet$ Interestingly, for games with more than two-actions (and any finite number of types), non-singleton ICTs can arise \\\hspace*{5mm} that  exhibit cyclic behavior. Besides, singleton ICTs can also emerge at discontinuities.

 $\bullet$ Strikingly, all identified singleton ICT sets are  among  multi-type mean-field Nash equilibria, thus the random dynamics converge to one among the equilibrium, when the limit set is such a singleton.
    
 $\bullet$ Finally,  we provide a numerical procedure  to identify the singleton ICT sets and verify the possibility of cyclic ICTs using some graph theoretic tools, for any given finite generic game (with finite types and actions).

$\bullet$ Lastly, a concert queue   is analyzed, where three types of agents choose among three priority-based service options, differentiated by offered prices. Such systems can have cyclic preferences, some agents may perceive higher-priced options as   better, while free entry can be preferred over the highest price  ---  cyclic ICTs exist for such games.  
Interestingly,  cycles exist even for smaller fraction of  cyclic-preference agents, in presence of   avoid-the-crowd population.


In contrast to our work, most existing studies on game dynamics (\cite{boone2019darwin,juneja2013concert,Cressman2014,Mertikopoulosetal,KraTar24}) focus on   deterministic processes to model the evolution of population measures; further they typically consider mixed strategies and directly analyze using  (continuous) ordinary differential equations (ODEs).
These frameworks often overlook the stochastic nature of real-world decision-making, particularly the randomness   inherent because of  random arrival of heterogeneous agents; furthermore, the limiting DI has discontinuities   due to realistic consideration of      pure-action choices (the real-world agents seldom choose according to mixed strategies). Authors in \cite{turn_by_turn} do consider   the random system evolution with pure action choices; however they have a restricted study   with only two actions and two types. They utilize some direct probabilistic arguments, while  we analyze any finite system  in complete generality  using  DI-based framework,   where the population measures evolve in $(k-1)$-dimensional spaces with $k$-actions.  Authors in \cite{zusai2018tempered,rothsandholm2013} do consider evolution because of pure action choices and analyze using DIs, 
however, they do not consider random evolution. 
\hide{we construct an aggregate population measure of strategies that evolves over time; our approach directly models the stochastic iterates of action measures evolution, preserving the randomness from agent types \rv{and action selection}. Basically, we use a differential inclusions (DI) based stochastic approximation framework to analyze the limiting behavior, which generalize ODEs to accommodate set-valued dynamics. It is essential because agents use pure strategies, i.e., they choose single action rather than a mixture --- introduces discontinuities in the evolution of the empirical distribution. This approach enables the analysis of such a significantly general games with any finite number of actions and types where population measures reside in higher dimensions. In contrast, \cite{turn_by_turn} focuses on a restricted setting with two actions and two types, i.e., a one dimension case, and relies on direct probabilistic arguments for the limiting analysis, whereas we adopt the DI-based approach.}

Seminal works such as \cite{benaim2005stochastic,benaim2006stochastic,benaim2012, rothsandholm2013} highlight the significance of the DI framework in understanding complex dynamical systems that arise in stochastic approximation.
In particular, \cite{benaim2005stochastic} shows that the limit set of  population measure iterates is an  ICT set,   almost surely.
In this paper, we extend this line of research by identifying the explicit structure of certain ICT sets (as discussed above), given the game under consideration. For example, we derive cyclic ICT sets in any  finite dimensional space, by constructing non-classical solutions of the DI, that pass through discontinuities (in DI) and  exhibit cyclic nature;  while works like \cite{brown1951iterative,shapley1963some,matsui1992best,samuelson1997evolutionary,binmore2001does,boone2019darwin}
dealing with mixed strategies derive the cyclic outcomes (like Shapley polygons and limit cycles around Nash equilibria)  for three-action games, or two dimensional space,  using the well-known  Poincaré–Bendixson theorem.  To the best of our knowledge, we have not come across any  literature that extensively characterizes the limit-sets of   DIs to this extent (e.g., identifying cycles  through discontinuities).

\hide{Similar to the cyclic ICT sets characterized in our work, several learning-based studies have reported the existence of cyclical behaviors --- such as Shapley polygons and limit cycles around Nash equilibria (shown through the Poincaré–Bendixson theorem) (see, for example \cite{brown1951iterative,shapley1963some,matsui1992best,samuelson1997evolutionary,binmore2001does,boone2019darwin}). However, this line of research predominantly focuses on mixed-strategy dynamics and, more importantly, overlooks the inherent randomness present in the underlying evaluations.}

Our work also relates to another branch of the literature \cite{sandholm2010population,gigerenzer2002bounded,simon1955behavioral,camerer2003behavioral} that studies games with bounded rational and/or behavioral  agents; such a consideration is important, as once again real-world agents seldom make completely rational decisions based on rigorous computations. We discuss  this more in  subsection \ref{sub_sec_some_examples_app}; for now, we would like to mention that our dynamics involve  populations spanning  fully rational, bounded-rational, and behavioral agents. 

\section{A multi-type population Game} \label{sec_general model}
We consider a large population consisting of multiple types of interacting players and study   
  their interactions in  a mean-field (MF) kind of framework. The interactions in our model are 
  significantly
  different from those in the  traditional MF scenarios, in terms of the types of agents, in terms of the involvement of the agents, and also in terms of the realization of the utilities. Towards a realistic consideration, we consider a mixture of rational and behavioral agents -- the former agents choose actions to maximize certain utility functions, while the latter category chooses actions simply based on some behavioral patterns \cite{agarwal2024balancingrationalitysocialinfluence}. Furthermore, we consider scenarios where the agents interact with the system just once, derive their utilities, and leave the system (e.g., concert queues, as in \cite{juneja2009concert,juneja2013concert}). Alternatively, our framework can also include the scenarios where the agents interact once, but receive their utilities at the end of the game (e.g., a participation game as in \cite{turn_by_turn}).  Before delving further into the details of the actual dynamics that describe the system, we begin with the static version of the framework that can predict the possible limits of the dynamic interactions.  We would later discuss the relation between the static notions of the equilibrium and the limits of the dynamical system. 
 
Formally, \textit{the multi-type population game  is represented by   $\langle  \Theta, \balpha, \{\A_\t\}_{\t \in \Theta},  (u_\t)_{\t \in \Theta}\rangle$,} 
with the components defined as follows:

\noindent$\bullet$ The set $\Theta := \{\t_1,\t_2, \ldots, \t_n\}$ represents the finite types of players. The players interact with players of their own type as well as with players of all other types.   

\noindent$\bullet$ The population distribution across types in $\Theta$  is given by $\balpha := (\alpha_\t)_{\t \in \Theta} \in \P (\Theta)$;  specifically $\alpha_\t$ represents the proportion of players of type $\t$ among the population. In general,   $\P ({\mathcal X})$ denotes the set of probability measures on~$\mathcal X$.
 
 \noindent$\bullet$  The set $\A_\t =\{1, \ldots, k_\t  \}$ represents the finite set of actions available  to  players of type $\t$. 
    
\noindent$\bullet$ The vector $\boldsymbol{\mu}_\theta \in \mathcal{P}(\mathcal{A}_\t)$ represents the empirical distribution of the players of type $\theta$ over the action set $\mathcal{A}_\t$. We denote by $\mu_\theta^a$ the fraction of players of type $\theta$ who choose action $a \in \mathcal{A}_\t$. Thus, $\bmu_\t = (\mu_\t^a )_{a\in \A_\t}$.

\noindent$\bullet$  The function $u_\t$ represents the utility of a typical player of type $\t$,  which may vary across different types.
        As in a mean field game, the utility functions   depend on the actions  chosen by the players and the mean field (more precisely the empirical distribution) of the actions chosen by other players.
       A typical type-$\t$ player   derives   utility  $u_\t (a, \bmu_\t,  {\bmu}_{-\t} )$, when it chooses action $a \in \A_\t$ and when the mean field  of its own type is $\bmu_\t$, and that of the other types is ${\bmu}_{-\t} = \{\bmu_{\t'} : \t' \ne \t \}$ \textit{(agents aim to maximize their utilities)}. In particular, we consider  games with special structure, where the dependency is on aggregate measure\footnote{If required, one can easily extend the analysis of this paper to more general   $u_\t (a, \bmu_\t,  {\bmu}_{-\t} )$ functions.}:
\begin{equation}
\label{Eqn_special_game}
u_\t (a, \bmu_\t,  {\bmu}_{-\t} ) =   u_\t (a, \bromga), \text{ for all } \t \in \Theta, \text{ where } \bromga  :=  \sum_{\at \in \Theta } \alpha_{\at} \bmu_{\at} . 
\end{equation}
In the above $\bromga \in \P ( \A) $ is an \textit{`aggregate measure'} on the combined set of actions   $\A := \cup_\t \A_\t$; for notational simplicity,  we set $\mu_\t (a) = 0$ if $a \notin \A_\t$.    Here, $\bromga = (\bromgac^a)_{a \in \A}$, where $\bromgac^a$ is the fraction of population across all types that choose action~$a$. Next, we define the equilibrium for such games, we also define the  aggregate-MFE (mean-field equilibrium).
\begin{definition}{\bf[MT-MFE and aggregate-MFE]}   \label{defn_MT-MFE}
        A tuple $(\bmu^*_{\t})_{\t \in \Theta}$, where $\bmu_\theta^* \in \mathcal{P}(\A_\t)$ for each $\theta \in \Theta$, is said to be a \textit{ multi-type mean-field Nash equilibrium} (MT-MFE) of the game  $\langle  \Theta, \balpha, \{\A_\t\}_{\t \in \Theta},  (u_\t)_{\t \in \Theta}, \bbeta \rangle$, if  for each $\t \in \Theta$, the following   holds: 
        $$
        \mathrm{supp}(\bmu^*_\t) \subseteq {\rm Arg} \max_{a \in \A_\t} u_\t (a, \bromga^* ), \text{ for all } \t \in \Theta, \text{  where } \mathrm{supp}(\bmu^*_\t) := \{a \in \A_\t : \mu_{\t}^{*a}   > 0\}.
        $$ 
       We refer to the aggregate measure $\bromga^*$  as the aggregate-MFE,  if there exists an  MT-MFE  $(\bmu^*_{\t})_{\t \in \Theta}$ such that  $\bromga^* = \sum_{\t} \alpha_\t \bmu^*_\t$.
    \end{definition}
\subsection{Some examples and applications}\label{sub_sec_some_examples_app}
Our multi-type setting provides a sufficiently general framework that can cover interactions among a variety of agents. The framework can consider rational agents who choose actions with an aim to optimize a certain (type-dependent) utility function  $u_\t$. For example, for some fraction of the population visiting a service facility, the utility could be the time spent in the facility, while for some others  the utility could be driven by economic factors, and for a third category, it could be the certainty with which the job is completed.

There has been a shift in thinking, in the recent game-theoretic literature, from considering completely rational agents to more realistic bounded-rational agents, or for that matter, agents with some behavioral patterns \cite{agarwal2024balancingrationalitysocialinfluence, turn_by_turn, camerer2011behavioral, banerjee1992simple}. 
Such agents may either approximate their utilities rather than compute them accurately (referred to as bounded-rational agents in \cite{sandholm2010population}) or may simply follow some behavioral instincts (e.g., herding agents in \cite{agarwal2024balancingrationalitysocialinfluence, turn_by_turn, banerjee1992simple, morone2008simple, eyster2009rational}). Our framework is general enough to accommodate populations spanning fully rational, bounded-rational, and behavioral agents.
\subsubsection*{\underline{Behavioral interactions}}
In settings involving a large population, one may notice people imitating others, aligning with the majority by following their choices, or deliberately avoiding the majority, etc.  

For example, say the population of type ($\t=\text{avoid}$) prefers the least considered choice.    One can typically notice such behavioral patterns  when people choose less crowded restaurants or tourist spots, etc.  Clearly, such agents can be incorporated into our framework by defining the following (fictitious) utility function:
\vspace{-2mm}
\begin{eqnarray}\label{eqn_avoid_utility}
     u_{\t } (a, \bromga) = - \bromgac^a \ind_{\{a \in \A_\t\}}, \text{ when } \t = \mbox{avoid}.
\end{eqnarray}
It is important to note here that \textit{the utility functions of the behavioral agents only  capture  the factors that drive their decisions/choices, but may not be indicative of their true utilities.} However, such a consideration is sufficient, as the focus of our paper is primarily on the long-run behavior of the system dynamics driven by the strategic/behavioral choices.

Similarly, one can consider the population of type $\t =\mbox{prefer}$, who simply prefer a particular action, say $\ta$. Again, one can notice such trends commonly, for example, some people  always prefer traditional methods. Such a population can be incorporated by considering the type-utility function as:
$ u_\t (a, \bromga) =  \ind_{\{a = \ta\}},  \mbox{ when } \t = \mbox{prefer}.$
\subsubsection*{\underline{Herding and $\alpha$-rational Nash equilibrium}}\label{Subsec_herding_explain}
One can include the herding crowd in the population as in \cite{agarwal2024balancingrationalitysocialinfluence,turn_by_turn}.  This situation can be incorporated by considering the   utility function:
$ u_\t  (a, \bromga) =  \bromgac^{a} \ind_{\{a \in \A_\t\}}, \text{ when } \t = \mbox{herd}.$

The MT-MFE definition \ref{defn_MT-MFE} with herding and rational population (with $n = 2$ )    coincides with the recently defined $\alpha$-rational  Nash equilibrium (referred as $\alpha$-RNE in \cite{agarwal2024balancingrationalitysocialinfluence}). One can view Definition \ref{defn_MT-MFE} as multi-type $\alpha$-RNE. 

\hide{It is important to note here that \textit{the utility functions of 
the behavioral agents only  capture  the factors that drive their decisions/choices, but may not be indicative of their true utilities.} However, such a consideration is sufficient, as the focus of our paper is primarily on the long-run behavior of the dynamics driven by the static game described in section~\ref{sec_general model}.}

\hide{\subsection{Some examples and applications}\label{sub_sec_some_examples_app}
Our multi-type setting provides a sufficiently general framework that can cover interactions among a variety of agents. The framework can consider rational agents who choose actions with the aim to optimize a certain utility function  ($u_\t$), where different types of agents can have different utility functions. For example, for some fraction of the population visiting a service facility, the utility could be the time spent in the facility, while for some other fraction of the population the utility could be driven by economic factors, and for a third category, it could be the certainty with which the job is completed.   

There has been a shift in thinking, in the recent game-theoretic literature, from considering completely rational agents to more realistic bounded rational agents, or for that matter, agents with some behavioral patterns \cite{agarwal2024balancingrationalitysocialinfluence}, \cite{turn_by_turn}, \cite{camerer2011behavioral}, \cite{banerjee1992simple}. Agents  either may not elaborately compute their utilities (accurately) towards making their choices (referred to bounded rational agents in \cite{sandholm2010population}), or    may simply follow some behavioral instincts (e.g., herding agents in \cite{agarwal2024balancingrationalitysocialinfluence}, \cite{turn_by_turn}, \cite{banerjee1992simple}, \cite{morone2008simple}, \cite{eyster2009rational}). Our framework is general enough  to  cover a mixture of such a variety of agents spanning from completely rational (and different varieties of rational), to bounded rational to behavioral agents.

\subsubsection*{\underline{Behavioral interactions}}

In settings involving a large population, one can typically notice people imitating others, aligning with the majority by following their choices, or deliberately avoiding the majority, etc.  

For example, say the population of type ($\t=\text{avoid}$) prefers the least considered choice. One can typically notice such behavioral patterns, for example, when people choose less populated restaurants or tourist destinations, etc.  Clearly, such agents can be incorporated into our framework by defining the following (fictitious) utility function:
\begin{eqnarray}\label{eqn_avoid_utility}
     u_{\t } (a, \bromga) = - \bromgac^a \ind_{\{a \in \A_\t\}}, \text{ when } \t = \mbox{avoid}.
\end{eqnarray}
Similarly, one can consider the \rv{ a } population of type $\t =\mbox{prefer}$, who simply prefer a particular action, say $\ta$. Again, one can notice such trends commonly, for example, some people  always prefer traditional methods. Such a population can be incorporated by considering a type-utility function as:
$
    u_\t (a, \bromga) =  \ind_{\{a = \ta\}},  \mbox{ when } \t = \mbox{prefer}.
$

\subsubsection*{\underline{Herding and $\alpha$-rational Nash equilibrium}}\label{Subsec_herding_explain}
One can include the herding crowd in the population as in \cite{agarwal2024balancingrationalitysocialinfluence,turn_by_turn}.  This situation can be incorporated by considering the following utility function:
$
    u_\t  (a, \bromga) =  \bromgac^{a} \ind_{\{a \in \A_\t\}}, \text{ when } \t = \mbox{herd}.
$

The MT-MFE definition in \ref{defn_MT-MFE} with herding and rational population\rv{s} (with $n = 2$ )    coincides with the recently defined $\alpha$-rational  Nash equilibrium (referred \rv{ to } as $\alpha$-RNE in \cite{agarwal2024balancingrationalitysocialinfluence}). One can view Definition \ref{defn_MT-MFE} as \rv{ a } multi-type $\alpha$-RNE. 

It is important to note here that \textit{the utility functions of 
the behavioral agents only  capture  the factors that drive their decisions/choices, but may not be indicative of their true utilities.} However, such a consideration is sufficient, as the focus of our paper is primarily on the long-run behavior of the dynamics driven by the static game described in section~\ref{sec_general model}.}
\section{Turn-by-turn dynamics among multi-type players } \label{sec_turn_by_turn}
Inspired by recent work~\cite{turn_by_turn},
we consider  turn-by-turn dynamics within 
our framework of a multi-type population game,  discussed in   section \ref{sec_general model}. In this setting, the players arrive sequentially over time, and each player's type is sampled  (independently across time slots) according to $\balpha$. They choose their actions based on their types, current trends, or the empirical distribution of the choices of the previous players, and participate only once.  
The players can derive utilities immediately after making a choice or   after the game is over; the former category (among the non-behavioral agents) represents rational players, while  the players in the latter scenario can be interpreted as myopic rational players that typically make choices based on current trends without being far-sighted (see, e.g., \cite{sandholm2010population}). 
  The former modeling is applicable to instances like queuing or routing problems, where the customers or drivers receive negative utility  (in terms of the travel or waiting times) immediately after their choices. On the other hand, there are games  where the  utility of a player depends upon  collective decisions of all others and is received in the end; for instance, in vaccination drives, the benefit 
  arises only after a substantial portion of the population is vaccinated, and in crowdfunding campaigns, the utility depends on reaching a target, etc., (see \cite{turn_by_turn}). 

The main aim of the paper is to study the culmination of such game dynamics, as $T$, the number of players (or turns) increases to infinity. 
Such a study helps in deriving an approximate analysis for finite games, when $T$ is sufficiently large.  We next describe the precise details of the dynamics. Formally, a player of random type $F(t)$, arrives at a turn/decision epoch  $t>0$,  according to the probability distribution:
$$
P(F(t) = \theta) = \alpha_\t \quad \text{for all } \theta \in \Theta.
$$ 
The player chooses an action $A(t) \in \A$ based on their type and the (available) history of actions chosen by previous players, which we describe in the immediate next. Let     
\begin{eqnarray}
\label{eqn_barN}
\Bar{N}_\t(t) := \frac{ \sum_{s\le t} \ind_{\{F(t) = \t\}}}{t}, \mbox{ and }  \Rmuc_\t^a (t) := \frac{ \sum_{s\le t} \ind_{\{F(t) = \t, A(t) = a \}} } {t \Bar{N}_\t(t)}     
\end{eqnarray}
respectively represent the fraction of agents  of type $\t$ and the fraction of type $\t$ players that have chosen action $a$, up to time $t$. Let $ \Rmu_{\t} (t) := (\Rmuc_{\t}^a(t))_a$ (we again set $\Rmuc_{\t}^a(t) = 0$ for $a \notin \A_\t$).
 Let $\Romga(t)$  represent the aggregate (random) population measure, as defined in \eqref{Eqn_special_game},  now only because of the actions chosen by the agents till epoch $t$: 
\begin{eqnarray}
     \Romga (t)  &=&  \sum_{\t \in \Theta } \alpha_{\t} \Rmu_{\t} (t). \label{Eqn_Romga}
\end{eqnarray}
Every agent chooses an action that maximizes its type-dependent utility function. If the agent  is of type $\t$, i.e., if  $F(t+1) = \t$, then it chooses an action $A(t+1)$ that maximizes  the utility function $u_\t$: 
\begin{equation}\label{ratioanl_decision}
    A(t+1)  = \min \left \{   {\rm Arg} \max_{a \in \A_\t} u_\t (a, \Romga(t) )  \right \}. 
\end{equation}
In the above, the \textit{ties are broken by assuming that the players  choose the smallest index among the maximizers in~\eqref{ratioanl_decision}.} As already mentioned, the utilities are realized by the player either immediately or after all the players have chosen their actions, i.e., at the end of the game (and such differences have no impact on the game dynamics). 

Next, we describe in detail the evolution of the aggregate population measure $\Romga(t)$ defined in \eqref{Eqn_Romga}.  Observe that its $a$-th component $\Romgac^a(t)$ is the fraction  of players within the overall population who have chosen action $a$, up to time $t$, and is given by (see \eqref{eqn_barN}):
\begin{equation*}\label{eqn_mu_bar}
    \Romgac^a(t) := \sum_{\t \in \Theta} \Rmuc^a_\t(t) \Bar{N}_\t(t), \mbox{ for all } a \in  \A = \cup_\t \A_\t. 
  \end{equation*}
In other words, one can view $\Romga(t) = (\Romgac^a(t))_{a \in  \A}$ as the (random) state of the system at turn epoch $t$. This vector evolves according to the following update rule (one can assume any value for the initial terms like  $ \Romga(0)$):
\begin{eqnarray}\label{eqn_iterates_nu}
   \Romgac^a(t+1) = \Romgac^a(t) + \frac{1}{t+1}(\ind_{\{A(t+1) = a\}}-\Romgac^a(t)), \mbox{ for all } a \in \A  \text{ and } t>0. 
\label{EQN_ITERATES_NU}
\end{eqnarray}
The above iterative scheme aligns with a stochastic approximation (SA) scheme (e.g., \cite{borkar2008stochastic}). Generally, the asymptotic analysis of such iterates can be derived using the ordinary differential equation (ODE) (e.g., \cite{kushner2003stochastic}), 
\begin{eqnarray}\label{eqn_ode_new_foot}
    \dot{\bromgac}^a =  {g^a}(\bromga), \mbox{ for all } a \in  \A, \mbox{ with, } \ 
    g^a(\bromga) \ := \ \mathrm{E}[\ind_{\{A(t+1) = a\}}-\Romgac^a(t) \mid \bm \Gamma(t)], 
\end{eqnarray}
 defined   using the conditional expectation of $(\ind_{\{A(t+1) = a\}}-\Romgac^a(t))$ with respect to $\bm \Gamma(t)    := \sigma\{\Romga(s): s \le t\}$,
the filtration that captures the history of $\Romga(t)$.
Basically ${\bm\Gamma}(t)$   denotes the smallest $\sigma$-algebra generated by $\Romga(s)$ for all $s \le t$. It is easy to observe that:
\begin{eqnarray}
   \hspace{-1mm} g^a(\bromga) \hspace{-2.5mm} &:=& \hspace{-2.5mm}\mathrm{E } [\ind_{\{A(t+1) = a\}}-\Romgac^a(t)) \mid \bm \Gamma(t)],   \small{\text{ (when $\Romga(t) = \bromga$ with $\Romgac^a(t) = \bromgac^a$)}}, \label{eqn_cond_exp}\\
         \hspace{-2.5mm}&=&\hspace{-2.5mm} \ \sum_{\t \in \Theta}\mathrm{E}\left[\mathrm{E}\left[ \ind_{\{F(t+1) = \t \}}\ind_{\{A(t+1) = a\}}\mid F(t+1),  \bm\Gamma(t)  \right] \mid \bm\Gamma(t) \right] - \bromgac^a, \label{} \nonumber\\
              \hspace{-2mm}&=&\hspace{-2mm} \ \sum_{\t \in \Theta} \alpha_\t \ind_{\{a \in \A_{\t}^*(\bromga) \}}   - \bromgac^a , \mbox{ (from } \eqref{ratioanl_decision}), \mbox{ for all } a \in  \A, \label{final_g_nu_bar}
\end{eqnarray}
where $\A_{\t}^*(\bromga)\ := \left \{ \min \ {\rm Arg } \max_{\ta \in \A_\t}  u_\t (\ta, \bromga) \right \}$ (recall the ties are broken in the favor of the smallest index in  \eqref{ratioanl_decision})
and hence the corresponding ODE is: 
\begin{eqnarray}
\label{eqn_ODE}
    \dot{\bromgac}^a \ = \ g^a(\bromga) \ = \ \sum_{\t \in \Theta} \alpha_\t \ind_{\{a \in \A_{\t}^*(\bromga)\}} - \bromgac^a \ = \ 
     \sum_{\t \in \Theta} \alpha_\t \Pi_{\ta \in \mathcal{A}} \ind_{\{h_\t^{a,\ta}(\bromga) \geq 0  \}} - \bromgac^a,  \text{ for all } a \in   \A, \label{eqn_general_ode_1} 
\end{eqnarray}
where  the functions $h^{a,\ta}_{\t}: \D \to \mathbb{R}$ are defined for all $\t$ and $a\ne \ta$   by (let $k := | \  \A|$):
\begin{equation}\label{eqn_h_ij}
    h_{\t}^{a,\ta}(\bromga) := u_{\t}(a,\bromga)-u_{\t}(\ta,\bromga), \text{ and } \D := \left \{ \bromga \in [0,1]^k:  \sum_a \bromgac^a = 1 \right \}. 
\end{equation}
In \eqref{eqn_ODE}, for each action $a$, the ODE\footnote{Domain $\D$ is clearly invariant for the ODE, for example if $\bromgac^a = 0$ for some $a$ then the corresponding RHS of ODE is non-negative.} function  $g^a:\D \to \RR$. One can write \eqref{eqn_ODE}  in the vector form using the corresponding vector function  $\g  : \D \to \RR^k$,
\begin{equation}\label{eqn_ODE2}
\hspace{1mm}
    \dot{\bromga} = {\bm g}(\bromga), \mbox{ with, } \g(\cdot) := (g^a (\cdot) )_{a \in     \A }.
\end{equation}
Observe from \eqref{eqn_ODE} that the function $ {\bm g}(\cdot)$ is discontinuous  with respect to $\bromga$ in general; however   it  equals a continuous $\g(\bromga) = \b  - \bromga$, for some vector $\b$   in some open sets ($\b$ depends on  open set)   that are not intersected by  `border'  sets,
\begin{equation}\label{eqn_fun_H_theta}
 \{ \H^{a,\ta}_\t  \}_{a \ne \ta, \t,} \mbox{ where, }
    \H^{a,\ta}_\t := \{\bromga: h^{a, \ta}_\t (\bromga) = 0\} \mbox{ for each } (a, \ta) \in \A_\t \times \A_\t \mbox{ with } a \ne \ta \mbox{ and } \t \in \Theta.
\end{equation}

\begin{wrapfigure}{r}{0.32\textwidth}
\vspace{-5mm}
    \centering
 \hspace{0.5mm}\includegraphics[trim={7cm  3.5cm 5cm 5.5cm},clip,scale=0.34]{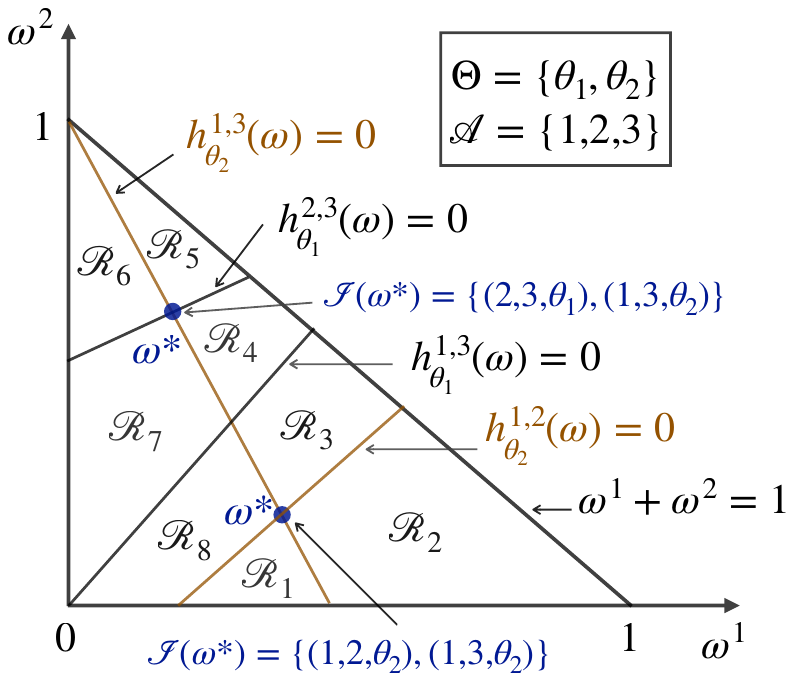}
    \captionsetup{type=figure}
    \vspace{-4mm}
    \caption{Representative picture for 2 types,  3 actions}
    \label{pictorial_regions_hfun}
     \vspace{-4mm}
\end{wrapfigure}

We have finitely many $\{\H^{a,\ta}_\t\}$ regions, as $n=|\Theta| < \infty$ and $k=|\cup_\t \A_\t| < \infty$. Further from \eqref{eqn_ODE}, the open regions  mentioned above (where $\g$ is continuous)  are bounded by some of the $\H^{a,\ta}_\t$ regions  --- 
\textit{basically in   each such region, agents of the same type always choose one particular action}. We represent these open sets\footnote{Each $\R_j$ is open in the domain $\D$ equipped with subspace topology from $\mathbb{R}^{k-1}$.} by $\{\R_j\}_{j \le m}$ and   \textit{assume $m < \infty$}, see the representative Figure
~\ref{pictorial_regions_hfun}.

Thus, one will have a partition of $\D = \H \cup_j \R_j$  with  $\H := \cup_{a,\ta, \t} \H^{a,\ta}_\t$ representing the `borders' region in $\D$, where some type-agents can be indifferent to a certain subset of actions. 
For clarity, we repeat that,  any type agent has one distinct `maximizer' action (unique maximizer in $\mbox{Arg} \max_a   u_\t(a, \bromga)$) at all $\bromga \in \D \setminus \H$,  and these actions  dictate the `region-specific'  $\b$ vectors mentioned earlier. 

As a result,  ODE \eqref{eqn_ODE2} can be discontinuous at $\bromga \in \H$ and hence the basic stochastic approximation results cannot be applied to study the stochastic iterates  \eqref{eqn_iterates_nu}. Therefore, one requires a more sophisticated framework to study the game dynamics \eqref{eqn_iterates_nu}, and we utilize a more general differential inclusion (DI)  based framework of \cite{benaim2005stochastic} to study the same. Towards this, we begin with identifying the DI associated with   ODE \eqref{eqn_ODE2}. 
Prior to that, we digress briefly to discuss an interesting application that can be modeled using the dynamics given in~\eqref{eqn_iterates_nu}.
\hide{
\begin{figure}[H]
\vspace{-5.7mm}
    \centering
    \begin{subfigure}[b]{0.35\textwidth}
        \centering
        \includegraphics[trim={0cm 11.4cm 10cm 11cm}, clip, scale=0.50]{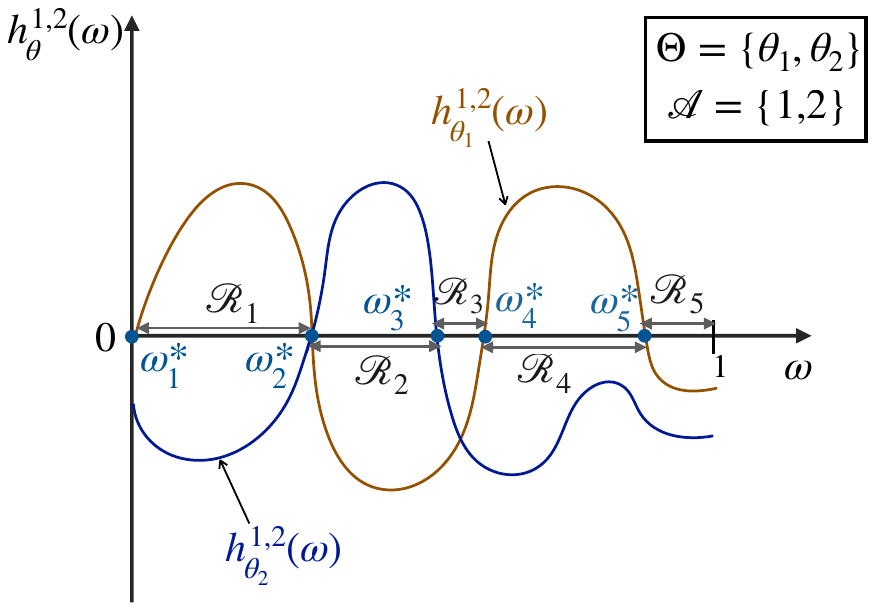}
            \vspace{-6mm}
        \caption{Two actions and two types}
        \label{fig:two_actions}
   \end{subfigure}
    \hfill
    \begin{subfigure}[b]{0.55\textwidth}
        \centering
        \includegraphics[trim={2.3cm 10cm 7cm 11.95cm}, clip, scale=0.50]{pictorial_regions_hfun.pdf}
            \vspace{-4mm}
        \caption{Three actions and two types}
        \label{fig:three_actions1}
    \end{subfigure}
    \vspace{-2mm}
    \caption{Representative pictures for different action and type configurations.}
    \label{DI interval figure}
    \vspace{-4mm}
\end{figure}}

\subsection{Queuing game with a variety of agents and   priorities-based services}\label{sub_sub_sec_queuing_application}
Queuing systems that cater to diverse customer preferences are commonly observed in various real-life scenarios, like visa application centers,   hospital appointment desks, etc. To study such scenarios, we consider a queuing system designed to accommodate customer preferences via three distinct types of queues. The first,  referred to as the standard queue, offers a cost-free option but involves potentially longer wait times due to higher congestion. The second, called the moderate queue, provides a balanced choice with reduced congestion cost reduced by a factor  $0<\rho < 1$, and at a moderate fee $p$. Finally, the third one is a high priority or premium queue that provides  maximum convenience ---  this eliminates the congestion  entirely but requires a highest premium fee $p_s$. Let $a=1$, $a=2$ and $a=3$ denote the choice of the standard, moderate and premium queues, respectively, i.e., the set of available actions is  $\A = \cup_\t \A_\t = \{1,2,3\}$.

To model and analyze the system more realistically, we consider a variety of customers. We begin  with the customers who make their choices directly based on the overall costs  of the three options provided by the system. This type of customer is referred to as 
  cost-rational (briefly  as $\CSr$-rational),  and their  utility function   is given by:
\begin{eqnarray}\label{eqn_queuing_game_utility_function}
   u_{\CSr}(a, \bromga) = 
   \left (  -\bromgac^1  \right ) \ind_{\{a = 1\}} +  
   \left (  -\rho \bromgac^2-p \right )  \ind_{\{a = 2\}} +  
   \left (  -p_s  \right ) \ind_{\{a = 3\}},  
\end{eqnarray}
where $\bromgac^i$ represents the aggregate congestion level in queue $i$ at the decision epoch. Let  $\t_1=\CSr$ be their type. 

Often one observes customers who make their choices by comparing with the choices of the others. Typically customers  prefer the moderate  over the standard queue, and the premium over the moderate queue,  because of  comfort reasons; customers can also rank  standard queue superior to premium queue for price considerations.  Thus the customers that compare with others and perceive the utility, for example while weighting the moderate queue option (or $a=2$),  will perceive positive utility by comparing with $\bromgac^1$  fraction  of people using standard queue and negative utility by comparing with $ \bromgac^3$ fraction of people using premium queue;  thus we model their utility by $\bromgac^1 - c \bromgac^3$, where $c$ is a parameter capturing the trade-off.  Perceived utilities  for other options  can be captured in a similar way,  and in all, one can capture the utility function of this type  of comparing $(\t_2 = \CMr$) agents  as  below:
\begin{eqnarray}
   u_{\CMr}(a, \bromga) = 
   \left (  \bromgac^3-\bromgac^2  \right ) \ind_{\{a = 1\}} +  
   \left ( \bromgac^1-c\bromgac^3 \right )  \ind_{\{a = 2\}} +  
   \left (  \bromgac^2-\bromgac^1  \right ) \ind_{\{a = 3\}}. \label{CMR}
\end{eqnarray}
 We view them also as rational agents with a different utility function, and refer them briefly as $\CMr$-rational agents. Since the premium queue costs a higher price, the trade-off factor $c \geq 1$ is applied only to $\bromgac^3$. Note that the utilities in \eqref{CMR} are inspired by the well-known rock-scissor-paper game (see \cite{narahari2014game}).

It is   not uncommon to observe in queuing systems that  the customer  choices depend upon some  inherent behavioral tendencies, as discussed in subsection \ref{sub_sec_some_examples_app}.  In particular,    some customers     avoid the congested queues;  thus we consider avoid-the-crowd   population as the third type with $\t_3=\nuu$, and their  utility function is given by \eqref{eqn_avoid_utility}. In all, we consider a queuing game with   $\Theta = \{\CSr, \CMr, \nuu\}$ as the set of customers types;  let $\alpha_\CSr,\alpha_\CMr$ and $\alpha_\nuu$ represent the respective fractions. 

In queuing systems, it is reasonable to assume that  the customers make their decisions sequentially, one after the other, and each makes a single choice; furthermore, only one customer decides at any given time; also  observe that the three utility functions   depend on the aggregate congestion vector $\bromga$. Hence, the above queuing system can be   analyzed using  the "aggregate mean-field" dependent turn by turn dynamics given in \eqref{eqn_iterates_nu}.  
 We study  this   game in subsection \ref{subsec_analysis_Queuingnetwork}, after analyzing the general game.

\section{ Differential inclusion (DI) based   stochastic approximation}\label{Sec_ident_DI}
The vector $\bromga = (\bromgac^a)_{a \in \A} $   belongs to $\mathbb{R}^k$ (as $|\A|=k$), however it satisfies    $\sum_{a} \bromgac^a = 1$. Thus, it is sufficient to analyze   the vectors in $\mathbb{R}^{k-1}$, by (say) replacing   $\bromgac^{k} = 1-\sum_{a  \ne k}\bromgac^a  $. With slight abuse of notation, we accordingly redefine the vector  $\bromga := (\bromgac^{1},\ldots,\bromgac^{k-1})$, the sets  $ \{ \H^{a,\ta}_\t  \}_{a \ne \ta, \t}$, domain $\D$ and the function $\bm{g}(\cdot)$. 
We begin with  a technical lemma which is  instrumental in identifying the differential inclusion (DI) that can approximate the dynamics \eqref{eqn_iterates_nu} (see   \ref{Appendix_DI}  for background on  DIs, in particular \eqref{DI_plane1};   and the  proof is in   \ref{proof_lemma_Filippov}):
\vspace{-0.1cm}
\begin{lem}\label{lemma_Filippov}
Consider a collection  of measurable sets 
  $\{ \C_{i\delta}\}_{i \le m}$ in   Euclidean space $\mathbb{R}^k$,    one for each $\delta \in (0, 1)$ and for some  finite integer $m \ge 2.$ 
Consider a collection of `centers' $ \{\g^c_i\}_{i\le m}$  that satisfy the following for each $i$ and $\delta$: (i)  $\g_i^c  \in \co \left ({ \cup_{i=1}^m \C_{i\delta} } \right ) $  for each $\delta$, where $\co$ denotes the closed convex hull, and  (ii)
  $\C_{i\delta} \subset \B_{i\delta}:=\B(\g^c_i, r_{i\delta})  = \{ \g: d(\g, \g^c_i) \le r_{i\delta}\} $ for some $r_{i\delta} < \infty$, where $d$ represents the euclidean distance.   
Further,  say $r_{i\delta} \to 0$ for each $i$ as $\delta \to 0$; then:
{\small$$\hspace{20MM}
\bigcap_{\delta} \co\left(\bigcup_{i=1}^{m}\C_{i\delta}\right) = \bigcap_{\delta} \co\left(\bigcup_{i=1}^{m} \B_{i\delta}\right) = \co \left( \g^c_1,\g^c_2,\ldots, \g^c_m \right).\hspace{40mm} \mbox{ \eop }
$$}
\end{lem}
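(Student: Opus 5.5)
We prove the two equalities by sandwiching. Write $K := \co(\g^c_1,\ldots,\g^c_m)$, $E_\delta := \co(\cup_i \C_{i\delta})$ and $F_\delta := \co(\cup_i \B_{i\delta})$. Since $\C_{i\delta}\subset \B_{i\delta}$ by (ii), we have $E_\delta \subset F_\delta$ for every $\delta$, and hence $\bigcap_\delta E_\delta \subset \bigcap_\delta F_\delta$.

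The plan is to prove two inclusions:
\[
K \subset \bigcap_\delta E_\delta
\quad\text{and}\quad
\bigcap_\delta F_\delta \subset K .
\]
Together with $\bigcap_\delta E_\delta \subset \bigcap_\delta F_\delta$, these give
\[
K \subset \bigcap_\delta E_\delta \subset \bigcap_\delta F_\delta \subset K,
\]
so all three sets coincide.

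For the lower inclusion we use hypothesis (i). Each center $\g^c_i$ lies in $E_\delta$ for every $\delta$. Since $E_\delta$ is convex and closed, it contains the convex hull of the centers, which is already closed as the hull of finitely many points. So $K \subset E_\delta$ for each $\delta$, and therefore $K\subset\bigcap_\delta E_\delta$.

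For the upper inclusion we use the radii. Let $\bar r_\delta := \max_i r_{i\delta}$; it tends to $0$ as $\delta\to0$ because $m<\infty$. Every $\B_{i\delta}$ lies in $K + \B(0,\bar r_\delta)$, the closed $\bar r_\delta$-neighbourhood of $K$. That neighbourhood is the Minkowski sum of a compact convex set and a closed ball, so it is convex and closed. Hence $F_\delta \subset K + \B(0,\bar r_\delta)$.

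Now take any $\g\in\bigcap_\delta F_\delta$. Then $d(\g,K)\le \bar r_\delta$ for all $\delta$, so $d(\g,K)=0$, and since $K$ is closed, $\g\in K$.

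The main point of care is the upper inclusion: we need the $\bar r_\delta$-neighbourhood of $K$ to be closed and convex, so that it contains the closed convex hull $F_\delta$. Both properties follow from the compactness and convexity of $K$. The intersection over $\delta\in(0,1)$ can be taken along any sequence $\delta\to0$, since the bound $d(\g,K)\le\bar r_\delta$ holds for every $\delta$. We do not need $E_\delta$ or $F_\delta$ to be monotone in $\delta$, because the sandwich argument applies to each $\delta$ separately.
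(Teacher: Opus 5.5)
Your proof is correct and follows the same sandwich as the paper. The lower inclusion $\co(\g^c_1,\ldots,\g^c_m)\subset\bigcap_\delta\co(\cup_i\C_{i\delta})$ comes from hypothesis (i), and the upper one from $\C_{i\delta}\subset\B_{i\delta}$ together with $\bigcap_\delta\co(\cup_i\B_{i\delta})$ reducing to the hull of the centers. The paper justifies that last step only by an informal set-convergence remark ($\B_{i\delta}\to\{\g^c_i\}$, so the hulls converge); your closed, convex $\bar r_\delta$-neighbourhood of $K$ makes it rigorous, and your chain of inclusions also gives $K\subset\bigcap_\delta\co(\cup_i\B_{i\delta})$ with no separate argument.
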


\noindent Next, we study  the partition of the reduced domain $\D := \left \{ \bromga : \bromgac^a \ge 0~ \forall a , \sum_{a \ne k  } \bromgac^a \le 1 \right  \}$ of the function ${\bm g}(\cdot)$,   as a first step towards identifying the DI that can approximate dynamics \eqref{eqn_iterates_nu}.
Recall $\H= \cup_{a,\ta, \t} \H^{a,\ta}_\t$ is the set associated with the discontinuities in the function ${\bm g}(\cdot)$; also recall  $\H^{a,\ta}_\t$  in \eqref{eqn_fun_H_theta} is the border region, in the \nhd of which type-$\t$ players can switch between the decisions $a, \ta$  with a small change in aggregate measure $\bromga$ (if all other $a' \ne a,\ta$ are inferior for type-$\t$ population).  
As mentioned before, the domain $\D$ is  partitioned into   a finite number of open, connected and  disjoint regions denoted by $\R_{1}, \ldots, \R_{m}$, and $\H$:
\begin{equation}\label{eqn_calD}
    \D = \cup_{j=1}^{m} {\R}_j \cup \H.
\end{equation}
Note here, the common border between any pair of `adjacent' $\R_i,\R_j$ regions (i.e., those with intersection of closures, $\overline{\R}_i \cap \overline{\R}_j \ne \emptyset$)  lies in some  components of $\H$, see representative Figure \ref{pictorial_regions_hfun}. From  ODE \eqref{eqn_general_ode_1}-\eqref{eqn_ODE2},   there exist constants $\{\b_j\}_j$, such that  its  RHS  (right hand side)  has   the following special form for any  $\bromga \in \cup_j \R_j $: 
\begin{eqnarray}
   \g(\bromga) &= &  \sum_{j}   \b_j \ind_{\{\bromga \in \R_{j}\}}     -\bromga, \text{ where $\b_j = (b_j^1, \ldots, b_j^{k-1})$  for any } j , \text{ with, } \label{eqn_new_general_ode1}\\
   b_j^{a} &:=&  \sum_{\t } \alpha_\t  \prod_{\ta \ne a}\ind_{\{ h_\t^{a, \ta} ({\tilde \bromga}) > 0 \}}  \text{ for some }  {\tilde \bromga} \in \R_j,  \text{ and  all } a . \label{eqn_new_general_ode2}
\end{eqnarray}
In the above,  the definition of constant $\b_j$ is independent of   $ {\tilde \bromga} \in \R_j$ ---  each  $\R_j$   is a connected set on which none of the  $h^{a, \ta}_\t  $  functions   touch  zero (sign of each   $h^{a, \ta}_\t $ function remains the same   for all $ {\tilde \bromga} \in \R_j$, by continuity assumed in~\ref{A.2}).

We now set up the notations that would be instrumental in presenting the main results of the paper. 
\textit{Fix any $\bromga \in \H$ and all the discussions below are centered around it.} To begin with, define the following set of indices $\I (\bromga)$:
\begin{eqnarray}
 \I (\bromga) := \{ (a, \ta, \t) \in \A_\t \times \A_\t \times \Theta :a \ne \ta  \mbox{ and } h^{a,\ta}_\t  (\bromga) = 0  \}.    \label{Eqn_Iomega}
\end{eqnarray}
The cardinality   $|\I (\bromga)| $  represents the number of border curves intersecting 
 at point $\bromga$. Further, each such $\bromga$ is adjacent to  a sub-class of open regions $\{\R_j\}$, and   let,
\begin{eqnarray}
    \J(\bromga) := \{ j: \bromga \in \overline{\R}_j \}. \label{Eqn_Jomega}
\end{eqnarray}
Basically at  $\bromga$, there exist $|\J(\bromga)|$ regions  $\{\R_{j}\}_{j \in \J(\bromga)}$  among $\{\R_j\}_{j \le m}$ of \eqref{eqn_calD}, that intersect with closed balls $\B(\bromga, \delta)$  for all $0<\delta \le \bar \delta$ for some sufficiently small $\bar \delta > 0$, see representative Figure \ref{pictorial_regions_hfun}  (here $\B(\bromga, \delta) := \{\bromga' : d(\bromga, \bromga') \le \delta\}$).

For each $j_y \in \J(\bromga)$, choose any  sequence   $\{\bromga^y_{t}\}_{t \geq 1} \subset  \R_{j_y}$ that converges to $\bromga$.
 By \eqref{eqn_new_general_ode1},   the $ \lim_{t \to \infty} \g(\bromga^y_{t})$ exists and equals $ \b_{j_y} - \bromga$, which    is the same for any sequence in  $ \R_{j_y}$  that converges to $\bromga$. Let this limit be represented by:  
\begin{eqnarray}\label{eqn_g_limit}
    \g^\infty_y (\bromga ) :=
    \lim_{\substack{\bromga^y_{t} \to \bromga,   \bromga^y_{t} \in \R_{j_y}}} \g  ( \bromga^y_{t}) = \b_{j_y} - \bromga 
\end{eqnarray}
(observe here  one may   have     $\g^\infty_y (\bromga) \ne \g (\bromga)$, due to discontinuity at $\bromga$).
%
Thus, we have  exactly  $|\J(\bromga)|$ number of limits, $\{ \g_y^\infty(\bromga)\}_{j_y \in \J(\bromga)}.$  

Consider a small enough $\bar \delta > 0$ such that no   $h^{a,\ta}_\theta$ curve, other than those in $\I (\bromga)$, intersect the regions $\{\B(\bromga, \bar \delta) \cap \R_{j_y} \}_y$ and consider any $\delta \in (0, \bar \delta)$. Now, 
  define the following sets, one for each $j_y \in \J(\bromga)$:
  \begin{eqnarray}\label{eqn_c_j}
      \C_{j_y\delta}  := {\bm g}(\B(\bromga,\delta) \cap \R_{j_y}).
  \end{eqnarray}
Observe $\C_{j_y\delta}$ is 
 a   bounded set and  by definition in \eqref{eqn_g_limit}, we have 
 $\g^\infty_y \in  \overline{\C}_{j_y\delta}$, the closure;   hence,  there exists a radius $r_{y\delta}$  such that $\C_{j_y\delta} \subset \B(\g^\infty_y(\bromga), r_{y\delta})$. 
%
We now aim to apply Lemma \ref{lemma_Filippov}   for sets $\{\C_{j_y \delta}\}$  towards constructing the RHS of the desired DI, see \eqref{DI_plane1}. To proceed, we now choose an appropriate sequence of radii'   $r_{y\delta}$  that converge to $0$ as $\delta$ tends to $0$. Towards that, from \eqref{eqn_general_ode_1}, the function ${\bm g}(\cdot)$ is Lipschitz continuous in the region $\R_{j_y}$ with Lipschitz constant $1$:
$$
|{\bm g}(\bromga') - {\bm g}(\bromga'')| \leq |\bromga'-\bromga''| \leq \delta \mbox{ for all }  \bromga', \bromga'' \in \R_{j_y}.
$$
Observe that though $\bromga \notin \R_{j_y}$, it however belongs to the closure $\overline{{\R}}_{j_y}$ (by definition of $\H$ and as $\bromga\in \H$), and hence we have,    also see  \eqref{eqn_new_general_ode1} and \eqref{eqn_g_limit}: 
$
| {\bm g}_y^\infty(\bromga) - {\bm g}(\bromga')|  \leq \delta \mbox{ for all } 
\bromga' \in \R_{j_y}.
$

In other words, ${\bm g}(\bromga') \in  \B({\bm g}_y^\infty(\bromga),\delta)$ for all $\bromga' \in \R_{j_y} \cap \B(\bromga, \delta)$ implying  $\C_{j_y\delta} \subset \B(\g^\infty_y(\bromga), \delta)$. Thus, one can choose  $r_{y\delta} = \delta$ ensuring that $r_{y\delta} \to 0$ as $\delta \to 0.$ Finally, using Lemma \ref{lemma_Filippov} with   sets $\{\C_{\delta}\}_{j}$ as defined in \eqref{eqn_c_j}, the DI corresponding to   ODE \eqref{eqn_ODE2} is given by (see the definition of DI  \eqref{DI_plane1} in \ref{Appendix_DI}):
\begin{equation}\label{eqn_general_DI}
    \dot{\bromga}\in {\bm \G}(\bromga) = \begin{cases}
  \{  {\bm g}(\bromga) \}, & \mbox{ if } \bromga \in \D \setminus \H = \cup_j \R_j,\\

  \co\left \{{\bm g}^\infty_{1}(\bromga),\ldots,{\bm g}^\infty_{|\J(\bromga)|}(\bromga
  ), \g(\bromga) \right \}, &\mbox{ if } \bromga \in \H,
\end{cases}
\end{equation}
where ${\bm g}^\infty_y(\bromga) =  \b_{j_y} - \bromga$ is the limit defined in $\eqref{eqn_g_limit}$, and $\J (\bromga)$ is  defined in \eqref{Eqn_Jomega}.

{\bf  Assumptions:} We  require some  assumptions for the generic case (with any finite $k, n$), and will begin with stating four  of them; \textit{these are assumed throughout the paper}. The rest of them are stated, as and when   required.
\begin{enumerate}[label=\textbf{G.\arabic*}, ref=\textbf{G.\arabic*}]
    \item The function $\bromga \mapsto h_{\t}^{a,\ta}(\bromga)$, defined in \eqref{eqn_h_ij},  is continuous on $\D$ for all $(a,\ta, \t)$.
    \label{A.1}
    \item For every $\t \in \Theta$ and $a,\tilde{a} \in \A_\t$, the set $\H^{a,\ta}_\t$ is a curve/surface that divides the domain $\D$ into two disjoint open sets. We also assume $m < \infty$.
    \label{A.2}
  \item Assume $\b_j \notin \H$ for all $j \in \{1,\ldots,m\}$. \label{asm_bj_notin_regions}
   \item Assume $|\{ \bromga : |\I(\bromga)| > 1\}| < \infty.$ \label{asm_card_ind_set_finite}
\end{enumerate}
The continuity assumption is  considered typically in literature working with ODE-based techniques (e.g., \cite{turn_by_turn}), while the remaining assumptions are required for mathematical tractability. 

{\bf Stochastic approximation result:}
We now derive the asymptotic analysis of the   dynamics \eqref{eqn_iterates_nu}, under  \ref{A.1}-\ref{asm_card_ind_set_finite}. 
Recall that the ODE \eqref{eqn_ODE2} corresponding to  \eqref{eqn_iterates_nu} is discontinuous.  Thus,  we approximate the random trajectories of \eqref{eqn_iterates_nu} by the solutions of   DI \eqref{eqn_general_DI}, using   the results of \cite{benaim2005stochastic}.   Further analysis is obtained  by characterizing the limits and special limit-sets, called internally chain transitive (ICT) sets  of DI \eqref{eqn_general_DI}, see Definition \ref{Def_ICT_set} of Appendix \ref{Appendix_DI} . 

Define  the  time instants,  
$$
\tau(0):=0 \quad \text { and } \quad \tau(t) :=\sum_{i=1}^t \gamma(i) \quad \text { for all } t \geq 1,
$$
and consider the continuous-time affine interpolated process $\inu: \mathbb{R}_{+} \rightarrow \mathbb{R}^{k}$, with $\inu = (\inuc^a)_{a }$,  for \eqref{eqn_iterates_nu}  as below:
\begin{equation}\label{eqn_interpolated_traj}
    \inuc^a\left(\tau(t)+s\right) = \inuc^a(t) + s \frac{\inuc^a(t+1) - \inuc^a(t)}{\tau(t+1)-\tau(t)}, \quad s \in\left[0, \gamma(t+1)\right).
\end{equation}
Using \cite[Theorem 3.6, Propositions 1.3 and 1.4]{benaim2005stochastic}, and the definitions (perturbed solution and ICT set) from \ref{Appendix_DI}, we prove the following (proofs of this section are  in  \ref{proof_Thm_stoch_approx}).
\begin{thm}
{\bf [Stochastic approximation]}
\label{Thm_stoch_approx}
The following properties  hold almost surely for dynamics \eqref{eqn_iterates_nu}:
\begin{itemize}
    \item[(i)] The linear interpolated process $\inu (\cdot)$   of \eqref{eqn_interpolated_traj} is a bounded perturbed solution of  DI~\eqref{eqn_general_DI}. 
    \item[(ii)] The limit set  
  $L(\inu(\cdot) ):=\bigcap_{t \geq 0} \overline{\{\inu(s): s \geq t\}},$ is an ICT.  \eop
\end{itemize}
\end{thm}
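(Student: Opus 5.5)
We reduce the statement to verifying the hypotheses of \cite[Theorem 3.6, Propositions 1.3 and 1.4]{benaim2005stochastic}. To do so, we rewrite \eqref{eqn_iterates_nu} in the standard stochastic-approximation form
\begin{equation*}
\Romga(t+1) - \Romga(t) = \gamma(t+1)\left( \g(\Romga(t)) + \bm{U}(t+1)\right),
\quad \gamma(t) := \frac{1}{t},
\end{equation*}
with martingale-difference noise
\begin{equation*}
\bm{U}(t+1) := \left(\ind_{\{A(t+1)=a\}} - \Romgac^a(t)\right)_a - \g(\Romga(t)).
\end{equation*}
Since $\g(\Romga(t)) \in \bm\G(\Romga(t))$ by \eqref{eqn_general_DI}, this fits the form $\Romga(t+1)-\Romga(t) \in \gamma(t+1)\left(\bm\G(\Romga(t)) + \bm U(t+1)\right)$ required in \cite{benaim2005stochastic}. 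The $(k-1)$-dimensional reduction is harmless because the last coordinate is an affine function of the others.

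\textbf{Step 1: $\bm\G$ is a Marchaud map.} We check three properties on $\D$, extended outside $\D$ as usual, for instance by composing with the projection onto $\D$.
\begin{itemize}
\item[(a)] \emph{Nonempty, compact, convex values.} These are immediate, since each value is either a singleton or the convex hull of finitely many points.
\item[(b)] \emph{Linear growth.} Every $\b_j$ and $\g(\bromga)$ lies in a bounded set because $\b_j \in [0,1]^{k-1}$, so $\sup_{\bm z\in\bm\G(\bromga)}|\bm z| \le c(1+|\bromga|)$.
\item[(c)] \emph{Upper semicontinuity, or closed graph.} This is the one nontrivial point. Take $\bromga_n \to \bromga$ and $\bm z_n \in \bm\G(\bromga_n)$ with $\bm z_n \to \bm z$. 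For $n$ large, $\bromga_n \in \B(\bromga,\delta)$ with $\delta<\bar\delta$. Each $\bm z_n$ is then a convex combination of terms $\b_{j}-\bromga_n$ with $j\in\J(\bromga_n)\subseteq \J(\bromga)$, together with $\g(\bromga_n)$. The inclusion of index sets uses the choice of $\bar\delta$: only the curves in $\I(\bromga)$ meet $\B(\bromga,\bar\delta)$, so the regions touching $\bromga_n$ are among those touching $\bromga$. The value $\g(\bromga_n)$ at a border point likewise equals $\b' - \bromga_n$, where $\b'$ is determined by the tie-breaking rule. We must also argue that $\b'$ persists at $\bromga$, or lies in the hull of the adjacent $\b_j$'s. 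Passing to the limit and using compactness of the simplex of weights then gives $\bm z \in \bm\G(\bromga)$. Lemma \ref{lemma_Filippov} together with the construction \eqref{eqn_c_j} supports this: $\bm\G(\bromga)$ contains $\bigcap_\delta \co\, \g(\B(\bromga,\delta))$, which is the standard Filippov regularization and is known to be upper semicontinuous.
\end{itemize}

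\textbf{Step 2: noise and step-size conditions.} We have $\sum_t \gamma(t)=\infty$ and $\sum_t\gamma(t)^2<\infty$. Moreover $\bm U(t+1)$ is $\bm\Gamma(t+1)$-measurable with $\mathrm{E}[\bm U(t+1)\mid\bm\Gamma(t)]=0$ by \eqref{eqn_cond_exp}, and $|\bm U(t+1)|\le 2\sqrt{k}$. Hence $M_t=\sum_{s\le t}\gamma(s+1)\bm U(s+1)$ is an $L^2$-bounded martingale and converges almost surely. This yields the asymptotic noise condition of \cite[Proposition 1.3]{benaim2005stochastic}, namely $\sup_{\tau\le T}\|\sum \gamma \bm U\|\to 0$ over windows of length $T$. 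Boundedness of the iterates holds trivially, since $\Romga(t)\in\D$ by the convex update. By \cite[Proposition 1.3]{benaim2005stochastic}, the interpolation \eqref{eqn_interpolated_traj} is therefore a bounded perturbed solution, which proves (i).

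\textbf{Step 3: limit set.} Part (ii) follows from \cite[Theorem 3.6]{benaim2005stochastic}: the limit set of a bounded perturbed solution of a Marchaud DI is ICT.

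The main obstacle is Step 1(c). The difficulty is that the extra point $\g(\bromga)$ at a border, where the tie-breaking by smallest index applies, must be handled consistently under limits along sequences of border points. If we cannot show this directly, we will instead note that $\bm\G$ contains the Filippov set-valued map, which is upper semicontinuous. We would then work with the graph closure of $\bm\G$, possibly using \ref{A.2} and \ref{asm_card_ind_set_finite} to control the points where several borders meet.
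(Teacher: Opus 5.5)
Your route is the paper's. You rewrite \eqref{eqn_iterates_nu} as a Robbins--Monro scheme with $\gamma(t)=1/t$ and bounded martingale-difference noise, obtain the asymptotic noise condition, and apply \cite[Proposition 1.3]{benaim2005stochastic} for (i) and \cite[Theorem 3.6]{benaim2005stochastic} for (ii). Using almost-sure convergence of the $L^2$-bounded martingale $\sum_s\gamma(s+1)\bm U(s+1)$ in place of \cite[Proposition 1.4]{benaim2005stochastic} is a valid substitute. The paper never checks the standing hypotheses on $\bm\G$, so your Step 1 is more careful than the paper, and you are right that the closed graph is the only nontrivial point. As written, however, Step 1(c) is not closed, and your fallbacks do not close it:
\begin{itemize}
\item Lemma \ref{lemma_Filippov} and \eqref{eqn_c_j} concern $\g$ only on the open regions $\R_{j_y}$. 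They say nothing about the tie-broken values $\g(\bromga_n)$ at border points $\bromga_n\to\bromga$, which is exactly what is at stake.
\item The set $\bigcap_\delta\co\,\g(\B(\bromga,\delta))$, with no null sets removed, is the Krasovskii hull. It automatically \emph{contains} $\bm\G(\bromga)$; the inclusion you assert is the direction that needs proof.
\item Upper semicontinuity of the smaller Filippov map $\bromga\mapsto\co\{\b_j-\bromga: j\in\J(\bromga)\}$ does not pass to the larger map $\bm\G$.
\item Working with the graph closure would make \cite[Theorem 3.6]{benaim2005stochastic} yield ICT sets of a different DI, not of \eqref{eqn_general_DI}.
\end{itemize}

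The missing step is short and uses \ref{asm_card_ind_set_finite}, as you guessed (read with unordered pairs $\{a,\ta\}$, as intended). Take $\bromga\in\H$ and $\bromga_n\to\bromga$ with $\bromga_n\neq\bromga$.

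First, $\J(\bromga_n)\subseteq\J(\bromga)$ for large $n$. This needs no choice of $\bar\delta$: the sets $\overline{\R}_j$ are finitely many and closed, and $\bromga\notin\overline{\R}_j$ for $j\notin\J(\bromga)$.

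Second, by \ref{asm_card_ind_set_finite}, for large $n$ any $\bromga_n\in\H$ lies on a single border $\H^{a,\ta}_\t$. At such a point every type other than $\t$ has a strict maximizer that persists nearby. Type $\t$ either has one too, or is tied exactly between $a$ and $\ta$; in the tied case \ref{A.2} makes $a$ the strict choice on one side and $\ta$ on the other. So the smallest-index choice is the one used in an adjacent region, i.e.\ $\g(\bromga_n)=\b_j-\bromga_n$ for some $j\in\J(\bromga_n)$.

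Hence $\bm\G(\bromga_n)\subseteq\co\{\b_j-\bromga_n: j\in\J(\bromga)\}$. Every limit of $\bm z_n\in\bm\G(\bromga_n)$ therefore lies in $\co\{\b_j-\bromga: j\in\J(\bromga)\}\subseteq\bm\G(\bromga)$, which gives the closed graph. The same argument justifies your Krasovskii identification.

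The assumption is genuinely needed. Take two types with $h^{1,2}_{\t_2}=-h^{1,2}_{\t_1}$, action $3$ dominated for both. Add a third type choosing between actions $1$ and $3$, whose border crosses the common line at $\bromga$. At points of the common line near $\bromga$, on the side where the third type picks $3$, the tie-break sends $\t_1$ and $\t_2$ to action $1$. The resulting vector puts no mass on action $2$ and mass $\alpha_{\t_3}$ on action $3$. It lies outside $\bm\G(\bromga)+\bromga$, because every adjacent $\b_j$ charges action $2$ and the tie-broken vector at $\bromga$ charges neither action $2$ nor action $3$. So $\bm\G$ fails to have a closed graph at $\bromga$.
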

We conclude this section, with a simple yet important consequence of the above theorem. 
\begin{lem}\label{lemma_singleton_ICT}
Let $\SS^*$ denote the sub-class of ICT  sets of   DI \eqref{eqn_general_DI}
that are singletons. Define the set of sample paths ($x$) of the dynamics \eqref{eqn_iterates_nu} whose limit sets are singleton ICT sets: 
\begin{eqnarray}
    \Omega:= \{ x :  L(\inu (x, \cdot))  \in \SS^*\} = \{ x :  L(\inu (x, \cdot)) = \left \{\bromga^*\}, \mbox{ where } \{\bromga^*\} \mbox{ is an ICT set} \right \}. \label{eqn_Omega_space}
\end{eqnarray}
For  all $x \in \Omega$, the  linear interpolated trajectory $\inu(x,t)$ as well as the original trajectory $\Romga(x,t)$ converge to $\bromga^*$ as $t\to \infty$, where  $\bromga^* \in L(\inu(x, \cdot))$. \eop
\end{lem}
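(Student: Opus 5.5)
The plan is to work directly from the definition of the limit set together with compactness of the domain, and then pass from the interpolated path back to the iterates. Fix $x \in \Omega$, so that $L(\inu(x,\cdot)) = \{\bromga^*\}$. By Theorem~\ref{Thm_stoch_approx}(i), $\inu(x,\cdot)$ is bounded; in fact it stays in the compact simplex $\D$, because each $\Romga(t)$ lies in $\D$ and $\D$ is convex, so every linear interpolation between consecutive iterates remains in $\D$.

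The first step shows that $\inu(x,t)\to\bromga^*$. Suppose not. Then there exist $\epsilon>0$ and times $s_n\uparrow\infty$ with $d(\inu(x,s_n),\bromga^*)\ge\epsilon$. Since $\D$ is compact, a subsequence satisfies $\inu(x,s_{n_l})\to \bromga'$ for some $\bromga'$ with $d(\bromga',\bromga^*)\ge \epsilon$. For every $t\ge 0$, the points $\inu(x,s_{n_l})$ with $s_{n_l}\ge t$ all lie in $\{\inu(x,s): s\ge t\}$, so $\bromga'$ belongs to the closure of that set. Hence $\bromga'\in\bigcap_t \overline{\{\inu(x,s):s\ge t\}} = L(\inu(x,\cdot))=\{\bromga^*\}$, which is a contradiction.

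The second step transfers the convergence to the original iterates. The interpolated process agrees with the iterates at the grid times, $\inu(x,\tau(t)) = \Romga(x,t)$; this is exactly what the construction \eqref{eqn_interpolated_traj} encodes. With $\gamma(i)=1/i$ as in \eqref{eqn_iterates_nu}, the grid times satisfy $\tau(t)=\sum_{i\le t} 1/i\to\infty$. So $\Romga(x,t)$ is the subsequence $\inu(x,\tau(t))$ of a convergent function, and it therefore also converges to $\bromga^*$. The membership $\bromga^*\in L(\inu(x,\cdot))$ is immediate from $L=\{\bromga^*\}$.

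I expect the only delicate points to be bookkeeping rather than substance. One is ensuring boundedness, and not merely relying on a nonempty limit set; Theorem~\ref{Thm_stoch_approx}(i) or the convexity of $\D$ supplies this. The other is confirming that the indexing in \eqref{eqn_interpolated_traj} makes $\inu$ coincide with $\Romga$ at $\tau(t)$, possibly up to an index shift, which does not affect the limit. Neither step needs the ICT property beyond the assumption that the limit set is the singleton.
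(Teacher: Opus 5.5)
Your proof is correct and follows the paper's own argument. The paper argues by contradiction: it takes times $t_n\to\infty$ along which the bounded interpolated path stays $\epsilon$-away from $\bromga^*$, extracts a convergent subsequence by Bolzano--Weierstrass, and notes that the limit must lie in $L(\inu(x,\cdot))=\{\bromga^*\}$. Your second step, reading off $\Romga(x,t)=\inu(x,\tau(t))$ with $\tau(t)\to\infty$, spells out the transfer to the original iterates, which the paper's proof leaves unstated.
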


In summary: (a) the interpolated process \eqref{eqn_interpolated_traj} forms  a perturbed solution for   DI \eqref{eqn_general_DI} --- \textit{implying the dynamics \eqref{eqn_iterates_nu} can be approximated by the solutions of the DI}; and (b)   the limit set of the perturbed solution and hence that of   \eqref{eqn_iterates_nu} is an ICT almost surely.
Thus, for further analysis, one would require to identify the ICT sets.  These sets are  like the equilibrium points or the limit-cycles of the classical ODEs; however they may not always stem from the `zeros' of the RHS of DI, like in classical  theory. They can instead result from some appropriately structured discontinuity in the RHS. \textit{One of the main results of this paper is to identify a sufficiently big class of such ICT sets which are relevant to study the asymptotic analysis of our behavior dynamics \eqref{eqn_iterates_nu}.} 
In particular,  the components of $\H$ will have a major role in this identification.

\section{Asymptotic analysis of \eqref{eqn_iterates_nu} via characterization of ICT sets}\label{sec_char_ICT_set}
We now consider asymptotic analysis of \eqref{eqn_iterates_nu}. Recently, the authors in \cite{turn_by_turn} analyzed similar dynamics with two actions and two types. 
\hide{
have analyzed turn-by-turn dynamics like in \eqref{eqn_iterates_nu}, for a special case with two actions $\A = \{1, 2\}$;  they consider two types of population,   the myopic rational population and    the herding crowd as described in subsection~\ref{Subsec_herding_explain}.}
In subsection \ref{sec_game_two_actions}, with two actions,  we significantly extend the analysis of \cite{turn_by_turn}  by considering a multi-type population with any finite number ($n$) of  types.  %
%
Later in subsection \ref{subsec_generic_results},  we further generalize to   consider any finite number $(k)$ of actions and illustrate the possibility of cycles at limit. 
%
%
Subsection \ref{subsec_game_three_actions} contains additional  results with $|\A|=3$, while 
in  subsection \ref{subsec_connect_ICT_MT_MFE}, the  singleton limits  are shown to be  aggregate-MFEs   of   Definition  \ref{defn_MT-MFE}. 

\subsection{Games with two actions}
\label{sec_game_two_actions}
In this case, the empirical distribution over actions is represented originally by $\bromga = (\bromgac^1,\bromgac^2)$ and in the reduced space by $\bromga = \bromgac^1$ (recall that one can replace $\bromgac^2 = 1- \bromgac^1$ to obtain a reduced space). Hence, the reduced domain is $\D=[0,1]$.
\textit{For simplicity in the analysis, we denote $\bromgac^1$ by $\bromgac$, and this notation is applicable only for this subsection}. We obtain almost sure convergence of the dynamics \eqref{eqn_iterates_nu} using Theorem \ref{Thm_stoch_approx} and Lemma \ref{lemma_singleton_ICT}. 

The ODE \eqref{eqn_general_ode_1} for this special case  simplifies to:
\begin{equation} \label{eqn_ODE_two_action}
     \dot{\bromgac} = \g(\bromgac) = \sum_{\t \in \Theta}\alpha_\t \ind_{\{h_{\t}^{1,2}(\bromgac) \geq 0\}} -\bromgac. 
\end{equation}
\begin{wrapfigure}{r}{0.45\textwidth}
\vspace{-4mm}
    \centering
   \includegraphics[trim={5cm  5cm 7.3cm 5.5cm},clip,scale=0.38]{Two_action_fun_h.pdf}
    \captionsetup{type=figure}
    \vspace{-6mm}
\caption{Representative picture for 2 types and   actions; here $\bromgac^*_1 = 0$, so $\R_0 = \emptyset$.}
    \label{fig:two_actions}
    \vspace{-6mm}
\end{wrapfigure}
We now obtain the simplified form of   DI \eqref{eqn_general_DI} and the corresponding ICT sets  which helps us in deriving the analysis with two actions. Towards this, first observe that the only possible pair of actions $(a,\ta)$ with $a\ne\ta$ defining the set of indices $\I(\bromgac)$  defined in \eqref{Eqn_Iomega} is $(1, 2)$ and this is true for any $\bromgac  \in \H = \cup_\t \H^{1,2}_\t = \cup_\t \{\bromga: h^{1, 2}_\t (\bromga) = 0\}. $
Hence,  \textit{the set of indices    $\I(\bromgac)$  can now be viewed as a subset of $\Theta$.}
 We require \textit{the following assumption specific to the case with two actions or equivalently the one-dimensional case} (like in~\cite{turn_by_turn}):
\begin{enumerate}[label=\textbf{O.1}, ref=\textbf{O.1}] 
 \item Assume $|\H| < \infty$.  
 \label{O.1} 
\end{enumerate}
Let $ \vartheta  :=|\H|$ and let   $ \H = \{\bromgac^*_1,   \cdots, \bromgac^*_\vartheta\}$  be arranged in increasing order, see Figure \ref{fig:two_actions}. Thus for the case  with two actions,   one can represent the $\{\R_j\}$ regions as below and as depicted in Figure \ref{fig:two_actions}:  
$$
\R_0 = [0, \bromgac^*_1),  \  \R_{\vartheta} = (\bromgac^*_\vartheta, 1] \mbox{ and }  \R_{i} := (\bromgac^*_{i}, \bromgac^*_{i+1}), \mbox{ for other } 1 < i \leq \vartheta.
$$
In the above, regions $\R_0$ and $\R_{\vartheta}$ are  empty respectively if  $\bromgac^*_1 = 0$ and $\bromgac^*_\vartheta = 1$; thus the number  $m$ of disjoint $\R_j$ regions is upper bounded by, $m \le \vartheta+1$. 
Now, consider any $\bromgac_j^* \in \H$ with $\bromgac^*_j \notin \{0,1\}$; then  $\I(\bromgac^*_j)$ of \eqref{Eqn_Iomega}, the subset of types of the population  that toggle their decisions around  $\bromgac^*_j $  is given by:
$
\I(\bromgac_j^*) = \{ \t \in \Theta: h^{1,2}_\t (\bromgac_j^*) = 0\}.
$
For such $\bromgac_j^*$, under the assumption \ref{O.1}, with $\R_{j_1}  := \R_{j-1}$ and $\R_{j_2} := \R_{j}  $ one can partition the set  $\I(\bromgac_j^*)$ as below (by  \ref{O.1}   the  function $h^{1,2}_\t$  for any $\t$ touches $0$  only at finitely many    $\bromgac \in \H$,  also recall each $h^{1,2}_\t$ is continuous):
\begin{eqnarray}    
\I^-(\bromgac_j^*) &=& \{\t \in \I(\bromgac_j^*):  h^{1,2}_{\t}(\bromgac_j^*) > 0 \mbox{ in } \R_{j_1}, h^{1,2}_{\t}(\bromgac_j^*) < 0 \mbox{ in } \R_{j_2}
\}, \label{eqn_I_minus_non_st} \\
\I^+(\bromgac_j^*) &=& \{\t \in \I(\bromgac_j^*):   h^{1,2}_{\t}(\bromgac_j^*) < 0 \mbox{ in } \R_{j_1}, h^{1,2}_{\t}(\bromgac_j^*) > 0 \mbox{ in }\R_{j_2}\}.\label{eqn_I_plus_non_st}
\end{eqnarray}
Also for each $j$, $\R_{j_1} $ and   $\R_{j_2}$ are two open (in subspace topology on $[0,1]$), connected and disjoint intervals. In case $\bromgac_1^*=0 \mbox{ or } $  $\bromgac_\vartheta^* = 1, $   we have only one open connected region   that surrounds it,   which is $\R_{1_2} = \R_1 = (0,\bromgac_{2}^*)$ or $\R_{\vartheta_1} =\R_{\vartheta-1}=(\bromgac_{\vartheta-1}^*,1)$ respectively; and the sets $\I^-(\bromgac_j^*)$ or $ \I^+(\bromgac_j^*)$, with $j=1$ or $j=\vartheta$ respectively,  are now given by: 
\begin{eqnarray}    
\I^+(0) &=& \{\t \in \I(\bromgac_j^*):  h^{1,2}_{\t}(\bromgac_j^*) > 0 \mbox{ in } \R_{j_2}
\},\ \ \ \
\I^-(0) \hspace{1mm}=\hspace{1mm} \emptyset, \label{eqn_I_minus_st} 
\\
\I^-(1) &=& \{\t \in \I(\bromgac_j^*):  h^{1,2}_{\t}(\bromgac_j^*) > 0 \mbox{ in } \R_{j_1}
\}, \ \ \ \
\I^+(1) \hspace{1mm}=\hspace{1mm} 
\emptyset.  \label{eqn_I_plus_st}
\end{eqnarray}
 Further, the RHS of the ODE \eqref{eqn_ODE_two_action}, for $\bromgac\in \R_{j_1} \cup \R_{j_2} $ equals:
\begin{eqnarray}  \label{eqn_fun_g_}
   \g(\bromgac) &=&  b_{j_1} \ind_{\{\bromgac \in \R_{j_1}\}} +  b_{j_2}   \ind_{\{\bromgac \in \R_{j_2}\}} -\bromgac, \text{ where }\\
  b_{j_1}  &:=& \sum_{\t \in \Theta\setminus \I(\bromgac^*_j) }  \alpha_\t \ind_{\{h_{\t}^{1,2}(\bromgac^*_j) > 0\}} + \sum_{\t  \in \I^-(\bromgac^*_j)}\alpha_\t,    
 \nonumber \ \  \\
  b_{j_2}  &:=&   \sum_{\t \in \Theta\setminus \I(\bromgac^*_j) } \alpha_\t \ind_{\{h_{\t}^{1,2}(\bromgac^*_j) > 0\}} 
  +   \sum_{\t \in   \I^+(\bromgac^*_j)}\alpha_\t.     \nonumber
\end{eqnarray}
The function $\g(\cdot)$ in \eqref{eqn_fun_g_} is  linear and continuous in the regions/intervals $\R_{j_1}$ and $\R_{j_2}$. Thus, the limits in \eqref{eqn_g_limit}, $\g^\infty_{j_1}(\bromgac_j^*)$ (left hand) and $\g^\infty_{j_2}(\bromgac_j^*)$ (right hand) of $\bromgac_j^*$ are as below: 
\begin{eqnarray}
    \g^\infty_{j_1}(\bromgac_j^*) &=& \g({\bromgac_j^*}^-)  := \lim_{\bromgac\uparrow \bromgac^*_j} \g(\bromgac) = \sum_{\t \notin \I(\bromgac^*_j)} \alpha_\t \ind_{\{h_{\t}^{1,2}(\bromgac^*_j) > 0\}} + \sum_{\t \in \I^-(\bromgac^*_j)} \alpha_\t - \bromgac^*_j, \label{eqn_limit_two_actions_di1}\\
    \g^\infty_{j_2}(\bromgac_j^*) &=& \g({\bromgac_j^*}^+)  := \lim_{\bromgac\downarrow \bromgac^*_j} \g(\bromgac) =  \sum_{\t \notin \I(\bromgac^*_j)} \alpha_\t \ind_{\{h_{\t}^{1,2}(\bromgac^*_j) > 0\}} + \sum_{\t \in \I^+(\bromgac^*_j)} \alpha_\t - \bromgac^*_j. \label{eqn_limit_two_actions_di2}
\end{eqnarray}
 Hence, the DI \eqref{eqn_general_DI} for the special case with two actions,  constructed using \eqref{eqn_fun_g_}-\eqref{eqn_limit_two_actions_di2}, is given by (where, for e.g.,  the convex hull $\co(a,b,c) := \left [\min\{a,b,c\}, \max\{a, b,c\} \right ]$ is an interval):
\begin{equation}\label{eqn_DI_with_two_actions}
    \dot{\bromgac}\in {\bm \G}(\bromgac) = \begin{cases}
  \{  {\bm g}(\bromgac) \}, & \mbox{ if } \bromgac \in \D \setminus \H,\\
\co( \g^\infty_{j_1}(\bromgac_j^*), \g(\bromgac^*_j), \g^\infty_{j_2}(\bromgac_j^*)), &\mbox{ for all } \bromgac=\bromgac_j^* \in \H \setminus \{0,1\} \\
\co(  \g(\bromgac^*_j), \g^\infty_{j_2}(\bromgac_j^*)), &\mbox{ if } \bromgac=0 \in \H  \\
\co( \g^\infty_{j_1}(\bromgac_j^*), \g(\bromgac^*_j) ), &\mbox{ if } \bromgac=1 \in \H   .
\end{cases}
\end{equation}
We now have the following result related to the dynamics  \eqref{eqn_iterates_nu} for   games with two actions (proof is in \ref{proof_lem_alm_sure_conv_two_act}).
 \begin{thm}{\bf [Almost sure convergence]}
    \label{lem_alm_sure_conv_two_act}
Consider $\A = \{1, 2\}$ and assume  \ref{O.1}. Then DI~\eqref{eqn_DI_with_two_actions} is not associated with any   non-singleton ICT set.  Further, the class $\SS^*$ of singleton ICT sets of Lemma \ref{lemma_singleton_ICT}   equals:
\begin{equation}
    \label{Eqn_ICTs_Two_actions}
\SS^* =    \left \{ \{\bromgac^*\} : \bromgac^* \in \mathbb{L} \right \} \mbox{, with }
\mathbb{L} = \left \{  \bromgac^* \notin \H:  \g(\bromgac^*) = 0  \right \} \cup \left \{   \bromgac^* \in \H :    0 \in   {\bm \G}(\bromgac^*)     \right  \}.
\end{equation}
Moreover, $|\SS^*| < \infty$ and  the    iterates in \eqref{eqn_iterates_nu} converge:  \ 
$\Romgac(t) \to \bromgac^* \text{ as }  t \to \infty, \text{ for some }   \bromgac^*  \in \mathbb{L}$ almost surely. \eop 
\end{thm}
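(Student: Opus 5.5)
We will work entirely with the one-dimensional DI \eqref{eqn_DI_with_two_actions} on $\D=[0,1]$. The approach is to combine the fact that ICT sets of a DI on an interval are compact, connected and invariant with a sign analysis of $\G$, which rules out non-singleton ICT sets. We then read off which points are singleton ICT sets and invoke Theorem \ref{Thm_stoch_approx} together with Lemma \ref{lemma_singleton_ICT}.

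\textbf{Step 1 (structure of $\G$).} Since $m\le\vartheta+1<\infty$ by \ref{O.1}, on each open interval $\R_i$ the DI is the single-valued affine ODE $\dot\omega=b_i-\omega$. Its unique zero is $b_i$, and by \ref{asm_bj_notin_regions} this zero lies either inside $\R_i$ or outside $\overline{\R}_i$. Consequently, on each $\R_i$ the field $g$ has at most one zero, and it changes sign only there, from $+$ to $-$. In particular every zero of $g$ in $\D\setminus\H$ is isolated. The set $\mathbb L$ is therefore finite: it has at most one point per $\R_i$ plus at most the $\vartheta$ points of $\H$. This gives $|\SS^*|<\infty$ once we show $\SS^*=\{\{\omega^*\}:\omega^*\in\mathbb L\}$.

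\textbf{Step 2 (no non-singleton ICT).} Suppose $L$ is an ICT set. Being compact and connected, it is an interval $[p,q]$; suppose $p<q$. Since $(p,q)$ has positive length while $\mathbb L$ is finite, some point $\omega\in(p,q)\setminus\mathbb L$ lies in some $\R_i$ with $g(\omega)\neq0$, say $g(\omega)>0$.

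To derive the contradiction, we use the following facts about $L$:
\begin{itemize}
\item Internal chain transitivity requires chains from $q$ back to points near $p$.
\item Chains staying in $L$ can move leftward only through points where $\G$ contains a negative value, and similarly for rightward motion.
\item Because $g$ is affine with slope $-1$ inside each $\R_i$ and set-valued only at the finitely many points of $\H$, there is a ``one-way gate'' for the flow. Concretely, $[\omega,q]$ is forward invariant, and a neighbourhood of $\omega$ is crossed only rightward, with a strictly positive speed bounded below.
\end{itemize}
A cleaner route to the same conclusion is via a Lyapunov or attractor argument. Pick a small $\epsilon$ so that $g>c>0$ on $[\omega-\epsilon,\omega+\epsilon]\subset\R_i$. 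Then $[\omega,1]\cap L$ is an attractor for the restricted flow, and its basin contains $L\cap(\omega-\epsilon,1]$ but not $p$. By Bena\"im--Hofbauer--Sorin (Prop.\ 3.20 in \cite{benaim2005stochastic}), an ICT set meeting the basin of an attractor $A$ must be contained in $A$. This contradicts $p\in L$. The mirror argument handles $g(\omega)<0$.

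\textbf{Step 3 (which singletons are ICT).} A singleton $\{\omega^*\}$ is ICT iff it is invariant, i.e.\ iff the constant path is a solution, i.e.\ iff $0\in\G(\omega^*)$. For $\omega^*\notin\H$ this condition reads $g(\omega^*)=0$, and for $\omega^*\in\H$ it reads $0\in\co(\cdot)$, with the boundary cases $0$ and $1$ handled by the one-sided hulls. This is exactly $\mathbb L$.

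\textbf{Step 4 (convergence).} Theorem \ref{Thm_stoch_approx}(ii) gives that $L(\inu)$ is almost surely an ICT set, hence a singleton in $\SS^*$ by Steps 2--3. Lemma \ref{lemma_singleton_ICT} then yields $\Romgac(t)\to\omega^*\in\mathbb L$ almost surely.

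The main obstacle is Step 2: making the ``one-way gate'' rigorous for a set-valued field. The delicate point is to ensure that the chosen $\omega$ avoids $\H$, so that the field there is single-valued, and that the attractor or basin statement is applied to the restricted DI on $L$.
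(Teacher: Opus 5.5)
Your argument is correct in outline, and its core mechanism is the paper's: a point $\omega\in\mathcal{R}_i$ with $g(\omega)\neq 0$ acts as a one-way gate. The non-singleton step is organized differently, though.

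\textbf{How your route differs from the paper's.} The paper's Step 3 takes a zero-free sub-interval of $\mathcal{F}\cap\mathcal{R}_j$ and notes that solutions there move in one direction only, which it says ``violates two-sided accessibility.'' It then dismisses every other non-singleton candidate (finite, countable or uncountable collections of points) by ``similar reasoning.'' You instead first use that ICT sets are connected, so in dimension one every non-singleton ICT set is a nondegenerate interval $[p,q]$. This handles uniformly the candidates without interior, such as a set of two equilibria, which the paper treats only informally. You should state the one-line reason for connectedness: each solution segment of a chain is a continuous path in $L$, so an $(\epsilon,T)$-chain with $\epsilon$ smaller than the distance between two disjoint closed pieces of $L$ cannot pass from one piece to the other. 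Your Steps 1, 3 and 4 coincide with the paper's Step 2, its Step 1, and its final appeal to Theorem~\ref{Thm_stoch_approx} and Lemma~\ref{lemma_singleton_ICT}.

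\textbf{What needs repair.} The sentence ``$[\omega,1]\cap L$ is an attractor for the restricted flow'' is false as written. Since $g>0$ near $\omega$, no solution defined on all of time passes through $\omega$ while staying in $[\omega,q]$, so this set is not invariant. Also, Proposition 3.20 of \cite{benaim2005stochastic} concerns attractors of the full DI, not of a system restricted to $L$. Likewise, it is $[\omega,1]$, not $[\omega,q]$, that is forward invariant for the full DI.

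You have two ways to fix this.
\begin{itemize}
\item Take $A$ to be the limit set of all solutions starting in $U:=(\omega-\epsilon,1]$. This is a genuine attractor of the full DI, it is contained in $[\omega+\epsilon,1]$, and its basin contains $\omega\in L$. Proposition 3.20 then gives $L\subset A$, a contradiction.
\item Simpler, and avoiding attractor theory: let $I=[\omega-\epsilon,\omega+\epsilon]\subset(p,q)$ with $g\ge c>0$ on a neighbourhood of $I$ inside $\mathcal{R}_i$. Any solution that is $\ge y\in I$ at some time stays $\ge y$ afterwards. Hence a solution segment of duration more than $2\epsilon/c$ that starts at a point $\ge\omega-\epsilon$ ends at a point $\ge\omega+\epsilon$. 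Take $T>2\epsilon/c$ and $\epsilon'<\epsilon$. By induction along the chain, every $(\epsilon',T)$-chain starting within $\epsilon'$ of $q$ ends at a point $\ge\omega+\epsilon$, so never within $\epsilon'$ of $p<\omega-\epsilon$. This contradicts internal chain transitivity.
\end{itemize}

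The second fix is the quantitative form of the paper's ``one direction'' remark; the paper leaves implicit that $T$ must be large relative to the width of the gate. With either fix in place, your proof is complete and somewhat more rigorous than the paper's.
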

\hide{
{\color{blue} Using \eqref{eqn_I_minus_non_st}-\eqref{eqn_DI_with_two_actions}
and by virtue of two actions or single dimension, we\footnote{
Here we meant
$0\in\H_{two}^\partial$ only if $0 \in \H$, and $\g(0) >   \g(0^+) = 0$
and similar convention for $1 \in \H_{two}^\partial$.
}  have: 
\begin{eqnarray*}
 \left \{   \bromgac^* \in \H :    0 \in  {\bm \G}(\bromgac^*)  \right  \} \ = \  \H_{two}^\partial \cup \H_{two}, \mbox{ where, } \hspace{-35mm}&&   \\
 \H_{two} &=& \left\{ \bromgac^* \in   \H \setminus \{0,1\}: \  0 \in {\bm \G}(\bromgac^*)\right\},\\
 \H_{two}^\partial &=& \left  \{   0 \in \H:  \g(0) >   \g(0^+) = 0 \right  \} \cup \left  \{  1 \in \H :    0 = \g(1)  \ge   \g(1^-) \right  \} .
\end{eqnarray*}
 (i.e., each point  in  $\H$  leads to an  ICT set at discontinuity, also note here $\H_{two}^\partial \subset \{0,1\}$ and need not always equal $\{0, 1\}$) and hence  have the following description of the singleton ICT sets  given in \eqref{Eqn_ICTs_Two_actions}:
\begin{eqnarray}\label{eqn_attrac_Eqn_ICTs_Two_actions}
\mathbb{L} = \C_{two}  \cup \H_{two} \cup \H_{two}^\partial \ \  \ \mbox{ or }
  && \hspace{-1mm} \C_{two}  \cup \H_{two} \subset  \mathbb{L} \subset  \C_{two}  \cup \H_{two} \cup \{0, 1\}, \mbox{ where }  \\
&&\hspace{-52mm}  \C_{two}  = \left \{  \bromgac^* \notin \H:  \g(\bromgac^*) = 0  \right \}
\mbox{ is the set of classical zeros.}  \nonumber
\end{eqnarray}}
}
\subsection{Generic games with finite actions} \label{subsec_generic_results}
We now consider any general game as in section \ref{sec_general model} with $k < \infty $   actions and $n<\infty$   types and primarily identify the ICT sets. In this subsection, we illustrate that games with more than two actions can have non-singleton ICT sets, and thereby establish the possibility of `cyclic' limits for dynamics \eqref{eqn_iterates_nu}.  

Recall, any region $ \R_j$ defined in section \ref{Sec_ident_DI} is the sub-region of the domain $\D$ in which each type of population chooses one particular action.  This `type-wise' action (of at maximum one type of population)  changes only when a solution/dynamics crosses over to a new $\R_i$ region, after crossing across some parts of the border set $\H \cap \overline{\R}_j$. 
  We begin with a first result that rules out the possibility of a non-singleton ICT set that lies completely inside the closure of one of these regions (with at least one point in the interior);   all proofs of this subsection are in \ref{subsec_append_Generic_game_proofs}. 
\begin{thm}
\label{Thm_gen_one}
Consider a set $\F \subset \overline{\R}_j$ for some $j \le m$, where $|\F| > 1$ and   $\F   \cap \R_j   \ne \emptyset$. Then $\F$ is not an ICT set.~\eop
\end{thm}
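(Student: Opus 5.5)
The plan is a Lyapunov-function argument centred on the vector $\b_j$ that drives the dynamics inside $\R_j$. By \eqref{eqn_new_general_ode1}, on $\R_j$ the DI \eqref{eqn_general_DI} reduces to the affine ODE $\dot\bromga=\b_j-\bromga$. Set $V(\bromga):=\|\bromga-\b_j\|^2$. Along any solution segment lying in $\R_j$ we get $\dot V=-2V$, so $V$ decreases strictly unless $\bromga=\b_j$. Assumption \ref{asm_bj_notin_regions} gives $\b_j\notin\H$, so $\b_j\notin \overline{\R}_j\setminus\R_j$; thus $\b_j$ is either in $\R_j$ or outside $\overline{\R}_j$. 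I would use the standard properties of ICT sets from \ref{Appendix_DI} (cf.\ \cite{benaim2005stochastic}): an ICT set $\F$ is compact and invariant, so through every point of $\F$ there is a complete solution (defined for all $t\in\mathbb{R}$) lying in $\F$, and any two points of $\F$ are joined by $(\epsilon,T)$-chains inside $\F$.

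Step 1 (backward orbit from an interior point). Pick $\bromga_0\in\F\cap\R_j$ with $\bromga_0\neq\b_j$; such a point exists because $|\F|>1$ and $\R_j$ is open, so we may perturb along $\F$ if needed. By invariance there is a complete solution through $\bromga_0$ contained in $\F\subset\overline{\R}_j$. While this solution stays in the open set $\R_j$, it is the explicit curve $\b_j+e^{-t}(\bromga_0-\b_j)$. Backward in time this curve leaves the bounded domain $\D$, so it must hit $\H\cap\overline{\R}_j$ at some finite negative time. This produces points $\bromga'\in\F$ on the border with $V(\bromga')>V(\bromga_0)$, and a forward arc from $\bromga'$ to $\bromga_0$ along which $V$ strictly drops.

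Step 2 (no recovery of $V$ within $\overline{\R}_j$). I would show that $V$ is non-increasing along every solution of \eqref{eqn_general_DI} that stays in $\overline{\R}_j$, and strictly decreasing whenever the solution is not at $\b_j$. On $\R_j$ this was shown above. On $\overline{\R}_j\cap\H$, a solution remaining in $\overline{\R}_j$ either enters $\R_j$ immediately or slides along a border piece. By \ref{asm_card_ind_set_finite} the corner points are finite, so up to a null set of times the solution sits on a single surface $\H^{a,\ta}_\t$, which by \ref{A.2} separates $\R_j$ from exactly one neighbour $\R_i$. There the Filippov velocity is a convex combination $\lambda(\b_j-\bromga)+(1-\lambda)(\b_i-\bromga)$ tangent to the surface. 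Here $\b_i$ and $\b_j$ differ only in the coordinates of the single type $\t$ switching between $a$ and $\ta$. I would use this structure to show $\langle \bromga-\b_j,\dot\bromga\rangle\le 0$ with strict inequality for a sliding solution confined to $\overline{\R}_j$. This step is the main obstacle. If the direct computation fails, I would fall back to a cruder claim: $\F$ cannot contain any sliding arc of positive length in $\H$ at all, since $\F\subset\overline{\R}_j$ and sliding velocities point partly toward $\b_i$. Then all motion in $\F$ outside finitely many corner points is governed by $\b_j-\bromga$.

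Step 3 (conclusion). Once $V$ is a Lyapunov function on $\overline{\R}_j$, strictly decreasing off $\Lambda:=\{\b_j\}\cap\overline{\R}_j$, I would invoke Benaïm–Hofbauer–Sorin's Lyapunov criterion \cite[Proposition 3.27]{benaim2005stochastic}. Since $V(\Lambda)$ is at most a single point, it has empty interior, and the criterion gives that every ICT set in $\overline{\R}_j$ lies in $\Lambda$. Hence any ICT set $\F$ here is at most the singleton $\{\b_j\}$, contradicting $|\F|>1$. A more hands-on alternative is the following. Let $v^*=\max_{\F}V$, attained at some $\bar\bromga\in\F$. A complete solution in $\F$ through $\bar\bromga$ would need $V>v^*$ at negative times, unless $V$ is constant along it, which forces the solution to be the equilibrium $\b_j$. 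Step 2 excludes the first option, so $\F=\{\b_j\}$, again contradicting $|\F|>1$.
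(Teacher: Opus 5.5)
Your Step~2 fails, and the problem is not a missing computation: $V(\bromga)=\|\bromga-\b_j\|^2$ is genuinely not monotone along solutions that stay in $\overline{\R}_j$. The theorem does not exclude a piece of $\partial\R_j\cap\H^{a,\ta}_\t$, separating $\R_j$ from $\R_i$, with $h^{a,\ta}_\t(\b_i)\,h^{a,\ta}_\t(\b_j)<0$; that is exactly what \ref{asm_gen_2_implication} rules out, and it is not assumed here. On such a piece, Lemma~\ref{lem_three_action_on_H} gives solutions of \eqref{eqn_general_DI} that slide along the border with $\dot\bromga=\bromga^\infty_{ij}-\bromga$, where $\bromga^\infty_{ij}$ is the point at which $[\b_i,\b_j]$ meets the border. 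Then $\dot V=2\langle\bromga-\b_j,\bromga^\infty_{ij}-\bromga\rangle$, which is strictly positive whenever $\bromga$ lies strictly between $\bromga^\infty_{ij}$ and the orthogonal projection of $\b_j$ onto the border. The one-type-switch structure of $\b_i-\b_j$ does not help, because the border normal $\nabla h^{a,\ta}_\t$ is unrelated to $\b_i-\b_j$. Your fallback is also false: the sliding velocity is tangent to the border, so a sliding arc stays in $\partial\R_j\subset\overline{\R}_j$ and can lie in $\F$. Hence neither the Lyapunov criterion nor your maximum-of-$V$ argument applies. The latter breaks exactly at a point of a repelling border where a solution slides in with $\dot V>0$ and then leaves into $\R_j$.

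No repair is possible, because the statement is false without \ref{asm_gen_2_implication}. Take two types with $\alpha_{\t_1}=\alpha_{\t_2}=\tfrac12$, $\A_{\t_1}=\{1,3\}$, $\A_{\t_2}=\{2,3\}$. In reduced coordinates $\bromga=(x,y)$, let type $\t_1$ prefer $3$ iff $x+2y>\tfrac14$ and type $\t_2$ prefer $3$ iff $3x+2y<\tfrac12$ (linear utilities); \ref{A.1}--\ref{asm_card_ind_set_finite} hold. The region $\R_j=\{x+2y>\tfrac14,\ 3x+2y<\tfrac12\}$ has $\b_j=(0,0)$. Across $A=\{x+2y=\tfrac14\}$ the neighbour has $\b=(\tfrac12,0)$; this border is attracting, with sliding towards $(\tfrac14,0)$. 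Across $B=\{3x+2y=\tfrac12\}$ the neighbour has $\b=(0,\tfrac12)$; this border is repelling, with sliding towards $(0,\tfrac14)$. Start at $r=(\tfrac1{16},\tfrac5{32})\in B$ and follow the ray $re^{-t}$ through $\R_j$ to $s=(\tfrac1{24},\tfrac5{48})\in A$. Slide along $A$ to the corner $c=(\tfrac18,\tfrac1{16})$, then slide along $B$ back to $r$. This is a periodic solution of \eqref{eqn_general_DI} with period $\ln 5$, hence an ICT set contained in $\overline{\R}_j$ and meeting $\R_j$; along the last leg $V$ rises from $\tfrac{5}{256}$ at $c$ to $\tfrac{29}{1024}$ at $r$.

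The paper's own proof has the same hole. Lemma~\ref{lem_three_act_not_zero} only uses the ray structure of solutions lying in $\R_j$, and never excludes chains that leave $\R_j$ and return through $\partial\R_j$. Under \ref{asm_gen_2_implication} your plan does work, and it covers both of the paper's cases ($\g(\bromga)\neq 0$ and $\bromga=\b_j$) at once:
\begin{itemize}
\item Lemma~\ref{Lemma_no_Solution_on_H} forbids non-constant motion along $\H$ for a positive time.
\item A rest point on a two-region border would have to lie on $[\b_i,\b_j]$, which is impossible when $h^{a,\ta}_\t(\b_i)h^{a,\ta}_\t(\b_j)>0$.
\item So solutions inside $\overline{\R}_j$ follow rays towards $\b_j$ or rest at one of the finitely many corners.
\item $V$ is therefore a Lyapunov function, for solutions inside $\overline{\R}_j$ (all the chain definition uses), off $\Lambda=\{\b_j\}\cup\{\text{corners}\}$.
\item The argument of \cite[Proposition 3.27]{benaim2005stochastic} then confines $\F$ to the finite set $\Lambda$, which forces a singleton.
\end{itemize}
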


\noindent{\bf Remarks:}
The above Theorem leads to some important observations:
\begin{itemize}
    \item[(i)] Any ICT set $\F$ is either a singleton ($\F \in \SS^*$, the class of singleton ICT sets defined in Lemma \ref{lemma_singleton_ICT}) or a subset of $\H$ ($\F \subset \H$), or may span across multiple $\R_j$ regions.
   \item[(ii)] Consider any singleton ICT set $\{\bromga^*\}$ with  $\bromga^* \in \cup_j \R_j$; then $\bromga^*$  is a classical attractor that is both asymptotically and locally stable (see the proof of Theorem \ref{Thm_gen_one}). Specifically,   $\b_j$  for some $j$, is a classical attractor,  if $\b_j \in \R_j$. Thus the number of singleton  ICT sets in $\cup_j \R_j$  is upper bounded by $m$. 
\end{itemize}
 
We require the following additional assumption/condition for some results, while the   negation of the same implies some  other results, provided later (Lemma \ref{lem_three_action_on_H}); also note that the first condition is for a specific tuple $(a,\ta,\t)$.

\begin{enumerate}
[label={\textbf{G.5}}$(^{a, \ta}_\t)$, ref=\textbf{G.5}$(^{a, \ta}_\t)$, leftmargin=4.4em]  
    \item  \textit{Let  $\nablah$ represent the  gradient of the function  $h^{a, \ta}_\t(\cdot)$  at $\bromga$}. For  all $\bromga \in \H^{a,\ta} _\t   \cap \{\bromga': |\J (\bromga')| =2\}$,    assume
$ 
  (\nablah^T \cdot (\b_j-  \bromga)) (\nablah^T \cdot (\b_i -\bromga))     > 0 \mbox{ for  }  i,j \in \J (\bromga) .
  $  
\label{Asm_filippov_new} 
\end{enumerate}

\begin{enumerate}[label=\textbf{G.5}, ref=\textbf{G.5}]
\item The assumption \ref{Asm_filippov_new} is satisfied for all possible $(a, \ta, \t)$.\label{asm_gen_2_implication}
\end{enumerate}
Recall  $\J(\bromga)$ defined in \eqref{Eqn_Jomega} represents the  sub-class of open regions $\{\R_j\}$  that are adjacent to $\bromga$.  We have  $|\J(\bromga) | =2$,  for all  $\bromga \in \H^{a,\ta}_\t \setminus \{ \bromga : |\I(\bromga)| > 1\} $, and by assumption \ref{asm_card_ind_set_finite} the number of exceptions (with $|\J(\bromga) | >2$) is finite. We now prove a result related to the existence of a form of solution, which is instrumental in deriving the main results.  We refer to a solution as\textit{ constant solution if $\bromga(t) = \bromga$ for all $t$ for some $\bromga \in \D$}, and others as non-constant solutions. 

\begin{lem} {\bf [Solution only touches non-attracting (NA) border]}
\label{Lemma_no_Solution_on_H} 
Assume 
\ref{Asm_filippov_new} for some $(a, \ta, \t)$. 
Consider any non-constant solution $\bromga(t)$ of the DI \eqref{eqn_general_DI}. Then, there does not exist a time interval $[s_1, s_2]$  with $s_2 > s_1$ such that  
   $ \bromga(t) \in \H^{a,\ta}_\t$ for all $ t \in [s_1, s_2]$. \eop 
\end{lem}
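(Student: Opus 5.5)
Suppose, for contradiction, that a non-constant solution $\bromga(\cdot)$ of DI \eqref{eqn_general_DI} satisfies $\bromga(t)\in\H^{a,\ta}_\t$ for all $t\in[s_1,s_2]$ with $s_2>s_1$. The plan is to reduce to the points of the border where only two regions meet, and there use the sign condition in \ref{Asm_filippov_new} to show that no admissible velocity is tangent to the border.

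\emph{Reduction to a two-region point.} By \ref{asm_card_ind_set_finite}, the set $E:=\{\bromga:|\I(\bromga)|>1\}$ is finite. Hence there is a subinterval $[s_1',s_2']\subset[s_1,s_2]$ of positive length on which $\bromga(t)\notin E$. If the solution stayed in $E$ on every subinterval, continuity and finiteness of $E$ would force it to be constant on $[s_1,s_2]$; this is the only place where non-constancy enters, and it may require a short argument. On $[s_1',s_2']$ only the curve $\H^{a,\ta}_\t$ is active, so $|\J(\bromga(t))|=2$, say $\J=\{i,j\}$. The DI then reads
$$\dot\bromga(t)\in\co\{\b_i-\bromga(t),\ \b_j-\bromga(t),\ \g(\bromga(t))\}.$$
Moreover $\g(\bromga)$ equals $\b_i-\bromga$ or $\b_j-\bromga$, depending on the tie-breaking rule that type $\t$ uses at $\bromga$. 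So the convex hull is just the segment between the two one-sided limits $\g^\infty_i,\g^\infty_j$ of \eqref{eqn_g_limit}.

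\emph{Tangency forces a sign change.} Since $h^{a,\ta}_\t(\bromga(t))\equiv0$ on $[s_1',s_2']$ and $\bromga(\cdot)$ is absolutely continuous, the chain rule gives
$$\nablaht^T\dot\bromga(t)=0\quad\text{for a.e. } t.$$
This step assumes $h^{a,\ta}_\t$ is $C^1$ near the border, which is implicit in \ref{Asm_filippov_new} since it uses the gradient. Pick such a $t$ and write $\dot\bromga(t)=\lambda(\b_i-\bromga)+(1-\lambda)(\b_j-\bromga)$ with $\lambda\in[0,1]$. Then
$$0=\lambda\,\nablah^T(\b_i-\bromga)+(1-\lambda)\,\nablah^T(\b_j-\bromga).$$
By \ref{Asm_filippov_new}, the two scalars $\nablah^T(\b_i-\bromga)$ and $\nablah^T(\b_j-\bromga)$ are nonzero and of the same sign. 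A convex combination of two nonzero numbers of the same sign cannot vanish, which is a contradiction. Since the set of such $t$ has positive measure, the solution cannot remain on $\H^{a,\ta}_\t$ over any interval, which proves the lemma. The interpretation is that both neighbouring vector fields push strictly across the border in the same direction, so the border is non-attracting and can only be crossed.

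\emph{Main obstacle.} The delicate points are the regularity of $h^{a,\ta}_\t$ along the curve needed for the chain rule (only continuity is assumed in \ref{A.1}), and making sure $\J(\bromga)$ really has two elements away from $E$. If needed, I would avoid differentiating $h$ directly and instead use the transversality: the sign condition at $\bromga$ persists in a neighbourhood by continuity. Then every Filippov velocity has a normal component bounded away from zero near $\bromga$, so $h(\bromga(t))$ changes strictly monotonically, which contradicts $h(\bromga(t))\equiv0$.
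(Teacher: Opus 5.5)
Your proposal is correct and follows the paper's proof essentially step for step. You use \ref{asm_card_ind_set_finite} to pass to a subinterval on which $\J(\bromga(t))=\{i,j\}$, differentiate $h^{a,\ta}_\t(\bromga(t))\equiv 0$ to get $\nablaht^T\dot{\bromga}(t)=0$ a.e., and contradict \ref{Asm_filippov_new}, since a convex combination of two nonzero same-sign numbers cannot vanish. Your remark that $\g(\bromga)$ coincides with one of the one-sided limits, so that the hull in \eqref{eqn_general_DI} is just the segment, fills in a detail the paper passes over silently, but it does not change the argument.
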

Thus, under \ref{asm_gen_2_implication}, the DI has no solutions that live  on the border set $\H$ for a  non-trivial interval of time.

{\bf Linear Utilities:} 
We obtain the rest of the results for a special case with linear boundary $\{h^{a,\ta}_\t\}$ functions. There are many games that fall under such a category, for example the queuing game of
 subsection \ref{sub_sub_sec_queuing_application}; we believe one can extend    majority of the results  to non-linear boundary    functions, however, that would be considered as a part of future research. 
\textit{For the rest of the paper, we consider affine linear   functions as below (for any $(a,\ta, \t)$)}:
\begin{eqnarray}
\label{Eqn_h_linear}
    h^{a,\ta}_\t(\bromga) :=  \bromgac^1- \sum_{a'= 2}^{k-1} \eta^{a,\ta, a'}_\t  \bromgac^{a'} +  c^{a,\ta}_\t,   \mbox{  for some constants } \left\{\eta^{a,\ta, a'}_\t  \right\}_{a'\ne 1} \text{ and }  c^{a,\ta}_\t. 
\end{eqnarray}
By simple algebra, the gradient term in \ref{Asm_filippov_new} simplifies to the following and we also have for any $ \lambda \in [0,1]$: 
\begin{eqnarray}
    (\nablah^T \cdot (\b_j-  \bromga)) (\nablah^T \cdot (\b_i -\bromga)) &=& h^{a,\ta}_\t(\b_j) \nonumber h^{a,\ta}_\t(\b_i), \text{ and } \\
     h^{a,\ta}_\t(\bromga_1 \lambda +\bromga_2 (1-\lambda) )  &=& \lambda h^{a,\ta}_\t(\bromga_1) +(1-\lambda) h^{a,\ta}_\t(\bromga_2).
    \label{Eqn_linear_h_satisfies}
\end{eqnarray}

\subsubsection{\underline{Solutions of DI \eqref{eqn_general_DI}}}
In this subsection, we will derive some important solutions of DI \eqref{eqn_general_DI}, which would be instrumental in further analysis.
 As already mentioned, we now consider affine linear $\{h^{a,\ta}_\t\}$ functions as in \eqref{Eqn_h_linear}-\eqref{Eqn_linear_h_satisfies}. With this, \textit{the open region $\R_j$ is convex for each $j$.}  
To begin with, 
if $\b_j \in \R_j$ for some $ 1 \le j\le m$ and initial condition $\bromga(0) \in \R_j$, then the unique solution of the DI \eqref{eqn_general_DI} is as below:
\begin{eqnarray}
    \bromga(t) = (\bromga(0) - \b_{j})e^{-t} + \b_{j} \mbox{ for all } t \geq 0. \label{Eqn_DI_Solution_in_Rj}
\end{eqnarray}
For any $t$, the solution $\bromga(t)$ is a convex combination of $\bromga(0)$ and $\b_j$, and hence the solution remains in $\R_j$ for all $t \ge 0$, 
by convexity of $\R_j$. 
Clearly such a  $\b_j$ is an attractor: it is locally and  asymptotically stable in the classical sense (see also proof of Theorem \ref{Thm_gen_one}). Let the set of such \textit{classical attractors} and the corresponding sub-class of ICT sets  be denoted by $\LL_c^*$ and $\SS_c^*$, respectively:
\begin{eqnarray}\label{eqn_set_AA}
\LL_c^* := \{ \b_j :  j \in \AA\}  \text { and } \SS_c^* :=  \{ \{\b_j\} : j\in\AA\},   \text{ where }   \AA:= \{ j :  \b_j \in \R_j\}.   
\end{eqnarray}
Also observe,    $\SS^*_c  \subset \SS^*$, the class of singleton ICT sets defined in Lemma \ref{lemma_singleton_ICT}.
 Now, consider any initial condition  $\bromga(0) \in \R_{j_1}$ for some $j_1 \notin \AA$.  Again clearly,  
 there exists a unique solution that starts from $\bromga(0)$, as in \eqref{Eqn_DI_Solution_in_Rj}; however, now the solution is not valid for all $t$. More precisely, the solution is exactly as in \eqref{Eqn_DI_Solution_in_Rj}, but only for   $t \le \tau_{j_1} $ where exit time, 
\begin{eqnarray}\label{eqn_hititing_time1}
     \tau_{j_1} = \tau_{j_1} (\bromga(0)) :=  \inf \left \{ t : (\bromga(0) - \b_{j_1})e^{-t} + \b_{j_1}  \notin \R_{j_1} \right \}. 
 \end{eqnarray}
By boundedness of $\R_{j_1}$ and as $\b_{j_1} \notin \R_{j_1}$, we have
$\tau_{j_1} < \infty$. By continuity of  solution $\bromga(\cdot)$,  $\bromga_{j_1} := \bromga(\tau_{j_1})    \in \H \cap  {\overline \R}_{j_1}$. 
Say $ | \J(\bromga _{j_1}) |= 2$ with $\J(\bromga _{j_1}) = \{j_1, j_2\}$.  Extend  the function  $\bromga(t)$ beyond $t > \tau_1$,  as below (with $\tau_{j_0}=0, \  \bromga_{j_0} := \bromga(0)$):
\begin{eqnarray}\label{eqn_bromega_in_regions}
    \bromga(t) =  \sum_{i=1}^{2} \left ( (\bromga_{j_{i-1}} - \b_{j_{i}}) e^{-(t- \tau_{j_{i-1}})}  +  \b_{j_{i}} \right ) \ind_{\{  t \in [\tau_{j_{i-1}},\tau_{j_{i}}] \} },   \text{ for some $\tau_{j_2} > \tau_{j_1}$}. 
\label{Eqn_solution_two_regions} 
\end{eqnarray}
We will first show that $\bromga(t) \in \R_{j_2}$ for all $t\in (\tau_{j_1}, \tau_{j_2})$ and identify an appropriate $\tau_{j_2}$ in  the following, to later prove that \eqref{eqn_bromega_in_regions} is also a solution of the DI \eqref{eqn_general_DI}:

\begin{lem}{\bf [Solution through NA border]} \label{lem_solu_tau1_tau2} Assume \ref{asm_gen_2_implication} and that $\J(\bromga_{j_1}) = \{j_1,j_2\}$.
Consider  the function $\bromga(\cdot)$ defined in \eqref{Eqn_solution_two_regions}, then we have   $\bromga(t) \in \R_{j_2}$  for all $t \in (\tau_{j_1}, \tau_{j_2}) $, where:
    \begin{eqnarray}
    \label{Eqn_tau_two}
  \tau_{j_2} = \tau_{j_2} (\bromga(0)) :=  \inf \left \{  t > \tau_{j_1} :  \bromga(t) \notin \R_{j_2}   \right \}, \mbox{ with }  \tau_{j_2} = \infty \mbox{ when } \bromga (t) \in \R_{j_2}  \ \forall \ t  > \tau_{j_1}. \hspace{2mm} \mbox{\eop}
\end{eqnarray}
\end{lem}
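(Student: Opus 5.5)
The statement is essentially a consequence of the definition of $\tau_{j_2}$ as an infimum, once we show that the set over which the infimum is taken does not include times arbitrarily close to $\tau_{j_1}$. So the plan is to show that immediately after $\tau_{j_1}$ the curve $\bromga(t) = (\bromga_{j_1}-\b_{j_2})e^{-(t-\tau_{j_1})}+\b_{j_2}$ enters $\R_{j_2}$. Once that holds, $\tau_{j_2}>\tau_{j_1}$ is well defined by \eqref{Eqn_tau_two}, and by the definition of the infimum $\bromga(t)\in\R_{j_2}$ for every $t\in(\tau_{j_1},\tau_{j_2})$.

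The key observation is that on $[\tau_{j_1},\infty)$ the curve is a convex combination $\lambda(t)\bromga_{j_1}+(1-\lambda(t))\b_{j_2}$ with $\lambda(t)=e^{-(t-\tau_{j_1})}$ decreasing from $1$. So it moves along the segment from $\bromga_{j_1}$ toward $\b_{j_2}$, and by linearity \eqref{Eqn_linear_h_satisfies} every $h^{a',\ta'}_{\t'}$ along it is the affine interpolation $\lambda h(\bromga_{j_1})+(1-\lambda)h(\b_{j_2})$. We treat two families of border functions separately.

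\emph{Borders not active at $\bromga_{j_1}$.} Since $|\J(\bromga_{j_1})|=2$ and only the regions $\R_{j_1},\R_{j_2}$ are adjacent, the active index set $\I(\bromga_{j_1})$ consists of the border(s) separating $\R_{j_1}$ and $\R_{j_2}$; generically this is a single $(a,\ta,\t)$. For every other triple, $h(\bromga_{j_1})\neq0$ with the sign it has on $\R_{j_2}$ (both $\R_{j_1}$ and $\R_{j_2}$ lie on that same side near $\bromga_{j_1}$). By continuity this sign is preserved for $\lambda$ close to $1$.

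\emph{The active border(s).} For $(a,\ta,\t)\in\I(\bromga_{j_1})$ we have $h(\bromga(t))=(1-\lambda(t))h(\b_{j_2})$, which has the sign of $h(\b_{j_2})$; this is nonzero by \ref{asm_bj_notin_regions}. We must show this is the sign of $h$ on $\R_{j_2}$, and this is the main obstacle. We use \ref{asm_gen_2_implication} in the form \eqref{Eqn_linear_h_satisfies}: $h(\b_{j_1})h(\b_{j_2})>0$. Since the solution in $\R_{j_1}$ reached the border from $\R_{j_1}$ moving toward $\b_{j_1}$, and $h(\bromga(t))=e^{-t}h(\bromga(0))+(1-e^{-t})h(\b_{j_1})$ hits zero only if $h(\b_{j_1})$ has sign opposite to that of $h$ on $\R_{j_1}$, the sign of $h(\b_{j_1})$ is the $\R_{j_2}$-side sign. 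Hence $h(\b_{j_2})$ also has the $\R_{j_2}$-side sign. If several triples are active but still $|\J|=2$, the same argument applies to each.

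Combining the two families, for $t$ slightly larger than $\tau_{j_1}$ all border functions have exactly the signs that define $\R_{j_2}$, so $\bromga(t)\in\R_{j_2}$. (Because $\R_{j_2}$ is the convex open set cut out by these strict sign conditions, its intersection with $\D$ is handled by invariance of $\D$.) The conclusion then follows from the infimum as in the first paragraph, with $\tau_{j_2}=\infty$ allowed.
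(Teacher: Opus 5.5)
Your proposal is correct and follows essentially the paper's own argument. Both use the linearity of $h^{a,\ta}_\t$ along the segment from $\bromga_{j_1}$ to $\b_{j_2}$ and the hitting relation at $\tau_{j_1}$, which gives $h^{a,\ta}_\t(\bromga(0))\,h^{a,\ta}_\t(\b_{j_1})<0$. Assumption \ref{asm_gen_2_implication} then transfers this sign to $\b_{j_2}$, so $h^{a,\ta}_\t(\bromga(t))=(1-e^{-(t-\tau_{j_1})})\,h^{a,\ta}_\t(\b_{j_2})$ has the $\R_{j_2}$-side sign just after $\tau_{j_1}$, and the infimum definition of $\tau_{j_2}$ completes the proof. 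The only difference is minor: the paper uses $\J(\bromga_{j_1})=\{j_1,j_2\}$ to place $\bromga(t)$ in $\R_{j_1}\cup\R_{j_2}$ and then only rules out $\R_{j_1}$, whereas you check the full sign pattern of $\R_{j_2}$, handling the inactive borders by continuity.
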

Now, from the above Lemma, we have that $\bromga(t) \in  \R_{j_2} $ for all $t \in (\tau_{j_1}, \tau_{j_2})$. Thus, the derivative of $\bromga (\cdot)$ or the function in  \eqref{Eqn_solution_two_regions}  satisfies the DI \eqref{eqn_general_DI}, even for all $t \in (\tau_{j_1}, \tau_{j_2})$;  moreover, $\bromga(\cdot)$ is (trivially) absolutely continuous.
Thus, $\bromga (\cdot)$ is a solution of the DI \eqref{eqn_general_DI} for initial condition $\bromga_{j_0} \in \R_{j_1}$ and for all $t \le \tau_{j_2}$.   By 
continuing  similarly, a  solution 
of~\eqref{eqn_general_DI} with initial condition $\bromga(0)$ and  for any given time interval  $[0, \tau$], under \ref{asm_gen_2_implication}, is given by: 
\begin{eqnarray}\label{eqn_solution_under_G.2}
   \bromga(t) = \sum_{i=1}^{L} \left((\bromga_{j_{i-1}} - \b_{j_{i}}) e^{-\left(t- \tau_{j_{i-1}}\right)} +  \b_{j_{i}}\right)\ind_{\{t \in [\tau_{j_{i-1}},\tau_{j_{i}}]\}},   \mbox{  for any } t \in [0, \tau]\mbox{ with }    \label{solution_DI_general}
        \end{eqnarray}
$\bromga_{j_0} := \bromga(0) \mbox{ and }   \tau_{j_0} =0$,  where recursively on $\kappa \le L$, 
        \begin{itemize}
            \item [{\bf (s.1)}]  $
         \tau_{j_{\kappa}} = \tau_{j_\kappa} (\bromga_{j_0}) :=  \inf \left \{  t > \tau_{j_{\kappa-1}} :  \bromga(t) \notin \R_{j_\kappa}   \right \}$ represents the epoch at which the solution exits the $\R_{j_\kappa}$ region, when the set is non-empty; else,  we set  $\tau_{j_\kappa} = \infty$, implying the solution stays in $\R_{j_\kappa}$ forever; 
        
         \item   [{\bf (s.2)}]  $\bromga_{j_\kappa} := \bromga(\tau_{j_\kappa})$ is the   DI solution at the above exit epoch $\tau_{j_\kappa}$; 
         \item   [{\bf (s.3)}] 
        $\J(\bromga_{j_\kappa}) = \{j_\kappa, j_{\kappa+1}\}$ with $\R_{j_{\kappa+1}}$ representing the region that DI solution enters at $\tau_{j_\kappa}$; further $\bromga_{j_\kappa}$ is in the border between $\R_{j_{\kappa}}$  and $\R_{j_{\kappa+1}}$,  and touches upon $\H^{a_{j_\kappa},\ta_{j_\kappa}}_{\t_{j_\kappa}}$ for some $(a_{j_\kappa},\ta_{j_\kappa},\t_{j_\kappa})$, basically $h^{a_{j_\kappa},\ta_{j_\kappa}}_{\t_{j_\kappa}} (\bromga_{j_\kappa}) =~0$; 
         \item  [{\bf (s.4)}]  $L \ :=  \min\{ \kappa' : \tau_{j_{\kappa'}} > \tau\}$ equals (maximum) number of the regions traversed by the DI-solution  in interval~$[0, \tau]$.
     \end{itemize}
Let  $h_{j_\kappa} \text{ and } \H_{j_\kappa}$  briefly represent, $h^{a_{j_\kappa}, \ta_{j_\kappa}}_{\t_{j_\kappa}} \text{ and } \H^{a_{j_\kappa},\ta_{j_\kappa}}_{\t_{j_\kappa}}$ respectively. 
By Lemma \ref{lem_solu_tau1_tau2},   the above solution exists when,
  \begin{eqnarray}
  \label{Eqn_conditions_for_solution}
    h_{j_\kappa}   (\b_{j_{\kappa}} )   h_{j_\kappa}  (\b_{j_{\kappa-1}} )  >  0   \mbox{ and }   %
      \J(\bromga_{j_\kappa}) = \{j_\kappa, j_{\kappa+1}\}  \mbox{ for all }  \kappa \in \{1, \cdots, L\}. 
  \end{eqnarray}
The above form of  solution is possible under \ref{asm_gen_2_implication} and only  when $|\J(\bromga_{j_\kappa}) |= 2$ for each $\kappa \le L$. 
Also, recall by Lemma \ref{Lemma_no_Solution_on_H}, there exists no solution that lives on $\H$ for any  (non-zero length) interval of time.

The solution \eqref{eqn_solution_under_G.2} is valid when the initial condition is in the interior of some $\R_j$, i.e., for  $\bromga(0) \in \R_j$ for some $j$. For initial conditions on boundary, i.e.,  \textit{if $\bromga(0) \in  \H^{a,\ta}_\t$  for some boundary,  one can obtain solution exactly as in \eqref{eqn_solution_under_G.2} by Lemma \ref{lem_solu_tau1_tau2_start_line_H} provided in Appendix \ref{subsec_append_Generic_game_proofs}.}

Interestingly, from  \eqref{solution_DI_general},  the solution can  traverse several regions $\{\R_j\}$ during its course. This, coupled with the fact that there are finitely many   $\{\R_j\}$, hints at the possibility of a cycle ---  either a solution gets absorbed in one of the regions or it may return to the initial region $\R_{j_1}$ (the set $\AA$,  in \eqref{eqn_set_AA},  precisely represents the sub-class of absorbing regions, also see \eqref{Eqn_DI_Solution_in_Rj}).  
We will now show that such a possibility is not vacuous and, in fact, also show that the resulting cycle is a non-singleton ICT  set, in the immediate next.

 \subsubsection{\underline{Cyclic ICT sets}}  \label{subsub_sec_Cyclic ICT sets} 
We now describe one scenario, constructed using \eqref{eqn_solution_under_G.2}, under which a solution of the DI \eqref{eqn_general_DI} moves in a cyclic path. We basically identify a `cyclic DI-solution' that forms an ICT set under the following condition: 
\begin{enumerate}[label=\textbf{C.1}, ref=\textbf{C.1}]
\item Assume there exists a solution as in \eqref{eqn_solution_under_G.2}, with two special features: (a)   $(l+1)$-th region coincides with the first region,  $\R_{j_{l+1}} = \R_{j_1}$ for some $l > 2$; and  (b)   $\bromga_{j_{l}} = \bromga(0)$,  i.e.,   the solution starts  in the  boundary $\bromga(0) \in \H^{a_{_l}, \ta_l}_{\t_l}$. \label{asm_cycle_one}
\end{enumerate}
Represent the quantities corresponding to the special solution in \ref{asm_cycle_one} as $\bromga^*_{j_0}, \bromga^*_{j_1}, \cdots, \bromga^*_{j_{l-1}}$, $\tau^*_{j_1}, \cdots \tau^*_{j_l}$ with $\tau_{j_0} = 0$; observe $\bromga^*_{j_0} = \bromga(0) = \bromga^*_{j_l}$. If such a special cyclic solution exists,  it becomes a non-singleton ICT set (proof is given in Appendix \ref{proof_Cyclic_ICT}.)   Observe that, one will require $l > 2$ in \ref{asm_cycle_one} as the solution can flow only in one direction between any two points as seen from \eqref{eqn_solution_under_G.2} or as will be seen in another variety of solution provided in the next subsection. 
\begin{thm}\label{Cyclic_ICT}
    Assume \ref{asm_gen_2_implication}, \ref{asm_cycle_one} and \ref{asm_bj_notin_regions}. Then, the $l$-cycle, denoted by $\CC^* $, and defined as below,  forms an ICT set:
\begin{eqnarray}
\label{Eqn_Ic_cycle}
    \CC^* := \cup_{i=1}^{l}{\mathcal S}_{i},  \mbox{ where } {\mathcal S}_{i} := \left\{(\bromga_{j_{i-1}}^*-\b_{j_{i}})e^{-\left(t-\tau^*_{j_{i-1}}\right)} + \b_{j_{i}}:\tau^*_{j_{i-1}}\leq t \leq \tau^*_{j_{i}}\right\}.  
\end{eqnarray}
\end{thm}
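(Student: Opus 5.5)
The plan is to verify directly the three defining properties of an ICT set (Definition \ref{Def_ICT_set}) for $\CC^*$: compactness, invariance, and internal chain transitivity. All three will follow once $\CC^*$ is realized as the orbit of one periodic solution of DI \eqref{eqn_general_DI} defined on the whole real line. Let $T^* := \tau^*_{j_l}$. Each exit time $\tau^*_{j_\kappa}$ is finite, because $\b_{j_\kappa}\notin \R_{j_\kappa}$ for every region visited: otherwise the solution \eqref{Eqn_DI_Solution_in_Rj} would stay in $\R_{j_\kappa}$ forever, and \ref{asm_cycle_one} could not hold. Hence $0<T^*<\infty$.

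\emph{Step 1: the periodic solution.} Let $\bromga^*(\cdot)$ on $[0,T^*]$ be the special solution \eqref{eqn_solution_under_G.2} supplied by \ref{asm_cycle_one}, and extend it to all of $\RR$ by $\bromga^*(t+nT^*)=\bromga^*(t)$ for $n\in\mathbb{Z}$. This extension is well defined and continuous because $\bromga^*(T^*)=\bromga^*_{j_l}=\bromga^*_{j_0}=\bromga^*(0)$. It is piecewise $C^1$, hence absolutely continuous on compact intervals. I must check that $\dot{\bromga}^*(t)\in\bm\G(\bromga^*(t))$ for almost every $t$.
\begin{itemize}
\item On each open interval $(\tau^*_{j_{i-1}},\tau^*_{j_i})$ (and its translates), Lemma \ref{lem_solu_tau1_tau2} gives $\bromga^*(t)\in\R_{j_i}$. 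There $\dot{\bromga}^*=\b_{j_i}-\bromga^*=\g(\bromga^*)$, so the inclusion holds.
\item The junction epochs form a countable, hence Lebesgue-null, set, so they do not matter for the a.e.\ requirement.
\item At the wrap-around junction $t=0$ (mod $T^*$), the solution leaves $\R_{j_l}$ and re-enters $\R_{j_{l+1}}=\R_{j_1}$ through $\bromga^*_{j_0}\in\H^{a_l,\ta_l}_{\t_l}$. I will check that this crossing satisfies \eqref{Eqn_conditions_for_solution}, namely $h_{j_l}(\b_{j_l})h_{j_l}(\b_{j_1})>0$ and $\J(\bromga^*_{j_0})=\{j_l,j_1\}$. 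This is exactly what \ref{asm_cycle_one} asserts, since it requires the concatenated object to be a solution of the form \eqref{eqn_solution_under_G.2} with $\R_{j_{l+1}}=\R_{j_1}$.
\end{itemize}
I expect this step to be the main subtle point. It needs the forward-crossing structure at every junction (\ref{asm_gen_2_implication}, together with \ref{asm_bj_notin_regions} so that the $h$-values at the $\b_j$ are nonzero and the crossing is transversal). It also needs the observation that backward-in-time extension is legitimate, because it is simply a translate of the forward solution and no uniqueness in backward time is required.

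\emph{Step 2: compactness and invariance.} $\CC^*=\bromga^*([0,T^*])$ is a continuous image of a compact interval, hence compact; it equals the union of the arcs $\mathcal S_i$ in \eqref{Eqn_Ic_cycle}. For invariance, take any $\bromga\in\CC^*$ and write $\bromga=\bromga^*(s)$ for some $s\in[0,T^*]$. Then $t\mapsto\bromga^*(t+s)$ is a solution of the DI on $\RR$, it starts at $\bromga$, and its range is contained in $\CC^*$ by periodicity.

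\emph{Step 3: internal chain transitivity.} Fix $\bromga,\bromga'\in\CC^*$, $\epsilon>0$ and $T>0$. Write $\bromga=\bromga^*(s)$ and $\bromga'=\bromga^*(s')$ with $s,s'\in[0,T^*)$. Choose $n\in\mathbb N$ with $t_1:=s'-s+nT^*\ge T$. The single solution $\bromga^*(\cdot+s)$ restricted to $[0,t_1]$ lies in $\CC^*$, starts at $\bromga$, and ends exactly at $\bromga^*(s+t_1)=\bromga^*(s')=\bromga'$ by periodicity. It is therefore an $(\epsilon,T)$-chain in $\CC^*$ with one link and zero jump error. Since $\epsilon$, $T$ and the two points were arbitrary, $\CC^*$ is internally chain transitive, hence an ICT set. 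It is non-singleton because $\R_{j_1}\neq\R_{j_2}$ forces the solution to move, and $l>2$ ensures the regions visited are distinct.
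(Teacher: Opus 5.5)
Your proposal is correct and follows essentially the same route as the paper. The paper builds, for any $\z\in\CC^*$, a solution that runs once around the cycle back to $\z$, then concatenates $\lceil T/\tau^*_{j_l}\rceil$ such loops followed by a final arc to $\z'$; this is exactly your single-link chain with $t_1=s'-s+nT^*$. Your packaging as one $T^*$-periodic solution on all of $\RR$ has the added benefit of making explicit the compactness of $\CC^*$ and the whole-line solution required by Definition~\ref{def_DI_solution}, which the paper leaves implicit.
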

\Removecyclic{
\subsubsection{Conditions for existence of cyclic-ICT set} \label{subsub_sec_cond_exist_cycle}}
\Removecyclic{
We aim to obtain the conditions under which \ref{asm_cycle_one} is satisfied, which provides the conditions for the existence of a cyclic ICT set.  From \eqref{solution_DI_general}, the conditions in \ref{asm_cycle_one} is satisfied if: 
\begin{itemize}
    \item [{\bf C.2}] the inequalities in \eqref{Eqn_conditions_for_solution} are satisfied with $L = l$, and $j_{l} = j_0$, for some $\{j_0, \cdots, j_{l}\} \in \AA^c$, where $\AA^c$ is  the complementary set of $\AA$, the set of attractors defined in \eqref{eqn_set_AA}; and 
    \item [{\bf C.3}]  there exists a (recursive) solution in terms of  $\{\bromga^*_{j_0}, \cdots, \bromga^*_{j_{l-1}}\}$ and 
 $\{\tau^*_{1}, \cdots, \tau^*_{l}\}$
for the following set of equations, with $\tau^*_{0} = 0$, $\bromga_{j_0}^* \in \D_{j_0} $ with $\D_{j_0} :=  \H_{j_0} \cap \overline{\R}_{j_0}\cap \overline{\R}_{j_{1}}$, and, 
\begin{eqnarray}
  \bromga^*_{j_{i}}  \in  \D_{j_i} :=  \H_{j_i} \cap \overline{\R}_{j_i}\cap \overline{\R}_{j_{i+1}}, \text{ and, } \bromga^*_{j_{i}} &=& (\bromga_{j_{i-1}}^*-\b_{j_{i}})e^{-(\tau^*_{i}-\tau^*_{i-1})} + \b_{j_{i}}  ,~ \forall i \in \{1,\ldots,l-1\},  \label{eqn_bromega_i_iteration} \hspace{8mm}\\
\text{ and finally, }    \bromga^*_{j_0} &=& (\bromga_{j_{l-1}}^*-\b_{j_l})e^{-(\tau^*_{l}-\tau^*_{l-1})} + \b_{j_l}. \label{eqn_bromega_one_iteration}
\end{eqnarray}
\end{itemize}
 If there exists a solution  of the above set of equations, then it would satisfy the following  set of equations (with $\oplus$ representing modulo $l$ addition): 
\begin{eqnarray}
\frac{\bromgac^{a *}_{j_i}-b^a_{j_{i\modl 1}}}{ 
        \bromgac^{a*}_{j_{i\modl 1}} - b_{j_{i\modl 1}}^a} 
        =
         \frac{\bromgac^{\ta *}_{j_i}-b^\ta_{j_{i\modl 1}}}{ 
        \bromgac^{\ta *}_{j_{i\modl 1}} - b_{j_{i\modl 1}}^\ta}  \text{  for each  } a \ne \ta \text{ and each } i. \label{eqn_the_diff_bromega_ratio}
\end{eqnarray}
We would assume {\bf C.2} and then 
obtain simpler conditions for the existence of solutions in {\bf C.3} by mapping the existence to the existence of a fixed-point of some appropriately constructed fixed-point equations. 

Towards this, we define a sequence of $l$ functions $\{\psi_i\}_{i \leq l}$,  such that the composite function 
$\psi := \psi_l \circ \psi_{l-1}\circ \cdots \circ \psi_1$ formed by them,   would represent the required fixed point equation. 

From \ref{eqn_bromega_i_iteration} with $i=1$,  the second component of the solution $\bromga_{j_1}^*$ satisfies     $h_{j_1} (\bromga^*_{j_1}) =0$  and the first two components  of the solution, $\bromga^*_{j_0}, \bromga^*_{j_1}$ are related to each other via the second part of \eqref{eqn_bromega_i_iteration}. 
Hence, we define the first map $\psi_1$, that reflects exactly these two  relations, using a clever trick as in \eqref{eqn_the_diff_bromega_ratio} that avoids the corresponding $\tau$
`components\footnote{\label{foot_psione} As discussed before, as $j_0, j_1 \notin \AA$, there exists a solution from any $\bromga(0) = \bromga \in \H_{j_0}\cap {\overline \R}_{j_1}$, which hits $\H_{j_1}$, and hence $h_{j_1} (\bromga)h_{j_1} (\b_{j_1}) < 0$ as in \eqref{Eqn_sign_changes_two}; this implies  $\nicefrac{ - h_{j_1}(\b_{j_1})} { (h_{j_1} (\bromga ) - h_{j_1}  (\b_{j_1}) )} \in [0, 1] $. Thus $\psi_1(\bromga) \in \D.$ }:
\begin{eqnarray}\label{eqn_fun_sai_1}
 \psi_1 (\bromga  ) := \frac{ - h_{j_1}(\b_{j_1})} { h_{j_1} (\bromga ) - h_{j_1}  (\b_{j_1}) } \bromga  +     \left (1- \frac{ - h_{j_1}(\b_{j_1})} { h_{j_1} (\bromga ) - h_{j_1}  (\b_{j_1}) }     \right )    \b_{j_1} \text{ for all  } \bromga \in \H_{j_0} \cap {\overline \R}_{j_1}.
\end{eqnarray}
One can immediately observe that, if there exists a solution as in {\bf C.3}, then it satisfies   $\bromga_{j_1}^*  =  \psi_1 (\bromga^*_{j_{0}}),$ and  $h_{j_1}(\bromga^*_{j_1}) = 0$; this can  be verified by direct substitution and using simple algebra. We would consider the domain of $\psi_1$
as $\H_{j_0}\cap {\overline \R}_{j_1}$ (since  $\bromga^*_{j_0} \in \D_{j_0} \subset \H_{j_0}\cap {\overline \R}_{j_1}$), see also footnote\footref{foot_psione}; further clearly $h_{j_1} (\psi_1(\bromga)) = 0$ for any $\bromga  $ (can verify by directly simplifying $h_{j_1}(\psi_1(\bromga))$), the range of $\psi_1$ is a subset of $\H_{j_1}$, i.e., $\psi_1 : \H_{j_0} \cap {\overline \R}_{j_1} \to \H_{j_1}$.

Using a similar approach and for analogous reasons, we define the following functions, one for each $i \in \{1, \ldots, l\}$:   
\begin{eqnarray}\label{eqn_fun_sai_i}
    \psi_i (\bromga  ) := \frac{ - h_{j_i}(\b_{j_i})} { h_{j_i} (\bromga ) - h_{j_i}  (\b_{j_i}) } \bromga  +     \left (1- \frac{ - h_{j_i}(\b_{j_i})} { h_{j_i} (\bromga ) - h_{j_i}  (\b_{j_i}) }     \right )    \b_{j_i}, \  \mbox{ for  all } \bromga \in \H_{j_{i-1}} \cap \overline{\R}_{j_i}.
\end{eqnarray}
Once again  any solution of {\bf C.3}, satisfies   $\bromga_{j_i}^*  =  \psi_i (\bromga^*_{j_{i-1}}),$ and  $h_{j_i}(\bromga^*_{j_i}) = 0$; thus one can  further consider $\psi_i : \H_{j_{i-1}} \cap {\overline \R}_{j_i}\to \H_{j_i}$.

At $i=l$, the range of $\psi_l$ is $\H_{j_l} = \H_{j_0}$, by {\bf C.2} and hence the solution of {\bf C.3}, if  exists, satisfies the following fixed point equation constructed using the composite function formed using  $\{\psi_i\}_{i \leq l}$ functions (see~\eqref{eqn_bromega_i_iteration}):
\begin{eqnarray}
    \bromga^*_{j_0} = \psi (  \bromga^*_{j_0}  ) \text{, where, }  \psi := \psi_l \circ \psi_{l-1}\circ \cdots \circ \psi_1 \text{ and }  \psi: \H_{j_0} \cap \overline{\R}_{j_1} \to \H_{j_0}.
\end{eqnarray} 
The idea is to identify the conditions under which
  the well-known Brouwer's fixed point theorem can be applied to the function $\psi(\cdot)$ ---  that would establish the existence of a fixed point in $\D_{j_0} \subset \H_{j_0}\cap {\overline R}_{j_1}$, which in turn establishes the existence of solution in {\bf C.3} and hence that of the cyclic ICT set by Theorem \ref{Cyclic_ICT}. 

To begin with observe $\psi(\cdot)$ is a continuous function. 
And then to well define the function $\psi$ that satisfies the hypothesis of Brouwer's fixed point theorem,  we would  require an appropriate domain, constructed using  smaller (but sufficient to hold required points) domains for constituent functions $\{\psi_i\}$  and hence require: 
\begin{itemize}
    \item a sequence of domains $\{\Z_i\}$ such that the corresponding range,  $\psi_i (\Z_i) \subset \Z_{i+1}$ for each $i$ (this would ensure the composite function $\psi$ is well defined with domain as $\Z_1$);
    \item each such domain should intersect with points of interest  (potential solution of \eqref{eqn_bromega_i_iteration}), i.e., $\Z_i \cap \D_{j_i} \ne \emptyset$ for each $i$; and
     \item finally, we require that $\psi(\Z_1) \subset \Z_1$.
\end{itemize}
Towards achieving the above, 
first define recursively $\bar \psi_i:=  \psi_i  \circ \bar \psi_{i-1}$, with $\bar \psi_1 = \psi_1$ and then
the first set of conditions are:
\begin{eqnarray}
  {\bar \psi}_i (\D_{j_{i-1}}) \subset \D_{j_i} \mbox{ for each }  i, \label{eqn_D_j_1_subset_D_j_i}
\end{eqnarray}
or an easier condition (but less general)
\begin{eqnarray}
   \psi_i (\D_{j_{i-1}}) \subset \D_{j_i} \mbox{ for each }  i \in \{1, \cdots, l \}, \mbox{ with, recall, } \D_{j_l} = \D_{j_0}. \label{eqn_D_j_1_subset_D_j_i_sai}
\end{eqnarray}
The above provides a test to check cycles, derivation of simpler conditions is for future study. 
In the current paper, 
we will delve into more details for the special case with three actions.

\hide{
\newpage
we recursively define the following,
\begin{eqnarray}
    \Z_l  :=  \psi_l^{-1}(\D_{j_0})\cap \D_{j_{l-1}},  \ 
    \Z_{l-1} := \psi_{l-1}^{-1}(\Z_l)\cap \D_{j_{l-2}},  \mbox{ and, }   %
    \Z_{i} \ := \  \psi_{i}^{-1}(\Z_{i+1})\cap \D_{j_{i-1}}, \text{ for all } i \in \{1,\ldots,l-2\}. \nonumber 
\end{eqnarray}
If $\Z:=\Z_1 \cap \psi_l (\Z_l) \cap \D_{j_0}$ as defined above (with $i=1$) is non-empty (this implies   $\Z_i \ne \emptyset$ for all $i$),  then we have an appropriate domain for $\psi$; observe the nested definitions ensure, $\psi(\Z_1) \subset  \Z_1 \subset \D_{j_0}$ because of the following:
\begin{eqnarray*} 
 \  \psi_i(\Z_i) \subset \Z_{i+1} \mbox{ for all } i,  \\
  \psi(\Z) \subset   \psi(\Z_1) \subset \left ( \psi_l \circ \psi_{l-1}\circ \cdots \circ \psi_2  \right ) (\Z_2) \subset \left ( \psi_l \circ \psi_{l-1}\circ \subset \cdots \circ \psi_3  \right ) (\Z_3)  \cdots \subset   \psi_l   (\Z_l)  \subset\Z \subset \D_{j_0} 
\end{eqnarray*}

\newpage
And then   as a first step to well define the function $\psi$, if required on a smaller domain that intersects with $\D_{j_0},$  we would require  sequence of domains $\{\Z_i\}$ with each $\Z_i \cap \D_{j_i} \ne \emptyset$,   that the range of function $\psi_i$ intersects the domain of the next function:
\begin{eqnarray}
 \left (  \psi_1(\H_{j_0}\cap {\overline R}_{j_1}) \right ) \cap  \H_{j_1}\cap {\overline R}_{j_2} \ne \emptyset, \mbox{ and } 
 \left ( \psi_i( \H_{j_{i-1}} \cap {\overline \R}_{j_i} )  \ \right ) \cap  \H_{j_i}  \ne \emptyset \mbox{ for each }  i \in \{2, \cdots, l\}.
\end{eqnarray}
In fact we require more -- we need to identify a smaller domain  for $\psi$, call it $\Z$,  such that  $\psi_1(\Z) \subset \Z$ and $\Z \subset \D_{j_0}$.

To apply the Brouwer's fixed-point theorem. It is clear that

If we can shrink the domain and range of the function $\psi(\cdot)$ from $\H_{j_0}$ to a closed, bounded, and \rv{convex } set $\D_{j_0}' \subset \D_{j_0}$ (observe that $\psi(\cdot)$ is continuous on $\D_{j_0}'$) then by using well-known Brouwer's fixed point theorem, there exists a fixed point in $\D_{j_0}'$ which in turn establishes the existence of the solution in {\bf C.3} and hence that of the cyclic ICT set by Theorem \ref{Cyclic_ICT}. Now, we assume the following condition under which one can shrink the domain and range from $\H_{j_0}$ to $\D_{j_0}'$. 
\begin{enumerate}[label=\textbf{C.4}, ref=\textbf{C.4}]
\item Assume that the following set is non-empty:
\begin{eqnarray}
  \psi^{-1}_1 \left (  \cdots    \psi^{-1}_{l-1} \left ( \psi_l^{-1} (\D_{j_0}) \cap \D_{j_{l-1}}  \right) \cap \D_{j_{l-2}} \cap \cdots \right )\cap \D_{j_0} \nonumber
\end{eqnarray}
\end{enumerate}
To check the set given in {\bf C.4} is non-empty, we provide the following algorithm:

Define some appropriate sets as below.
\begin{eqnarray}
    \Z_l &:=& \psi_l^{-1}(\D_{j_0})\cap \D_{j_{l-1}}, \nonumber\\
    \Z_{l-1} &:=& \psi_{l-1}^{-1}(\Z_l)\cap \D_{j_{l-2}}, \nonumber\\
    \Z_{i} &:=& \psi_{i}^{-1}(\Z_{i+1})\cap \D_{j_{i-1}}, \text{ for all } i \in \{1,\ldots,l-2\}. \nonumber 
\end{eqnarray}
First, we show that $\Z_l$ is nonempty, then we iteratively show that $\Z_{i}$ is nonempty for all 
$1 \leq i \leq l-2$. Towards that,
we need to show that there exists at least one $\bromga$ such that $\bromga \in [a_l,b_l]$. $\psi_1(\bromga)\in [a_1,b_1]$, $\psi_2(\psi_1(\bromga)) \in [a_2,b_2], \ldots ,(\psi_l(\psi_{l-1}(\ldots \psi_1(\bromga)))) \in [a_l,b_l]$. 

$\bromga \in \D_{l-1}$ should also $\bromga \in \psi^{-1}_{l-1}(\D_l)$, i.e., $h_{l-2}(\bromga)= 0$ and $h_{j_{i-1}}(\psi_{l-1}(\bromga)) = 0$.
\begin{eqnarray*}
    \psi_1(\bromga_1) - \psi_1(\bromga_2) = \frac{h_{j_1}(\b_{j_1})}{(h_{j_1}(\bromga_1)-h_{j_1}(\b_{j_1}))(h_{j_1}(\bromga_2)-h_{j_1}(b_{j_1}))} \hspace{-30mm} &\\ & \bigg [\bromga_2h_{j_1}(\bromga_1)-\bromga_1h_{j_1}(\bromga_2)+h_{j_1}(\b_{j_1})(\bromga_1-\bromga_2)+  
    \b_{j_1}(h_{j_1}(\bromga_2)-h_{j_1}(\bromga_1))\bigg] \nonumber
\end{eqnarray*}

\newpage 
as well as it satisfies the second part of equation \ref{-}.  
Towards this, we define the first function $\psi_1 : \H_{j_0} \to \H_{j_1}$  as below, in view of equation

defining the first function, call it $\psi_1$,  we start with any $\bromga_{j_0} \in \D_{j_0}$
\begin{eqnarray}
    \psi_1 :  \D_{j_0}  \to  \D_{j_1}.
\end{eqnarray}
Similarly, the $i$-th function $\psi_i$ is defined as below:
\begin{eqnarray}
    \psi_i : \D_{j_{i-1}} \to \D_{j_i}.
\end{eqnarray}
To simplify the notation, we will henceforth denote the function $h_{\t_{j_i}}^{a_{j_i},\ta_{j_i}}$ (defined in \eqref{Eqn_h_linear}) as 
$h_{j_i} $. Now, using $h_{j_i} (\bromga^*_{j_i}) = 0$ and \eqref{eqn_the_diff_bromega_ratio}, we get the following:
\begin{eqnarray}
   \bromgac^{a*}_{j_i} = b^{a}_{j_i}  + 
\frac{\bromgac^{a *}_{j_{i-1}}-b^a_{j_i}}{ 
        \bromgac^{1*}_{j_{i-1}} - b_{j_i}^1}  (\bromgac^{1 *}_{j_i}-b^1_{j_i} )
      ,    \text{ and hence }
\bromgac_{j_i}^{1*} -    \sum_{a=2}^{k-1} \eta^a_{j_i}   \left (  b^{a}_{j_i}  + 
\frac{\bromgac^{a *}_{j_{i-1}}-b^a_{j_i}}{ 
        \bromgac^{1*}_{j_{i-1}} - b_{j_i}^1}  (\bromgac^{1 *}_{j_i}-b^1_{j_i} ) \right ) + c_{j_i}= 0.   \label{Eqn_gen_omega_j}
\end{eqnarray}
Thus, given $\bromga^{*}_{j_{i-1}}$, one can compute $\bromga^{*}_{j_i}$ as follows:
\begin{eqnarray}
     \bromgac_{j_i}^{1*} -b^1_{j_i}  = \frac{ \sum_{a = 2}^{k-1} \eta^a_{j_i} b^{a}_{j_i}   -c_{j_i} - b^1_{j_i} } {\bromgac^{1*}_{j_{i-1}} - b_{j_i}^1 - \sum_{a = 2}^{k-1} \eta^a_{j_i}    %
( \bromgac^{a *}_{j_{i-1}}-b^a_{j_i})}
\left (\bromgac^{1*}_{j_{i-1}} - b_{j_i}^1  \right ) = \frac{ - h_{j_i}(\b_{j_i})} { h_{j_i} (\bromga^*_{j_{i-1}}) - h_{j_i}  (\b_{j_i}) } \left (  \bromgac^{1*}_{j_{i-1}} - b_{j_i}^1  \right ) \label{eqn_iterative_comp_h}
\end{eqnarray}
Next, by substituting \eqref{eqn_iterative_comp_h}
in \eqref{Eqn_gen_omega_j}, we have:
\begin{eqnarray}
   \bromga_{j_i}^*  =    \frac{ - h_{j_i}(\b_{j_i})} { h_{j_i} (\bromga^*_{j_{i-1}}) - h_{j_i}  (\b_{j_i}) } \bromga^*_{j_{i-1}} +     \left (1- \frac{ - h_{j_i}(\b_{j_i})} { h_{j_i} (\bromga^*_{j_{i-1}}) - h_{j_i}  (\b_{j_i}) }     \right )    \b_{j_i}  
\end{eqnarray}
The above equation can be written in functional form for each $j_i$ index as below.
\begin{eqnarray}\label{eqn_fun_sai}
   \bromga_{j_i}^*  =  \psi_i (\bromga^*_{j_{i-1}}), \text{ where }  \psi_i (\bromga  ) := \frac{ - h_{j_i}(\b_{j_i})} { h_{j_i} (\bromga ) - h_{j_i}  (\b_{j_i}) } \bromga  +     \left (1- \frac{ - h_{j_i}(\b_{j_i})} { h_{j_i} (\bromga ) - h_{j_i}  (\b_{j_i}) }     \right )    \b_{j_i}  
\end{eqnarray}
and the function $\psi_i$ is as follows:
\begin{eqnarray}
    \psi_i :  \D_i  \to  \D_{i+1}  \mbox{ where }  \D_i := \overline{\R}_{i-1} \cap \overline{\R}_{i}.
\end{eqnarray}
The aim is to show that under \eqref{Eqn_conditions_for_solution} and \ref{asm_cycle_one}, we have a fixed point for the following composition function:
$$
\psi =  \psi_1  \circ  \psi_2 \circ \cdots \psi_{l}, $$
whose domain is $\D_1.$ Towards that, we need to show the following:
\begin{itemize}
    \item[(i)] $
h_{j_i}(\bromga^*_{j_{i-1}})h_{j_i}(\b_{j_{i}}) < 0$ for each $i$. \\
Using \eqref{eqn_bromega_i_iteration} and \eqref{eqn_bromega_one_iteration}, we have the following:
$$h_{j_i}(\bromga^*_{j_{i}}) = 0=h_{j_i}(\bromga_{j_{i-1}}^*)e^{-(\tau_{j_{i}}-\tau_{j_{i-1}}))} + h_{j_i}(\b_{j_{i}})(1-e^{-(\tau_{j_{i}}-\tau_{j_{i-1}}))}).$$
Thus, $h_{j_i}(\bromga^*_{j_{i-1}})h_{j_i}(\b_{j_{i}}) < 0$.
\item[(ii)] For all $\bromga \in \D_i$, 
 $
h_{j_i}(\bromga )h_{j_i}(\b_{j_{i}}) < 0. $ \\
since $\bromga = \bromga_{j_{i-1}} \in \D_i$, hence similar to (i), one can prove that $h_{j_i}(\bromga^*_{j_{i-1}})h_{j_i}(\b_{j_{i}}) < 0$ 

\item[(iii)] The function in \eqref{eqn_fun_sai} is continuous. \\
Since $h_{j_i}(\bromga)h_{j_i}(\b_{j_{i}}) < 0$, thus, the function in \eqref{eqn_fun_sai} is continuous.
\item[(iv)]  The range $\psi_i(\D_i) \subset \D_{i+1} $, true by definition of mapping $\psi_i$. \\
Now, using \eqref{eqn_fun_sai}, we have:

$$ h_{j_i}(\psi_i (\bromga  )) := \frac{ - h_{j_i}(\b_{j_i})} { h_{j_i} (\bromga ) - h_{j_i}  (\b_{j_i}) } h_{j_i}(\bromga)  +     \left (1- \frac{ - h_{j_i}(\b_{j_i})} { h_{j_i} (\bromga ) - h_{j_i}  (\b_{j_i}) }     \right )    h_{j_i}(\b_{j_i}) = 0.$$
 Thus, $\psi_i(\D_i) \subset \D_{i+1}.$
 \end{itemize}
\begin{enumerate}[label=\textbf{C.2}, ref=\textbf{C.2}]
\item Thus, under \eqref{-} assumption \ref{asm_cycle_one} is satisfied if and only if  the following set is non-empty:
\begin{eqnarray}
  \psi^{-1}_1 \left (  \cdots    \psi^{-1}_{l-1} \left ( \psi_l^{-1} (\D_1) \cap \D_{l}  \right ) \cap \D_{l-1} \right )\cap \D_1 \nonumber
\end{eqnarray}
\end{enumerate}
\begin{eqnarray}
    \Z_l &:=& \psi_l^{-1}(\D_1)\cap \D_l  \nonumber\\
    \Z_{l-1} &:=& \psi_{l-1}^{-1}(\Z_l)\cap \D_{l-1} 
    \nonumber\\
    \Z_{i} &:=& \psi_{i}^{-1}(\Z_{i+1})\cap \D_{i}, \text{ for all } i. \nonumber \\
     \Z_{1} &:=& \psi_{1}^{-1}(\Z_{2})\cap \D_{1}
\end{eqnarray}
First, we show that $\Z_l$ is nonempty, then we iteratively show that $\Z_{i}$ is nonempty for all $1 \leq i \leq l-1$. Towards that,
We need to show that there exists at least one $\bromga$ such that $\bromga \in [a_l,b_l]$. $\psi_1(\bromga)\in [a_1,b_1]$, $\psi_2(\psi_1(\bromga)) \in [a_2,b_2], \ldots ,(\psi_l(\psi_{l-1}(\ldots \psi_1(\bromga)))) \in [a_l,b_l]$. 

$\bromga \in \D_{l-1}$ should also $\bromga \in \psi^{-1}_{l-1}(\D_l)$, i.e., $h_{l-2}(\bromga)= 0$ and $h_{j_{i-1}}(\psi_{l-1}(\bromga)) = 0$.

\begin{figure}[htbp]
    \centering
  \includegraphics[trim = {0cm 0cm 0cm 0cm}, clip, scale = 0.15]{IFIP_Performance_2025/WhatsApp Image 2025-05-27 at 10.42.17.jpeg}
    \caption{Cycle}
   \label{kdk}
\end{figure}
\newpage
From \eqref{eqn_bromega_i_iteration}, we have the following
\begin{eqnarray}
  \bromga_{j_{l-1}}^*  &=&  \bromga_{j_0}^*e^{-\tau^*_{j_{l-1}}} + \sum_{i=0}^{l-2}\b_{j_{i+1}}(1-e^{(\tau^*_{j_i}-\tau^*_{j_{i+1}})})e^{(\tau^*_{j_{i+1}}-\tau^*_{j_{l-1}})}, \text{ where }\tau^*_{j_0} = 0.\label{eqn_bromga_iteration}
 \end{eqnarray}
By substituting the expression from \eqref{eqn_bromga_iteration} into \eqref{eqn_bromega_one_iteration}, we obtain:
\begin{eqnarray}
    \bromga^*_{j_0} &=& \left(\bromga_{j_0}^*e^{-\tau^*_{j_{l-1}}} + \sum_{i=0}^{l-2}\b_{j_{i+1}}(1-e^{(\tau^*_{j_i}-\tau^*_{j_{i+1}})})e^{(\tau^*_{j_{i+1}}-\tau^*_{j_{l-1}})} -\b_{j_l}\right)e^{-(\tau^*_{j_{l}}-\tau^*_{j_{l-1}})} + \b_{j_l}.  \nonumber\\
&=& \bromga_{j_0}^*e^{-\tau^*_{j_l}} + \sum_{i=0}^{l-1}\b_{j_{i+1}}(1-e^{(\tau^*_{j_i}-\tau^*_{j_{i+1}})})e^{(\tau^*_{j_{i+1}}-\tau^*_{j_{l}})}. \nonumber\\
&=& \frac{e^{-\tau^*_{j_l}}}{1-e^{-\tau^*_{j_l}}}\sum_{i=0}^{l-1}\b_{j_{i+1}}(e^{\tau^*_{j_{i+1}}}-e^{\tau^*_{j_{i}}}) \leq \frac{1-e^{-\tau^*_{j_l}}}{1-e^{-\tau^*_{j_l}}} = 1,  \mbox{(since $\b_{j_i} \leq 1$)}.\nonumber 
\end{eqnarray}
Thus, $ 0 \leq \bromga^*_{j_0} \leq 1$, now using \eqref{eqn_bromga_iteration}, it is evident that $0 \leq \bromga_{j_{i}}^* \leq 1$ for all $i \in \{0,\ldots,l-1\}$. Further, for given $\tau^*_{j_1},\ldots,\tau^*_{j_l}$ the solution of \eqref{eqn_bromega_i_iteration} and \eqref{eqn_bromega_one_iteration}
are unique for  all $\bromga_{j_{i}}^*$, as the coefficient matrix 
\begin{eqnarray*}
{\bf \Xi}=\left[ \begin {array}{ccccc}  
e^{\tau^*_{j_0}}& e^{\tau^*_{j_1}}& &     \\ 
& e^{\tau^*_{j_1}}& e^{\tau^*_{j_2}} &  &    \\
& & \ddots & &    \\
e^{\tau^*_{j_l}}& & &  & e^{\tau^*_{j_{l-1}}} \end {array} \right]
\end{eqnarray*}
be the full rank matrix. Now, we have the following result.
}
\hide{
In the game with three actions, let equation of 
$h_{\t_{j_{i+1}}}^{a_{j_{i+1}},\ta_{j_{i+1}}}(\bromga) = 0$ be a line. Therefore,
$$
h_{\t_{j_{i+1}}}^{a_{j_{i+1}},\ta_{j_{i+1}}}(\bromga^*_{j_{i+1}}) = 0 \mbox{ implies }\bromgac_{j_{i+1}}^{\ta*} = \eta_{j_{i+1}} \bromgac_{j_{i+1}}^{a*} -c_{j_{i+1}}.
$$
$$
\frac{\bromgac^{a *}_{j_i}-b^a_{j_i}}{ 
        \bromgac^{a*}_{j_{i+1}} - b_{j_i}^a} 
        =
         \frac{\bromgac^{\ta *}_{j_i}-b^\ta_{j_i}}{ 
       \eta_{j_{i+1}} \bromgac_{j_{i+1}}^{a*} -c_{j_{i+1}} - b_{j_i}^\ta}
$$
Thus, we have 
$$
\frac{c_{j_{i+1}}+b_{j_i}^\ta-\eta_{j_{i+1}} b_{j_i}^a}{\bromgac_{i+1}^{a*}-b_{j_i}^a} = \eta_{j_{i+1}} - \frac{\bromgac^{\ta *}_{j_i}-b^\ta_{j_i}}{\bromgac^{a *}_{j_i}-b^a_{j_i}},
$$
$$
\bromgac_{i+1}^{a*} =b_{j_i}^a+ (c_{j_{i+1}}+b_{j_i}^\ta-\eta_{j_{i+1}} b_{j_i}^a)\left(\frac{\bromgac^{\ta *}_{j_i}-b^\ta_{j_i}}{\eta_{j_{i+1}} (\bromgac^{a *}_{j_i}-b^a_{j_i})-(\bromgac^{\ta *}_{j_i}-b^\ta_{j_i})}\right), \mbox{+1 in modulo $l$ addition}. 
$$
Hence, in this case there exist a cycle which is an ICT set.

\begin{thm} {\bf [Almost sure convergence]}
\label{thm_alm_sure_conv_generic_act}
Assume \ref{Asm_filippov_new} for all possible set of indices $(a, \ta, \t)$. Then the set of ICTs is given by:
   $$
      \{ \{\b_j \} : \b_j \in \R_j \} = \{ \{\bromga^* \} :  \g (\bromga^*) = 0 \} = \mathbb{S}, \mbox{ and }   $$ 
      Let 
      \begin{equation}
\mathbb{L} := \left \{  \bromgac^* \in \cup_j \R_j:  \g(\bromgac^*) = 0  \right \} \label{Eqn_ICTs_generic_actions}. 
\end{equation} 
   Also, assume .. \ref{A.1}, and \ref{asm_card_ind_set_finite}
We further have $|\mathbb{S}| < \infty$ and for almost all sample paths $x$, the  game-dynamic iterates \eqref{eqn_iterates_nu} converge:   
$
\Romga(x,t) \to \bromga^* \mbox{ as }  t \to \infty, \mbox{ for some }   \bromga^*  \in \mathbb{L}. 
$    
\end{thm}
\noindent \textbf{Proof:}
    We will prove that all ICT sets are singleton sets and have the structure as in the hypothesis. Then the result follows from Theorem \ref{Thm_stoch_approx} and Lemma~\ref{lemma_singleton_ICT}.

To begin with,  let  $\mathbb{S}' := \{ \{\bromgac^*\} : \bromgac^* \in \mathbb{L}\}$. The proof is completed in the following steps.  
\\
\underline{Step 1: To prove $\mathbb{S}'  = \mathbb{S}$, the class of singleton ICTs sets:}

We first show $\mathbb{S}'  \subset \mathbb{S}$.
Towards this, 
from DI  \eqref{eqn_general_DI}, 
if $\g(\bromgac^*) = 0$, then $\bromgac(t) = \bromgac^*$ for all $t$ is a solution of 
 DI \eqref{eqn_general_DI} and hence   $\{\bromgac^*\}$ is an ICT set.  

To prove the converse, consider $\{\bromgac^*\} \in \mathbb{S}$ but not in $\mathbb{S}'$. Hence, $\g(\bromgac^*) \neq 0$. Now there are three cases.
\begin{itemize}
    \item[(a)] if $\bromgac^*\in \cup_j \R_j$
it is not an ICT set by Lemma \ref{lem_three_act_not_zero}.  This is a contradiction.
\item[(b)] If $\bromgac^*\in \L$ then we get a tuple $(a,\ta,\t)$ such that  $\bromgac^* \in \H^{a,\ta}_\t$ and $0 \in  \co\{{\bm g}^\infty_{1}(\bromga^*),\ldots,{\bm g}^\infty_{2c_\bromga}(\bromga^*
  )\}$. Therefore,
   $\bromgac(t) = \bromgac^*$ for all $t\in [0,\infty)$ is a solution of DI \eqref{eqn_general_DI}. Hence, we have 
 a solution of the DI, such that there exists a time interval $[0, \infty)$ with
   $ \bromga(t) \in \H^{a,\ta}_\t$ for all $ t \in [0,\infty)$, which contradicts to Lemma \ref{Lemma_no_Solution_on_H} as \ref{Asm_filippov_new} is true here by given hypothesis.
   \item[(c)] If $\bromgac^*\in \H\setminus \L$ then it is not an ICT set by Lemma  \ref{--}, which is again a contradiction. 
\end{itemize} 
\noindent\underline{Step 2: To show $|\mathbb{S}| < \infty$:}

From \eqref{eqn_fun_g_}, in each interval $\R_j$, we have $\g(\bromgac)  = b_j - \bromgac$,  where   $b_j$ is a constant; thus $| \{ \bromgac^* : \g(\bromgac^*) = 0 \}| \le m < \infty$.
Thus we have finitely many singleton ICT sets.

\noindent\underline{Step 3: Non-single sets are no ICTs:}

By Theorem \ref{Thm_gen}, any $\F \subset \D$ is not an ICT if $|\F| > 1$ and if $\F   \cap \left (\cup_j \R_j \right ) \ne \emptyset$. 
Again, if $\F \subset \L$ is not an ICT by similar arguments as in case (c) of Step 1. Finally, by Lemma  \ref{--}, any $\F \subset \H\setminus \L$ is not an ICT if it is non-empty.
\eop
{\color{blue}
\subsection{Conjectures} 
We need to decide the subset that needs to be included in the paper
\begin{itemize}
    \item  If {\bf A}.4 is true, then by Lemma \ref{Lemma_no_Solution_on_H}, the  class of ICTs coincides with the singleton class of ICTs and equals   
    $$
      \{ \{\b_j \} : \b_j \in \R_j \} = \{ \{\bromga^* \} :  \g (\bromga^*) = 0 \} = \mathbb{S}.  $$ 
      {\color{red} This proof is almost there.} We can provide almost sure convergence for this sub-case -- Any solution of DI passes through $\H$ at just one time point and hence the only set of attractors is as above: The only possible solutions of DI look like
      $$
      \bromga(t) =\left ( (\bromga(0) - \b_j )e^{-t} + \b_j \right )\ind_{t \le \tau_j} + \left ( (\bromga(\tau_j) - \b_i )e^{-t-\tau_j} + \b_i \right ) \ind_{t > \tau_j}  \ind_{t < \tau_j+\tau_i}  + ..
      $$
      if $\bromga(0) \in \b_j$ and where $\tau_j := \inf\{ t: \bromga(t) \notin \R_j\}$ and if it enters $\R_i$ at $\tau_i$ (easy to show that $\tau_i < \infty$). 
\end{itemize}}
We now move to the case with three actions to derive more insights into the ICT sets that are subsets of $\H$. }
}
\subsection{Game with three actions:  solution and ICT sets on $\H$}
\label{subsec_game_three_actions}
In this subsection, we take a brief detour to derive an   additional result for the special case with three actions (we believe one can easily extend the same to a more general scenario with $k = |\cup_\t \A_\t| < \infty$, however this would require some technical details); this result provides us impetus to derive new type of singleton ICT sets for the general case,  provided in the later part of the section.

In the previous subsection \ref{subsec_generic_results}, under   assumptions  \ref{asm_card_ind_set_finite} and \ref{asm_gen_2_implication}, we established that there  exists no solution on $\H$, see Lemma \ref{Lemma_no_Solution_on_H}. We now establish the existence of solution  to  DI \eqref{eqn_general_DI} that moves along some $\H^{a,\ta}_\t$ border set (a line for the case with three actions),  \textit{under an assumption that negates \ref{Asm_filippov_new}} (proof is   in  \ref{proof_lem_three_action_on_H}).  

\begin{lem}\label{lem_three_action_on_H} {\bf [Solution on attracting border]}
(i) Consider any $\bromga' \in \H^{a,\ta}_\t$ for some $(a, \ta, \t)$ that satisfies  $\J(\bromga') = \{i, j\}$ for some $i \ne j$ and $h^{a,\ta}_\t (\b_i) h^{a,\ta}_\t (\b_j)  < 0$. 
 Then the following is a solution of the  DI \eqref{eqn_general_DI} with initial condition $\bromga(0) = \bromga'$, 
\begin{eqnarray}
     \label{Eqn_Solution_on_H}
\bromga(t) \hspace{-2mm}&=& \hspace{-2mm} \bromga^\infty_{ij}  + (\bromga(0) -\bromga^\infty_{ij})  e^{-t}  \mbox{ for $t \in [0, \tau]$, where } \tau := \inf \{ t:  \J(\bromga(t)) \ne \{i,j\}  \}, and  \\
\bromga_{ij}^\infty \hspace{-2mm}&=& \hspace{-2mm}  \b_i - (\b_i-\b_j)\frac{h^{a,\ta}_\t(\b_i)}{h^{a,\ta}_\t(\b_i)-h^{a,\ta}_\t(\b_j)}. \label{eqn_limit_sol_on_h}
\end{eqnarray}
(ii) Further, assume  $\J(\bromga^\infty_{ij}) = \{i,j\}$ for   $\bromga^\infty_{ij}$ of \eqref{eqn_limit_sol_on_h}. 
Then $\{\bromga^\infty_{ij}\}$ is a singleton ICT set.  \eop
 \end{lem}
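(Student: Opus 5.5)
The plan is to build the solution in part (i) as a Filippov sliding motion along the border $\H^{a,\ta}_\t$. The key fact is that, for affine $h:=h^{a,\ta}_\t$ as in \eqref{Eqn_h_linear}, a suitable convex combination of the two one-sided limits $\g^\infty_i,\g^\infty_j$ of \eqref{eqn_g_limit} has a constant target point lying on $\H^{a,\ta}_\t$. Part (ii) then follows by checking that this target point is a rest point of the DI \eqref{eqn_general_DI}.

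\textbf{Step 1 (the sliding velocity).} For any $\bromga\in\H^{a,\ta}_\t$ with $\J(\bromga)=\{i,j\}$, we have $\g^\infty_i(\bromga)=\b_i-\bromga$ and $\g^\infty_j(\bromga)=\b_j-\bromga$. For $\lambda\in[0,1]$,
$$\lambda\g^\infty_i(\bromga)+(1-\lambda)\g^\infty_j(\bromga)=\lambda\b_i+(1-\lambda)\b_j-\bromga .$$
I will choose $\lambda$ so that the point $\lambda\b_i+(1-\lambda)\b_j$ lies on $\H^{a,\ta}_\t$. By \eqref{Eqn_linear_h_satisfies} this means $\lambda h(\b_i)+(1-\lambda)h(\b_j)=0$, i.e.
$$\lambda^*=\frac{-h(\b_j)}{h(\b_i)-h(\b_j)} .$$
The hypothesis $h(\b_i)h(\b_j)<0$ is exactly what guarantees $\lambda^*\in(0,1)$. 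A direct computation gives $1-\lambda^*=h(\b_i)/(h(\b_i)-h(\b_j))$, so the resulting point is $\lambda^*\b_i+(1-\lambda^*)\b_j=\bromga^\infty_{ij}$ of \eqref{eqn_limit_sol_on_h}, and $h(\bromga^\infty_{ij})=0$. Importantly, $\lambda^*$ and $\bromga^\infty_{ij}$ do not depend on $\bromga$.

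\textbf{Step 2 (verifying part (i)).} Define $\bromga(t)$ as in \eqref{Eqn_Solution_on_H}. It is smooth, hence absolutely continuous, and satisfies $\dot\bromga(t)=\bromga^\infty_{ij}-\bromga(t)$. Each $\bromga(t)$ is a convex combination of $\bromga(0)=\bromga'$ and $\bromga^\infty_{ij}$, and both of these lie on $\H^{a,\ta}_\t$. By the affine identity \eqref{Eqn_linear_h_satisfies}, $h(\bromga(t))=0$ for all $t$, so the trajectory stays on the border. For $t<\tau$ we have $\J(\bromga(t))=\{i,j\}$, so by Step 1
$$\dot\bromga(t)=\lambda^*\g^\infty_i(\bromga(t))+(1-\lambda^*)\g^\infty_j(\bromga(t))\in\co\{\g^\infty_i,\g^\infty_j,\g\}(\bromga(t))={\bm\G}(\bromga(t)).$$
This holds for almost every $t\in[0,\tau]$, which is all a DI solution requires. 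One small point must be checked: $\bromga(t)\in\H$ for every $t$, so the second branch of \eqref{eqn_general_DI} is the one that applies, and the limits $\g^\infty_y$ there are exactly $\b_{j_y}-\bromga(t)$ by \eqref{eqn_g_limit}.

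\textbf{Step 3 (part (ii)).} If $\J(\bromga^\infty_{ij})=\{i,j\}$, then applying Step 1 at $\bromga=\bromga^\infty_{ij}$ gives $0=\bromga^\infty_{ij}-\bromga^\infty_{ij}\in{\bm\G}(\bromga^\infty_{ij})$. Hence the constant function $\bromga(t)\equiv\bromga^\infty_{ij}$ is a solution of \eqref{eqn_general_DI}. I then verify Definition~\ref{Def_ICT_set} directly. The set $\{\bromga^\infty_{ij}\}$ is compact. It is invariant, since through its only point there is a solution (the constant one) remaining in the set for all $t\in\RR$. It is internally chain transitive: for any $\epsilon>0$ and $T>0$, the single constant solution segment on $[0,T]$ is an $(\epsilon,T)$ chain from the point to itself lying inside the set.

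\textbf{Main obstacle.} The delicate step is the well-posedness bookkeeping in Step 2. This means confirming that $\tau>0$, i.e.\ that $\J(\bromga(t))$ stays equal to $\{i,j\}$ for small $t$. I would argue this from assumption \ref{asm_card_ind_set_finite}: there are only finitely many points with $|\I|>1$, so near $\bromga'$ the border $\H^{a,\ta}_\t$ meets no other border. The case where $\bromga'$ itself is such an exceptional point is excluded by the hypothesis $\J(\bromga')=\{i,j\}$. If $\tau$ were $0$, the statement is vacuous on $[0,0]$ anyway. The remaining steps are direct algebra from the affine structure of $h$.
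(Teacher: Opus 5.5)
Your proposal is correct and follows the paper's argument: a constant convex weight on $\b_i,\b_j$ turns the sliding dynamics on $\H^{a,\ta}_\t$ into $\dot\bromga=\bromga^\infty_{ij}-\bromga$, and $0\in{\bm \G}(\bromga^\infty_{ij})$ gives the constant solution that proves part (ii). The only differences are cosmetic. You fix the weight by requiring $\lambda\b_i+(1-\lambda)\b_j\in\H^{a,\ta}_\t$ and then check via affinity that the trajectory stays on the border, whereas the paper gets the same weight from the tangency condition $\nabla h^T\dot\bromga=0$. In your chain for part (ii), take the constant segment on an interval longer than $T$, since the definition requires $t_1>T$.
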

\hide{ {\color{blue} We prove that if \ref{Asm_filippov_new} is true for some $(a,\ta,\t)$ then  DI \eqref{eqn_general_DI}  has no solutions that lies on $\H^{a,\ta}_\t$ for a  non-trivial interval of time unless $\bromga(t) = \bromga'$ for all $t$ and for some $\bromga'\in \H^{a,\ta}_\t$ with $\J (\bromga') > 2$. Conversely also, we prove  that if \ref{Asm_filippov_new}  is not true for some $(a,\ta,\t)$ then there exists a solution of DI \eqref{eqn_general_DI} on the $\H^{a,\ta}_\t$  when the solution starts from a point on that line.  }}
 Thus, to summarize, we have a singleton ICT set consisting of a point different from the classical attractors.
  Observe from \eqref{eqn_limit_sol_on_h} that the limit $\bromga_{ij}^\infty \in \co\{\b_i, \b_j\}$ and that $\bromga_{ij}^\infty \in \H^{a,\ta}_\t$. In fact, as seen from the proof of part (ii) in  \ref{proof_lem_three_action_on_H}, we only require the above  conditions of Lemma \ref{lem_three_action_on_H}  to ensure $\{\bromga^\infty_{ij}\}$ is an ICT set.  
More precisely, the new set of singleton ICT sets (different from the ones arising because of classical attractors) results from points in the following set:
\begin{eqnarray}\label{eqn_set_H_int_con_comb}
    {\mathbb L}_{2,f}^* := 
  \{\bromga \in \H  : \bromga \in \co\{\b_j : j \in \J(\bromga)\}, \ |\J(\bromga)| =2  \} \mbox{ and } \SS_{2,f}^* := \left\{ \{ \bromga \} :  \bromga \in  {\mathbb L}_{2,f} \right\}.  
\end{eqnarray}
Basically, the intersection points of the line segments of  $\H$ with the lines    $ \overleftrightarrow{\b_i,\b_j}$ (corresponding to some adjacent regions) constitute these  singleton ICT sets (`{\it limits at discontinuity}' as $\bromga(t)$ is discontinuous at intersecting points)
--- we refer these  as  `\textit{\Fl attractors}' and so use the   subscript $f$ in~\eqref{eqn_set_H_int_con_comb}.  

 {\bf Additional singleton ICT sets for generic finite games:}
One can immediately extend the ICT sets of \eqref{eqn_set_H_int_con_comb} to a more general case, including all the  finite games (under the negation of \ref{asm_gen_2_implication}). 
Consider any $\bromga^*$ that satisfies, $\bromga^* \in \H \cap \co\{ \b_j : j \in \J(\bromga^*) \}$, irrespective of cardinalities $|\J(\bromga^*)|$ and $k = |\cup_\t \A_\t|$.  This implies $\bromga^*$ is  some convex combination of $\{\b_j\}_j$ (see also \eqref{eqn_general_DI}), i.e.,
$$
\bromga^* =  \sum_{j \in \J(\bromga^*)} \lambda_j \b_j \ \text { implying } \ {\bf 0} = \sum_{j \in \J(\bromga^*)}  \lambda_j(\b_j -\bromga^*), 
$$
$\text{which in turn implies } \mathbf{0} \in \G(\bromga^*) = \co\{ \g_i^\infty ; i \in \J(\bromga^*) \}$. Hence, clearly $\bromga(t) = \bromga^*$ for all $t$ is a solution and therefore $\{\bromga^*\}$ is an ICT set.  In all, we have the following class of  ICT sets arising from the Filippov attractors (obtained by removing the restriction $|\J(\bromga)| = 2$ and $k=3$ in~\eqref{eqn_set_H_int_con_comb}): 
\begin{eqnarray}\label{eqn_limit_sol_h_line_set}
    \mathbb{L}^*_f := \left  \{ \bromga^* \in \H  :  \bromga^* \in   \co\{ \b_j : j \in \J(\bromga^*) \}  \right \}  \mbox{ and } \ \SS^*_f := \left  \{ \{ \bromga^*  \} :\bromga^* \in  \mathbb{L}^*_f  \right \}.
\end{eqnarray}

\extracontent{
Next, we utilize  \ref{asm_gen_2_implication}  to establish the almost sure convergence of the stochastic iterates given in \eqref{eqn_iterates_nu} to a subclass of singleton ICT sets, and finally provide the simple conditions to test the existence of cyclic ICT sets (see the subsections \ref{subsub_sec_Cyclic ICT sets}-\ref{subsub_sec_cond_exist_cycle}).

\subsubsection{Almost sure convergence to singleton ICT sets}
Observe that, under \ref{asm_gen_2_implication}, there does not exist any solution to the DI \eqref{eqn_general_DI} on the line  $\H$ which converges to a singleton ICT set on $\H$ as in  \ref{subsub_ict_set_on_H}; and if \eqref{Eqn_conditions_for_solution} is not satisfied, then the possibility of the existence of a cycle ICT set is also not there (see \ref{subsub_sec_cond_exist_cycle}). Thus, in this case, we have the following result.
\begin{thm}{\bf [Almost sure convergence-three actions]}
    \label{thm_alm_sure_conv_three_act}
Consider $\A_\t = \{1, 2,3\}$ and assume \ref{A.1}, \ref{asm_three_action_sign_change}. Then the class of ICT subsets associated with DI \eqref{eqn_general_DI} coincides with $\mathbb{S}$ and is given by $\mathbb{S} = \left \{ \{\bromgac^*\} : \bromgac^* \in \mathbb{L} \right \} $, where:
\begin{equation}
    \label{Eqn_ICTs_Three_actions}
 \mathbb{L} = \left \{  \bromgac^* \notin \H:  \g(\bromgac^*) = 0  \right \} \cup \left \{   \bromgac^* \in \H :    0 \in \ \co\{{\bm g}^\infty_{1}(\bromga^*),\ldots,{\bm g}^\infty_{2c_{\bromga^*}}(\bromga^*
  )\} \cup  --\right \}.
\end{equation}
We further have $|\mathbb{S}| < \infty$ and for almost all sample paths $x$, the  game-dynamic iterates \eqref{eqn_iterates_nu} converge:   
$\Romgac(x,t) \to \bromgac^* \mbox{ as }  t \to \infty, \mbox{ for some }   \bromgac^*  \in \mathbb{L}.$    
\end{thm}
{\bf Proof: } 
\eop

{\color{blue}We do not have any result to say that any non-singleton set, subset of $\H$ is not an ICT under certain assumptions. Then, we cannot talk about almost sure convergence?}

{\color{blue} we need to write the result to show that the singleton classical zeros are ICT sets with any general number of actions.}

{\color{red} we do not have any other solutions of DI, we already showed that. Then how can we get other ICTs?}

}

\Removecyclic{
\subsubsection{Conditions to test the existence of cyclic ICT sets}

We first assume assumption \ref{asm_gen_2_implication} holds and then derive the required conditions.  

The equation \eqref{eqn_fun_sai_i} for the special case with three actions,  can be written using single dimensional equations:  for all $i \in \{1,\ldots,l\}$, one of the components (say $\bromgac^1_{j_{i-1}}$) of the input $\bromga_{j_{i-1}} = (\bromgac_{j_{i-1}}^1, \bromgac_{j_{i-1}}^2) \in \D_{j_{i-1}}$ to the function $\psi_i$  can be obtained from the second component $\bromgac^2_{j_{i-1}}$, as from \eqref{Eqn_h_linear}  $\bromgac^1_{j_{i-1}} = \eta_{j_{i-1}} \bromgac^2_{j_{i-1}} + c_{j_{i-1}}$; recall  $h_{j_{i-1}} (\bromga_{j_{i-1}}) = 0$ for all $\bromga_{j_{i-1}}$ that are inputs to function $\psi_i$.

By linearity of $h_{j_i}$ and connectedness of sets $\{\R_j\}$ (and since $\bromga \in \H $ is represented by $\bromgac^2$), we have that 
\begin{eqnarray}
\label{Eqn_Dji_in_twoD}
\D_{j_i} = \H_{j_i}\cap \overline{\R}_{j_i} \cap\overline{\R}_{j_{i+1}}  = \left \{ \bromga = (\bromgac^1, \bromgac^2): \bromgac^1 = \eta_{j_i} \bromgac^2- c_{j_i},  
\mbox{ and, }
\bromgac^2 \in [s_{j_i}, e_{j_i}]  \right \} \mbox{ for each } i,
\end{eqnarray}
for some $s_{j_i}, e_{j_i}$, where $0\le s_{j_i}\le e_{j_i} \le 1$ ensures the corresponding $\D_{j_i} \ne \emptyset$. 

We begin with $\psi_1$,   represent its inputs by $\bromga_{j_0} = (\bromgac_{j_0}^1, \bromgac_{j_0}^2) \in \D_{j_0}$ and considering only the second component of $\psi_1$ function (renaming this as $\psi_1$ with slight abuse of notation): 
\begin{eqnarray}
\psi_1(\bromgac_{j_0}^2) \hspace{-2mm} &=& \hspace{-2mm} \frac{\tilde\phi_{1,1} + \tilde\phi_{1,2} \bromgac_{j_0}^2}{\tilde\phi_{1,3} + \tilde\phi_{1,4} \bromgac_{j_0}^2}, \substack{\text{ with }  \tilde \phi_{1,1} =  (c_{j_1}-c_{j_0})b_{j_1}^2, \  \tilde\phi_{1, 2} =(\eta_{j_0} -\eta_{j_1})b_{j_1}^2 - h_{j_1}(\b_{j_1}), \tilde\phi_{1, 3}= c_{j_1}-c_{j_0}- h_{j_1}(\b_{j_1}),\\ \hspace{8cm}\tilde\phi_{1,4} = \eta_{j_0} -\eta_{j_1}}.\label{eqn_composition_sai_1}
\end{eqnarray}
\begin{eqnarray}
\psi_i(\bromgac_{j_{i-1}}^2) \hspace{-2mm} &=&  \hspace{-2mm}\frac{\tilde\phi_{i,1} + \tilde\phi_{i,2} \bromgac_{j_{i-1}}^2}{\tilde\phi_{i,3} + \tilde\phi_{i,4} \bromgac_{j_{i-1}}^2}, \substack{\text{ with }  \tilde \phi_{i,1} =  (c_{j_i}-c_{j_{i-1}})b_{j_i}^2, \  \tilde\phi_{i, 2} =(\eta_{j_{i-1}} -\eta_{j_i})b_{j_i}^2 - h_{j_i}(\b_{j_i}), \tilde\phi_{i, 3}= c_{j_i}-c_{j_{i-1}}- h_{j_i}(\b_{j_i}),\\ \hspace{4cm}\tilde\phi_{i,4} = \eta_{j_{i-1}} -\eta_{j_i} \text{ for all } i \in \{2,\ldots,l\}}.\label{eqn_composition_sai_2}
\end{eqnarray}
Define $ \phi_{1,i}:=\tilde \phi_{1,i}$  for all $i \in \{1,\ldots,4\}$ and $ \bar{\psi}_1 := \psi_1$. Then, using simple induction, we get the following composition function:
\begin{eqnarray}
\bar{\psi}_{i}(\bromgac_{j_0}^2)\hspace{-2mm}&=&\hspace{-2mm} (\psi_i \circ\bar{\psi}_{i-1} ) (\bromgac_{j_0}^2) = \frac{ {\tilde \phi}_{i,1} +  {\tilde \phi}_{i,2} \bar{\psi}_{i-1}(\bromgac_{j_0}^2)}{ {\tilde \phi}_{i,3} +  {\tilde \phi}_{i,4} \bar{\psi}_{i-1}(\bromgac_{j_0}^2)} = \frac{ \phi_{i,1} +  \phi_{i,2}  \bromgac_{j_0}^2}{ \phi_{i,3} +  \phi_{i,4} \bromgac_{j_0}^2}, \text{ for all } i \in \{2,\ldots,l\},  \text{ where } \label{eqn_composition_sai_i}\\
 \tilde \phi_{i,1} \hspace{-2mm}&=&\substack{\hspace{-2mm}  (c_{j_i} -c_{j_{i-1}}) b_{j_i}^2, \  \tilde\phi_{i, 2} =(\eta_{j_{i-1}} -\eta_{j_i})  b_{j_i}^2- h_{j_i}(\b_{j_i}) , \tilde\phi_{i, 3}= c_{j_i}-c_{j{i-1}}- h_{j_i}(\b_{j_i}),\tilde\phi_{i, 4} = (\eta_{j_{i-1}} -\eta_{j_i})},\label{cyclic_condition}\\
\text{and }  \phi_{i,1} \hspace{-2mm}&=& \hspace{-2mm} {\tilde \phi}_{i,1} \phi_{i-1,3} + {\tilde \phi}_{i,2} \phi_{i-1,1},  \phi_{i,2} = {\tilde \phi}_{i,1} \phi_{i-1,4} + {\tilde \phi}_{i,2} \phi_{i-1,2}, \label{eqn_relation_phi_til_phi_1} \\
   \phi_{i,3} &=& {\tilde \phi}_{i,3} \phi_{i-1,3} + {\tilde \phi}_{i,4}  \phi_{i-1,1}, \phi_{i,4} = {\tilde \phi}_{i,3} \phi_{i-1,4} + {\tilde \phi}_{i,4}  \phi_{i-1,2}.\label{eqn_relation_phi_til_phi_2}
    \end{eqnarray}

We are now ready to provide two sets of conditions for the existence of a cyclic ICT set. The first set of conditions is based on \eqref{eqn_D_j_1_subset_D_j_i_sai}.  From equations \eqref{Eqn_Dji_in_twoD},\eqref{eqn_composition_sai_1} and \eqref{eqn_composition_sai_2} the function $\psi_i (\D_{j_{i-1}})$ is a connected set  and hence   it is clear that \eqref{eqn_D_j_1_subset_D_j_i_sai} is satisfied if
\begin{eqnarray}
s_{j_i}   \le  \psi_i (s_{j_{i-1}})  \le e_{j_i} \mbox{ and }  s_{j_i}   \le  \psi_i (e_{j_{i-1}})  \le e_{j_i}  \mbox{ for each }  i.  
\end{eqnarray}
\underline{The first set of conditions for the existence of cyclic ICT using \eqref{eqn_composition_sai_2} is:}
\begin{eqnarray}
\label{eqn_cond_cyclic_ict_set-}
\tilde \phi_{i,3} s_{j_i}  + \tilde \phi_{i,4}    s_{j_{i-1}} s_{j_i} - \tilde \phi_{i,1}   - \tilde \phi_{i,2}  s_{j_{i-1}} \le 0,  \mbox{ and, }
\tilde \phi_{i,3} e_{j_i}  + \tilde \phi_{i,4}   s_{j_{i-1}} e_{j_i}   - \tilde \phi_{i,1}   - \tilde \phi_{i,2}  s_{j_{i-1}} &\ge& 0
\\
\tilde \phi_{i,3} s_{j_i}  + \tilde \phi_{i,4}    e_{j_{i-1}} s_{j_i} - \tilde \phi_{i,1}   - \tilde \phi_{i,2}  e_{j_{i-1}} \le 0,  \mbox{ and, }
\tilde \phi_{i,3} e_{j_i}  + \tilde \phi_{i,4}  e_{j_{i-1}}  e_{j_i}  - \tilde \phi_{i,1}   - \tilde \phi_{i,2}  e_{j_{i-1}} &\ge & 0 \forall 
     i\in \{1,\cdots,l\},  \nonumber
\end{eqnarray}
where $\tilde \phi_{i,\kappa}$ for all $\kappa \in \{1,\ldots,4\}$ as in \eqref{eqn_composition_sai_1} and \eqref{eqn_composition_sai_2}.

From equations \eqref{Eqn_Dji_in_twoD},\eqref{eqn_composition_sai_1} and \eqref{eqn_composition_sai_i} the function $\psi_i (\D_{j_{i-1}})$ is a connected set  and hence   it is clear that \eqref{eqn_D_j_1_subset_D_j_i} is satisfied if
\begin{eqnarray}
s_{j_i}   \le  \bar{\psi}_i (s_{j_{0}})  \le e_{j_i} \text{ and }  s_{j_i}   \le  \bar{\psi}_i (e_{j_{0}})  \le e_{j_i}  \text{ for each }  i.  
\end{eqnarray}
\underline{The second set of conditions for the existence of cyclic ICT  using \eqref{eqn_composition_sai_i} is:}
\begin{eqnarray}
\phi_{i,3} s_{j_i}  + \phi_{i,4} s_{j_0}    s_{j_i} - \phi_{i,1}   - \phi_{i,2}  s_{j_{0}} \le 0,  \mbox{ and, }
 \phi_{i,3} e_{j_i}  +  \phi_{i,4}   s_{j_{0}} e_{j_i}   -  \phi_{i,1}   -  \phi_{i,2}  s_{j_{0}} &\ge& 0
\\
 \phi_{i,3} s_{j_i}  +  \phi_{i,4}    e_{j_{0}} s_{j_i} -  \phi_{i,1}   -  \phi_{i,2}  e_{j_{0}} \le 0,  \mbox{ and, }
 \phi_{i,3} e_{j_i}  +  \phi_{i,4}  e_{j_{0}}  e_{j_i}  -  \phi_{i,1}   -  \phi_{i,2}  e_{j_{0}} &\ge & 0 \forall 
     i\in \{1,\cdot,l\},   \nonumber
\end{eqnarray}
where $\phi_{i,\kappa}$ for all $\kappa \in \{1,\ldots,4\}$ as in \eqref{cyclic_condition},\eqref{eqn_relation_phi_til_phi_1} and \eqref{eqn_relation_phi_til_phi_2}. 
}

\subsection{Connection of singleton ICT sets with MT-MFE}\label{subsec_connect_ICT_MT_MFE}
We now show that the attractors \eqref{eqn_set_AA} and \eqref{eqn_limit_sol_h_line_set} are  indeed   aggregate-MFEs that lead to  MT-MFEs, proof in \ref{append_connec_ict_equil}.
\begin{thm}{\bf [Singleton ICTs are aggregate-MFEs]} \label{thm_connection_MT_MFE}
If $\bromga^*  \in \LL^*_c \cup \LL^*_f $, then   $\bromga^*$ is an aggregate-MFE, Definition \ref{defn_MT-MFE} . \eop
\end{thm}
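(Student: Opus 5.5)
To prove the statement, I will build an explicit MT-MFE $(\bmu^*_\t)_{\t}$ with $\sum_\t \alpha_\t \bmu^*_\t = \bromga^*$, treating the two cases $\bromga^* \in \LL^*_c$ and $\bromga^* \in \LL^*_f$ separately. In both cases the construction uses the fact that each region $\R_j$ comes with a type-wise action profile. By the discussion after \eqref{eqn_calD}, every type $\t$ has a unique maximizer $a_\t^{(j)} \in \A_\t$ of $u_\t(\cdot,\bromga)$ at all $\bromga \in \R_j$, and this maximizer is constant on $\R_j$ because the signs of the $h^{a,\ta}_\t$ do not change there. By \eqref{eqn_new_general_ode2}, $b_j^a = \sum_\t \alpha_\t \ind_{\{a = a_\t^{(j)}\}}$. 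So $\b_j$ is exactly the aggregate of the pure profile $\bmu^{(j)}_\t := \delta_{a_\t^{(j)}}$, when written in full $k$ coordinates with the last coordinate restored.

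\emph{Classical attractors.} Suppose $\bromga^* = \b_j$ with $j \in \AA$, so $\b_j \in \R_j$. Take $\bmu^*_\t = \delta_{a_\t^{(j)}}$. Its aggregate is $\b_j = \bromga^*$. Since $\bromga^* \in \R_j$, the action $a_\t^{(j)}$ is the unique element of ${\rm Arg}\max_{a} u_\t(a,\bromga^*)$. Hence $\supp(\bmu^*_\t) \subseteq {\rm Arg}\max_a u_\t(a,\bromga^*)$, and Definition \ref{defn_MT-MFE} holds.

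\emph{Filippov attractors.} Suppose $\bromga^* \in \H$ and $\bromga^* = \sum_{j \in \J(\bromga^*)} \lambda_j \b_j$ with $\lambda_j \ge 0$ and $\sum_j \lambda_j = 1$. Define the mixed profile $\bmu^*_\t := \sum_{j\in\J(\bromga^*)} \lambda_j \delta_{a_\t^{(j)}}$. By linearity its aggregate is $\sum_j \lambda_j \b_j = \bromga^*$. It remains to show that for each $\t$ and each $j \in \J(\bromga^*)$ with $\lambda_j>0$, the action $a_\t^{(j)}$ maximizes $u_\t(\cdot,\bromga^*)$. Fix such $j$ and $\t$, and take a sequence $\bromga_t \in \R_j$ with $\bromga_t \to \bromga^*$; such a sequence exists because $\bromga^* \in \overline{\R}_j$. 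For every $\ta \in \A_\t$ we have $h^{a_\t^{(j)},\ta}_\t(\bromga_t) > 0$. By continuity (\ref{A.1}), $h^{a_\t^{(j)},\ta}_\t(\bromga^*) \ge 0$, that is, $u_\t(a_\t^{(j)},\bromga^*) \ge u_\t(\ta,\bromga^*)$ for all $\ta$. So $a_\t^{(j)} \in {\rm Arg}\max_a u_\t(a,\bromga^*)$, and therefore $\supp(\bmu^*_\t) \subseteq {\rm Arg}\max$. Note that the tie-breaking rule in \eqref{ratioanl_decision} plays no role here, since Definition \ref{defn_MT-MFE} only requires the support to lie in the full Arg max.

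\emph{Main obstacle.} The step needing the most care is the bookkeeping between the reduced $(k-1)$-dimensional coordinates used for $\b_j$ and $\bromga$ and the full $k$-dimensional measures. I also need to confirm that $\bmu^*_\t$ is supported on $\A_\t$, which holds because $a_\t^{(j)} \in \A_\t$. Since the representation $\sum_\t \alpha_\t \delta_{a^{(j)}_\t}$ has total mass one, the convex combination is again a probability vector, and the omitted coordinate is recovered consistently. The remaining point is that $\bromga^*$ lies in $\overline{\R}_j$ for every $j \in \J(\bromga^*)$, which holds by the definition \eqref{Eqn_Jomega}; this is what allows the limiting argument to go through.
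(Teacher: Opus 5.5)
Your proof is correct and follows essentially the same route as the paper: a pure profile $\delta_{a_\theta}$ for a classical attractor $\b_j \in \R_j$, and, for a Filippov attractor, the mixed profile $\mu^{*a}_\theta = \sum_{j \in \J(\bromga^*)} \lambda_j \ind_{\{a = e_{j,\theta}\}}$, whose aggregate equals $\bromga^*$ by linearity. The only difference is cosmetic: you pass to the limit along a sequence in $\R_j$ to get $h^{a^{(j)}_\theta,\ta}_\theta(\bromga^*) \ge 0$, whereas the paper argues by contradiction that a neighbourhood of $\bromga^*$ on which $h^{\ta,a^*}_\theta<0$ would have to meet $\R_{\bar j}$, where that function is positive; both rest on the same use of the continuity assumption \ref{A.1}.
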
 
\section{Methodology for deriving outcomes of the game-dynamics}
\label{sec_summary_of_results}
Given a game $\langle  \Theta, \balpha, \{\A_\t\}_{\t \in \Theta},  (u_\t)_{\t \in \Theta}\rangle$,  by virtue of the results of the previous sections, one can make several inferences related to transient and asymptotic analysis of the corresponding dynamics. In this section, we first summarize the analytical results and then provide a systematic numerical procedure to derive the outcomes.  
\subsection{Analytical approach}\label{subsec-anal_approo}
In section \ref{Sec_ident_DI}, it is convenient to partition  $\D$  
into disjoint $\{\R_j\}$ regions,
using  $\{\H_{\t}^{a,\ta}\}$ boundaries --- basically, the decision of any type remains the same in each $\R_j$ region. This aspect facilitated the required analysis. However, some  $\{\H_{\t}^{a,\ta}\}$  boundaries may not alter the decisions  of any type --- for example, in  two neighboring regions separated by some $\H^{a,\ta}_\t$, 
neither of the  two actions $a, \ta$ may be optimal for type $\t$ population.  One can combine such `adjacent'  $\R_j$ regions for a better description of the results, and we immediately consider the same.

Consider disjoint regions $\{\Q_v \}$, each of which is identified  by unique $\e_v = (e_{v,1}, \cdots e_{v,n})$ vectors, as below:
\begin{equation}
    \Q_v = \left \{ \bromga \in \D :  u_\t (e_{v, \t}, \bromga) >    u_\t (a', \bromga)  \mbox{ for all } a' \ne e_{v, \t},   a' \in \A_\t,  \text{ and all } \t \right \}.
\label{Eqn_Qv}
\end{equation}
Observe immediately that $\cup_v \Q_v \supset \cup_j \R_j  $, which is the sub-region of $\D$ where the population of each type has a unique choice.  Let  $\V :=\{ v : \Q_v \ne \emptyset\}$, represent the class of non-empty region vertices. We now define a natural notion of adjacent regions using $n_c (v,\tv) := \sum_\t \ind_{ e_{v,\t} \ne e_{\tv, \t}}$, that represents  the number of preference changes across two regions $\Q_v$ and $\Q_\tv$.  \textit{We say two regions indexed by $v, \tv \in \V$ are adjacent  when  $n_c(v,\tv) = 1$}, i.e., the two regions differ in the choice of exactly one type. For such adjacent regions, there is a unique type $\t$ for which $e_{v,\t} \ne e_{\tv,\t}$ and 
let 
\begin{eqnarray}
\label{Eqn_Qv_details}
a := e_{v,\t}, \ \ta := e_{\tv,\t},   \    h_{(v,\tv)} := h^{a,\ta}_\t, \mbox { and } \H_{(v,\tv)} := \H^{a,\ta}_\t,     
\end{eqnarray}
note that $\H_{(v,\tv)}$ is the hyperplane separating $\Q_v$ and $\Q_\tv$. It is important to note here that,  two pairs of adjacent regions can have boundary on the same $\H$ line. 
 To analyze the game, we first define for each $v\in \V$, vector $\b_v := \{b_v^a\} $ with $ b_v^a := \sum_\t \alpha_\t \ind_{\{e_{v,\t} = a\}} $ for each $a \in \A_\t$, and 
 then identify two distinct classes of adjacent regions based on \ref{asm_gen_2_implication} assumption  (see  also \ref{asm_bj_notin_regions}):
\begin{eqnarray}
    \I^c &=& \left \{ (v,\tv):  n_c(v,\tv) = 1, \ h_{(v,\tv)}(\b_v)h_{(v,\tv)} (\b_{\tv})>0  \right \}, \text { and } \label{eqn_set_ind_Ic} \\
     \I^* &=& \left  \{ (v,\tv): n_c(v,\tv) = 1, \ h_{(v,\tv)}(\b_v)h_{(v,\tv)}(\b_{\tv}) < 0  \right \}. \label{eqn_set_ind_Istar}
\end{eqnarray}
By Theorem \ref{Thm_stoch_approx}, (i) the interpolated stochastic trajectory representing the dynamics \eqref{eqn_iterates_nu} is a perturbed solution  (see Definition \ref{def_perturbed_sol}) of DI \eqref{eqn_general_DI},  while the limit sets are ICT by part (ii); thus the solutions of the DI provide insights into  the trajectories  of the dynamics while the ICT sets characterize the limit behavior. We now summarize the related 
results.

\noindent{\underline{Non-constant DI solutions:}}
We focus on non-constant solutions of the DI \eqref{eqn_general_DI} --- by Lemma \ref{Lemma_no_Solution_on_H}, such solutions cannot lie on the boundary $\E_{v,\tv} := \H_{(v,\tv)} \cap \overline{\Q}_v \cap \overline{\Q}_{\tv} $  for any pair $(v, \tv) \in \I^{c}$, see \eqref{eqn_set_ind_Ic}.     
On the other hand, by Lemma \ref{lem_three_action_on_H}  there exists a DI solution on the boundary, $\E_{v,\tv}  $  for any $(v,\tv) \in \I^*.$ 
Thus, for a given game, one can immediately identify the following: 
 (a) we either have solutions of type \eqref{solution_DI_general} that can cross through adjacent regions $\Q_v$, $\Q_{\tv}$ with $(v,\tv) \in \I^c$; (b) or have solution  \eqref{Eqn_Solution_on_H} on the `attracting' boundary region, $\E^* := \cup_{(v,\tv) \in \I^*} \E_{v,\tv}$ (this result  is proved  only for $k=3$, but we anticipate it to be  true for general $k$). One can also have solutions with some parts as in \eqref{solution_DI_general} and some as in \eqref{Eqn_Solution_on_H}.

\noindent{\underline{Classical and Filippov attractors:}}  We refer to singleton ICT sets as attractors  (with a
slightly erroneous interpretation, as will be discussed below). The set $\SS^*_c $ in  \eqref{eqn_set_AA} represents a sub-class of such ICT sets, derived using the   `classical attractors'; it is easy to verify that one can rewrite this set using $\Q$ regions as:  $\LL^*_c := \{ \b_v : \b_v \in \Q_v\}$ and $\SS^*_c := \{\{\b_v\}: \b_v \in \LL^*_c\}$. 
In a similar way, the second sub-class of attractors,  
  the   \Fyl attractors of \eqref{eqn_limit_sol_h_line_set} that arise primarily because  of discontinuities in dynamics, can be rephrased as below (see \eqref{Eqn_Jomega}):
\begin{eqnarray}
    \mathbb{L}^*_f &:=& \left  \{ \bromga^* \in \H  :  \bromga^* \in   \co\{ \b_v : v \in \J(\bromga^*) \}  \right \},  \mbox{ with } \label{Eqn_SSF}\\
    \J'(\bromga) &:=& \{ v \in \V: \bromga \in \overline{\Q}_v \} \text{ for any } \bromga \in \H. \nonumber
\end{eqnarray} 
 At this juncture, we would like to mention that the use of the word `attractor' is not exactly appropriate for these ICT sets. These sets are more like the equilibrium points of classical dynamic system literature, can have characteristics like `repellers' or `attractors' or `saddle points' (see, e.g., \cite{strogatz2024nonlinear}); we nonetheless choose to call them as attractors, as many of them  become asymptotic limits of stochastic iterates, see Lemma \ref{lemma_singleton_ICT} and the numerical results. 

\noindent{\underline{Cyclic ICT sets and Region-Vertex (RV) graph:}} 
\textit{We now construct a directed graph, with an aim to identify the conditions for non-existence of cycles.} Consider a graph with the vertex set as $\V$,   
the edge set as  $\EE_c = \I^c $ and direction is from $v\to v'$ if $h_{(v,v')} (\b_v)  <0$ for all $(v,v') \in \EE_c$ (one can work with reverse direction also, i.e., with $h_{(v,v')} (\b_v)  > 0$ for all $(v,v') \in \EE_c$);  we basically place edges   on those 
  `adjacent regions' $(v, \tv)$,  whose separating boundary $\H_{(v,\tv)}$  cannot house any  DI solution  (rather any  solution  that touches such a  boundary only passes through it  as established in Lemma \ref{Lemma_no_Solution_on_H}, also see \eqref{eqn_set_ind_Ic}). 
  \textit{We refer to the directed   graph $(\V,\EE_c)$ as the Region-Vertex (RV) graph.}   \textit{If the RV  graph contains no cycles, then no cyclic ICT sets as in Theorem \ref{Cyclic_ICT} exist} --- it is clearly not possible to satisfy\footnote{Any $\Q_v$ region is union of some finitely many $\R_j$ regions; some of these constituent $\R_j$ regions can be absorbing  by convexity (for   regions with $\b_j \in \R_j$ solution starting in $\R_j$ remains in $\R_j$ forever by \eqref{Eqn_DI_Solution_in_Rj}), however some boundaries of such $Q_v$ can also participate in a cycle.} assumption \ref{asm_cycle_one} with no cycles in the RV graph.

 On the other hand,  the presence of a cycle in RV graph indicates the possibility of cyclic ICT sets. Conditions for further verifying the existence of cyclic ICT sets (i.e., to satisfy assumption \ref{asm_cycle_one} completely) are provided in Appendix~\ref{subsub_sec_cond_exist_cycle} for the games with three actions. \textit{At this juncture, we would like to remark that the discussions of this paper related to cyclic limit sets are around the cyclic ICT sets identified in this paper;  one might have other varieties of cyclic ICT sets and could be of interest for future investigation.} 
Next, we provide a numerical procedure for analysis of the game:

\subsection{ Numerical procedure}
\label{sec_numerical_procedure}
The numerical procedure consists of the following three steps:

\noindent\textbf{Step 1:} We begin by identifying the unique $\e_v$ vectors corresponding to each $\Q_v$ region in the domain $\D$. To this end, we first enumerate all possible combinations of $\e$-vectors, i.e., vectors $\e = (e_\t)_{\t \in \Theta}$ with $e_\t \in \A_\t$. Among these, we retain only those $v$ for which the corresponding $\Q_v$ region  is nonempty, 
to obtain the vertex set $\V$. Towards this, we solve the following linear program (LP),  for each $\e_v$:
\begin{eqnarray*}
\max_{\bromga} \Lambda(\bromga)   \quad \text{s.t.} \quad h^{e_{v,\t}, a'}_\t(\bromga) > 0 \ \forall \ a' \ne e_{v,\t},\ \forall \ \t;\ \ \sum_a \bromgac^a = 1;\ \ 0 \le \bromgac^a \le 1\ \forall \ a,
\end{eqnarray*}
 where $\Lambda(\cdot)$  is any linear function. If this LP admits a feasible solution for some $\e_v$, we retain  it  and include the corresponding index $v$ in   $\V$ --- the LP-feasibility  implies the existence of at least one $\bromga \in \D$ satisfying the following:   for each type $\t$  the action $e_{v,\t}$   provides strictly higher utility than all other actions.
Then compute $\b_v$ for each  $v \in \V$ (recall $ b_v^a := \sum_\t \alpha_\t \ind_{\{e_{v,\t} = a\}} $  $\forall$ $a$). 

\noindent\textbf{Step 2:} We next identify the class of singleton attractors   $  \SS^*_c \cup \SS^*_f $,   defined in  \eqref{eqn_set_AA}, \eqref{eqn_limit_sol_h_line_set}, using the   following procedure: 

\textbf{(i)} To obtain    $\SS^*_c$, the set of classical attractors of \eqref{eqn_set_AA}, simply  verify if  $\{\b_v\} \in \SS^*_c$ for each $v \in \V$ --- by numerically checking  if  $\b_v \in \Q_v$ and this in turn is achieved  by verifying through the functions $\{h^{a,\ta}_\t(\cdot)\}$ defined in  \eqref{Eqn_h_linear}:
$$h_\t^{e_{v,\t},a'}(\b_v) >0 \ \forall \ a' \neq e_{v,\t} \ \forall  \ \t.$$ 

\textbf{(ii)} 
Next we identify   $\SS^*_{>,f} := \SS^*_f \setminus \SS^*_{2,f}$ defined using \eqref{eqn_set_H_int_con_comb}-\eqref{eqn_limit_sol_h_line_set}. This is achieved in two sub-steps: 
 
$\quad$ (a) To begin with,  if $\{\bromga^*\} \in \SS^*_{>,f}$, then  $\bromga^* \in \H$ with $|\J'(\bromga^*)| > 2$ -- any such  $\bromga^* $, with $|\J'(\bromga^*)| > 2$,  is  a common point of more than two hyperplanes among $\{ \H_{(v,\tv)}\}$, each of which separates a pair of adjacent regions --- thus each such $\bromga^*$ implies the existence of a cycle of length $\ell > 2$ in the 
undirected graph $(\V, \E_a)$ with edge set $\E_a := \{(v, \tv) : n_c(v,\tv) = 1\}$ --- the existence of such cycles can be verified using numerical methods for graphs.  For each such cycle $\C = (v_1, \cdots, v_\ell,v_1)$  with   $\ell > 2$, we identify one potential  candidate  for  $  \SS^*_{>,f}$    as the unique $\bromga^*$ that satisfies,
\begin{eqnarray}
\bromga^* \in \H_{(v_\ell, v_{1})} \cap   \left(\cap_{i=1 }^{\ell-1} \H_{(v_i, v_{i+1})}\right) \text{ where } v_i \in \C = (v_1, \cdots, v_\ell, v_1). \label{Eqn_cycle}
\end{eqnarray}

$\quad$(b) To determine the subset of   the potential candidates identified in step (a), that  belong to
$\SS^*_{> ,f}$ --- 
from \eqref{eqn_limit_sol_h_line_set}, $\{\bromga^*\} \in \SS^*_{> ,f}$, if further  $\bromga^* \in   \co\{ \b_v : v \in \J'(\bromga^*)\}$. Such a $\bromga^*$ can be rewritten as $\bromga^* = \sum_{v \in \J'(\bromga^*)} \lambda_v \b_v$,  for some $\lambda_v$ satisfying $ 0\leq \lambda_v \leq1$, $\sum_{v} \lambda_v = 1$. This inclusion  can thus be verified  for each potential $\bromga^*$  of step (a)   by checking the feasibility of the following LP (constructed using  the cycle of  \eqref{Eqn_cycle} corresponding to $\bromga^*$):
\begin{eqnarray}\label{eqn_lp_num_proc}
     && \min_{\{\lambda_i\}_{i \in \{1,\ldots,\ell\}}} \Lambda(\lambda_i) \quad \text{s.t.} \nonumber\\ && h_{(v_i,v_{i+1})}\left(\sum_{v} \lambda_v \b_v\right) = 0\ \forall \ i < \ell; \ \ h_{(v_\ell,v_1)}\left(\sum_{v} \lambda_v \b_v\right) = 0;\ \
    0 \le \lambda_v \le 1; \ \ \sum_{v} \lambda_v = 1.\qquad
 \end{eqnarray}
In the above $\Lambda(\cdot)$ is again any dummy linear   function and see \eqref{Eqn_Qv_details}. 

\noindent\textbf{(iii)} To derive $\SS^*_{2,f}$ of \eqref{eqn_set_H_int_con_comb}, consider each $(v,\tv) \in \I^*$ and first compute the point of intersection $\bromga^*$ of  the line $\overleftrightarrow{\b_v, \b_\tv}$ and $\H_{(v,\tv)}$, i.e., compute $ \bromga^* \in \overleftrightarrow{\b_v, \b_\tv} \cap \H_{(v,\tv)}$. This point is obtained by   calculating,  
\begin{equation*}
\lambda = \left|\frac{h_{(v,\tv)}(\b_\tv)}{h_{(v,\tv)}(\b_v) - h_{(v,\tv)}(\b_\tv)}\right| \text{ and } 
    \bromga^* = \lambda \b_v + (1-\lambda)\b_\tv. 
\end{equation*}

\begin{wrapfigure}{r}{0.32\textwidth}
\vspace{-4.8mm}
    \centering
    \includegraphics[trim={0cm 4cm 1cm 4.4cm},clip,scale=0.19]{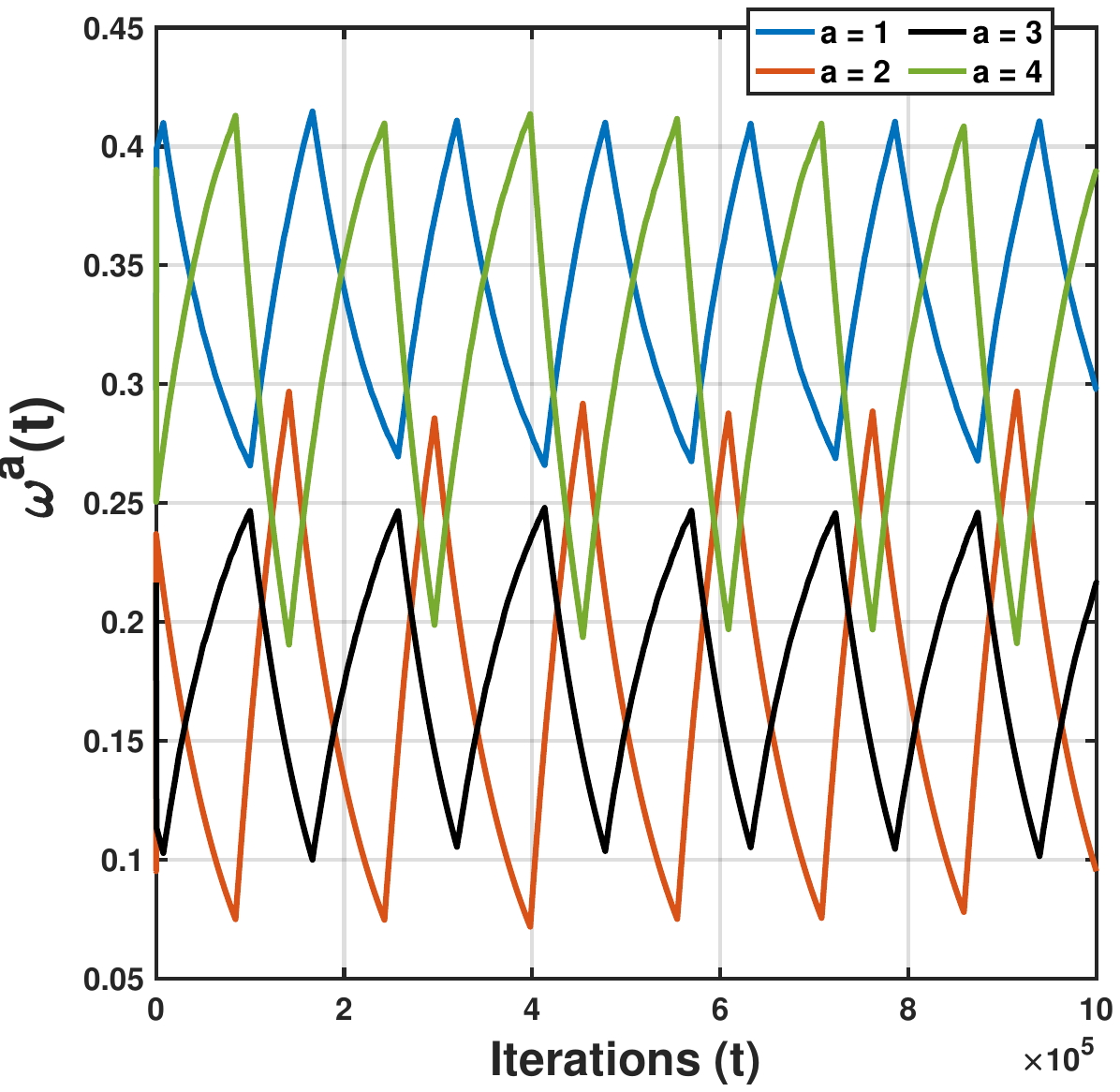}
    \captionsetup{type=figure}
    \vspace{-2mm}
    \caption{Cycle in an instance}
    \label{fig:four_actions_cycle}
    \vspace{-5mm}
\end{wrapfigure}
We then verify if $\bromga^*$ belongs to $\H_{(v,\tv)}$ (let the type that changed action between the adjacent  regions $\Q_v$ and $\Q_\tv$ be $\t$, see \eqref{Eqn_Qv_details}). Indeed $\bromga^* 
\in \H_{(v,\tv)}$   if: (a) the common action  $  e_{v, \t'} = e_{\tv, \t'}$  performs superior compared to other actions in $\Q_v\cup \Q_\t$  for all  $\t' \ne \t$,  and  b)  if for type $\t$, the action $e_{v, \t}$  is superior in $\Q_v$ and   the action $e_{\tv, \t}$ is  superior in  $\Q_{\tv}$.

$\noindent$\textbf{Step 3:} Finally, we determine numerically whether the directed graph $(\V, \EE_c)$ contains directed cycles using the function `allcycles($\cdot$)'. 
If no such cycle exists, this confirms the absence of cyclic ICT set. 

\paragraph{\bf Numerical 
instances} 
We generate several random instances of games (basically the parameters) and utilize the procedure described above to obtain the various possible outcomes. We then generate several sample paths of dynamics \eqref{eqn_iterates_nu} for the   random games 
--- in all the cases,  the limits of the sample paths are one among the  outcomes predicted. 
In  Figure \ref{fig:four_actions_cycle}, we plot one such sample path corresponding to a random instance with four actions, whose RV graph has cycles.  As can be seen, 
the sample path indeed exhibits cyclic behavior. 
\textit{Interestingly,   the period of the cycle increases with $t$, but the cycles persist forever.} 
\hide{
\begin{itemize}
    \item Start with different values of $\{\e_v\}$ vectors and identify the ones with non-empty region to obtain $\V$. Towards that we consider the following LP:
    \begin{eqnarray*}
\max_{\bromga} \sum_a \bromgac^a \quad \text{s.t.} \quad h^{e_{v,\t}, a'}_\t(\bromga) > 0 \ \forall \ a' \ne e_{v,\t},\ \forall \ \t;\ \ \sum_a \bromgac^a = 1;\ \ 0 \le \bromgac^a \le 1\ \forall \ a.
\end{eqnarray*}
This LP has to be solved for each possible $\e_v$ and $v \in \V$ iff the LP has feasible solution -- why??

\item  Identifying $\SS^* = \SS^*_c \cup \SS^*_f$ has three sub-steps:
\begin{itemize}
    \item One just need to numerically verify for each  $v \in \V$ if corresponding $\{\b_v\} \in \SS^*_c$ by ---

    \item We split $ \SS^*_f = \SS^*_{2, f} \cup \left ( \SS^*_f  \setminus \SS^*_{2,f} \right ) $ where ... definitions ... 

    We consider a second (now undirected)  graph $(\V, \E_a)$ with set of edges, 
    $
    \E_a = \{  (v, \tv) :  n_c (v, \tv) = 1\} 
    $
    towards identifying $\SS^*_{> ,f} := \left ( \SS^*_f  \setminus \SS^*_{2,f} \right ) $.  If there is a cycle  $\C = \{v_1, \cdots, v_l\}$ of length more than $2$ (i.e., $l \ge 2$)  for this graph, it implies the existence of a unique common point  $\bromga^*$in the intersection  $ \bromga^* \in \H_{(v_l, v_{1}}   \cap \cap_{i=1 }^{l-1} \H_{(v_i, v_{i+1})}$. Thus  $\{\bromga^*\} \in \SS^*_{> ,f}$ if and only if  $0 \in \G (\bromga^*)$  --- the later inclusion is verified using the following LP:
$$
    \min_{\{\lambda_i\}_{i \in \{1,\ldots,l\}}} \quad \text{s.t.} \quad h_{(v_i,v_{i+1})}\left(\sum_{v} \lambda_v \b_v\right) = 0\ \forall \ i < l; \ \ h_{(v_l,v_1)}\left(\sum_{v} \lambda_v \b_v\right) = 0;\ \
    0 \le \lambda_v \le 1; \ \ \sum \lambda_v = 1.
$$

where with $\t := \t_{v,v'}$ (the type that changes b/w $v$ and $v'$ regions):
\begin{eqnarray*}
h_{(v, v')} \left(\sum_\tv \lambda_\tv \b_\tv\right) = 
 \sum_\tv \lambda_\tv  \left (   \sum_{a}  \left (\eta_{\t }^{e_{v,\t } , a } - \eta_{\t}^{  e_{v',\t }, a }  \right )     b_\tv^a  \right ) + c_{\t }^{e_{v, \t } } -  c_ {\t }^{  e_{v',\t}  }
\end{eqnarray*}
If the above LP is feasible then $\bromga := \sum \lambda_v \b_v \in \LL_f^*$ because ---- ??

    \item  Towards identifying $\SS^*_{2,f}$  we first consider each $(v,\tv) \in \I^*$ and obtain the point of intersection of  line  $ \bromga^* \in \leftrightarrow{\b_v, \b_\tv} \cap \H_{(v,\tv}$.   Then each such point has to be verified using the following logic to check if $\bromga^* \in \LL_{2,f}^*$.

\end{itemize}

    \item Possibility of cycles is ruled out by checking for cycles of the directed RV graph  $(\V, \E_c)$. 
\end{itemize}}
\hide{
Given a game $\langle  \Theta, \balpha, \{\A_\t\}_{\t \in \Theta},  (u_\t)_{\t \in \Theta}, \bbeta \rangle$; the following numerical procedure (steps) can be used to identify both the classical (see \eqref{eqn_set_AA}) and non-classical attractors (see \eqref{eqn_limit_sol_h_line_set}) for any number of actions $k$ and types $n$, and to check the possibility of cyclic ICT set (see sub-subsection \ref{subsub_sec_Cyclic ICT sets}): (i)  Find all the possible combinations of ${\e_\kappa}=( e_{\kappa,\t})_\t$ vectors, then identify the set $\{\e_v\}_{v \in \VV}$ by checking the feasibility of the following linear programming (LP):
\begin{eqnarray*}
\max_{\bromga} \sum_a \bromgac^a \quad \text{s.t.} \quad h^{e_{v,\t}, a'}_\t(\bromga) > 0 \ \forall \ a' \ne e_{v,\t},\ \forall \ \t;\ \ \sum_a \bromgac^a = 1;\ \ 0 \le \bromgac^a \le 1\ \forall \ a.
\end{eqnarray*}
(ii) Compute the edge set $\EE_c$ and check for the cycle in the $(\VV,\EE_c)$ graph. (iii) Derive the set $\SS_c^*$ by checking: $h^{e_{v,\t}}(\b_v) >0 \ \forall \ a' \neq e_{v,\t} \ \forall  \ \t$. (iv) Finally, to obtain $\SS_F^*$, identify cycles in the undirected graph $(\VV, \EE_F)$ with edge set: $ \EE_F := \{ v \to v' \text{ and } v' \to v : n_c(v, v') = 1 \},$ and let the vertex set of cycle is $\Upsilon = \{v_1,\ldots,v_l\}$, then solve the following LP:
 $$
    \min_{\{\lambda_i\}_{i \in \{1,\ldots,l\}}} \quad \text{s.t.} \quad h_{(v_i,v_{i+1})}\left(\sum_{v} \lambda_v \b_v\right) = 0\ \forall \ i < l; \ \ h_{(v_l,v_1)}\left(\sum_{v} \lambda_v \b_v\right) = 0;\ \
    0 \le \lambda_v \le 1; \ \ \sum \lambda_v = 1.
$$

where with $\t := \t_{v,v'}$ (the type that changes b/w $v$ and $v'$ regions):
\begin{eqnarray*}
h_{(v, v')} \left(\sum_\tv \lambda_\tv \b_\tv\right) = 
 \sum_\tv \lambda_\tv  \left (   \sum_{a}  \left (\eta_{\t }^{e_{v,\t } , a } - \eta_{\t}^{  e_{v',\t }, a }  \right )     b_\tv^a  \right ) + c_{\t }^{e_{v, \t } } -  c_ {\t }^{  e_{v',\t}  }
\end{eqnarray*}
If it is feasible then $\omega := \sum \lambda_v \b_v \in \SS_F^*$.}
\subsection{Analysis of queuing game}\label{subsec_analysis_Queuingnetwork}
We now  analyze  the queuing game introduced in Subsection \ref{sub_sub_sec_queuing_application} under two scenarios.
\subsubsection{\underline{With two types of rational agents}} 
Consider the game with only  $\CSr$  and $\CMr$ rational customers.
Each agent type has three available actions, implying a total of $2*3=6$ potential border lines $\{\H_\t^{a,\ta}\}$.

We consider an interesting case study, where the $\CSr$-rational costs are equal for all the three options at full load:  we basically consider that $p_s = 1 = \rho + p$ in \eqref{eqn_queuing_game_utility_function}, by virtue of which we have  $u_\CSr(1, (1,0) ) = u_\CSr(2, (0, 1) )  = u_\CSr(3, (0,0) )  = -1$. Hence, both $\H^{1,3}_\CSr$ and $\H^{2,3}_\CSr$ regions just touch the boundary of $\D$ and thus do not contribute to the analysis.  Therefore, for any $\alpha_\CSr <  1$, it is not difficult to identify that action~$3$ is strictly dominated by other actions for the $\CSr$-rational crowd under $p_s = 1 = \rho + p$, see \eqref{eqn_queuing_game_utility_function}. Thus we consider (effectively) $\A_\CSr = \{1,2\}$.

The borders constructed using \eqref{eqn_h_ij}, \eqref{eqn_queuing_game_utility_function}   and \eqref{CMR}, corresponding to $\CSr$ and $\CMr$ rational agents,  are   as below:
\begin{eqnarray}\label{eqn_cs_cm_bound_lines1}
h^{1,2}_{\CSr}(\bromga) \hspace{-2mm} &=& \hspace{-2mm} -\bromgac^1+\rho\bromgac^2+p, \
 h^{1,2}_{\CMr}(\bromga) = -\bromgac^1  - \bromgac^2 +  (c+1)\bromgac^3, \\  h^{2,3}_{\CMr}(\bromga) \hspace{-2mm} &=& \hspace{-2mm}  2\bromgac^1-\bromgac^2  -c\bromgac^3 ,  \ h^{3,1}_{\CMr}(\bromga) = -\bromgac^3  -\bromgac^1 +  2\bromgac^2.\label{eqn_cs_cm_bound_lines2}
\end{eqnarray}
 Further, considering the $\Q$-regions as described in section \ref{sec_summary_of_results}, the domain $\D$ is partitioned into five $\Q$-regions using the above functions (see
Figure \ref{fig:first_two} (Left)); each of them are identified by the following $\{\e_v\}$ and $\{\b_v\}$ vectors:
{\small
\begin{eqnarray}\label{eqn_b_val_CS_CM1}
   \e_1 \hspace{-2mm} &=& \hspace{-2mm} (1,3),  \ \e_2 = (1,1), \ \e_3 = (1,2), \ \e_4 = (2,2),  \ \e_5 = (2, 1);\\
   \b_1 \hspace{-2mm} &=& \hspace{-2mm} (\alpha_\CSr,0), \ \b_2 = (1,0), \ \b_3 = (\alpha_\CSr,\alpha_\CMr), \  \b_4 = (0,1),  \ \b_5 = (\alpha_\CMr, \alpha_\CSr).\label{eqn_b_val_CS_CM2}
\end{eqnarray}}
By Theorem \ref{Thm_stoch_approx}, the limit set of the iterates in \eqref{eqn_iterates_nu} is an ICT set. These ICT limit sets can be characterized using our theoretical results summarized in  section \ref{sec_summary_of_results}. Following this methodology, we obtain the following (proof in \ref{sec_append_Analysis_of_queuing_game}):
\begin{thm} {\bf[With two rationals]}\label{thm_appli_1}
Assume
$\rho < \nicefrac{1}{2}$ and $p+\rho = p_s =1$. The  $\LL_c^*$ of \eqref{eqn_set_AA} and $\LL^*_f$ of \eqref{eqn_limit_sol_h_line_set} are given by:
    \begin{itemize}
        \item[(i)] the set $\LL_c^*=\{(\alpha_\CSr,\alpha_\CMr)\}$  iff 
        $\frac{\rho}{1+\rho} < \alpha_\CMr < \nicefrac{2}{3}$,
        \item[(ii)]   (a) $\left\{ (\frac{1}{1+\rho},\frac{\rho}{1+\rho})\right\} \in \LL^*_f$ \hspace{1mm} iff   \hspace{1mm} $\alpha_\CMr < \frac{\rho}{1+\rho}$,
       \ \ \ \      (b) $\left\{(1,0)\right\} \in \LL^*_f$ \hspace{1mm} iff   \hspace{1mm}   $c \to \infty$ and $\alpha_\CMr = 1$,\\
            (c) $\left\{\left(\frac{2c+1}{3(c+2)},\frac{1}{3}\right)\right\} \in \LL^*_f$ iff $ \frac{c+5}{3(c+2)} < \alpha_\CMr$.
    \end{itemize}
Moreover, the RV graph has cycle(s) iff
$\nicefrac{2}{3}<\alpha_\CMr$, i.e., the stochastic iterates in \eqref{eqn_iterates_nu} may evolve cyclically in the limit. \eop
\end{thm}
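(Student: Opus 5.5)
The plan is to follow the methodology of Section~\ref{sec_summary_of_results} literally. Work in the full coordinates $\bromga=(\bromgac^1,\bromgac^2,\bromgac^3)$ with $\bromgac^3=1-\bromgac^1-\bromgac^2$. Use $\A_\CSr=\{1,2\}$, which is justified by the dominance argument already given for $p_s=1=\rho+p$. Then use the four border functions \eqref{eqn_cs_cm_bound_lines1}--\eqref{eqn_cs_cm_bound_lines2}.

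\textbf{Step 1: the regions.} I first verify that these borders cut $\D$ into exactly the five nonempty regions $\Q_1,\dots,\Q_5$ with the vectors $\e_v,\b_v$ of \eqref{eqn_b_val_CS_CM1}--\eqref{eqn_b_val_CS_CM2}. This uses the fact that $\H^{1,2}_\CSr$ is a line through $(p,0,1-p)$ and $(1/(1+\rho),\rho/(1+\rho),0)$. It also uses that the three $\CMr$-lines meet at a single interior point, found by solving $h^{2,3}_\CMr=h^{3,1}_\CMr=0$ on the simplex, which gives $\bromgac^2=1/3$ and $\bromgac^1=(2c+1)/(3(c+2))$. Here $\rho<1/2$ is used to locate the $\CSr$-line relative to this triple point, so that it separates only the expected regions. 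From this I record the adjacency list $\{(v,\tv):n_c(v,\tv)=1\}$ and the separating function $h_{(v,\tv)}$ for each pair.

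\textbf{Step 2: classical attractors and Filippov points.} For part (i), I test $\b_v\in\Q_v$ for each $v$ by plugging into the relevant $h$'s. For $v=3$, $h^{1,2}_\CSr(\b_3)=\alpha_\CMr(1+\rho)-\rho>0$ gives $\alpha_\CMr>\rho/(1+\rho)$. The $\CMr$ preference for action $2$ at $\b_3$ reduces to $2(1-\alpha_\CMr)>\alpha_\CMr$, i.e.\ $\alpha_\CMr<2/3$. For the other four $\b_v$, I show by direct evaluation that some $h$ has the wrong sign for every admissible $\balpha$; for example, at $\b_2=(1,0,0)$ the $\CMr$ agents prefer action $2$, not $1$.

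For part (ii), I compute $h_{(v,\tv)}(\b_v)h_{(v,\tv)}(\b_\tv)$ for every adjacent pair to classify it into $\I^c$ or $\I^*$. For each pair in $\I^*$, I form $\bromga^*=\lambda\b_v+(1-\lambda)\b_\tv$ as in Step~2(iii) and check whether $\bromga^*$ lies on the common edge $\E_{v,\tv}$. For (a), this uses the pair $(3,4)$ on the face $\bromgac^3=0$, where the intersection point is $(1/(1+\rho),\rho/(1+\rho))$. The attracting sign condition there is exactly $h^{1,2}_\CSr(\b_3)<0$, i.e.\ $\alpha_\CMr<\rho/(1+\rho)$.

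For (c), the triple point belongs to $\J'$ of $\Q_1,\Q_2,\Q_5$ (and possibly $\Q_3$). I apply the LP \eqref{eqn_lp_num_proc}, i.e.\ I decide when this point lies in $\co\{\b_v\}$ over its adjacent regions. This is a $2\times2$ barycentric computation, and the resulting inequality should reduce to $\alpha_\CMr>(c+5)/(3(c+2))$.

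For (b), the point $(1,0,0)$ is a vertex of $\D$. I check when it lies on a border and in the hull of the adjacent $\b_v$; this forces $\b$'s containing $(1,0)$ to be attained only in the degenerate limit $c\to\infty$, $\alpha_\CMr=1$. I also rule out every other candidate intersection.

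\textbf{Step 3: the RV graph.} I orient each edge in $\I^c$ by the sign of $h_{(v,\tv)}(\b_v)$. I then show that a directed cycle, necessarily running through the $\CMr$ rock--scissor--paper regions around the triple point, exists iff the sign of $h^{2,3}_\CMr(\b_3)$ makes the edge out of $\Q_3$ consistent with the rotation. This is again the inequality $\alpha_\CMr>2/3$. For $\alpha_\CMr<2/3$, $\Q_3$ becomes absorbing, since $\b_3\in\Q_3$, and orientation breaks the cycle.

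The main obstacle I expect is Step~1 together with the bookkeeping of Step~2(ii). Getting the exact region adjacencies right, including which regions touch the triple point and the vertex $(1,0,0)$, is what turns the ``iff'' statements into sharp thresholds. I would draw the picture for a representative $\rho,c$ first and then confirm each adjacency via sign evaluation at edge midpoints.
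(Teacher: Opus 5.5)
Your route is the paper's own proof. You split adjacent pairs into $\I^c$ and $\I^*$ by the sign of $h_{(v,\tv)}(\b_v)h_{(v,\tv)}(\b_\tv)$, test $\b_3\in\Q_3$ for (i), meet the segment $[\b_3,\b_4]$ with $\H^{1,2}_\CSr$ for (ii)(a), and orient the $\CMr$ triangle for cycles. Parts (i) and (ii)(a) go through as you sketch.

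\textbf{The gap is in (ii)(c): you attach the wrong regions to the triple point} $T=\bigl(\frac{2c+1}{3(c+2)},\frac13\bigr)$.
\begin{itemize}
\item One has $h^{1,2}_\CSr(T)=\frac{c+5}{3(c+2)}-\frac{2\rho}{3}>\frac13-\frac{2\rho}{3}>0$ for $\rho<\frac12$. So a neighbourhood of $T$ lies where $\CSr$-agents choose action $1$.
\item Hence the regions meeting at $T$ are the three $\CMr$-sectors with $\CSr\to1$, namely $\Q_1,\Q_2,\Q_3$. $\Q_5$ (where $\CSr\to2$) does not touch $T$, and $\Q_3$ is certainly adjacent, not just `possibly'.
\item With the correct set, $\co\{\b_1,\b_2,\b_3\}=\{\bromgac^1\ge\alpha_\CSr,\ \bromgac^2\ge0,\ \bromgac^1+\bromgac^2\le1\}$. $T$ lies in it iff $\frac{2c+1}{3(c+2)}\ge1-\alpha_\CMr$, i.e.\ $\alpha_\CMr\ge\frac{c+5}{3(c+2)}$, which is the threshold in (c).
\item With $\b_5=(\alpha_\CMr,\alpha_\CSr)$ in the hull the test gives a wrong answer. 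Take $c=4$ (threshold $\frac12$, $T=(\frac12,\frac13)$) and $\alpha_\CMr=0.45$. Then $T=\frac{10}{27}\b_1+\frac{7}{297}\b_2+\frac{20}{33}\b_5$, yet $T\notin\LL^*_f$.
\end{itemize}
So your computation would not `reduce to' the stated inequality until the adjacency is fixed.

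Smaller points:
\begin{itemize}
\item[(1)] `Exactly five nonempty regions' is false for $\rho\le\frac1{c+2}$. Wherever $\CSr$ prefers action $2$ one has $\bromgac^3<\rho\le\frac1{c+2}$, so $\CMr$ prefers $2$ and $\Q_5=\emptyset$. This is why the paper's proof first takes $\frac1{c+2}<\rho<\frac12$ and calls the other case analogous; no conclusion uses $\Q_5$.
\item[(2)] Do not try to `rule out every other candidate'; it is false.
  \begin{itemize}
  \item For $c>1$ and $\frac{c+5}{3(c+2)}<\alpha_\CMr<\frac23$ you get $h^{2,3}_\CMr(\b_3)=2-3\alpha_\CMr>0>2-(c+2)\alpha_\CMr=h^{2,3}_\CMr(\b_1)$, so $(3,1)\in\I^*$.
  \item Then $[\b_1,\b_3]$ meets $\H^{2,3}_\CMr$ at $\bigl(\alpha_\CSr,\frac{(c+2)\alpha_\CMr-2}{c-1}\bigr)$. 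Its second coordinate lies in $(\frac13,\frac23)$, i.e.\ on the $\Q_1$--$\Q_3$ edge.
  \item This is a further point of $\LL^*_f$. Only the three stated equivalences are provable, not completeness.
  \end{itemize}
\item[(3)] In (b), $(1,0)\in\Q_4$ for every finite $c$ and $\rho>0$, since both types strictly prefer action $2$ there. The only available $\b_v$ is $\b_4=(0,1)$, so $(1,0)\notin\LL^*_f$. Item (b) only makes sense as a degenerate limit. Your sentence is not an argument for it, and the paper's proof does not address it either.
\item[(4)] For the `only if' direction of the cycle claim, use the graph, not `$\Q_3$ absorbing' (which fails when $\alpha_\CMr<\frac{\rho}{1+\rho}$).
  \begin{itemize}
  \item Within $\{2,3,4,5\}$, vertex $2$ has no in-edge and vertex $3$ has no out-edge, so every cycle passes through $\Q_1$.
  \item $\Q_1$'s only neighbours are $\Q_2$ and $\Q_3$, and $1\to2$ is forced. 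So a cycle needs $3\to1$, i.e.\ $2-3\alpha_\CMr<0$.
  \end{itemize}
\end{itemize}
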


\hide{
\begin{figure*}[ht]
\vspace{-5mm}
    \centering
    \begin{minipage}{0.44\textwidth}
        \centering
        \begin{subfigure}{0.48\textwidth}
            \centering
            \includegraphics[trim={4.4cm 4cm 9cm 5.4cm}, clip, scale=0.11]{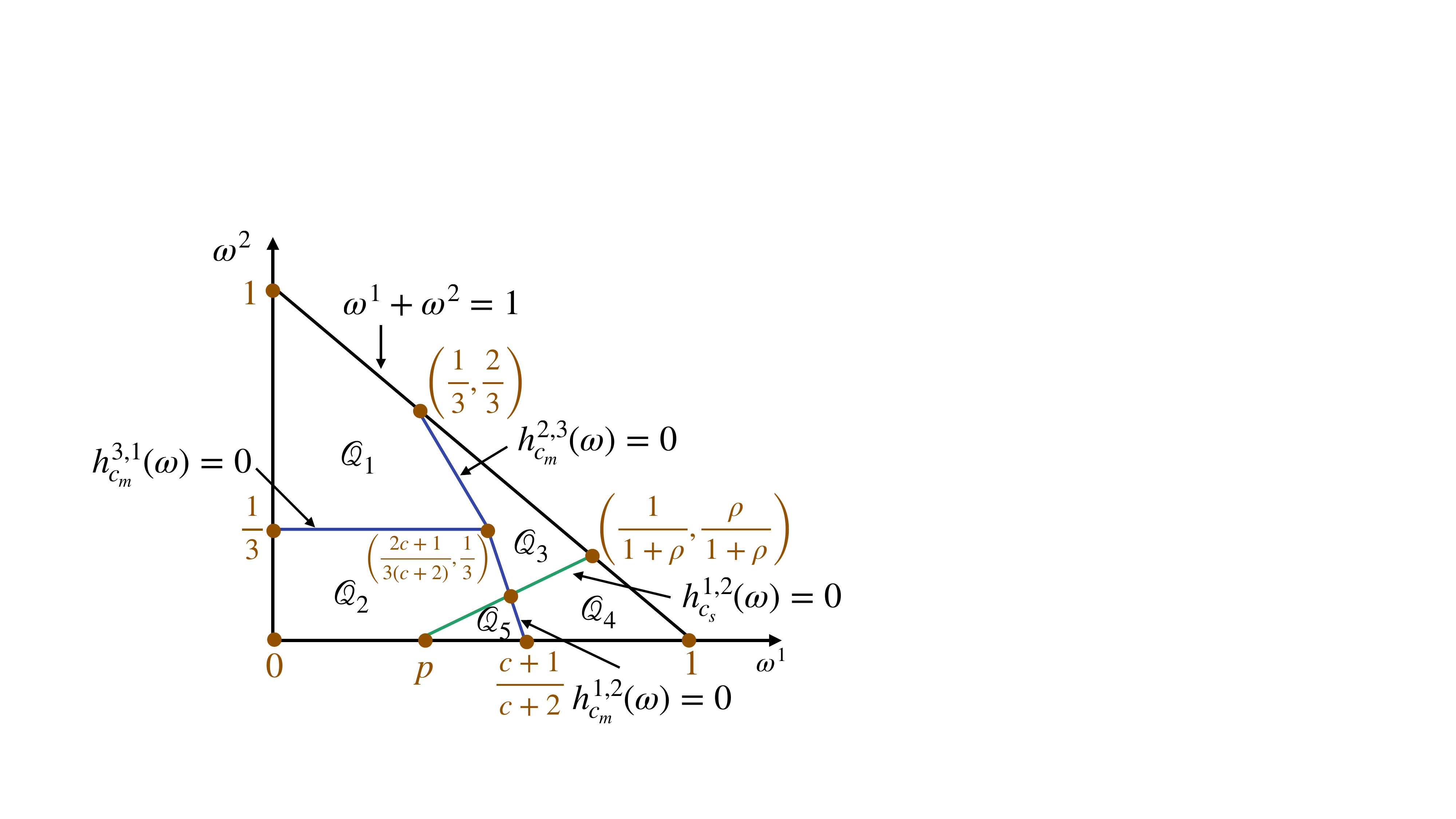}
            \label{fig:cs_cm}
        \end{subfigure}
        \hfill
        \begin{subfigure}{0.41\textwidth}
            \centering
            \includegraphics[trim={1cm 7cm 2cm 5cm}, clip, scale=0.18]{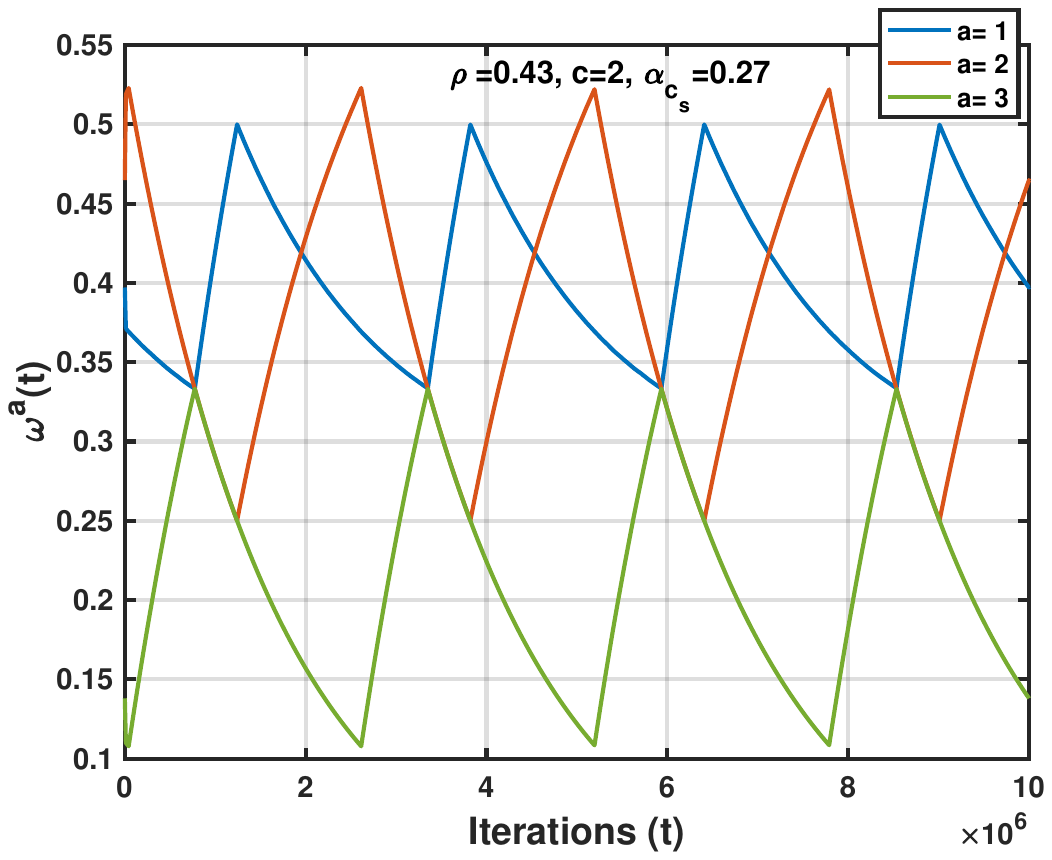}
         \label{three_actions_cycle_two_type}
        \end{subfigure}
         \vspace{-6mm}
        \caption{{\bf Queuing game \ref{sub_sub_sec_queuing_application} with two types of rational     agents:}   $\{\Q_v\}$ regions (Left), Numerical cycle (Right)}
        \label{fig:first_two}
                \vspace{-4mm}
    \end{minipage}
    \hfill
    \begin{minipage}{0.50\textwidth}
        \centering
        \begin{subfigure}{0.49\textwidth}
            \centering
            \includegraphics[trim={3cm 4cm 12cm 5.5cm}, clip, scale=0.12]{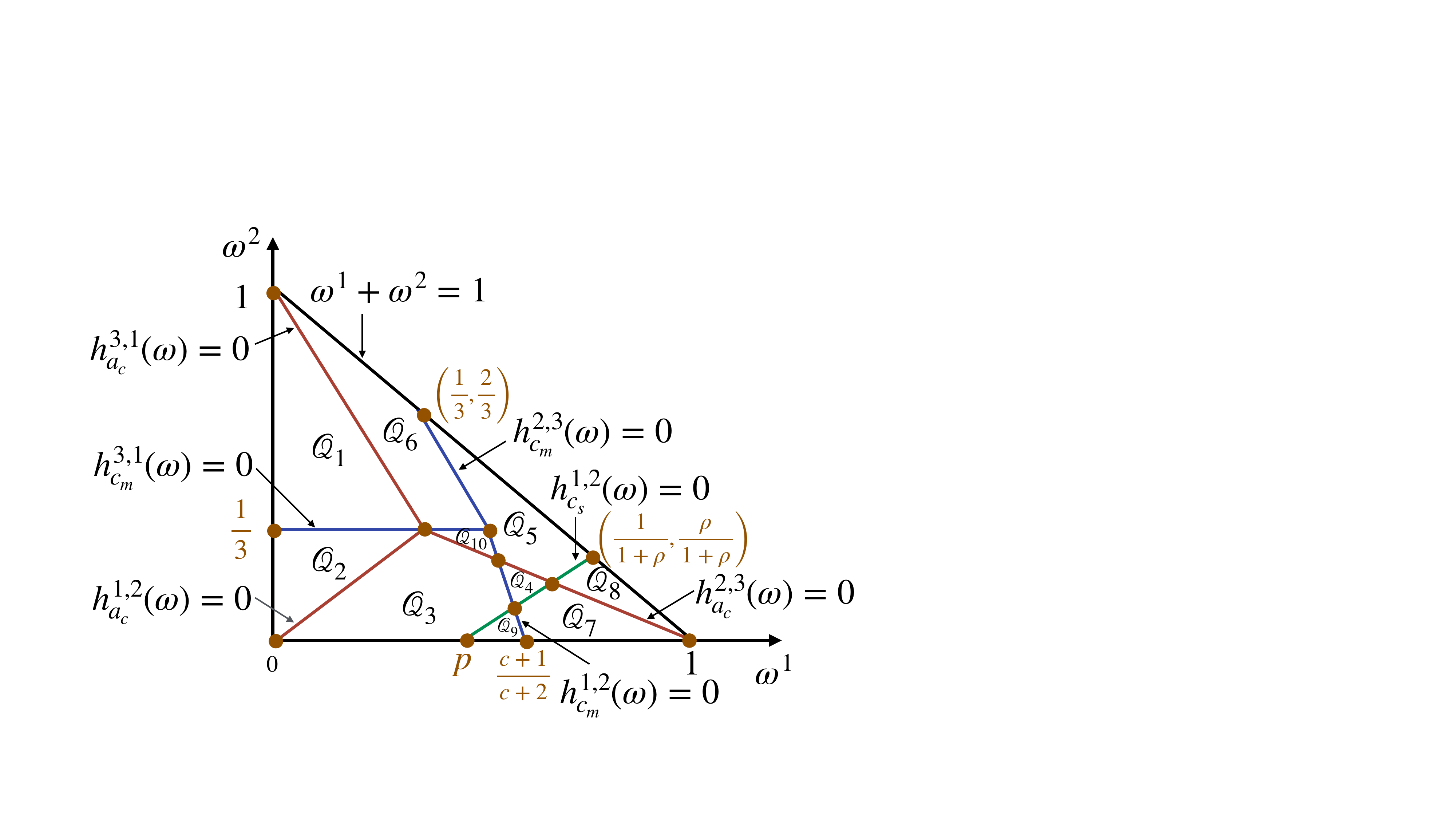}
            \label{fig:rational_avoid_compare}
        \end{subfigure}
        \hfill
        \begin{subfigure}{0.48\textwidth}
            \centering
            \includegraphics[trim={0cm 5cm 2cm 6.5cm}, clip, scale=0.17]{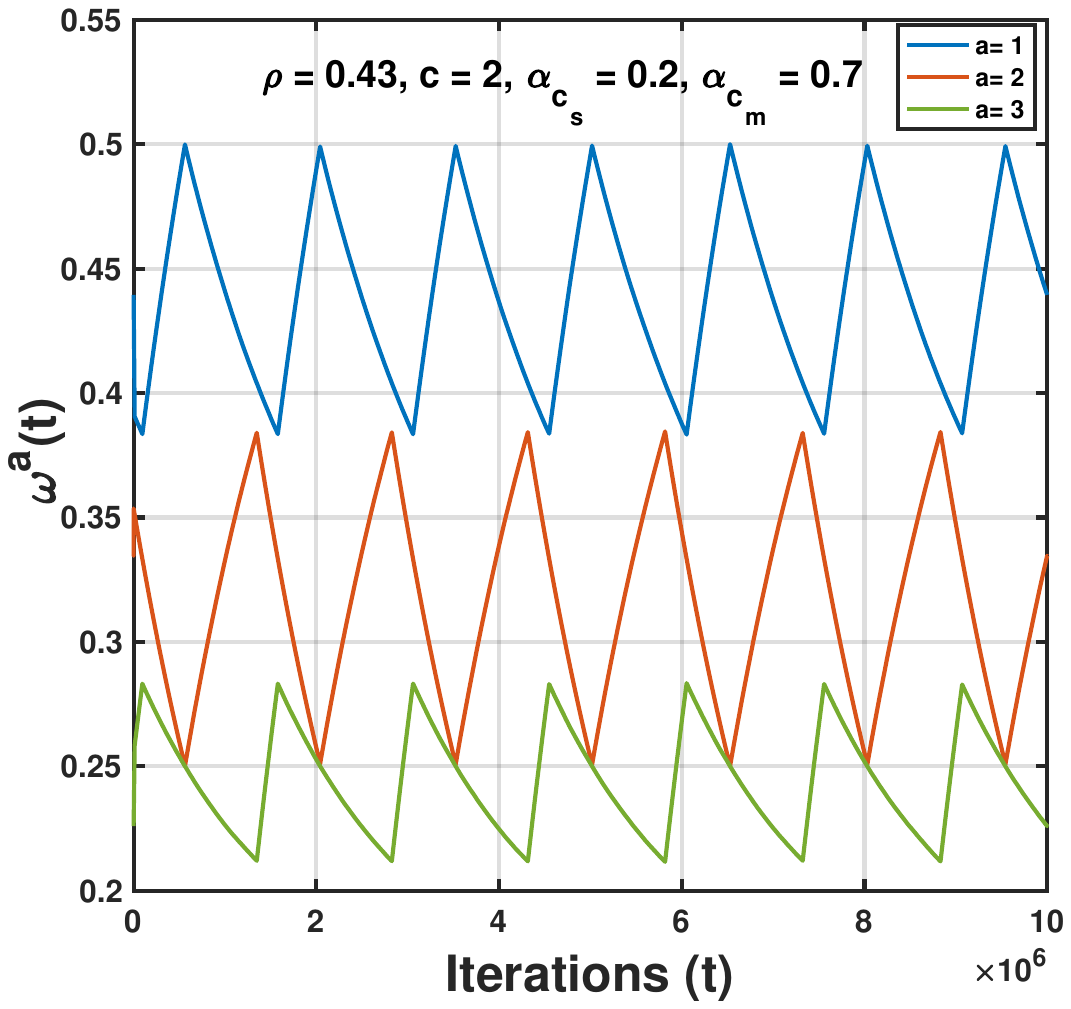}
    \label{fig:three_actions_cycle_three_type}
            \vspace{-1mm}
        \end{subfigure}
        \vspace{-6mm}
        \caption{{\bf  Queuing game \ref{sub_sub_sec_queuing_application} with three types of agents:} $\{\Q_v\}$ regions (Left), Numerical cycle (Right)}
        \label{fig:next_two}
        \vspace{-4mm}
    \end{minipage}
\end{figure*}}

\begin{figure*}[ht]
\vspace{-5mm}
\centering

\begin{minipage}{\textwidth}
    \centering
    \begin{subfigure}{0.45\textwidth}
        \centering
        \includegraphics[trim={4.4cm 4cm 9cm 5.4cm}, clip, scale=0.20]{cs_cm.pdf}
        \label{fig:cs_cm}
    \end{subfigure}
    \hfill
    \begin{subfigure}{0.45\textwidth}
        \centering
        \includegraphics[trim={1cm 7cm 2cm 5cm}, clip, scale=0.34]{ThreeActions_cyle.pdf}
        \label{fig:three_actions_cycle_two_type}
    \end{subfigure}

    \vspace{-1mm}
    \caption{{\bf Queuing game \ref{sub_sub_sec_queuing_application} with two types of rational agents:}
    $\{\Q_v\}$ regions (Left), Numerical cycle (Right)}
    \label{fig:first_two}
\end{minipage}

\begin{minipage}{\textwidth}
    \centering
    \begin{subfigure}{0.45\textwidth}
        \centering
        \includegraphics[trim={3cm 4cm 12cm 5.5cm}, clip, scale=0.20]{rational_avoid_compare1.pdf}
        \label{fig:rational_avoid_compare}
    \end{subfigure}
    \hfill
    \begin{subfigure}{0.45\textwidth}
        \centering
        \includegraphics[trim={0cm 5cm 2cm 6.5cm}, clip, scale=0.28]{Cycle_three_actions_three_type.pdf}
        \label{fig:three_actions_cycle_three_type}
    \end{subfigure}

    \vspace{-2mm}
    \caption{{\bf Queuing game \ref{sub_sub_sec_queuing_application} with three types of agents:}
    $\{\Q_v\}$ regions (Left), Numerical cycle (Right)}
    \label{fig:next_two}
\end{minipage}
\vspace{-5mm}
\end{figure*}

\subsubsection{\underline{With three types of agents}} 
Consider the $\nuu$-agents along with two types of rational agents in the game. Each type has three action choices implying a total of  $3 * 3 = 9$ potential border lines $\{\H_\t^{a,\ta}\}$. However, similar to the previous case, we have $\A_\CSr = \{1,2\}$. Consequently, the borders corresponding to the $\CSr$ and $\CMr$ rational type agents are given by \eqref{eqn_cs_cm_bound_lines1}-\eqref{eqn_cs_cm_bound_lines2} 
and corresponding to  $\nuu$ type agents using  \eqref{eqn_avoid_utility} are given by $h^{a,\ta}_{\nuu} (\bromga) = \bromgac^\ta - \bromgac^a, \  \forall \ a\ne \ta$.
  %
Here, the domain $\D$ partitions into ten $\Q$-regions with boundary lines as in Figure~\ref{fig:next_two} (Left), and with  $\{\e_v\}$ and $\{\b_v\}$ vectors:
{\small
\begin{eqnarray}
\e_1\hspace{-2mm}&=&\hspace{-2mm}(1,1,3),\ \e_2 = (1,1,1),\ \e_3 = (1,1,2),\ \e_4 = (1,2,2),\ \e_5 = (1,2,3),\ \e_6 = (1,3,3), \label{eqn_gen_e_vec_cs_cm_anu1}\\
\e_7\hspace{-2mm}&=&\hspace{-2mm}(3,3,3),\ 
  \e_8=(2,2,3),\ \e_9=(2,1,2),\ \e_{10}=(1,1,3); \ \b_1=(\alpha_\CSr+\alpha_\nuu,0),\  \b_2 = (1,0), \label{eqn_gen_e_vec_cs_cm_anu2}\\
 \b_3 \hspace{-2mm}&=&\hspace{-2mm} (\alpha_\CSr+\alpha_\CMr,\alpha_\nuu),\ \b_4 = (\alpha_\CSr,\alpha_\nuu+\alpha_\CMr),\ \b_5 = (\alpha_\CSr,\alpha_\CMr), \ \b_6 = (\alpha_\CSr,0),\  \b_7 = (0,1),\\
    \b_8 \hspace{-2mm}&=&\hspace{-2mm} (0,\alpha_\CSr+\alpha_\CMr), \ \b_9 = (\alpha_\CMr,\alpha_\CSr+\alpha_\nuu),\  \b_{10} = (\alpha_\CSr+\alpha_\CMr,0).
 \label{eqn_b_vec2} \label{eqn_gen_e_vec_cs_cm_anu3}
\end{eqnarray}}
\hide{
The dynamics \eqref{eqn_iterates_nu} of the game in cases (i) ($l=5$) and (ii) ($l=10$) are governed by the ODE (see \eqref{eqn_new_general_ode2}):
\begin{eqnarray}
\dot{\bromga} \hspace{-2mm}&=&  \hspace{-2mm}
\g(\bromga) = \sum_{j=1}^{l} \b_j \ind_{\{\bromga \in \Q_j\}} - \bromga, \mbox{ where $\b_j$ vectors are given by \eqref{eqn_b_val_CS_CM1}-\eqref{eqn_b_val_CS_CM2} for case (i) and by \eqref{eqn_b_vec_21}-\eqref{eqn_b_vec2} for case (ii).}
\end{eqnarray}
One can derive the DI corresponding to the above ODEs using \eqref{eqn_general_DI}.}
We once again use the procedure section \ref{sec_summary_of_results}  to  obtain the following (proof in \ref{sec_append_Analysis_of_queuing_game}):
\begin{thm} {\bf[With all three types]}\label{thm_appli_2}
Assume \ref{asm_bj_notin_regions}. Assume $\rho < \nicefrac{2}{(c+1)}$ 
and $p+\rho = p_s =1$. Then the following statements holds for  $\LL_c^*$ and $\LL^*_f$ (with $\alpha_{sm} := \alpha_\CSr+\alpha_\CMr)$:
\begin{itemize}
        \item[(i)]the set $  \mathbb{L}^*_c= \{(\alpha_\CSr,\alpha_\CMr)\}$ iff 
        $\ \alpha_\nuu < \min\left\{\frac{1}{c+2}, \alpha_\CMr\right\} ,  (\rho+1) \alpha_\CMr+\alpha_\nuu > \rho,  3\alpha_\CMr + (c+2)\alpha_\nuu <2$, 
        \item[(ii)] For $\LL^*_f$, we have: \  \   (a) $\left(\alpha_\CSr,\frac{1- \alpha_\CSr}{2}\right) \in \LL^*_f$ \hspace{1mm} iff \hspace{1mm} $\alpha_\nuu \neq 0, \ \  \alpha_\nuu > \alpha_\CMr, \ \ \frac{c}{c+2} < \alpha_\CSr < \frac{2-\rho}{2+\rho}$,\\ (b) $ \left(\frac{1-\rho\alpha_\nuu}{1+\rho},\frac{\rho-\alpha_\nuu}{1+\rho}\right) \in \mathbb{L}^*_f$ iff  \hspace{1mm} $\alpha_\nuu < \frac{\rho}{2+\rho}, \ \rho \alpha_\CMr + p < \alpha_\CSr$,  \  \   
            (c) $ \left(\alpha_{sm} ,\frac{\alpha_\nuu}{2}\right) \in \mathbb{L}^*_f$ iff  \hspace{1mm} $\alpha_\nuu \neq 0, \ \frac{1}{3} < \alpha_{sm}  < \frac{c}{c+2}$.
        \hide{
        \begin{itemize}
            \item[(a)] $ \left(\alpha_\CSr,\frac{1- \alpha_\CSr}{2}\right) \in \LL^*_f$ iff \hspace{1mm} $\alpha_\nuu \neq 0, \alpha_\nuu > \alpha_\CMr, \ \frac{c}{c+2} < \alpha_\CSr < \frac{2-\rho}{2+\rho}$,

            \item[(b)] $ \left(\frac{1-\rho\alpha_\nuu}{1+\rho},\frac{\rho-\alpha_\nuu}{1+\rho}\right) \in \mathbb{L}^*_f$ iff  \hspace{1mm} $\alpha_\nuu < \frac{\rho}{2+\rho}, \ \rho \alpha_\CMr + p < \alpha_\CSr$,

            \item[(c)] $ \left(\alpha_\CSr+\alpha_\CMr,\frac{\alpha_\nuu}{2}\right) \in \mathbb{L}^*_f$ iff  \hspace{1mm} $\alpha_\nuu \neq 0, \ \frac{1}{3} < \alpha_\CSr+\alpha_\CMr < \frac{c}{c+2}$.
        \end{itemize}}
    \end{itemize}
Moreover, the RV graph has cycle(s) iff 
$ \alpha_\CSr < \min\left\{\nicefrac{1}{2},\nicefrac{(c \alpha_\nuu+\alpha_\CMr)}{2}\right\},
\ \ \alpha_\CMr > \nicefrac{1}{2}$, in which case the stochastic iterates in \eqref{eqn_mu_bar} can evolve cyclically in the limit. \eop
\end{thm}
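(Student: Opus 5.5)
The plan is to follow the methodology of Section \ref{sec_summary_of_results} literally, working in the reduced coordinates $\bromga=(\bromgac^1,\bromgac^2)$ with $\bromgac^3 = 1-\bromgac^1-\bromgac^2$. First, I will verify that under $p+\rho=p_s=1$ action $3$ is strictly dominated for the $\CSr$-type in the interior of $\D$, so effectively $\A_\CSr=\{1,2\}$.

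Next, I will use the borders \eqref{eqn_cs_cm_bound_lines1}--\eqref{eqn_cs_cm_bound_lines2} together with $h^{a,\ta}_\nuu(\bromga)=\bromgac^\ta-\bromgac^a$ to confirm the ten nonempty regions $\Q_v$ with the $\e_v,\b_v$ listed in \eqref{eqn_gen_e_vec_cs_cm_anu1}--\eqref{eqn_gen_e_vec_cs_cm_anu3}. This is Step 1 of the numerical procedure, now done by hand. The hypothesis $\rho<2/(c+1)$ is used to fix the relative position of the $\CSr$ line $\bromgac^1=\rho\bromgac^2+p$ with respect to the intersection points of the $\CMr$ and $\nuu$ lines, so that the combinatorial picture of Figure~\ref{fig:next_two} (Left) is the correct one.

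For part (i), I will check for each $v$ whether $\b_v\in\Q_v$, by evaluating the sign conditions $h^{e_{v,\t},a'}_\t(\b_v)>0$ for every type. The only candidate that can sit in its own region for admissible $\balpha$ should be $\b_5=(\alpha_\CSr,\alpha_\CMr)$, with $\e_5=(1,2,3)$. Writing out the required inequalities gives:
\begin{itemize}
\item $\CSr$ prefers $1$ over $2$: $\alpha_\CSr<\rho\alpha_\CMr+p$, which rearranges to $(\rho+1)\alpha_\CMr+\alpha_\nuu>\rho$;
\item $\CMr$ prefers $2$: this yields $3\alpha_\CMr+(c+2)\alpha_\nuu<2$ together with a comparison against action $1$;
\item $\nuu$ prefers $3$: $\alpha_\nuu<\alpha_\CMr$ and $\alpha_\nuu<\alpha_\CSr$.
\end{itemize}
After eliminating redundant inequalities using $\sum\alpha=1$, these should reduce to the stated list, and conversely all other $\b_v$ should violate some sign condition.

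For part (ii), I will apply Step 2(ii)--(iii). For each adjacent pair $(v,\tv)$ I compute $h_{(v,\tv)}(\b_v)h_{(v,\tv)}(\b_\tv)$ to classify it into $\I^*$ or $\I^c$. For each pair in $\I^*$ I compute the intersection of $\overleftrightarrow{\b_v,\b_\tv}$ with $\H_{(v,\tv)}$ and check that it lies on the common boundary segment $\overline{\Q}_v\cap\overline{\Q}_\tv$. The three listed points should arise as follows:
\begin{itemize}
\item (a) from the $\nuu$ border $\bromgac^2=\bromgac^3$, i.e.\ $\bromgac^2=(1-\bromgac^1)/2$;
\item (b) from the $\CSr$ border $\H^{1,2}_\CSr$;
\item (c) from the $\nuu$ border $\bromgac^2=\bromgac^3$ at $\bromgac^1=\alpha_{sm}$.
\end{itemize}
In each case, the "iff" conditions come from two requirements: the intersection must lie strictly inside the relevant boundary segment, and the product of signs must be negative. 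I will also enumerate triple points with $|\J'(\bromga)|>2$ and rule them out via the LP \eqref{eqn_lp_num_proc}, or argue they fail the convex hull condition under the stated hypotheses and \ref{asm_bj_notin_regions}.

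Finally, for the cycle statement I will build the RV graph $(\V,\EE_c)$, orienting each $\I^c$ edge by the sign of $h_{(v,v')}(\b_v)$. The only possible directed cycle should run around the $\CMr$ rock-scissor-paper loop, for example through regions $3,4,5,6$ or $9,8,\dots$. The edge orientations depend on $\balpha$, and the cycle exists exactly when $\alpha_\CMr>1/2$ and $\alpha_\CSr<\min\{1/2,(c\alpha_\nuu+\alpha_\CMr)/2\}$. To show the converse, I will check that when any of these inequalities fails, some edge reverses or moves into $\I^*$, which breaks every cycle.

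I expect the main obstacle to be the bookkeeping of which boundary segments are actually shared between regions and how their orientations switch with $\balpha$. To keep this manageable I will tabulate all adjacent pairs, the separating $h$, and the signs $h(\b_v)$ and $h(\b_\tv)$ as affine functions of $(\alpha_\CSr,\alpha_\CMr,\alpha_\nuu)$, and then read off the conditions for (ii) and for the cycle directly from that table.
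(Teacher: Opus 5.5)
Your route is the paper's own. You carry out Steps 1--3 of Section~\ref{sec_numerical_procedure} by hand: the test $\b_v\in\Q_v$, the $\I^c/\I^*$ classification, intersecting the line through $\b_v,\b_{\tv}$ with $\H_{(v,\tv)}$, and building the RV graph. Your reduction for (i) agrees with the paper, and so does your identification of the borders behind (ii)(a)--(c), namely the pairs $(4,5)$, $(5,8)$, $(3,10)$.

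The genuine problem is the converse of the cycle criterion. The paper proves only the ``if'' half, by exhibiting the six-cycle $1\to2\to3\to4\to5\to6\to1$ ($\CSr$ fixed at action $1$, with $\CMr$ and $\nuu$ switching alternately). It never argues ``only if''. Your plan to show that any failed inequality ``breaks every cycle'' cannot succeed, because that converse is false.

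The regions $\Q_{10},\Q_5,\Q_6$, with $\e=(1,1,3),(1,2,3),(1,3,3)$, differ pairwise only in the $\CMr$ choice. For $c>1$ they meet at the $\CMr$ indifference point $q_c:=(\frac{2c+1}{3(c+2)},\frac{1}{3})$. One computes:
\begin{itemize}
\item $h_{(10,5)}(\b_{10})=h_{(10,5)}(\b_5)=(c+2)\alpha_\nuu-1$;
\item $h_{(5,6)}(\b_5)=2-3\alpha_\CMr-(c+2)\alpha_\nuu\ge h_{(5,6)}(\b_6)=2\alpha_\CSr-c(\alpha_\CMr+\alpha_\nuu)$;
\item $h_{(6,10)}(\b_6)=h_{(6,10)}(\b_{10})=-1$.
\end{itemize}
Hence $10\to5\to6\to10$ is a directed cycle of the RV graph whenever $\alpha_\nuu<\frac{1}{c+2}$ and $3\alpha_\CMr+(c+2)\alpha_\nuu>2$. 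This is the continuation of the $\alpha_\CMr>\frac{2}{3}$ cycle of Theorem~\ref{thm_appli_1}, and it permits $\alpha_\CMr\le\frac{1}{2}$. For example, take $c=2$, $\rho=\frac{1}{2}$, $(\alpha_\CSr,\alpha_\CMr,\alpha_\nuu)=(0.35,0.45,0.2)$. So only the ``if'' direction can be proved as stated; an ``iff'' would require adding this inner cycle to the criterion.

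Two smaller points. First, enumerating junction points in order to ``rule them out'' is neither needed nor possible. For $c>1$, $q_c$ is a junction of $\Q_5,\Q_6,\Q_{10}$, and it lies in $\co\{\b_5,\b_6,\b_{10}\}$ iff $\alpha_\CSr\le\frac{2c+1}{3(c+2)}$ and $\alpha_\nuu\le\frac{1}{c+2}$ (true in the example above). So it is often in $\LL^*_f$.

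Since (ii) only asserts membership of three specific points, junctions matter only at the endpoints of the stated ranges. There your ``strictly inside the boundary segment'' criterion is not the whole story. In (a) at $\alpha_\CSr=\frac{c}{c+2}$, the point also lies on $\H^{1,2}_\CMr$, yet it still belongs to $\co\{\b_4,\b_5\}$ when $\alpha_\nuu>\alpha_\CMr$, hence to $\LL^*_f$. So the strict lower bound in (a) is not necessary.

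Second, your example ``$9,8,\dots$'' is not an RV edge, since $\e_9=(2,1,2)$ and $\e_8=(2,2,3)$ differ in two types.
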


\vspace*{-3mm}
\begin{table}[ht]
\scalebox{0.92}{
\begin{tabular}{|l|l|l|}
\hline
\textbf{Limits of SA based scheme \eqref{eqn_iterates_nu}} 
& 
\begin{tabular}[c]{@{}l@{}}\textbf{Conditions for $\CSr$} \\ \textbf{and $\CMr$ rational}  \\ \textbf{agents ($\rho < \nicefrac{1}{2}$)}\end{tabular}
& \begin{tabular}[c]{@{}l@{}}\textbf{Conditions for $\CSr$, $\CMr$ rationals} \\ \textbf{and $\nuu$ agents $(\rho < \nicefrac{2}{(c+1)})$}\end{tabular} \\ \hline

\begin{tabular}[c]{@{}l@{}}Filippov Attractor:\\ 
{\scriptsize$\left(\dfrac{1-\rho\alpha_\nuu}{1+\rho},\dfrac{\rho-\alpha_\nuu}{1+\rho}\right) 
\xrightarrow{\alpha_\nuu \downarrow 0} 
\left(\dfrac{1}{1+\rho},\dfrac{\rho}{1+\rho}\right)$}
\end{tabular}
& $\alpha_\CMr < \dfrac{\rho}{1+\rho}$ 
& \begin{tabular}[c]{@{}l@{}}$\alpha_\nuu < \frac{\rho}{2+\rho}$, \\ $\rho \alpha_\CMr + 1-\rho< 1-\alpha_\nuu- \alpha_\CMr$ \end{tabular} \\ \hline

\begin{tabular}[c]{@{}l@{}}Classical Attractor:
$(\alpha_\CSr,\alpha_\CMr)$ \end{tabular} 
& $\dfrac{\rho}{1+\rho} < \alpha_\CMr < \tfrac{2}{3}$ 
& \begin{tabular}[c]{@{}l@{}}$\alpha_\nuu < \min\!\left\{\nicefrac{1}{(c+2)}, \alpha_\CMr\right\},$  \\
$\alpha_\CMr+\frac{\alpha_\nuu}{\rho+1} > \dfrac{\rho}{\rho+1},$ \\ 
$\alpha_\CMr+\nicefrac{(c+2)\alpha_\nuu}{3} < \nicefrac{2}{3}$ \end{tabular}
\\ \hline

Cycle in RV graph 
& $\alpha_\CMr > \nicefrac{2}{3}$ 
& \begin{tabular}[c]{@{}l@{}}$\alpha_\CSr < \min\!\left\{\frac{1}{2}, \frac{(c-1)\alpha_\nuu+1-\alpha_\CSr}{2}\right\},$ \\
$\alpha_\CMr > \nicefrac{1}{2}$ \end{tabular} \\ \hline
\end{tabular}}
\caption{Comparison of queuing game results with and without $\nuu$ or avoid-the-crowd agents}
\label{tabel_for_results}
\end{table}
{\bf Remarks about outcomes:}
To begin with,  the results of Theorem \ref{thm_appli_2} ({\it Filippov}  and classical attractors and cyclic behavior) coincide with that in Theorem \ref{thm_appli_1}, when $\alpha_{a_c} \downarrow 0$.  For ease of reference, some results are tabulated in Table \ref{tabel_for_results}. \\
$\bullet$ There exists at maximum one classical  attractor (row $2$), where all $\CSr$-agents use standard queue,  $\CMr$-agents use moderate queue and $\nuu$-agents (if any) use premium queue $(a=3)$. This is a nicely balanced equilibrium with all types having absolute preferences (or pure strategies) and occurs when none of the types are negligible in proportion. \\
$\bullet$ When cycles are possible (row 3), the classical attractor does not exist (row 2) --- basically when cycles exist, one cannot have convergence to MFEs in pure strategies (recall Theorem \ref{thm_connection_MT_MFE}). However one may have cyclic outcomes or convergence to Filippov attractors depending upon the initial conditions. 
We illustrate two instances of queuing dynamics (with $\alpha_{\CMr}$ sufficiently large) that exhibit cyclic behavior in Figures \ref{fig:first_two} and \ref{fig:next_two}. \\
$\bullet$ Cycles can occur only in systems with more than $\nicefrac{2}{3}$ of the population making choices by comparison with others  as in \eqref{CMR}; this is the case when $\alpha_{a_c}=0$; however, in the presence of avoid-the-crowd population, one can have cycles even with $\alpha_{\CMr}$ just above $\nicefrac{1}{2}$. Thus a simple avoid-the-crowd trait can accentuate the cyclic behavior, generated by compare population with cyclic preferences;   on the other hand,  $\CSr$-agents conscious of direct costs diminish the cyclic tendencies.      
One can analyze various other variants of this game  in a similar manner and for other parameter regimes. 
\section{Conclusions}
This paper considers multi-type population games with (any) finite number of action choices and agent types.
In particular, we consider a sequential turn-by-turn   decision making model where the players make choices  one after the other,  participating only once. 
The preferences of different types of agents can be different, however, they are all  influenced by the same system --- we capture this effectively by having a common  measure that represents the aggregate of all population choices. Typically agents interacting once with  the system, make pure or deterministic choices in a myopic manner --- we again capture this using the aggregate measure and by modeling the decisions as the pure choices that maximize a type-dependent utility function of the current empirical measure. Such dynamics represent several real-world scenarios like concert queues, traffic-routing decisions,  online reviews, and participation games. 

To study such complex dynamics, that exhibit discontinuities (pure choices suddenly switch when the empirical distribution of aggregate choices crosses a measure-zero curve)  and randomness (agents of different types can arrive in any random order), we had to resort to differential inclusion (DI) based stochastic approximation framework (classical ODE-based framework is not sufficient). 
We primarily provide explicit characterization of some limit sets via the internally chain transitive sets of the driving DI, given the parameters of the game.  

Unlike the  majority of the literature that characterizes cyclic outcomes of smooth dynamical systems predominantly in two-dimensional systems, our results aid in identifying cyclic outcomes in higher dimensions --- this is achieved by constructing appropriate solutions of DI that pass through discontinuities. We also identify non-classical attractors that are not zeros of the driving field (as in the classical sense), but rather occur at points where a few action-choices balance out for a few agent types; basically these occur at discontinuities of the field or at switching points of the field.  

Beyond the theoretical analysis, we propose a numerical procedure to identify the singleton ICT sets or point limits,  for any given finite game. We also provide a graph theoretic method to identify the possibility of cyclic ICT sets --  this helped us in  illustrating the the emergence of cyclic outcomes in games with more than 3 actions. Finally, we consider  a queuing game and illustrate the emergence of cyclic outcomes for scenarios, dominated by agents with cyclic-preferences and avoid-the-crowd agents.  

\bibliographystyle{elsarticle-num}
\bibliography{reference1}

\appendix
\vspace{-3mm}
\section{Summary of key notations}
\begin{table}[htbp]
\centering
\scalebox{0.97}{
\begin{tabular}{|c|c|}
\hline
{\bf Notation}             & { \bf Description}                                                                                        \\ \hline
$\Theta$             & set of finite types                                                           \\ \hline
$\balpha$                  & population distribution across types in $\Theta$                                                             \\ \hline
$\A_\t$                & set of finite actions of type $\t$
\\ \hline
$\mathcal{P}(\Theta)/ \mathcal{P}(\A_\t)$        & set of probability measures on $\Theta$/$\A_\t$                                                                             \\ \hline
$\bmu_\t$                & empirical distribution of the players of type $\t$ over $\A_\t$    \\ \hline
${\mu}_\t^a$                & fraction of type $\t$ players who chose action $a$ \\ \hline
$u_\t$                & utility of a player of type $\t$                                                        \\ \hline
$\Bar{N}_\t(t)$           & fraction of agents of type $\t$ up to time $t$                          \\ \hline
$\Rmuc_\t^a (t) $             & fraction of type $\t$ players that have chosen action $a$ up to time $t$                          \\ \hline
$\Romga (t)$               & random aggregate population measure till time  epoch $t$        \\ \hline
${\bm\Gamma}(t) $               & history of $\Romga (t)$ till time $t$                                              \\ \hline
$h^{a,\ta}_\t$               & difference in utility functions for actions $a$ and $\ta$ of type $\t$ player                                                            \\ \hline
$\H^{a,\ta}_\t$ & set of points where the function $h^{a,\ta}_\t(\cdot)$ is zero (boundary region)
                                    \\ \hline
   $\H$ & union of boundary regions $\H^{a,\ta}_\t$  for all $(a,\ta,\t)$                               \\ \hline
    $\D$   & domain of the functions $\g(\cdot)$ and $h(\cdot)$        \\ \hline
    $\I(\bromga)$ & set of indices $(a,\ta,\t)$ for which $h^{a,\ta}_\t(\bromga)=0$ \\ \hline
 $\J(\bromga)$ & subclass of regions $\{\R_j\}$ which are adjacent to $\bromga$
    \\ \hline
    $\inu (\cdot)$ & linear interpolated process  \\ \hline
    $\C_{i\delta}$ & measurable set in euclidean space ${\mathbb R}^k$ for some $i$ and $\delta \in (0,1)$ \\ \hline
    $\B(\g_i,r)$ & closed ball of radius $r$ and centered at $\g_i$ for some $i$  \\ \hline
    $\co(\g_1,\ldots,\g_m)$ & convex hull of $\g_1,\ldots,\g_m$ \\ \hline
    $\nablah$ & gradient of the function  $h^{a, \ta}_\t(\cdot)$  at $\bromga$ \\ \hline
    $\SS^* $ & subclass of ICT  sets that are singletons \\ \hline
     $\LL_c^*$ & set of classical attractors \\ \hline
     $\SS_c^*$ & class of ICT sets arising from $\LL_c^*$ \\ \hline
     $\LL_f^*$ & set of Filippov attractors \\ \hline
      $\SS_f^*$ & class of ICT sets arising from  $\LL_f^*$ \\ \hline
       $\CC^* $ & cyclic ICT set  \\ \hline
 \end{tabular}}
\end{table}
\section{Appendix: Background on Differential Inclusions}\label{Appendix_DI}

\noindent Given an ODE in $\mathbb{R}^k$ such that 
\begin{equation}\label{eqn_append_ODE}
    \dot{\bromga} = \g(\bromga).
\end{equation}
The differential inclusion associated with the above ODE is defined below as in \cite{filippov2013differential}:
\begin{equation}\label{eqn_append_DI_Def}
    \dot{\bromga} \in \G(\bromga),
\end{equation}
where $\G$ is a set valued function which maps each point $\bromga \in \mathbb{R}^k$ to a set $\G(\bromga) \subset \mathbb{R}^k$, which is mathematically defined as:

\begin{equation}\label{DI_plane1}
   \dot{\bromga} \in \G(\bromga) = \bigcap_{N \in \mathcal{N}} \bigcap_{\delta > 0} \co \big\{\g(\bromga') : \|\bromga - \bromga'\| < \delta \setminus N\big\},
\end{equation}
where $\N$ represents the family of measure zero sets, and $\co$ denotes the closed convex hull. We need some assumptions on $\G$ as in \cite{benaim2005stochastic} as below.

\noindent {\bf Assumptions on $\G$:}
\begin{itemize}
    \item[(i)] $\G$ is a closed set-valued map.
    \item[(ii)] $\G(\bromga)$ a nonempty compact convex subset of $\mathbb{R}^k$ for all $\bromga \in \mathbb{R}^k$.
    \item[(iii)] There exists $c>0$ such that for all $\bromga \in \mathbb{R}^k$:
    $$
    \sup_{z \in \G(\bromga)}\|z\| \leq c(1+\|\bromga\|),
    $$
    where $\|\cdot\|$ denotes the norm on $\mathbb{R}^k$.
\end{itemize}

We now reproduce some important definitions from \cite{filippov2013differential} and \cite{benaim2005stochastic}, relevant for our paper. 
\begin{definition}\label{def_DI_solution}
    A solution for the differential inclusion given in \eqref{eqn_append_DI_Def} with initial point $\bromga(0) \in \mathbb{R}^k$ is an absolutely continuous mapping $\bromga: \mathbb{R} \to \mathbb{R}^k$ such that
    $$
    \dot{\bromga}(t) \in \G(\bromga(t)),
    $$
    for almost all $t \in \mathbb{R}$.
\end{definition}
     
\begin{definition}{[Perturbed solutions]} \label{def_perturbed_sol}
A continuous function $\persolu : \mathbb{R}_+ = [0, \infty) \to \mathbb{R}^k$ is called a perturbed solution of the DI given in \eqref{eqn_append_DI_Def}, if it satisfies the following set of conditions:
\begin{enumerate}
    \item[(i)] $\persolu$ is absolutely continuous.
    \item[(ii)] There exists a locally integrable function $\pert(\cdot)$ such that:
    \begin{enumerate}
        \item $\lim_{t \to \infty} \sup_{0 \leq v \leq T} \left\| \int_t^{t+v} \pert(s) \, ds \right\| = 0, \text{ for all } T > 0$, and
        \item 
        $\dot{\persolu}(t) - \pert(t) \in \G^{\delta(t)}(\persolu(t)) \text{ for almost every } t > 0,$
        for some function $\delta : [0, \infty) \to \mathbb{R}$ with $\delta(t) \to 0$ as $t \to \infty$. Here, 
        $$
        \G^\delta(z) := \{ \persolu \in \mathbb{R}^k : \exists \  z' : \|z' - z\| < \delta, \, d(\persolu, \G(z')) < \delta \},
        $$
        and 
        $$
        d(\persolu, {\mathcal D}) = \inf_{c \in {\mathcal D}} \|\persolu - c\|.
        $$
    \end{enumerate}
\end{enumerate}
\end{definition}

\begin{definition}{[Internally chain transitive (ICT) set]}\label{Def_ICT_set}
    A set $\ICT$ is said to be ICT, provided that $\ICT$ is compact and there exists a chain between $z$ and $\tilde{z}$ in $\ICT$, for all $z,\tilde{z} \in \ICT$.

     We say that there exists a chain between $z$ and $\tilde{z}$ in $\ICT$, if for every $\epsilon > 0$ and $T > 0$, there exists an integer $l \in \mathbb{N}$, solutions $\bromga_1,\bromga_2, \ldots, \bromga_l$ to DI \eqref{eqn_append_DI_Def}, and real numbers $t_1, t_2, \ldots, t_l$ greater than $T$ such that:
\begin{enumerate}
    \item[(a)] $\bromga_i(s) \in \ICT$ for all $0 \leq s \leq t_i$ and for all $i = 1, \ldots, l$,
    \item[(b)] $\|\bromga_i(t_i) - \bromga_{i+1}(0)\| \leq \epsilon$ for all $i = 1, \ldots, l-1$,
    \item[(c)] $\|\bromga_1(0) - z\| \leq \epsilon$ and $\|\bromga_l(t_l) - \tilde{z}\| \leq \epsilon$.
\end{enumerate}
The sequence $(\bromga_1, \ldots, \bromga_l)$ is called an $(\varepsilon, T)$ chain in $\ICT$ from $z$ to $\tilde{z}$ for $\G$.
\end{definition}

\begin{definition}{[Robbins-Monro conditions]}\label{def_robbins_monro}
The stochastic process $\left\{\Romgac^a(t)\right\}_{t > 0}$ given by \eqref{eqn_bromega_DI_iterates} satisfies the Robbins-Monro condition with martingale difference noise (see \cite{kushner2003stochastic}), if its characteristics satisfy the following:
 \begin{itemize}
     \item[(a)] $\left\{\gamma(t)\right\}_{t > 0}$ is a deterministic sequence.
     \item[(b)] $\left\{\pertc(t)\right\}$ is measurable with respect to ${\bm\Gamma}(t)$ for each $t >0$.
     \item[(c)] $\mathrm{E}\left[\pertc(t+1) \mid {\bm\Gamma}(t)\right]=0$.
 \end{itemize}
\end{definition}
\section{Appendix: Stochastic Approximation Proofs} \label{Appendix_stoch_Approx_proofs}
\noindent \textbf{Proof of Lemma \ref{lemma_Filippov}: } \label{proof_lemma_Filippov}
    By hypothesis,   as $\delta \to 0$,  $\B_{i\delta} \to \{\g^c_i\}$  for each $i$ and thus  $\bigcup_{i=1}^{m} \B_{i\delta} \to {\mathcal O}:=\{\g^c_1, \cdots, \g^c_m\}$. Clearly, $\co (\bigcup_{i=1}^{m} \B_{i\delta}) \to \co \left({\mathcal O}\right)$.

Since $\C_{i\delta} \subseteq \B_{i\delta}$ for all $(i, \delta)$, we trivially have $\bigcup_{i=1}^{m}\C_{i\delta} \subset  \bigcup_{i=1}^{m}\B_{i\delta}$ and  then $\co\left({\bigcup_{i=1}^{m}\C_{i\delta}}\right) \subset \co\left({{\bigcup_{i=1}^{m}\B_{i\delta}}}\right)$; this implies 
 $$
 \bigcap_{\delta} \co\left(\bigcup_{i=1}^{m}\C_{i\delta}\right) \subset \bigcap_{\delta} \co\left(\bigcup_{i=1}^{m} \B_{i\delta}\right) = \co(\mathcal O).$$

Again by hypothesis, ${\mathcal O} \subset \co\left(\bigcup_{i=1}^{m }\C_{i\delta}  \right) $ for each $\delta$, and thus  $ \co({\mathcal O}) \subset  \bigcap_{\delta} \co\left(\bigcup_{i=1}^{m}\C_{i\delta}\right)$.
\eop

\vspace{3mm}

\noindent\textbf{Proof of Theorem \ref{Thm_stoch_approx}: }  \label{proof_Thm_stoch_approx}
  To prove the result, we first apply proposition 1.4 and then proposition 1.3   of \cite{benaim2005stochastic}, by showing that our stochastic trajectory satisfies  Robbins-Monro
conditions (\ref{def_robbins_monro}, also see \cite{benaim2005stochastic})). Towards that, for ease of explanation, we reproduce and rewrite the equations \eqref{eqn_iterates_nu} and \eqref{eqn_cond_exp}   as  below and provide equivalent representation of sigma-algebra $\Gamma(\cdot)$:
\begin{eqnarray}
 \hspace{-8mm} \Romgac^a(t+1) \hspace{-2mm}&=& \hspace{-2mm}\Romgac^a(t) + \gamma(t+1)(\ind_{\{A(t+1) = a\}}-\Romgac^a(t)), \forall a \in \A, \text{ with } \gamma(t):=\frac{1}{t},\ t>0 \text{ and } \label{eqn_iterates_nu_proof}\\
    \hspace{-8mm} g^a(\Romga(t)) \hspace{-2mm}&:=& \hspace{-2mm} \mathrm{E}[\ind_{\{A(t+1) = a\}}-\Romgac^a(t) \mid \bm \Gamma(t)], \text{ where }\label{eqn_cond_exp_proof}\\
  \hspace{-8mm} {\bm\Gamma}(t) \hspace{-2mm}&:=& \hspace{-2mm} \sigma\{\Romga(s): s \le t\} \stackrel{`a\text{'}}{=} \sigma \{\Romga(0),A(s):s \leq t\}   \text{ for all } t > 0. \label{def_gamma_tt} 
\end{eqnarray}
The equality $`a$' follows straightforwardly from \eqref{eqn_iterates_nu_proof}. Then, martingale difference noise denoted by $\pertc(\cdot)$, for any $a \in \A$, is defined as below (see, for instance,  \cite[Chapter 4-5]{kushner2003stochastic}).
\begin{eqnarray}\label{eqn_martingale_difference}
    \pertc(t+1) \hspace{-2mm}&:=& \hspace{-2mm} \ind_{\{A(t+1) = a\}} - \Romgac^a(t) - \mathrm{E}[\ind_{\{A(t+1) = a\}}-\Romgac^a(t)) \mid \bm\Gamma(t)], \text{ for all } t > 0, \\
         \hspace{-2mm}&=& \hspace{-2mm} \ind_{\{A(t+1) = a\}} - \Romgac^a(t) - g^a(\Romga(t)), \text{ for all } t > 0. \label{eqn_mar_diff}
    \end{eqnarray}
   Using \eqref{eqn_iterates_nu_proof} and \eqref{eqn_mar_diff}, we get:
    \begin{equation*}
     \Romgac^a(t+1) = \Romgac^a(t) + \gamma(t+1)[\pertc(t+1)+ g^a(\Romga(t))], \mbox{ for all } a \in \A.
\end{equation*}
 The above can be rewritten as the following inclusion  (constructed using singleton sets): 
\begin{equation}\label{eqn_bromega_DI_iterates}
     \Romgac^a(t+1) - \Romgac^a(t) - \gamma(t+1)\pertc(t+1) \in \gamma(t+1)\{g^a(\Romga(t))\}, \text{ for all } a \in \A.
\end{equation}
To apply \cite[Proposition 1.4]{benaim2005stochastic}, we first show that the stochastic process $\{\Romgac^a(t)\}_{t > 0}$ given by \eqref{eqn_bromega_DI_iterates} satisfies the Robbins-Monro conditions of Definition \ref{def_robbins_monro} with martingale difference noise \eqref{eqn_martingale_difference} as below.
\begin{itemize}
    \item[(a)] As $\gamma(t)=\nicefrac{1}{t}$ (see \eqref{eqn_iterates_nu_proof}), the step size $\{\gamma(t)\}_{t \geq 1}$ is a deterministic sequence, also  $\substack{\sum_{t}\nicefrac{1}{t^2} < \infty}$.
    \item[(b)] $\pertc(t)$ defined in  \eqref{eqn_martingale_difference} is $\bm \Gamma(t)$ measurable using   \eqref{eqn_cond_exp_proof}-\eqref{def_gamma_tt}, as $\Gamma(t-1) \subset \Gamma(t)$.
Also,  by    \eqref{eqn_martingale_difference}, we have 
$ \mathrm{E}[\pertc(t+1) \mid {\bm\Gamma}(t)] = 0$.
\end{itemize}
Finally,  to apply \cite[Proposition 1.4]{benaim2005stochastic},  it is left to show that $E[\pertc(t+1)^2] < \infty$ for each $a,t$. From \eqref{eqn_martingale_difference} it suffices to prove   $E[\Romgac^a(t)^2 ] < \infty $ for each $a$ and $t$. 
But this follows from  \eqref{eqn_iterates_nu_proof} because:   when $\Romgac^a(0) \in [0, 1]$ for all $a$,   by induction  for each $t$, we have $\Romgac^a(t) \in [0,1]$ for each $a$ --- observe for each $a,$  $|\Romgac^a(t)| = |(1-\gamma(t) ) \Romgac^a(t-1) + \gamma(t) \ind_{\{A(t)\}} | \le 1$, as $\gamma(t) = \nicefrac{1}{t}$.
Now by applying   \cite[Proposition 1.4]{benaim2005stochastic}, we have:
\begin{equation}
    \lim_{l \to \infty} f^T(l)=0 \text{ almost surely}, \text{ for all } T > 0, \label{eqn_lim_f_l_T}
\end{equation}
where
\begin{eqnarray*}
    f^T(l) &=& \sup \left\{\left\|\sum_{t=l}^{\kappa-1}\gamma(t+1)\pertc(t+1)\right\|:  \kappa = l+1,\ldots,m(\zeta_{l+1}+T)\right\}, \text{ and }\\
    m(t) &=& \sup\{l\geq0: t \geq \zeta_l\} \text{ with } \zeta_l := \sum_{t=1}^{l}\gamma(t).
\end{eqnarray*}
Hence the hypothesis of  \cite[Proposition 1.3]{benaim2005stochastic} is  satisfied and therefore the linear interpolated process $\inu (\cdot)$, defined in  \eqref{eqn_interpolated_traj} is a bounded perturbed solution of the DI \eqref{eqn_general_DI} for all the sample paths for which  \eqref{eqn_lim_f_l_T} is true; in other words  in almost sure sense, as \eqref{eqn_lim_f_l_T}  is true almost surely.

Finally,  using \cite[Theorem 3.6]{benaim2005stochastic}, the limit set $L(\inu(t))$ of linear interpolated trajectory $\inu(t)$,
is ICT almost surely.  
\eop

\vspace{3mm}
 
\noindent\textbf{Proof of Lemma \ref{lemma_singleton_ICT}:}\label{proof_lemma_singleton_ICT}
Consider $x \in \Omega$ (defined in \eqref{eqn_Omega_space}) such that the limit set associated with $x$ is $L(\inu(x, \cdot)) = \{\bromga^*\}$. Assume, for contradiction,  that the linear interpolated trajectory  $\inu(x, t) \not\to \bromga^*$ as $t\to \infty$. Since $\inu(x, t)$ (see \eqref{eqn_interpolated_traj}) is continuous with respect to $t$, therefore, there exists $\epsilon > 0$, and a sequence $\{t_n\}_{n \geq 1}$ such that $t_n \to \infty$ and
\begin{equation}\label{eqn_single_contra}
    |\inu(x,t_n)- \bromga^*| \geq \epsilon \mbox{ for all } n \geq 1.
\end{equation}
 Observe that the sequence $\{\inu(x,t_n)\}_{n\ge 1}$ is bounded, even as $t_n \to \infty$. Therefore, by the Bolzano -Weierstrass theorem (see \cite{rudin1976principles}), there exists a convergent subsequence $\{\inu( x,t_{n_\kappa})\}_{\kappa \geq 1}$ with a limit $\bromga'$. Also, by the definition of the limit set, we know that $\bromga' \in L(\inu(x, \cdot))$.

However, $L(\inu(x, \cdot)) = \{\bromga^*\}$, implying $ \bromga' = \bromga^*$. Thus, for any $\epsilon>0$, there exists $K$ such that, 
$$
|\inu(x,t_{n_\kappa}) - \bromga^*| < \epsilon, \text{ for all } \kappa>K.
$$ 
This contradicts \eqref{eqn_single_contra}. Therefore, $\inu(x,t)$ converges to $ \bromga^*$ as $t \to \infty$.     
\eop

\section{Appendix: Characterization of ICT sets proofs}
\subsection{Game with two actions}
\textbf{Proof of Theorem \ref{lem_alm_sure_conv_two_act}: }  \label{proof_lem_alm_sure_conv_two_act}
Let   $\mathbb{S}' := \{\{\bromgac^*\} : \bromgac^* \in \mathbb{L} \}$   represent the class in the RHS of \eqref{Eqn_ICTs_Two_actions} with $\LL$ defined in the same RHS. And 
let the class $\SS^*$  of singleton ICT sets,  defined in Lemma ~\ref{lemma_singleton_ICT}, be  represented briefly by $\SS$. We will first show that $\SS = \SS'$.  Then  we will show  that all ICT sets are singleton sets. And then the result follows from Theorem~\ref{Thm_stoch_approx} and Lemma~\ref{lemma_singleton_ICT}.
%
%
%
%
%
Now the proof proceeds in the following steps.
\\
\underline{\textbf{Step 1:} To prove that  $\mathbb{S}$ coincides with $\mathbb{S}'$, i.e., $\mathbb{S} = \mathbb{S}'$.}

We first show that $\mathbb{S}' \subset \mathbb{S}$.  
To this end, let $\{\bromgac^*\} \in \mathbb{S}'$.  
From the DI~\eqref{eqn_DI_with_two_actions}, observe the following:\\
If $\bromgac^* \notin \mathcal{H}$ which implies $\g(\bromgac^*) = 0$, then the constant function $\bromgac(t) = \bromgac^*$ for all $t \geq 0$ is a solution to the DI~\eqref{eqn_DI_with_two_actions}. Therefore, $\{\bromgac^*\}$ is an ICT set.
If $\bromgac^* \in \mathcal{H}$ with $0 \in [\g_1^\infty(\bromgac^*), \g_2^\infty(\bromgac^*)]$, then by similar reasoning, $\bromgac(t) = \bromgac^*$ is also a solution of the DI~\eqref{eqn_DI_with_two_actions}, and hence $\{\bromgac^*\}$ is an ICT set.
Thus, in both cases, $\{\bromgac^*\} \in \mathbb{S}$, and we conclude that $\mathbb{S}' \subset \mathbb{S}$.

To prove the converse, suppose, for contradiction, that $\{\bromgac^*\} \in \mathbb{S}$ but $\{\bromgac^*\} \notin \mathbb{S}'$. 
We first consider the case where $\bromgac^* \notin \mathcal{H}$. Then, by the structure of $\SS'$, $\g(\bromgac^*) \neq 0$. Since $\g$ is continuous on $\cup_j \mathcal{R}_j$, there exists a neighborhood $\B(\bromgac^*,\delta)$ (for some $\delta > 0$) such that $\g(\bromgac)$ is either strictly positive or strictly negative for all $\bromgac \in \B(\bromgac^*,\delta)$ (see also the definition of the limits $\g_y^\infty(\bromgac)$ in~\eqref{eqn_g_limit}). 
In this case, due to the strictly positive or negative drift in the neighborhood of $\bromgac^*$, the singleton $\{\bromgac^*\}$ cannot be invariant. Hence, by~\cite[Lemma 3.5]{benaim2005stochastic}, it cannot be an ICT set. This leads to a contradiction.

Now consider the case where $\bromgac^* \in \mathcal{H}$, but $\{\bromgac^*\} \notin \mathbb{S}'$, which implies $0 \notin [\g_1^\infty(\bromgac^*), \g_2^\infty(\bromgac^*)]$. In this situation, the set of possible derivatives at $\bromgac^*$ is either strictly positive or strictly negative, and thus the constant trajectory $\bromgac(t) = \bromgac^*$ cannot be a solution to the DI~\eqref{eqn_DI_with_two_actions}. This again contradicts the assumption that $\{\bromgac^*\}$ is an ICT set.
Therefore, our assumption must be false, and it follows that $\mathbb{S} \subset \mathbb{S}'$.

\noindent\underline{\textbf{Step 2:} To show that $|\mathbb{S}| < \infty$.}

From~\eqref{eqn_fun_g_}, observe that in each interval $\mathcal{R}_j$, the function $\g(\bromgac)$ takes the form $\g(\bromgac) = b_j - \bromgac$, where $b_j$ is a constant. Therefore, the number of solutions to the equation $\g(\bromgac^*) = 0$ is at most $m$, i.e.,
\[
\left| \left\{ \bromgac^* : \g(\bromgac^*) = 0 \right\} \right| \leq m < |\mathcal{H}| + 1 < \infty,
\]
by Assumption~\ref{O.1}. 
Hence, the number of singleton ICT sets is finite.

\noindent\underline{\textbf{Step 3:} Non-singleton sets are not ICT sets.}

Suppose, for contradiction, that ${\mathcal F}$ is an ICT set that includes a nontrivial interval. Then there exists some $j$ such that the set ${\mathcal I} := {\mathcal F} \cap \mathcal{R}_j$ is non-empty. By Assumption~\ref{O.1}, we may further shrink ${\mathcal I}$, if necessary, so that
\[
\mathcal{I} \cap \left\{ \bromgac : \g(\bromgac) = 0 \right\} = \emptyset.
\]
Since $\g(\bromgac)$ has a constant sign on $\mathcal{I}$, any solution trajectory of the ODE starting at a point $a \in \mathcal{I}$ can move only in one direction, say, from $a$ to another point $b \in \mathcal{I}$, but not in the reverse direction. This violates the invariance and two-sided accessibility requirements stated in Definition~\ref{Def_ICT_set} of an ICT set.

By similar reasoning, no set $\mathcal{F} = \{\bromgac_i\}_{i > 1}$, whether finite, countable, or uncountable, can form an ICT set unless each element is isolated and satisfies the conditions for singleton ICT sets established earlier. 
\hfill$\square$

\subsection{Generic games with finite actions} \label{subsec_append_Generic_game_proofs}

Some proofs of this section   depend upon some common observations given below.
 By   definition, the sign of function $h^{a,\ta}_\t(\bromga)$ does not change at any $\bromga \in \R_j$, while it can change across different regions; this is true 
 for any $(a,\ta,\t)$. More precisely, the following statements are true for any $(a,\ta,\t)$ and $j \ne j'$:
\begin{eqnarray}\label{eqn_same_diff_sign in_regions}
    h^{a,\ta}_\t (\bromga_1) h^{a,\ta}_\t (\bromga_2)  &>& 0 \mbox{ for all }   \bromga_1 , \bromga_2 \in \R_j,  \mbox{ for any }  j,  \\ 
       h^{a,\ta}_\t (\bromga_1) h^{a,\ta}_\t (\bromga_2) & < & 0   \mbox{ for all } \bromga_1 \in \R_j,   \  \bromga_2 \in \R_{j'} \mbox{ implies } \H^{a, \ta}_\t  \cap  \overline{\R}_j \cap \overline{\R}_{j'} \ne \emptyset; \nonumber 
\end{eqnarray}
basically, the sign change (by continuity) across $  \R_j, \R_{j'} $ is possible only  if    the corresponding $\H^{a,\ta}_\t$ is in their common boundary (also observe  $h^{a,\ta}_\t(\bromga) \ne 0$ if $\bromga \in \cup_j \R_j$).

\begin{lem}\label{lem_three_act_not_zero}
Consider any set $\F \subset \overline{\R}_j$ for some $1\le j\le m$. If $\bromga \in \F \cap \R_j $   with $\g(\bromga) \ne 0 $, then $\F$ is not an ICT set. 
\end{lem}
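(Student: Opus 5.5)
We prove this by contradiction, using the structure of the DI inside $\R_j$ together with the chain condition of Definition \ref{Def_ICT_set}. Suppose $\F \subset \overline{\R}_j$ is an ICT set and that some $\bromga \in \F \cap \R_j$ satisfies $\g(\bromga) = \b_j - \bromga \ne 0$, i.e., $\bromga \ne \b_j$. Recall that $h^{a,\ta}_\t$ is linear, so $\R_j$ is convex and open in $\D$. Inside $\R_j$ the DI \eqref{eqn_general_DI} reduces to the ODE $\dot\bromga = \b_j - \bromga$. Every solution segment staying in $\overline{\R}_j$ is therefore of the form \eqref{Eqn_DI_Solution_in_Rj}, with possible exceptional pieces only on $\H \cap \overline{\R}_j$.

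The key step is to exhibit a Lyapunov-type functional that strictly decreases along solutions near $\bromga$ and cannot increase elsewhere in $\overline{\R}_j$. Take $V(\bromga') := \|\bromga' - \b_j\|$.

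\begin{itemize}
\item In the interior, along \eqref{Eqn_DI_Solution_in_Rj}, we have $V(\bromga(t)) = V(\bromga(0))e^{-t}$, so $V$ is strictly decreasing whenever $\bromga(0) \ne \b_j$.
\item On boundary points $\bromga' \in \H \cap \overline{\R}_j$, admissible velocities lie in $\co\{\b_i - \bromga' : i \in \J(\bromga')\} \cup \{\g(\bromga')\}$. This may not decrease $V$. So instead of $V$ I would use the simpler invariance argument of \cite[Lemma 3.5]{benaim2005stochastic}: an ICT set is invariant, so every point of $\F$ must admit a full (backward and forward) solution staying in $\F$.
\end{itemize}

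Concretely, take the backward trajectory through $\bromga$ that stays in $\F \subset \overline{\R}_j$. While it is in $\R_j$ it equals $\b_j + (\bromga - \b_j)e^{-t}$ for $t \le 0$. Its distance to $\b_j$ grows like $e^{|t|}$, so it must leave the bounded set $\overline{\R}_j$ in finite backward time, or at least hit $\H \cap \overline{\R}_j$.

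The main obstacle is ruling out that the backward solution can continue along the boundary $\H \cap \overline{\R}_j$ forever. I would handle this by working with chains rather than full invariance. Fix $\epsilon$ small so that $\B(\bromga, 2\epsilon) \subset \R_j$, and fix $T$ large. An $(\epsilon, T)$ chain from $\bromga$ back to $\bromga$ within $\F$ must have its last solution piece $\bromga_l$ end within $\epsilon$ of $\bromga$ after running for time $t_l > T$ inside $\overline{\R}_j$.

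Consider the last time this piece is at distance $\ge 2\epsilon$ from $\bromga$, or else $\bromga_l$ stays in $\B(\bromga, 2\epsilon) \subset \R_j$ for the whole interval $[0, t_l]$.

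\begin{itemize}
\item In the second case, the explicit flow moves a distance of order $\|\b_j - \bromga\|(1 - e^{-T}) - O(\epsilon)$, which contradicts staying in the ball when $\epsilon$ is small and $T$ is large.
\item In the first case, over the final stretch inside the ball the flow is again explicit. Combining this with the previous step and choosing $\epsilon < \|\b_j - \bromga\|/4$ yields a contradiction via the contraction $\|\bromga_l(t) - \b_j\|$ being decreasing in the ball: the piece would have to come from farther points along the ray toward $\b_j$.
\end{itemize}

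If this direct chain argument becomes delicate, I would fall back on citing that ICT sets are invariant and attractor-free, so $\F$ would contain the $\alpha$-limit of $\bromga$, which is impossible since backward orbits in $\R_j$ escape to $\H$. I would then argue, for linear $h$, that the backward orbit crosses out of $\overline{\R}_j$ transversally, leaving $\F$.
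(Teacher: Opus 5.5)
Your proposal has a genuine gap, and it is the one you flag yourself: nothing in your argument controls motion on $\H\cap\overline{\R}_j$. You correctly note that there the admissible velocities need not decrease $\|\cdot-\b_j\|$, and the chain argument does not get around this. Your second case does give a contradiction, but your first case does not. A last piece may enter $\B(\bromga,2\epsilon)$ at an upstream point of the ray from $\b_j$ through $\bromga$ and then flow to within $\epsilon$ of $\bromga$; that is perfectly admissible. ``It must come from farther points along the ray'' only tells you that $\F$ contains upstream points. The real question is whether a chain can get from $\bromga$ (downstream) back to them. Inside $\R_j$ it cannot, but nothing you wrote excludes doing so along the boundary.

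The fallback fails for the same reason. If the backward exit point lies on a face with $h(\b_i)h(\b_j)<0$, the DI admits sliding solutions on that face (Lemma~\ref{lem_three_action_on_H}), so the backward solution need not leave $\overline{\R}_j$ transversally. Moreover, a non-unique DI has no single ``$\alpha$-limit of $\bromga$'', and a limit point on $\H$ is not outside $\F$ anyway. (As a side remark, linearity and convexity of $\R_j$ are not yet assumed where this lemma is used, namely in Theorem~\ref{Thm_gen_one}.) The paper's proof has the same hole: it only excludes a return trip ``which fully lies in $\R_j$''.

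In fact the gap cannot be closed without an extra hypothesis, because the statement fails when faces of $\R_j$ admit sliding. Take three actions and four types with $\balpha=(0.3,0.3,0.2,0.2)$:
type~1 chooses between actions 1 and 2, with $u(1,\cdot)-u(2,\cdot)=h_1:=2\omega^1+\omega^2-1.3$;
type~2 chooses between actions 1 and 3, with $u(1,\cdot)-u(3,\cdot)=h_2:=-\omega^1+\omega^2+0.3$;
types 3 and 4 always play actions 2 and 3.
On $\R_j=\{h_1>0,h_2>0\}$ one has $\b_j=(0.6,0.2)$. Across $F=\{h_1=0\}$ the vector is $\b_i=(0.3,0.5)$, and across $G=\{h_2=0\}$ it is $\b_k=(0.3,0.2)$. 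Since $h_1(\b_j)h_1(\b_i)<0$ and $h_2(\b_j)h_2(\b_k)<0$, both faces carry sliding motion. On $F$ it contracts toward $(0.5,0.3)\in F$; on $G$ it contracts toward $(0.5,0.2)$, which lies on the line of $G$ beyond the corner $B=(8/15,7/30)$.

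Now build a loop. From $p=(0.52,0.26)\in F$, follow the ray toward $\b_j$ through $\R_j$ until it hits $q=(19/35,17/70)\in G$. Then slide along $G$ to $B$, and slide along $F$ back to $p$. This is a periodic solution of period $\ln 3$, contained in $\overline{\R}_j$ and passing through points of $\R_j$ where $\g\neq 0$. Its trace is therefore a compact ICT set that contradicts the lemma, and also Theorem~\ref{Thm_gen_one}.

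What is missing is a no-sliding hypothesis such as \ref{asm_gen_2_implication} on the faces of $\R_j$. Under it, a solution that stays in $\overline{\R}_j$ cannot spend positive time on a face. Your $V=\|\cdot-\b_j\|$ is then nonincreasing along solutions in $\F$. It is strictly decreasing off a finite set of rest points: the corners, finite by \ref{asm_card_ind_set_finite}, and possibly $\b_j$. The standard Lyapunov argument for ICT sets \cite{benaim2005stochastic} then gives the contradiction.
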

\textbf{Proof.}
    From \eqref{eqn_new_general_ode1}, 
   $g^a(\bromga') = b^a_{j} - \bromgac'^a$  for all $a$ and all $\bromga' = (\bromgac'^1, \cdots, \bromgac'^{k-1}) \in \R_j$, where $\{b^a_{j}\}$ are constants;  by the hypothesis of the lemma, there exists a ball $\B(\bromga,\delta) \subset \R_j$ for some $\delta > 0$ such that $\mbox{sign}(g^a(\bromga'))$ is the same (and not equal to 0) for all $\bromga' \in \B(\bromga,\delta)$ and all  $a$.   Consider  any  other $\tilde{\bromga} \in \F$ with  $\tilde{\bromga} \neq \bromga'$; then  there does not exist any $(\epsilon,T)$ chain,  from  $\tilde{\bromga}$ to $\bromga$,   for every sufficiently small $\epsilon > 0$ and for any $T>0$ (see the Definition \ref{Def_ICT_set}); observe that any solution starting in $\R_j$ and till it hits the boundary of $\R_j$, has the following structure:
   $$
   \bromga(t) =(\bromga(0) - \b_j)e^{-t} + \b_j,
   $$and hence if one has a solution from some $\bromga'$ to $\bromga''$ (basically if $\bromga(0)= \bromga'$ and $\bromga(t) = \bromga''$ for some $t$), then one can not have a solution (which fully lies in $\R_j$) from $\bromga''$ to $\bromga'$. For the same reasons,  if $\F = \{\bromga\}$ is a singleton, then it is not invariant (see \cite[Lemma 3.5]{benaim2005stochastic}). Thus, $\F$ is not an ICT set. \eop
   
\vspace{3mm}
\noindent\textbf{Proof of Theorem \ref{Thm_gen_one}:}  To begin with, if $\g(\bromga) \neq 0$ for some $\bromga \in  \F \cap   \R_j$, then, by Lemma \ref{lem_three_act_not_zero} of \ref{subsec_append_Generic_game_proofs}, the set $\F$ is not ICT.

Now, suppose $\g(\bromga) = 0$ for some  $\bromga \in \F \cap \R_j$. From \eqref{eqn_new_general_ode1}--\eqref{eqn_new_general_ode2}, we have $\dot{\bromga}^a= b^a_{j} - \bromgac^a$ for all $a \in \R_j$, which implies $\bromga = \b_j$. 
Furthermore, $\bromga$ is an asymptotically and locally stable attractor, with the entire $\R_j$ as its domain of attraction. Therefore, any solution starting from $\bromga$ satisfies $\bromga(t) = \bromga$ for all $t$.

Now, consider any other $\bromga' \in \F$ (such a point exists since $|\F| > 1$). Then, for any $\epsilon <  d( \bromga, \bromga')$, where $d$ denotes the Euclidean distance, there exists no $(\epsilon,T)$-path from $\bromga$ to $\bromga'$, as per Definition \ref{Def_ICT_set}. Thus, $\F$ is not an ICT.
\eop

\vspace{3mm}

\noindent\textbf{Proof of Lemma \ref{Lemma_no_Solution_on_H}:} \label{proof_Lemma_no_Solution_on_H}
Assume, for the sake of contradiction, that there exists a non-constant solution $\bromga(t)$ of the DI \eqref{eqn_general_DI}, such that 
$\bromga(t) \in \H^{a,\ta}_\t$ for all $t \in [s_1, s_2]$, for some $s_2 > s_1$. Since the solution is non-constant, there exist $t, s \in [s_1, s_2]$ with $t \ne s$ such that $\bromga(t) \ne \bromga(s)$. By continuity of solution to the DI, this implies that $\bromga([s_1, s_2])$ contains a non-singleton connected set.

By further invoking Assumption \ref{asm_card_ind_set_finite}, there exists a sub-interval of $[s_1, s_2]$ over which the index set $\J(\bromga(t)) = \{i, j\}$ remains fixed for some pair of region indices $i, j$. With a slight abuse of notation, we denote this sub-interval again by $[s_1, s_2]$.

%

Thus, in all, we have the following for some fixed pair of region indices $\{i,j\}$:
\begin{equation}
   h^{a,\ta}_\t(\bromga(t)) = 0 \quad \text{and} \quad \J(\bromga(t)) = \{i, j\} \quad \text{for all } t \in [s_1, s_2]. \label{lemma41}
\end{equation}
Recall that for any $t$ in the above interval, $\bromga(t) \in \H^{a,\ta}_\t$, and therefore, $\bm \G(\bromga(t))$ is the convex hull formed by the vectors $\b_i$ and $\b_j$. Consequently, the DI \eqref{eqn_general_DI} simplifies to:
\begin{eqnarray}
    \bm \G (\bromga(t)) = \left\{ \lambda \b_i + (1 - \lambda) \b_j - \bromga(t) : \lambda \in [0,1] \right\} \quad \text{for all } t \in [s_1, s_2]. \label{lemma411}
\end{eqnarray}
Furthermore, using the first part of equation \eqref{lemma41}, we apply the chain rule (composition rule) to obtain for all $t \in [s_1, s_2]$:
\begin{eqnarray} \label{eqn_der_h_inn_pro}
    0 = \frac{d}{dt} \left( h^{a,\ta}_\t(\bromga(t)) \right) = \nablaht^T \dot{\bromga}(t).
\end{eqnarray}
By the definition of the DI, we have $\dot{\bromga}(t) \in \bm \G(\bromga(t))$ for almost all $t \in [s_1, s_2]$. For such $t$, define $\V(t) := \dot{\bromga}(t)$. Then from \eqref{lemma411}, there exists $\lambda(t) \in [0,1]$ such that:
\[
\V(t) = \lambda(t) \b_i + (1 - \lambda(t)) \b_j - \bromga(t) 
= \lambda(t) (\b_i - \bromga(t)) + (1 - \lambda(t)) (\b_j - \bromga(t)).
\]
Substituting into \eqref{eqn_der_h_inn_pro}, we obtain:
\[
0 = \nablaht^T \V(t) = \lambda(t) \nablaht^T (\b_i - \bromga(t)) + (1 - \lambda(t)) \nablaht^T (\b_j - \bromga(t)).
\]
However, by Assumption \ref{Asm_filippov_new}, we have that both inner products $\nablaht^T (\b_i - \bromga(t))$ and $\nablaht^T (\b_j - \bromga(t))$ are of the same sign and strictly non-zero, which implies:
\[
\nablaht^T \V(t) \ne 0,
\]
leading to a contradiction (since $\lambda(t) \in [0,1]$ and the convex combination of two non-zero quantities of the same sign cannot be zero).\eop

\vspace{3mm}

 \noindent\textbf{Proof of Lemma \ref{lem_solu_tau1_tau2}: }\label{proof_lem_solu_tau1_tau2}
Under the given hypothesis on $\J(\bromga_{j_1})$, we have $|\I(\bromga_{j_1})| = 1$, and let $\I(\bromga_{j_1}) = \{(a, \ta, \t)\}$. As a first step, we aim to show that the function $\bromga(\cdot)$, defined in \eqref{eqn_bromega_in_regions}, satisfies $\bromga(t) \in \R_{j_2}$ for some $t > \tau_{j_1}$, and subsequently we will establish that this inclusion holds for all $t \in (\tau_{j_1}, \tau_{j_2})$.

By continuity of the extended function \eqref{eqn_bromega_in_regions}, and using the hypothesis that $\J(\bromga_{j_1}) = \{j_1, j_2\}$, it follows that $\bromga(t) \in \R_{j_1} \cup \R_{j_2}$ for all $t \in (\tau_{j_1}, \tau_{j_1} + \epsilon)$, for some $\epsilon > 0$ sufficiently small. 
If, for some such $t$, it holds that $h^{a,\ta}_\t(\bromga_{j_0}) \cdot h^{a,\ta}_\t(\bromga(t)) < 0$, then by \eqref{eqn_same_diff_sign in_regions} we conclude that $\bromga(t) \notin \R_{j_1}$, which implies $\bromga(t) \in \R_{j_2}$ (recalling the initial condition $\bromga_{j_0} \in \R_{j_1}$).
Thus, it suffices to establish that the sign of $h^{a,\ta}_\t(\bromga(t))$ changes from that of $h^{a,\ta}_\t(\bromga_{j_0})$.

%

By definition, $h^{a, \ta}_\t(\bromga_{j_1}) = 0$, and hence:
\begin{eqnarray*}
0 &=& h^{a, \ta}_\t (\bromga_{j_0}) \cdot h^{a, \ta}_\t (\bromga_{j_1}) \\
&=& h^{a, \ta}_\t (\bromga_{j_0}) \cdot \left[ h^{a,\ta}_\t \left( (\bromga_{j_0} - \b_{j_1}) e^{-\tau_{j_1}} + \b_{j_1} \right) \right] \qquad \text{(from \eqref{eqn_bromega_in_regions})} \\
&=& h^{a, \ta}_\t (\bromga_{j_0})^2 e^{-\tau_{j_1}} + h^{a, \ta}_\t (\bromga_{j_0}) \cdot h^{a, \ta}_\t(\b_{j_1}) (1 - e^{-\tau_{j_1}}) \qquad \text{(from \eqref{Eqn_linear_h_satisfies})}.
\end{eqnarray*}
This yields:
\[
h^{a, \ta}_\t (\bromga_{j_0}) \cdot h^{a, \ta}_\t(\b_{j_1}) < 0,
\]
which implies:
\[
h^{a, \ta}_\t (\bromga_{j_0}) \cdot h^{a, \ta}_\t (\b_{j_2}) < 0,
\]
because $h^{a, \ta}_\t(\b_{j_1}) \cdot h^{a, \ta}_\t(\b_{j_2}) > 0$ by \ref{asm_gen_2_implication} and \eqref{Eqn_linear_h_satisfies}.

Finally, by direct computation, the function $\bromga(\cdot)$ defined in \eqref{eqn_bromega_in_regions} satisfies the following for all $t > \tau_{j_1}$:
\begin{eqnarray}
h^{a, \ta}_\t(\bromga_{j_0}) \cdot h^{a, \ta}_\t(\bromga(t)) 
&=& h^{a, \ta}_\t(\bromga_{j_0}) \cdot \left[ h^{a, \ta}_\t \left( (\bromga_{j_1} - \b_{j_2}) e^{-(t - \tau_{j_1})} + \b_{j_2} \right) \right] \nonumber \\
&=& h^{a, \ta}_\t(\bromga_{j_0}) \cdot h^{a, \ta}_\t(\b_{j_2}) (1 - e^{-(t - \tau_{j_1})}) < 0, \label{Eqn_oppo_sign}
\end{eqnarray}
where the final inequality holds since $h^{a, \ta}_\t(\bromga_{j_1}) = 0$.

Therefore, the sign of $h^{a, \ta}_\t(\bromga(t))$ differs from that of $h^{a, \ta}_\t(\bromga_{j_0})$, implying $\bromga(t) \in \R_{j_2}$ for some $t \in (\tau_{j_1}, \tau_{j_1} + \epsilon)$; denote one such $t$ by $t'$.
Now, for $t \in (t', \tau_{j_2})$, the function $\bromga(\cdot)$ in \eqref{eqn_bromega_in_regions} can be interpreted as the solution of the DI \eqref{eqn_general_DI} with initial point $\bromga(t') \in \R_{j_2}$, as specified in \eqref{Eqn_DI_Solution_in_Rj}.

This completes the proof (refer to the definition of the exit time $\tau_{j_2}$ as provided in the hypothesis of the lemma).\eop

\begin{lem}\label{lem_solu_tau1_tau2_start_line_H}
Assume \ref{Asm_filippov_new}. Consider  $\bromga(0) \in  \H^{a,\ta}_\t$  for some  $(a,\ta, \t)$, and assume $|\I (\bromga(0))| = 1$. Then there exists a solution exactly as in \eqref{eqn_solution_under_G.2} with initial condition $\bromga(0)$ and with $j_1 \in \J (\bromga(0))$. 
\end{lem}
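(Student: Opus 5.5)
We build the solution by first leaving $\bromga(0)$ into one adjacent region and then invoking the construction \eqref{eqn_solution_under_G.2} from the first interior point. Since $|\I(\bromga(0))| = 1$, only the single border $\H^{a,\ta}_\t$ passes through $\bromga(0)$. By \eqref{Eqn_Jomega} and \ref{A.2}, $\bromga(0)$ is therefore adjacent to exactly two regions, say $\J(\bromga(0)) = \{i,j\}$, lying on the two sides of $\H^{a,\ta}_\t$. By \ref{Asm_filippov_new} together with \eqref{Eqn_linear_h_satisfies}, $h^{a,\ta}_\t(\b_i)$ and $h^{a,\ta}_\t(\b_j)$ are nonzero and have the same sign. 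By \eqref{eqn_same_diff_sign in_regions}, $h^{a,\ta}_\t$ has opposite signs on $\R_i$ and $\R_j$. Hence exactly one of the two regions, call it $\R_{j_1}$, satisfies $h^{a,\ta}_\t(\bromga')\,h^{a,\ta}_\t(\b_{j_1}) > 0$ for $\bromga' \in \R_{j_1}$. This is the region "on the side of its own $\b$", so it is the one the flow enters.

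On $[0, \tau_{j_1}]$ we take the candidate $\bromga(t) = (\bromga(0)-\b_{j_1})e^{-t} + \b_{j_1}$. By linearity \eqref{Eqn_linear_h_satisfies} and $h^{a,\ta}_\t(\bromga(0)) = 0$,
\[
h^{a,\ta}_\t(\bromga(t)) = h^{a,\ta}_\t(\b_{j_1})\,(1-e^{-t}),
\]
which has the sign of $h^{a,\ta}_\t(\b_{j_1})$ for every $t > 0$. So the candidate lies strictly on the $\R_{j_1}$ side of $\H^{a,\ta}_\t$ immediately after time $0$.

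The other borders are handled by continuity. Every $h$ other than $h^{a,\ta}_\t$ is nonzero at $\bromga(0)$, so there is a ball $\B(\bromga(0),\bar\delta)$ in which all other borders keep a fixed sign. Inside that ball, points strictly on the $\R_{j_1}$ side of $\H^{a,\ta}_\t$ belong to $\R_{j_1}$. It follows that $\bromga(t) \in \R_{j_1}$ for $t \in (0,\epsilon)$.

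On that interval the candidate satisfies $\dot{\bromga} = \b_{j_1} - \bromga = \g(\bromga)$, which is the single-valued right side of \eqref{eqn_general_DI}. The only exceptional time is $t = 0$, a null set, so the DI holds almost everywhere. We then define $\tau_{j_1}$ as the exit time from $\R_{j_1}$ exactly as in (s.1).

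From any interior time $t' \in (0,\epsilon)$ the point $\bromga(t') \in \R_{j_1}$ is an interior initial condition. The earlier construction (\eqref{Eqn_DI_Solution_in_Rj}, Lemma \ref{lem_solu_tau1_tau2} and the recursion (s.1)–(s.4)) then continues the solution. Concatenating gives a function of the form \eqref{eqn_solution_under_G.2} with $\bromga_{j_0} = \bromga(0)$ and $j_1 \in \J(\bromga(0))$. It is absolutely continuous, being piecewise smooth and continuous.

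The main obstacle is the local step showing that the trajectory truly enters $\R_{j_1}$ rather than remaining on $\H$ or entering some third region. The first is excluded by the sign computation, and Lemma \ref{Lemma_no_Solution_on_H} is consistent with it. The second uses $|\I(\bromga(0))| = 1$ to guarantee that no other border meets $\bromga(0)$.

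A subtle point is that the flow could be tangent to $\H^{a,\ta}_\t$, since $\bromga(0) - \b_{j_1}$ might lie along the border. The explicit formula for $h^{a,\ta}_\t(\bromga(t))$ rules this out, because its sign is strict for all $t > 0$. If \ref{Asm_filippov_new} is assumed only at the point itself, this argument still suffices.
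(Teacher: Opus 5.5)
Your proof is correct, but it takes a different route from the paper: you build the solution forward from $\bromga(0)$, while the paper argues backward.

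The paper takes, for small $s>0$, the points $\tilde\bromga(s)=\bromga(0)e^{s}+\b_i(1-e^{s})$ and $\bar\bromga(s)=\bromga(0)e^{s}+\b_j(1-e^{s})$. For these, $h^{a,\ta}_\t(\tilde\bromga(s))=h^{a,\ta}_\t(\b_i)(1-e^{s})$ and similarly for $\bar\bromga(s)$. By \ref{Asm_filippov_new} both points lie in the same adjacent region, relabelled $\R_i$. The flow $\dot\bromga=\b_i-\bromga$ started at $\tilde\bromga(s)$ reaches $\bromga(0)$ at time $s$. The paper then uses Lemma~\ref{lem_solu_tau1_tau2} to carry this trajectory through $\bromga(0)$ into the other region, and takes the part after $\bromga(0)$ as the required solution. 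The region entered is the one on which $h^{a,\ta}_\t$ has the common sign of $h^{a,\ta}_\t(\b_i)$ and $h^{a,\ta}_\t(\b_j)$, which is exactly your $\R_{j_1}$.

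\textbf{What the paper's route buys.} It uses Lemma~\ref{lem_solu_tau1_tau2} as a black box. It also shows that $\bromga(0)$ is a crossing point of a solution coming from the opposite region.

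\textbf{What your route buys.} You redo at time $0$ the local sign computation found inside the proof of Lemma~\ref{lem_solu_tau1_tau2} (compare \eqref{Eqn_oppo_sign}). This makes your argument self-contained. It names the entered region explicitly and rules out tangency through the formula $h^{a,\ta}_\t(\bromga(t))=h^{a,\ta}_\t(\b_{j_1})(1-e^{-t})$. It handles the other borders with a direct ball-and-continuity argument. It also avoids the paper's ``without loss of generality'' relabelling.

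\textbf{Shared local fact.} Both routes rely on the same local picture: when $|\I(\bromga(0))|=1$, the two sides of $\H^{a,\ta}_\t$ near $\bromga(0)$ are exactly the two regions in $\J(\bromga(0))$. This is where the opposite signs of $h^{a,\ta}_\t$ on $\R_i$ and $\R_j$ come from. Your appeal to the within-region sign relation for that step is slightly loose. The justification you actually need is the two-sides argument, which you do supply.
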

\textbf{Proof.}
By hypothesis, say $\J(\bromga(0)) = \{i,j\}$ for some $i\ne j$. Define two functions $\tilde{\bromga}(s)$ and $\bar{\bromga}(s)$ for all $s >0$ as below (as $h^{a,\ta}_\t(\bromga(0)) = 0$ and using \eqref{Eqn_linear_h_satisfies}):
\begin{eqnarray}
    \tilde{\bromga}(s) &:=& \bromga(0)e^s+\b_i(1-e^s),  \text{ then } h_\t^{a,\ta}(\tilde{\bromga}(s)) = h_\t^{a,\ta}(\b_i)(1-e^s) \label{eqn_int_h_1},\\
    \bar{\bromga}(s) &:=& \bromga(0)e^s+\b_j(1-e^s),  \text{  then } h_\t^{a,\ta}(\bar{\bromga}(s)) = h_\t^{a,\ta}(\b_j)(1-e^s).\label{eqn_int_h_2}
\end{eqnarray}
Since $\bromga(0) \in \overline\R_i \cap \overline \R_j$, one can choose $s$ sufficiently small such that $\tilde{\bromga}(s), \bar{\bromga}(s) \in \R_i \cup \R_j$; they are not in $\H^{a,\ta}_\t$ as, $h^{a,\ta}_\t(\bar{\bromga}(s)) \ne 0 $ and $h^{a,\ta}_\t(\tilde{\bromga}(s)) \ne 0 $ by  \ref{asm_bj_notin_regions} and \eqref{eqn_int_h_1}-\eqref{eqn_int_h_2}. Further, by \ref{Asm_filippov_new}, $h_\t^{a,\ta}(\b_i) h_\t^{a,\ta}(\b_j) >0$ and hence using \eqref{eqn_int_h_1}-\eqref{eqn_int_h_2},
\begin{eqnarray}
    h_\t^{a,\ta}(\tilde{\bromga}(s))h_\t^{a,\ta}(\bar{\bromga}(s)) = h_\t^{a,\ta}(\b_j)    h_\t^{a,\ta}(\b_i)(1-e^s)^2 > 0.
\end{eqnarray}
Then, using  \eqref{eqn_same_diff_sign in_regions} and since $\J(\bromga(0))=\{i,j\}$, it follows that   both $\tilde{\bromga}(s)$ and $\bar{\bromga}(s)$ are in the same region, either in $\R_i$ or $\R_j$. Without loss of generality, consider that they both belong to the region $\R_i$.  Now, recall $\tilde \bromga(s) = \bromga(0) e^{s} + \b_i (1-e^s)$, which implies 
$$
\bromga(0) = (\tilde \bromga(s) -\b_i)e^{-s} + \b_i.
$$
Observe that, the above is a solution of the DI \eqref{eqn_general_DI} starting from $\tilde{\bromga}(s) \in \R_i$ which hits $\bromga(0)$ at $\tau = s.$ Now, using arguments of Lemma \ref{lem_solu_tau1_tau2}, there will exist a solution of the form \eqref{Eqn_solution_two_regions} to the DI \eqref{eqn_general_DI}, a segment of which can be seen as solution starting from $\bromga(0)$. One can extend further as before to obtain solution like \eqref{eqn_solution_under_G.2}.  \eop

\vspace{3mm}

\noindent\textbf{Proof of Theorem \ref{Cyclic_ICT}: }\label{proof_Cyclic_ICT} We complete the proof in two steps. 

To prove that $\CC^*$ is an ICT set (see Definition~\ref{Def_ICT_set}), we show that for any $\z, \z' \in \CC^*$ and any $T > 0$, there exists a solution $\bromga(t)$ of DI~\eqref{eqn_general_DI} such that 
\[
\bromga(0) = \z, \quad \bromga(t_1) = \z', \quad \text{and} \quad \bromga(t) \in \CC^* \;\; \forall \, 0 \leq t \leq t_1,
\]
for some $t_1 > T$. This is shown in \textbf{Step~(ii)}. Prior to that, in \textbf{Step~(i)}, we establish the existence of a solution $\bromga(t)$ of the form~\eqref{solution_DI_general}, lying entirely within $\CC^*$, that starts at $\z$ and returns to $\z$, for any $\z \in \CC^*$, which leads to the required proof of \textbf{Step~(ii)}. 

\vspace{2mm}
\noindent\textbf{Step (i):} 
Consider any $\z \in \CC^*$ implies  $\z \in{\S}_i$ for some $i$ with $1 \leq i \leq l$ (see \eqref{Eqn_Ic_cycle}). Hence, there exists a time epoch $t'< \tau^*_{j_i}$ such that 
\begin{eqnarray}\label{eqn_cycle_ict_first}
    \z = (\bromga_{j_{i-1}}^*-\b_{j_{i}})e^{-\left(t'-\tau_{j_{i-1}}^*\right)} + \b_{j_{i}},\text{ implies } t'=\log\left(\frac{\bromgac_{j_{i-1}}^{*a}-b^a_{j_{i}}}{z^a-b^a_{j_{i}}}\right)+\tau_{j_{i-1}}^* \text{ for any } a . 
\end{eqnarray} 
Further, under \ref{asm_cycle_one}, there exists a solution $\bromga(t)$ of DI \eqref{eqn_general_DI}  as in \eqref{solution_DI_general}  with $\bromga(0) = \z \in \S_i\subset\R_{j_i}$, which touches  the boundary of $\R_{j_i}$ at $\bromga_{j_i}^*$ at time $\tau_{j_i}^*-t'$, with $t'$ given in \eqref{eqn_cycle_ict_first}. From the structure of the solution in \eqref{solution_DI_general}, we have $\z=(z^a)_{a\in \A} \in \R_{j_i}$, while  \ref{asm_cycle_one} ensures $\b_{j_{i}}=(b^a_{j_{i}})_{a\in \A} \notin \R_{j_i}$, $\tau_{j_{i-1}}^* < \infty$. Together, these imply $t' < \infty$. Further, it is easy to verify that  $\frac{\bromgac_{j_{i-1}}^{*a}-b^a_{j_{i}}}{z^a-b^a_{j_{i}}} > 1,$ i.e., $t'>\tau_{j_{i-1}}^*.$

Moreover, based on the  structure of $\S_i$, it follows that $\bromga(t) \in \CC^*$ for all $t \in [0,\tau_{j_i}^*-t']$.
This solution $\bromga(t)$ can be extended beyond $t= \tau_{j_i}^*-t'$ by concatenating it with a new solution that starts at $\bromga^*_{j_i}$ again as in \eqref{solution_DI_general}: 
extend the function $\bromga(\cdot)$ from time $  \tau_{j_i}^*-t'$ to $ \tau_{j_i}^*-t'+ \tau_{j_{l}}^*-\tau_{j_i}^*$, we obtain:
  $$
  \bromga( \tau_{j_i}^*-t') = \bromga^*_{j_i} \mbox{ and } \bromga( \tau_{j_i}^*-t' + \tau_{j_{l}}^*-\tau_{j_i}^*) = \bromga^*_{j_l}.
  $$
To prove that this concatenated function $\bromga(\cdot)$ is indeed a solution of DI \eqref{eqn_general_DI} with $\bromga(0) = \z$,  it is sufficient to show that it is absolutely continuous (direct from the construction) and satisfies the DI \eqref{eqn_general_DI}, for almost all $t$. Clearly, this concatenated function $\bromga(\cdot)$ is also satisfy the DI \eqref{eqn_general_DI} for almost all $t \in [0,\tau_{j_i}^*-t' + \tau_{j_{l}}^*-\tau_{j_i}^*]$ with $\bromga(0) = \z$ and $\bromga\left(\tau_{j_i}^*-t' + \tau_{j_{l}}^*-\tau_{j_i}^*\right)=\bromga_{j_l}^*$
. Thus,  $\bromga(t) \in \CC^*$ for all such $t$.
Recall from \ref{asm_cycle_one} that $\bromga_{j_0}^* = \bromga_{j_l}^*$.
Therefore, by concatenating the function but now starting from $\bromga_{j_0}^*$, we obtain a solution 
$\bromga(t)$ for all $t \in [0, \ \tau_{j_{l}}^*]$ $\left(\text{observe~}   \tau_{j_{l}}^* = (\tau_{j_i}^*-t')+(\tau_{j_{l}}^*-\tau_{j_i}^*)+ \tau_{j_{i-1}}^*+(t'-\tau_{j_{i-1}}^*)\right)$, with initial and terminal values equal to $\z$. Moreover, $\bromga(t) \in \CC^*$ for all such $t$.


\vspace{2mm}
\noindent \textbf{Step (ii):}
Consider any $\z, \z' \in \CC^*$, and any $T>0$ (see Definition \ref{Def_ICT_set}) and define $\varrho := \lceil \nicefrac{T}{ \tau^*_{j_l}} \rceil$. 
First,  construct a DI solution as in \textbf{Step (i)}
such that $\bromga(0) = \z$ and $\bromga(\tau^*_{j_l}) = \z$. Then concatenate this solution repeatedly for $\varrho$-rounds, i.e., consider $\bromga (t)$ for all $t \in [0, \varrho  \tau^*_{j_l}]$ with the following properties:
$$
\bromga( t_\kappa   ) = \z, \text { for all }  t_\kappa := \kappa \tau^*_{j_l} \mbox{ with } \kappa  \in \{0, \cdots, \varrho\} \mbox{ and such that } \bromga(t) \in \CC^* \mbox{ for all } t \in [0, \varrho  \tau^*_{j_l}].
$$
Finally, concatenate the above with another DI solution that starts from $\z$ and ends at $\z'$ as before. Now, define $t_1 :=  \varrho  \tau^*_{j_l}$ such that $\bromga(t_1) = \z'$, then clearly $t_1 > T$ and $\bromga(t) \in \CC^*$ for all $0 \leq t \leq t_1$.
\eop

\subsection{Game with three actions} 
\noindent \textbf{Proof of Lemma \ref{lem_three_action_on_H}: }\label{proof_lem_three_action_on_H}
   We will first prove part (ii) and then part (i).

\noindent\textbf{Part (ii):} To begin with, assume that 
$\J(\bromga) = \{i,j\}$ for all $\bromga \in \H^{a,\ta}_\t$. We aim to show the existence of a solution $\bromga(t)$ of the DI \eqref{eqn_general_DI} such that $\bromga(t) \in \H^{a,\ta}_\t$ for all $t \geq 0$.

By the definition of a solution to a DI (see \ref{def_DI_solution}) and the characterization of the convex hull $\G$ in \eqref{eqn_general_DI}, this amounts to finding a solution of the following ODE:
\begin{equation}\label{eqn_find_lembda}
    \dot{\bromga} = \b_i + \lambda(\bromga)(\b_j - \b_i) - \bromga,
\end{equation}
where $\lambda(\bromga) \in [0,1]$ must be chosen appropriately so that the following condition holds:
\begin{equation*}
    h^{a,\ta}_\t(\bromga(t)) = 0 \quad \text{for all } t \geq 0.
\end{equation*}
The above implies that $\nicefrac{dh^{a,\ta}_\t(\bromga(t))}{dt}  = 0$ for all $t \geq 0$, which in turn gives (by the chain rule and using \eqref{eqn_find_lembda}):
\[
\nablaht^T \cdot \left( \b_i + \lambda(\bromga(t)) (\b_j - \b_i) - \bromga(t) \right) = 0.
\]
Using standard algebraic manipulation, we can now derive a candidate expression for $\lambda(\bromga(t))$. Since we are assuming that $\bromga(t) \in \H^{a,\ta}_\t$ for all $t$, we have $h^{a,\ta}_\t(\bromga(t)) = 0$ for all $t$. Substituting this into the equation yields:
\begin{equation}\label{eqn_val_lambda}
    \lambda(\bromga(t)) = \frac{\nablaht^T \cdot (\b_i - \bromga(t))}{\nablaht^T \cdot (\b_i - \b_j)} 
    = \frac{h^{a,\ta}_\t(\b_i)}{h^{a,\ta}_\t(\b_i) - h^{a,\ta}_\t(\b_j)} \quad \text{for all } \bromga(t) \in \H^{a,\ta}_\t, ~ \forall t \geq 0.
\end{equation}
Recall that $h^{a,\ta}_\t (\b_i) h^{a,\ta}_\t (\b_j) < 0$, and observe from \eqref{eqn_val_lambda} that, for every $\bromga(t) \in \H^{a,\ta}_\t$, there exists a $\lambda(\bromga(t))$ such that $0 \leq \lambda(\bromga(t)) \leq 1$.
Now, substitute $\lambda(\bromga(t))$ into \eqref{eqn_find_lembda}; then we obtain, using the definition of $\bromga_{ij}^\infty$ from \eqref{eqn_limit_sol_on_h},
\begin{eqnarray*}
    \dot{\bromga} &=& \b_i - (\b_i - \b_j) \frac{h^{a,\ta}_\t(\b_i)}{h^{a,\ta}_\t(\b_i) - h^{a,\ta}_\t(\b_j)} - \bromga = \bromga_{ij}^\infty - \bromga.
\end{eqnarray*}
The solution to the above linear ODE is given by:
\[
\bromga(t) = \bromga_{ij}^\infty(1 - e^{-t}) + \bromga(0)e^{-t}, \quad \text{for all } t \geq 0,
\]
with asymptotic limit $\bromga_{ij}^\infty$. Observe that $\bromga_{ij}^\infty$ defined in \eqref{Eqn_Solution_on_H} can be equivalently expressed as:
\begin{eqnarray*}
    \bromga_{ij}^\infty = \frac{h^{a,\ta}_\t(\b_i)}{h^{a,\ta}_\t(\b_i) - h^{a,\ta}_\t(\b_j)} \b_j + \frac{-h^{a,\ta}_\t(\b_j)}{h^{a,\ta}_\t(\b_i) - h^{a,\ta}_\t(\b_j)} \b_i.
\end{eqnarray*}
Recall again that $h^{a,\ta}_\t(\b_i) h^{a,\ta}_\t(\b_j) < 0$, which implies the two coefficients in the above expression are strictly positive and sum to one. Therefore, $\bromga_{ij}^\infty$ is a convex combination of $\b_i$ and $\b_j$, implying $\bromga_{ij}^\infty \in \D$ (since $\D$ is convex). Furthermore, $h^{a,\ta}_\t(\bromga_{ij}^\infty) = 0$, hence $\bromga_{ij}^\infty \in \H^{a,\ta}_\t$.
Furthermore, we have $\bromga_{ij}^\infty \in \co\{\b_i, \b_j\}$, which implies $\mathbf{0} \in \co\{\b_i - \bromga_{ij}^\infty, \b_j - \bromga_{ij}^\infty\}$. Therefore, $\bromga(t) = \bromga_{ij}^\infty$ for all $t \geq 0$ is a (constant) solution to the DI with initial condition $\bromga(0) = \bromga_{ij}^\infty$. Thus, $\{\bromga_{ij}^\infty\}$ forms a singleton ICT set (see Definition \ref{Def_ICT_set}).

Now consider the hypothesis of part (ii), i.e., $\J(\bromga_{ij}^\infty) = \{i,j\}$. By our assumption \ref{asm_card_ind_set_finite}, we have that $\J(\bromga) = \{i,j\}$ for all $\bromga \in \H \cap \B(\bromga_{ij}^\infty, \delta)$ for some $\delta > 0$. The above analysis applies to any initial condition $\bromga(0) \in \H \cap \B(\bromga_{ij}^\infty, \delta)$, which completes the proof of part (ii). 

\noindent\textbf{Part (i):} Now, suppose $\J(\bromga)$ is not constant over all $\bromga \in \H^{a,\ta}_\t$, but remains fixed (equal to $\{i,j\}$) for some initial portion of the solution $\bromga(t)$, say for $t \in [0, \tau]$, as specified in the hypothesis. Then, clearly, the trajectory $\bromga(t)$ given by \eqref{Eqn_Solution_on_H} is a valid solution to the DI \eqref{eqn_general_DI} on the interval $[0,\tau]$, as argued above.
\eop
\section{Connection of singleton ICT sets with MT-MFE}\label{append_connec_ict_equil}

\noindent\textbf{Proof of Theorem \ref{thm_connection_MT_MFE}}: 
We consider two cases to prove the theorem as outlined below.

\noindent\textbf{Case 1:} Suppose $\b_j \in \LL^*_c$, which implies $\b_j \in \R_j$ (see \eqref{eqn_set_AA}). Then, by equation \eqref{eqn_new_general_ode2}, we have
\[
b_j^{a} = \sum_{\t} \alpha_\t \prod_{\ta \ne a} \ind_{\{ h_\t^{a, \ta}(\b_j) > 0 \}} \quad \text{for all } a.
\]
Due to the strict inequality in the above expression, for each type $\t$, there exists a unique action $a_\t \in \A_\t$ such that
\[
\prod_{\ta \ne a_\t} \ind_{\left\{ h_\t^{a_\t, \ta}(\b_j) > 0 \right\}} > 0 \quad \text{and} \quad \arg\max_{a' \in \A_\t} u_\t(a', \b_j) = \{a_\t\}.
\]
Define $\bmu_\t^* := \delta_{\{a_\t\}}$ for each $\t$. Then, the tuple $(\bmu_\t^*)_\t$ constitutes an MT-MFE (see Definition~\ref{defn_MT-MFE}) with
\[
\b_j = \sum_\t \alpha_\t \bmu_\t^*.
\]
Hence, $\b_j$ is an aggregate-MFE (see Definition~\ref{defn_MT-MFE}).

 \noindent\textbf{Case 2:} Let $\bromga^* \in \LL_f^*$ as defined in \eqref{eqn_limit_sol_h_line_set}. By the definition of $\b_j$ in \eqref{eqn_new_general_ode2}, and the definition of any region $\R_j$, it follows that for each $j \in \J(\bromga^*)$ (see \eqref{Eqn_Jomega}) and each type $\t$, there exists a unique action $e_{j,\t} \in \A_\t$ such that
\begin{equation}
\label{Eqn_region_description}
h_\t^{e_{j,\t}, a}(\bromga) > 0 \quad \text{for all } a \ne e_{j,\t}, \text{ for all } \t, \text{ and all } \bromga \in \R_j.
\end{equation}
Moreover, the vector $\b_j = (b_j^a)$ is given by
\[
b_j^a = \sum_{\t} \alpha_\t \ind_{\{ a = e_{j,\t} \}} \quad \text{for any } a.
\]
Further, by \eqref{eqn_limit_sol_h_line_set}, we have $\bromga^* \in \H$ and it can be expressed as a convex combination of the vectors $\{\b_j\}$, that is,
\begin{equation}
\label{Eqn_convex_comb}
\bromga^* = \sum_{j \in \J(\bromga^*)} \lambda_j \b_j, \quad \text{with} \quad
\bromgac^{*a} = \sum_{j \in \J(\bromga^*)} \lambda_j \sum_\t \alpha_\t \ind_{\{ a = e_{j, \t} \}} \quad \text{for all } a \quad (\text{see } \eqref{Eqn_region_description}).
\end{equation}
Define, for each $\t$, the vector $\bmu_\t^* := (\mu_\t^{*a})_a$ as
\begin{equation}
\label{eqn_const_mu_the_agre}
\mu_\t^{*a} := \sum_{j \in \J(\bromga^*)} \lambda_j \ind_{\{ a = e_{j, \t} \}} \quad \text{for all } a.
\end{equation}
Then observe that
\[
\bromga^* = \sum_\t \alpha_\t \bmu_\t^*.
\]

Next, we show that $\bromga^*$ is an aggregate-MFE by proving that the tuple $(\bmu^*_\t)_\t$ constitutes an MT-MFE (see Definition~\ref{defn_MT-MFE}). In other words, we will establish that
\[
\supp(\bmu_\t^*) \subset \arg\max_{a \in \A_\t} u_\t(a, \bromga^*) \quad \text{for every } \t.
\]
Suppose, for the sake of contradiction, that this is not the case. Then there exists some type $\t$ and some action $\ta \in \supp(\bmu_\t^*)$ such that
\[
\ta \notin \arg\max_{a \in \A_\t} u_\t(a, \bromga^*).
\]
This implies that there exists an action $a^* \ne \ta$ such that
\[
u_\t(a^*, \bromga^*) > u_\t(\ta, \bromga^*), \quad \text{or equivalently,} \quad h^{\ta, a^*}_\t(\bromga^*) < 0.
\]
By continuity of the function $h^{\ta, a^*}_\t(\cdot)$ (see Assumption~\ref{A.1}), there exists an $\epsilon > 0$ such that for all $\bromga$ in a neighborhood $\mathcal{B}(\bromga^*, \epsilon)$, we have
\begin{equation}
\label{equilibrium}
h^{\ta, a^*}_\t(\bromga) < 0 \quad \text{for all } \bromga \in \mathcal{B}(\bromga^*, \epsilon).
\end{equation}
On the other hand, by \eqref{eqn_const_mu_the_agre}, the support of $\bmu_\t^*$ is given by
\[
\supp(\bmu_\t^*) = \{ a : a = e_{j,\t} \text{ for some } j \in \J(\bromga^*) \text{ with } \lambda_j > 0 \}.
\]
Hence, there exists some $\bar{j} \in \J(\bromga^*)$ such that $\ta = e_{\bar{j}, \t}$.
However, by the definition of the region $\R_{\bar{j}}$ (see \eqref{Eqn_region_description}), we must have
\[
h^{\ta, a^*}_\t(\bromga) > 0 \quad \text{for all } \bromga \in \R_{\bar{j}}.
\]
Moreover, by the definition of $\J(\bromga^*)$, the neighborhood $\mathcal{B}(\bromga^*, \epsilon)$ intersects $\R_{\bar{j}}$, i.e.,
\[
\mathcal{B}(\bromga^*, \epsilon) \cap \R_{\bar{j}} \neq \emptyset.
\]
This contradicts \eqref{equilibrium}.\eop

\section{Conditions for existence of a cyclic-ICT set} \label{subsub_sec_cond_exist_cycle}
Before proceeding further,  
we present some important observations related to $\{\R_j\}$ regions  and their boundaries $\{\H_\t^{a,\ta}\}$.
For any $\bromga_1,\bromga_2 \notin \H$ and any  index $ (a, \ta, \t)$, 
there will be no change in sign of the function values,  $h^{a,\ta}_\t (\bromga_1)$ and  $h^{a,\ta}_\t (\bromga_2)$  if the border region   $\H_\t^{a, \ta} $ does not intersect the line joining $\bromga_1$ and $\bromga_2$ (which follows from the continuity of  $h_\t^{a, \ta}(\cdot)$ and the intermediate value theorem). Thus, we have:
\begin{eqnarray}
    \label{Eqn_sign_changes}
\mbox{for any } \bromga_1, \bromga_2 \notin \H, \mbox{   }  \  
h_\t^{a, \ta} (\bromga_1) h_\t^{a, \ta} (\bromga_2) > 0 \text{  if }  \ \overleftrightarrow
{\bromga_1,\bromga_2} \cap  \H_\t^{a, \ta} = \emptyset, 
\end{eqnarray}
where $\overleftrightarrow{\bromga_1, \bromga_2}$ is the line segment joining $\bromga_1, \bromga_2$.  Secondly, if some initial condition $\bromga(0) \in \R_{j_1}$ for some $j_1 \notin \AA$ (see \eqref{eqn_set_AA}), then there exit a time epoch $\tau_{j_1}$ of the solution, defined in \eqref{Eqn_DI_Solution_in_Rj} and \eqref{eqn_hititing_time1}, is finite  and  it hits some boundary $\H^{a, \ta}_\t$.  In other words, $\bromga(\tau_{j_1}) \in \H^{a, \ta}_\t$ and then   by definition, $0=h^{a,\ta}_\t(\bromga(\tau_{j_1})) =  h(\bromga(0) ) e^{-\tau_{j_1}}  + h^{a,\ta}_\t(\b_j)(1-e^{-\tau_{j_1}}) $; further   
as  the sign of $h^{a,\ta}_\t$ does not change in $\R_{j_1}$, by \eqref{Eqn_sign_changes} it follows that: 
\begin{eqnarray}
\label{Eqn_sign_changes_two}
  h^{a, \ta}_\t(\bromga(0)) h^{a, \ta}_\t (\b_{j_1} )  < 0,  \mbox{ and hence, }  h^{a, \ta}_\t(\bromga) h^{a, \ta}_\t (\b_{j_1} )  < 0 \mbox{ for all }  \bromga \in \R_{j_1}. 
\end{eqnarray}

\subsection{For generic number of actions}
We aim to obtain the conditions under which \ref{asm_cycle_one} is satisfied, which provides the conditions for the existence of a cyclic ICT set.  From \eqref{solution_DI_general}, the conditions in \ref{asm_cycle_one} is satisfied if: 
\begin{itemize} [label=\textbf{C.2}, ref=\textbf{C.2}]
\item the inequalities in \eqref{Eqn_conditions_for_solution} are satisfied with $L = l$, and $j_{l} = j_0$, for some $\{j_0, \cdots, j_{l}\} \in \AA^c$, where $\AA^c$ is  the complementary set of $\AA$, the set of indices corresponding to the regions with classical attractors as defined in \eqref{eqn_set_AA}; and \label{asm_c.2}
\end{itemize}
\begin{itemize} [label=\textbf{C.3}, ref=\textbf{C.3}]
\item  there exists a (recursive) solution in terms of  $\{\bromga^*_{j_0}, \cdots, \bromga^*_{j_{l-1}}\}$ and 
 $\{\tau^*_{1}, \cdots, \tau^*_{l}\}$
for the following set of equations, with $\tau^*_{0} = 0$, $\bromga_{j_0}^* \in \D_{j_0} $ with $\D_{j_0} :=  \H_{j_0} \cap \overline{\R}_{j_0}\cap \overline{\R}_{j_{1}}$, and
{\small
\begin{eqnarray}
  \bromga^*_{j_{i}}  \in  \D_{j_i} :=  \H_{j_i} \cap \overline{\R}_{j_i}\cap \overline{\R}_{j_{i+1}}, \text{ and, } \bromga^*_{j_{i}} \hspace{-2mm}&=&\hspace{-2mm} (\bromga_{j_{i-1}}^*-\b_{j_{i}})e^{-(\tau^*_{i}-\tau^*_{i-1})} + \b_{j_{i}}  ,\substack{\forall \ i \in \{1,\ldots,l-1\}},  \label{eqn_bromega_i_iteration} \hspace{8mm}\\
\text{ and finally, }    \bromga^*_{j_0} \hspace{-2mm}&=&\hspace{-2mm} (\bromga_{j_{l-1}}^*-\b_{j_l})e^{-(\tau^*_{l}-\tau^*_{l-1})} + \b_{j_l}. \label{eqn_bromega_one_iteration}
\end{eqnarray}}\label{asm_c.3}
\end{itemize}
\vspace{-4mm}
If there exists a solution  of the above set of equations, then it would satisfy the following  set of equations (with $\oplus$ representing modulo $l$ addition): 
\begin{eqnarray}
\frac{\bromgac^{a *}_{j_i}-b^a_{j_{i\modl 1}}}{ 
        \bromgac^{a*}_{j_{i\modl 1}} - b_{j_{i\modl 1}}^a} 
        =
         \frac{\bromgac^{\ta *}_{j_i}-b^\ta_{j_{i\modl 1}}}{ 
        \bromgac^{\ta *}_{j_{i\modl 1}} - b_{j_{i\modl 1}}^\ta}  \text{  for each  } a \ne \ta \text{ and each } i. \label{eqn_the_diff_bromega_ratio}
\end{eqnarray}
We would assume {\bf C.2} and then 
obtain simpler conditions for the existence of solutions in {\bf C.3} by the existence of a fixed point of some appropriately constructed fixed-point equations.

Towards this, we define a sequence of functions $\{\psi_i\}_{i \leq l}$,  such that the composite function 
$\psi := \psi_l \circ \psi_{l-1}\circ \cdots \circ \psi_1$ represents the required fixed point equation. 

From \eqref{eqn_bromega_i_iteration} with $i=1$,  the second component of the solution $\bromga_{j_1}^*$ satisfies     $h_{j_1} (\bromga^*_{j_1}) =0$  and the first two components  of the solution, $\bromga^*_{j_0}, \bromga^*_{j_1}$ are related to each other via the second part of \eqref{eqn_bromega_i_iteration}. 
Hence, we define the first map $\psi_1$, that reflects exactly these two  relations, using a clever trick as in \eqref{eqn_the_diff_bromega_ratio} that avoids the corresponding $\tau$
components\footnote{\label{foot_psione} As discussed before, as $j_0, j_1 \notin \AA$, there exists a solution from any $\bromga(0) = \bromga \in \H_{j_0}\cap {\overline \R}_{j_1}$, which hits $\H_{j_1}$, and hence $h_{j_1} (\bromga)h_{j_1} \left(\b_{j_1}\right) < 0$ as in \eqref{Eqn_sign_changes_two}; this implies  $\nicefrac{ - h_{j_1}\left(\b_{j_1}\right)} { \left(h_{j_1} (\bromga ) - h_{j_1}  \left(\b_{j_1}\right) \right)} \in [0, 1] $. Thus $\psi_1(\bromga) \in \D.$ }:
\begin{eqnarray}\label{eqn_fun_sai_1}
 \psi_1 (\bromga  ) := \frac{ - h_{j_1}(\b_{j_1})} { h_{j_1} (\bromga ) - h_{j_1}  (\b_{j_1}) } \bromga  +     \left (1- \frac{ - h_{j_1}(\b_{j_1})} { h_{j_1} (\bromga ) - h_{j_1}  (\b_{j_1}) }     \right )    \b_{j_1} \text{ for all  } \bromga \in \H_{j_0} \cap {\overline \R}_{j_1}.
\end{eqnarray}
One can immediately observe that if there exists a solution as in {\bf C.3}, then it satisfies   $\bromga_{j_1}^*  =  \psi_1 (\bromga^*_{j_{0}}),$ and  $h_{j_1}(\bromga^*_{j_1}) = 0$; this can  be verified by direct substitution and using simple algebra. We consider the domain of $\psi_1$ as $\H_{j_0}\cap {\overline \R}_{j_1}$ (since  $\bromga^*_{j_0} \in \D_{j_0} \subset \H_{j_0}\cap {\overline \R}_{j_1}$), see also footnote\footref{foot_psione}. Furthermore, $h_{j_1} (\psi_1(\bromga)) = 0$ for any $\bromga  $ (can verify by directly simplifying $h_{j_1}(\psi_1(\bromga))$), the range of $\psi_1$ is a subset of $\H_{j_1}$, i.e., $\psi_1 : \H_{j_0} \cap {\overline \R}_{j_1} \to \H_{j_1}$.

Using a similar approach and for analogous reasons, we define the following functions, one for each $i \in \{1, \ldots, l\}$:   
\begin{eqnarray}\label{eqn_fun_sai_i}
    \psi_i (\bromga  ) := \frac{ - h_{j_i}(\b_{j_i})} { h_{j_i} (\bromga ) - h_{j_i}  (\b_{j_i}) } \bromga  +     \left (1- \frac{ - h_{j_i}(\b_{j_i})} { h_{j_i} (\bromga ) - h_{j_i}  (\b_{j_i}) }     \right )    \b_{j_i}, \  \mbox{ for  all } \bromga \in \H_{j_{i-1}} \cap \overline{\R}_{j_i}.
\end{eqnarray}
Once again  any solution of {\bf C.3}, satisfies   $\bromga_{j_i}^*  =  \psi_i (\bromga^*_{j_{i-1}}),$ and  $h_{j_i}(\bromga^*_{j_i}) = 0$; thus one can  further consider $\psi_i : \H_{j_{i-1}} \cap {\overline \R}_{j_i}\to \H_{j_i}$.

At $i=l$, the range of $\psi_l$ is $\H_{j_l} = \H_{j_0}$, by {\bf C.2} and hence the solution of {\bf C.3}, if  exists, satisfies the following fixed point equation constructed using the composite function formed using  $\{\psi_i\}_{i \leq l}$ functions (see~\eqref{eqn_bromega_i_iteration}):
\begin{eqnarray}
    \bromga^*_{j_0} = \psi (  \bromga^*_{j_0}  ) \text{, where, }  \psi := \psi_l \circ \psi_{l-1}\circ \cdots \circ \psi_1 \text{ and }  \psi: \H_{j_0} \cap \overline{\R}_{j_1} \to \H_{j_0}.
\end{eqnarray} 
The idea is to identify the conditions under which
the well-known Brouwer's fixed-point theorem can be applied to the function $\psi(\cdot)$  ---  that would establish the existence of a fixed point in $\D_{j_0} \subset \H_{j_0}\cap {\overline \R}_{j_1}$, which in turn establishes the existence of a solution in {\bf C.3} and hence that of the cyclic ICT set by Theorem \ref{Cyclic_ICT}. 

To begin with observe $\psi(\cdot)$ is a continuous function. 
And then to well define the function $\psi$ that satisfies the hypothesis of Brouwer's fixed point theorem,  we would  require an appropriate domain, constructed using  smaller (but sufficient to hold required points) domains for constituent functions $\{\psi_i\}$  and hence require: 
\begin{itemize}
   \item  a sequence of domains $\{\Z_i\}$ such that the corresponding range,  $\psi_i (\Z_i) \subset \Z_{i+1}$ for each $i$ (this would ensure the composite function $\psi$ is well defined with domain as $\Z_1$);
    \item each such domain should intersect with points of interest  (potential solution of \eqref{eqn_bromega_i_iteration}), i.e., $\Z_i \cap \D_{j_i} \ne \emptyset$ for each $i$; and
     \item finally, we require that $\psi(\Z_1) \subset \Z_1$.
\end{itemize}
Towards achieving the above, 
first define recursively $\bar \psi_i:=  \psi_i  \circ \bar \psi_{i-1}$, with $\bar \psi_1 = \psi_1$ and then
the first set of conditions are:
\begin{eqnarray}
  {\bar \psi}_i (\D_{j_{i-1}}) \subset \D_{j_i} \mbox{ for each }  i, \label{eqn_D_j_1_subset_D_j_i}
\end{eqnarray}
or an easier condition (but less general)
\begin{eqnarray}
   \psi_i (\D_{j_{i-1}}) \subset \D_{j_i} \mbox{ for each }  i \in \{1, \cdots, l \}, \mbox{ with, recall, } \D_{j_l} = \D_{j_0}. \label{eqn_D_j_1_subset_D_j_i_sai}
\end{eqnarray}
The above provides a test to check cycles, derivation of simpler conditions is for future study. 
In the current paper, 
we will delve into more details for the special case with three actions.
 
\subsection{For three actions: conditions to test the existence of cyclic ICT sets} 
We first assume \ref{asm_gen_2_implication} holds and then derive the required conditions.  

The equation \eqref{eqn_fun_sai_i} for the special case with three actions,  can be written using single dimensional equations:  for all $i \in \{1,\ldots,l\}$, one of the components (say $\bromgac^1_{j_{i-1}}$) of the input $\bromga_{j_{i-1}} = (\bromgac_{j_{i-1}}^1, \bromgac_{j_{i-1}}^2) \in \D_{j_{i-1}}$ to the function $\psi_i$  can be obtained from the second component $\bromgac^2_{j_{i-1}}$, as from \eqref{Eqn_h_linear}  $\bromgac^1_{j_{i-1}} = \eta_{j_{i-1}} \bromgac^2_{j_{i-1}} + c_{j_{i-1}}$; recall  $h_{j_{i-1}} (\bromga_{j_{i-1}}) = 0$ for all $\bromga_{j_{i-1}}$ that are inputs to function $\psi_i$.

By linearity of $h_{j_i}$ and connectedness of sets $\{\R_j\}$ (and since $\bromga \in \H $ is represented by $\bromgac^2$), we have that 
\begin{eqnarray}
\label{Eqn_Dji_in_twoD}
\D_{j_i} = \H_{j_i}\cap \overline{\R}_{j_i} \cap\overline{\R}_{j_{i+1}}  = \left \{ \bromga = (\bromgac^1, \bromgac^2): \bromgac^1 = \eta_{j_i} \bromgac^2- c_{j_i},  
\mbox{ and, }
\bromgac^2 \in [s_{j_i}, e_{j_i}]  \right \} \ \forall \ i,
\end{eqnarray}
for some $s_{j_i}, e_{j_i}$, where $0\le s_{j_i}\le e_{j_i} \le 1$ ensures the corresponding $\D_{j_i} \ne \emptyset$. 

We begin with $\psi_1$,   represent its inputs by $\bromga_{j_0} = (\bromgac_{j_0}^1, \bromgac_{j_0}^2) \in \D_{j_0}$ and considering only the second component of $\psi_1$ function (renaming this as $\psi_1$ with slight abuse of notation): 
\begin{eqnarray}
\psi_1(\bromgac_{j_0}^2) \hspace{-2mm} &=& \hspace{-2mm} \frac{\tilde\phi_{1,1} + \tilde\phi_{1,2} \bromgac_{j_0}^2}{\tilde\phi_{1,3} + \tilde\phi_{1,4} \bromgac_{j_0}^2}, \substack{\text{ with }  \tilde \phi_{1,1} =  (c_{j_1}-c_{j_0})b_{j_1}^2, \  \tilde\phi_{1, 2} =(\eta_{j_0} -\eta_{j_1})b_{j_1}^2 - h_{j_1}(\b_{j_1}), \\ \hspace{2cm} \tilde\phi_{1, 3}= c_{j_1}-c_{j_0}- h_{j_1}(\b_{j_1}), \ \tilde\phi_{1,4} = \eta_{j_0} -\eta_{j_1}.}\label{eqn_composition_sai_1}\\
\psi_i(\bromgac_{j_{i-1}}^2) \hspace{-2mm} &=&  \hspace{-2mm} \frac{\tilde\phi_{i,1} + \tilde\phi_{i,2} \bromgac_{j_{i-1}}^2}{\tilde\phi_{i,3} + \tilde\phi_{i,4} \bromgac_{j_{i-1}}^2}, \substack{\text{ with }  \tilde \phi_{i,1} =  (c_{j_i}-c_{j_{i-1}})b_{j_i}^2, \  \tilde\phi_{i, 2} =(\eta_{j_{i-1}} -\eta_{j_i})b_{j_i}^2 - h_{j_i}(\b_{j_i}),\\ \hspace{0.3cm} \tilde\phi_{i, 3}= c_{j_i}-c_{j_{i-1}}- h_{j_i}(\b_{j_i}), \tilde\phi_{i,4} = \eta_{j_{i-1}} -\eta_{j_i} \text{ for all } i \in \{2,\ldots,l\}.}\label{eqn_composition_sai_2}
\end{eqnarray}
 
Define $ \phi_{1,i}:=\tilde \phi_{1,i}$  for all $i \in \{1,\ldots,4\}$ and $ \bar{\psi}_1 := \psi_1$. Then, using simple induction, we get the following composition function:
\begin{eqnarray}
\hspace{-5mm}\bar{\psi}_{i}(\bromgac_{j_0}^2)\hspace{-3mm}&=&\hspace{-3mm} (\psi_i \circ\bar{\psi}_{i-1} ) (\bromgac_{j_0}^2) = \frac{ {\tilde \phi}_{i,1} +  {\tilde \phi}_{i,2} \bar{\psi}_{i-1}(\bromgac_{j_0}^2)}{ {\tilde \phi}_{i,3} +  {\tilde \phi}_{i,4} \bar{\psi}_{i-1}(\bromgac_{j_0}^2)} = \frac{ \phi_{i,1} +  \phi_{i,2}  \bromgac_{j_0}^2}{ \phi_{i,3} +  \phi_{i,4} \bromgac_{j_0}^2}, \ \substack{\forall \ i \in \{2,\ldots,l\},  \text{ where }} \label{eqn_composition_sai_i}\\
\hspace{-5mm}  \tilde \phi_{i,1} \hspace{-3mm}&=&\hspace{-3mm}  \substack{(c_{j_i} -c_{j_{i-1}}) b_{j_i}^2, \   \tilde\phi_{i, 2} =(\eta_{j_{i-1}} -\eta_{j_i})  b_{j_i}^2- h_{j_i}(\b_{j_i}), \ \tilde\phi_{i, 3}= c_{j_i}-c_{j{i-1}}- h_{j_i}(\b_{j_i}), \tilde\phi_{i, 4} = (\eta_{j_{i-1}} -\eta_{j_i}),} \label{cyclic_condition}\\
\hspace{-5mm} \phi_{i,1} \hspace{-3mm}&=& \hspace{-3mm} {\tilde \phi}_{i,1} \phi_{i-1,3} + {\tilde \phi}_{i,2} \phi_{i-1,1}, \ \phi_{i,2} = {\tilde \phi}_{i,1} \phi_{i-1,4} + {\tilde \phi}_{i,2} \phi_{i-1,2}, \label{eqn_relation_phi_til_phi_1} \text{ and} \\
\hspace{-5mm} \phi_{i,3}  \hspace{-3mm}&=& \hspace{-3mm} {\tilde \phi}_{i,3} \phi_{i-1,3} + {\tilde \phi}_{i,4}  \phi_{i-1,1}, \ \phi_{i,4} = {\tilde \phi}_{i,3} \phi_{i-1,4} + {\tilde \phi}_{i,4}  \phi_{i-1,2}.\label{eqn_relation_phi_til_phi_2}
\end{eqnarray}
We are now ready to provide two sets of conditions for the existence of a cyclic ICT set. The first set of conditions is based on \eqref{eqn_D_j_1_subset_D_j_i_sai}.  From equations \eqref{Eqn_Dji_in_twoD},\eqref{eqn_composition_sai_1} and \eqref{eqn_composition_sai_2} the function $\psi_i (\D_{j_{i-1}})$ is a connected set  and hence   it is clear that \eqref{eqn_D_j_1_subset_D_j_i_sai} is satisfied if
\begin{eqnarray}
s_{j_i}   \le  \psi_i (s_{j_{i-1}})  \le e_{j_i} \mbox{ and }  s_{j_i}   \le  \psi_i (e_{j_{i-1}})  \le e_{j_i}  \mbox{ for each }  i.  \nonumber
\end{eqnarray}
\underline{The first set of conditions for the existence of cyclic ICT using \eqref{eqn_composition_sai_2} is:}
\begin{eqnarray}
\label{eqn_cond_cyclic_ict_set-}
\substack{ \tilde \phi_{i,3} s_{j_i}  + \tilde \phi_{i,4}    s_{j_{i-1}} s_{j_i} - \tilde \phi_{i,1}   - \tilde \phi_{i,2}  s_{j_{i-1}} \le 0,  \mbox{ and, }
\tilde \phi_{i,3} e_{j_i}  + \tilde \phi_{i,4}   s_{j_{i-1}} e_{j_i}   - \tilde \phi_{i,1}   - \tilde \phi_{i,2}  s_{j_{i-1}}} \hspace{-3mm} &\ge&\hspace{-3mm} 0, \nonumber
\\
\hspace{-4mm}\substack{ \tilde \phi_{i,3} s_{j_i}  + \tilde \phi_{i,4}    e_{j_{i-1}} s_{j_i} - \tilde \phi_{i,1}   - \tilde \phi_{i,2}  e_{j_{i-1}} \le 0,  \mbox{ and, }
\tilde \phi_{i,3} e_{j_i}  + \tilde \phi_{i,4}  e_{j_{i-1}}  e_{j_i}  - \tilde \phi_{i,1}   - \tilde \phi_{i,2}  e_{j_{i-1}}} \hspace{-3mm}&\ge &\hspace{-3mm} 0, \nonumber  \substack{ \ \forall \  i \ \in \{1,\cdots,l\},}  \nonumber
\end{eqnarray}
where $\tilde \phi_{i,\kappa}$ for all $\kappa \in \{1,\ldots,4\}$ as in \eqref{eqn_composition_sai_1} and \eqref{eqn_composition_sai_2}.

From equations \eqref{Eqn_Dji_in_twoD},\eqref{eqn_composition_sai_1} and \eqref{eqn_composition_sai_i} the function $\psi_i (\D_{j_{i-1}})$ is a connected set  and hence   it is clear that \eqref{eqn_D_j_1_subset_D_j_i} is satisfied if
\begin{eqnarray}
s_{j_i}   \le  \bar{\psi}_i (s_{j_{0}})  \le e_{j_i} \text{ and }  s_{j_i}   \le  \bar{\psi}_i (e_{j_{0}})  \le e_{j_i}  \text{ for each }  i.  \nonumber
\end{eqnarray}
\underline{The second set of conditions for the existence of cyclic ICT  using \eqref{eqn_composition_sai_i} is:}
\begin{eqnarray}
{\small \phi_{i,3} s_{j_i}  + \phi_{i,4} s_{j_0}    s_{j_i} - \phi_{i,1}   - \phi_{i,2}  s_{j_{0}} \le 0,  \mbox{ and, }
 \phi_{i,3} e_{j_i}  +  \phi_{i,4}   s_{j_{0}} e_{j_i}   -  \phi_{i,1}   -  \phi_{i,2}  s_{j_{0}}} \hspace{-2mm}&\ge&\hspace{-2mm} 0 \nonumber
\\
 {\small \hspace{-4mm}\phi_{i,3} s_{j_i}  +  \phi_{i,4}    e_{j_{0}} s_{j_i} -  \phi_{i,1}   -  \phi_{i,2}  e_{j_{0}} \le 0,  \mbox{ and, }
 \phi_{i,3} e_{j_i}  +  \phi_{i,4}  e_{j_{0}}  e_{j_i}  -  \phi_{i,1}   -  \phi_{i,2}  e_{j_{0}}}\hspace{-2mm} &\ge & \hspace{-2mm} 0 \ \substack{\forall \
     i \ \in \{1,\cdots,l\},}   \nonumber
\end{eqnarray}
where $\phi_{i,\kappa}$ for all $\kappa \in \{1,\ldots,4\}$ as in \eqref{cyclic_condition},\eqref{eqn_relation_phi_til_phi_1} and \eqref{eqn_relation_phi_til_phi_2}. 

\section{Analysis of queuing game}\label{sec_append_Analysis_of_queuing_game}
\noindent\textbf{Proof of Theorem \ref{thm_appli_1}:}
Recall that $c \geq 1$ and consider $ \nicefrac{1}{(2+c)} < \rho < \nicefrac{1}{2}$. Then the $\{\Q_v\}$ regions are as shown in Figure \ref{fig:first_two} (Left) with borders in \eqref{eqn_cs_cm_bound_lines1}-\eqref{eqn_cs_cm_bound_lines2}, and corresponding $\{\e_v\}$ and $\{\b_v\}$ vectors are given in \eqref{eqn_b_val_CS_CM1}-\eqref{eqn_b_val_CS_CM2}. Hence, we have:
\begin{eqnarray}\label{eqn_new_h_equations}
\substack{h^{3,1}_{\CMr}(\b_1)h^{3,1}_{\CMr} (\b_2)} \hspace{-2mm} &=& \hspace{-2mm} \substack{h^{1,2}_{\CMr}(\b_2)h^{1,2}_{\CMr} (\b_3) = h^{1,2}_{\CMr}(\b_4)h^{1,2}_{\CMr} (\b_5) = 1,  h^{2,3}_{\CMr}(\b_3)h^{2,3}_{\CMr} (\b_1) =  (2-3\alpha_\CMr)(2-(c+2)\alpha_\CMr),} \hspace{4mm}\\
\substack{h^{1,2}_\CSr(\b_2)h^{(1,2)}_\CSr(\b_5)}\hspace{-2mm} &=& \hspace{-2mm}\substack{\rho(\alpha_\CSr(1-\rho)+\rho), \ \ h^{1,2}_{\CSr}(\b_3)h^{1,2}_{\CSr} (\b_4) = \alpha_\CMr(1+\rho)-\rho.}\label{eqn_new_h_equations1}
\end{eqnarray}
Using \eqref{Eqn_Qv_details} and \eqref{eqn_b_val_CS_CM1}-\eqref{eqn_b_val_CS_CM2}, we obtain:
\begin{eqnarray}\nonumber
    h^{3,1}_{\CMr}(\bromga) &=& h_{(1,2)}(\bromga), \  h^{1,2}_{\CMr}(\bromga)=h_{(2,3)}(\bromga)=h_{(4,5)}(\bromga), \  
    h^{2,3}_{\CMr}(\bromga)=h_{(3,1)}(\bromga), \\
    h^{1,2}_\CSr(\bromga) &=& h_{(3,4)}(\bromga)=h_{(2,5)}(\bromga), \nonumber
\end{eqnarray}
and hence the following holds using \eqref{eqn_new_h_equations}-\eqref{eqn_new_h_equations1}:
\begin{eqnarray*}
h_{(1,2)}(\b_1)h_{(1,2)}(\b_2) \hspace{-2mm} &=& \hspace{-2mm} h_{(2,3)}(\b_2)h_{(2,3)}(\b_3)=h_{(4,5)}(\b_4)h_{(4,5)}(\b_5)=1,  \\
h_{(3,1)}(\b_3)h_{(3,1)}(\b_1)\hspace{-2mm}&=&\hspace{-2mm}(2-3\alpha_\CMr)(2-(c+2)\alpha_\CMr), \ h_{(2,5)}(\b_2)h_{(2,5)}(\b_5)=\rho(\alpha_\CSr(1-\rho)+\rho), \\
\ h_{(3,4)}(\b_3)h_{(3,4)}(\b_4)\hspace{-2mm} &=& \hspace{-2mm}\alpha_\CMr(1+\rho)-\rho.
\end{eqnarray*}
If $\alpha> \nicefrac{2}{3}$, then $  \{(1,2),(2,3),(3,1)\} \subset \I^c$ (see \eqref{eqn_set_ind_Ic}) and  $h_{(1,2)}(\b_1)= h_{(2,3)}(\b_2) = -1 <0$ and  $h_{(3,1)}(\b_3)=2-3\alpha_\CMr<0$, implying that RV graph has cycle. It is easy to verify that there does not exist any other possibility of cycle.

On the other hand, if $\alpha_\CMr < \nicefrac{\rho}{(1+\rho)}$, then $(3,4) \in \I^*$ (see \eqref{eqn_set_ind_Ic}) and the point $\left(\frac{1}{1+\rho},\frac{\rho}{1+\rho}\right)$ lies at the intersection of the border line $h_{(3,4)}$ and the line segment $\overleftrightarrow{\b_3,\b_4}$. Thus, $\left(\frac{1}{1+\rho},\frac{\rho}{1+\rho}\right)$ is a Filippov attractor (see \eqref{eqn_limit_sol_h_line_set}). Similarly once can show that $\left(\frac{2c+1}{3(c+2)},\frac{1}{3}\right)$ is also a Filippov attractor if and only if $\frac{c+5}{3(c+2)} < \alpha_{\CMr}$.

A close inspection of Figure \ref{fig:first_two} (Left) and \eqref{eqn_b_val_CS_CM1}-\eqref{eqn_b_val_CS_CM2} reveals that   $\b_v \notin \Q_v$   for each $v \ne 3$; for example, $\b_5$ is on $\{\bromgac^1+\bromgac^2 = 1\}$, but $\Q_5$ is away from this line. Likewise, $\b_1$ lies on the $\bromgac^2 = 0$ line but $\Q_1$ is away from it. Now, observe that  $\b_3$ can belong to in its own region $\Q_3$ (or form a classical attractor) iff the point $(\alpha_{\CSr},\alpha_{\CMr})$ should lie (i) to the right side of the line $h^{2,3}_{\CMr}$ and $h^{1,2}_{\CMr}$, (ii) above of the line $h^{1,2}_{\CSr}$, which holds precisely when $\frac{\rho}{1+\rho} < \alpha_\CMr < \frac{2}{3}$.

Note that analogous results also hold for $\rho < \nicefrac{1}{(c+2)}$, which can be verified easily.
 \eop

\vspace{3mm}
 
\noindent\textbf{Proof of Theorem \ref{thm_appli_2}:}
Consider $\rho < \nicefrac{2}{(c+1)}$. Then the $\{\Q_v\}$ regions are as shown in Figure \ref{fig:next_two} (Left) with borders given in \eqref{eqn_cs_cm_bound_lines1}-\eqref{eqn_cs_cm_bound_lines2}, and $h^{a,\ta}_{\nuu} (\bromga) = \bromgac^\ta - \bromgac^a, \  \forall \ a\ne \ta$. The corresponding $\{\e_v\}$ and $\{\b_v\}$ vectors are given in 
\eqref{eqn_gen_e_vec_cs_cm_anu1}-\eqref{eqn_gen_e_vec_cs_cm_anu3}. Hence, we have:
\begin{eqnarray*}
 \substack{ h^{3,1}_{\CMr}(\b_1)h^{3,1}_{\CMr} (\b_2)} \hspace{-2mm}&=& \hspace{-2mm} \substack{h^{1,2}_{\CMr}(\b_3)h^{1,2}_{\CMr} (\b_4) = h_{\CMr}^{1,2}(\b_7)h_{\CMr}^{1,2}(\b_9) = h_{\CMr}^{3,1}(\b_6)h_{\CMr}^{3,1}(\b_{10})=1,  h^{1,2}_{\nuu}(\b_2) h^{1,2}_{\nuu}(\b_3) = \alpha_\CSr+\alpha_\CMr-\alpha_\nuu,}\\
  \substack{h^{2,3}_{\nuu}(\b_4) h^{2,3}_{\nuu}(\b_5)}\hspace{-2mm}&=&\hspace{-2mm}\substack{(\alpha_\CSr-1)(\alpha_\nuu- \alpha_\CMr), \
  h_{\CMr}^{2,3}(\b_5) h_{\CMr}^{2,3}(\b_6) = (3\alpha_\CSr-1)^2, \
h_{\nuu}^{3,1}(\b_6) h_{\nuu}^{3,1}(\b_1) = (2\alpha_\CSr-1)(1-2\alpha_\CMr),}\\
\substack{h_{\CSr}^{1,2}(\b_4) h_{\CSr}^{1,2}(\b_7)} \hspace{-2mm}&=&\hspace{-2mm} \substack{1-\alpha_\CSr(1+\rho), \ h_{\nuu}^{2,3}(\b_7) h_{\nuu}^{2,3}(\b_8) = 1-2\alpha_\nuu, \
h_{\CSr}^{1,2}(\b_5) h_{\CSr}^{1,2}(\b_8) = (-\alpha_\CSr+\rho\alpha_\CMr+p)(1-\rho\alpha_\nuu),}\\
\substack{h_{\CSr}^{1,2}(\b_9) h_{\CSr}^{1,2}(\b_3)} \hspace{-2mm}&=& \hspace{-2mm} \substack{(-\alpha_\CMr(1+\rho)+1)(\alpha_\nuu(1+\rho)-\rho), \
h_{\nuu}^{2,3}(\b_3)h_{\nuu}^{2,3}(\b_{10}) = -\alpha_\nuu^2, \ h_{\CMr}^{1,2}(\b_{10})h_{\CSr}^{1,2}(\b_5) = ((c+2)\alpha_\nuu-1)^2.}
\end{eqnarray*}
\hide{
\begin{eqnarray*}
  h^{3,1}_{\CMr}(\b_1)h^{3,1}_{\CMr} (\b_2) &=&  h^{1,2}_{\CMr}(\b_3)h^{1,2}_{\CMr} (\b_4) = h_{\CMr}^{1,2}(\b_7)h_{\CMr}^{1,2}(\b_9) = h_{\CMr}^{3,1}(\b_6)h_{\CMr}^{3,1}(\b_{10})=1>0, \label{eqn_h_13_cm} \\
  h^{1,2}_{\nuu}(\b_2) h^{1,2}_{\nuu}(\b_3)&=& \alpha_\CSr+\alpha_\CMr-\alpha_\nuu>0~ \text{ iff }~ \alpha_\nuu<\nicefrac{1}{2}, \label{eqn_h_12_nu} \\ 
  h^{2,3}_{\nuu}(\b_4) h^{2,3}_{\nuu}(\b_5)&=&(\alpha_\CSr-1)(\alpha_\nuu- \alpha_\CMr)>0~ \text{iff}~ \alpha_\nuu<\alpha_\CMr, \label{eqn_h_23_nu_u}\\
  h_{\CMr}^{2,3}(\b_5) h_{\CMr}^{2,3}(\b_6) &=& (3\alpha_\CSr-1)^2>0 \text{ iff } \alpha_\CSr \neq \nicefrac{1}{3}, \\
h_{\nuu}^{3,1}(\b_6) h_{\nuu}^{3,1}(\b_1) &=& (2\alpha_\CSr-1)(1-2\alpha_\CMr)>0 \text{ iff } \alpha_\CSr < \nicefrac{1}{2} \text{ and } \alpha_\CMr > \nicefrac{1}{2}, \\
h_{\CSr}^{1,2}(\b_4) h_{\CSr}^{1,2}(\b_7) &=& 1-\alpha_\CSr(1+\rho)>0 \text{ iff } \alpha_\CSr < \nicefrac{1}{1+\rho}, \\
h_{\nuu}^{2,3}(\b_7) h_{\nuu}^{2,3}(\b_8) &=& 1-2\alpha_\nuu>0 \text{ iff } \alpha_\nuu < \nicefrac{1}{2}, \\
h_{\CSr}^{1,2}(\b_5) h_{\CSr}^{1,2}(\b_8) &=& (-\alpha_\CSr+\rho\alpha_\CMr+p)(1-\rho\alpha_\nuu) > 0 \text{ iff } \rho \alpha_{\CMr} + p > \alpha_{\CSr},\\
h_{\CSr}^{1,2}(\b_9) h_{\CSr}^{1,2}(\b_3) \hspace{0mm}&=& \hspace{-3mm} (-\alpha_\CMr(1+\rho)+1)(\alpha_\nuu(1+\rho)-\rho) > 0 \text{ iff } \alpha_{\CMr} < \frac{1}{1+\rho}, \alpha_\nuu > \frac{\rho}{1+\rho} \text{ or } \alpha_\CMr > \frac{1}{1+\rho}, \alpha_\nuu < \frac{\rho}{1+\rho},\\
h_{\nuu}^{2,3}(\b_3)h_{\nuu}^{2,3}(\b_{10}) &=& -\alpha_\nuu^2,\\
h_{\CMr}^{1,2}(\b_{10})h_{\CSr}^{1,2}(\b_5) &=& ((c+2)\alpha_\nuu-1)^2 > 0 \text{ iff } \alpha_\nuu \neq \frac{1}{c+2}.\\ 
\end{eqnarray*}
}
Using \eqref{Eqn_Qv_details}, \eqref{eqn_b_val_CS_CM1}-\eqref{eqn_b_val_CS_CM2} and $h^{a,\ta}_{\nuu} (\bromga) = \bromgac^\ta - \bromgac^a, \  \forall \ a\ne \ta$, we obtain:
\begin{eqnarray*}
     h^{3,1}_{\CMr}(\bromga) = h_{(1,2)}(\bromga), \ \ h^{1,2}_{\nuu}(\bromga) = h_{(2,3)}(\bromga), \ \ h_{\CMr}^{1,2}(\bromga) = h_{(3,4)}(\bromga),\\
      h^{2,3}_{\nuu}(\bromga)=h_{(4,5)}(\bromga), \ \  h_{\CMr}^{2,3}(\bromga) = h_{(5,6)}(\bromga), \ \ h_{\nuu}^{3,1}(\bromga) = h_{(6,1)}(\bromga).
\end{eqnarray*}
and hence the following holds:
\begin{eqnarray*}
   \substack{ h_{(1,2)}(\b_1)h_{(1,2)}(\b_2)} \hspace{-2mm}&=& \hspace{-2mm} \substack{h_{(2,3)}(\b_2) h_{(2,3)}(\b_3) = h_{(3,4)}(\b_3)h_{(3,4)}(\b_4) = 1, \ h_{(4,5)}(\b_4)h_{(4,5)}(\b_5) = (\alpha_\CSr-1)(\alpha_\nuu- \alpha_\CMr),} \\ \substack{h_{(5,6)}(\b_5)h_{(5,6)}(\b_6)} \hspace{-2mm}&= &\hspace{-2mm}\substack{(3\alpha_\CSr-1)^2, \ h_{(6,1)}(\b_6)h_{(6,1)}(\b_1) = (2\alpha_\CSr-1)(1-2\alpha_\CMr).}
\end{eqnarray*}
Hence, if $ \alpha_\CSr < \min\left\{\nicefrac{1}{2},\nicefrac{(c \alpha_\nuu+\alpha_\CMr)}{2}\right\} \text{ and } \alpha_\CMr > \nicefrac{1}{2}$ then $\{(1,2),(2,3),(3,4),(4,5),(5,6),(6,1)\} \\ \subset \I^c$ and $h_{(1,2)}(\b_1)= h_{(2,3)}(\b_2)=h_{(3,4)}(\b_3)=-1 <0$, $h_{(4,5)}(\b_4) = \alpha_\CSr-1 < 0$, $h_{(5,6)}(\b_5)=2\alpha_\CSr-\alpha_\CMr-c\alpha_\nuu < 0$ and $h_{(6,1)}(\b_6) = 1-2\alpha_\CSr <0$ implying that RV graph has cycle.

\hide{\rv{If $ \alpha_\CSr < \nicefrac{1}{1+\rho},~\alpha_\nuu < \nicefrac{\rho}{1+\rho}, \text{ and } \alpha_\CMr > \nicefrac{1}{1+\rho},$ then $\{(1,2),(2,3),(3,9),(9,7),(7,4),(4,5),(5,6),(6,1)\}\not\subset \I^c$ and $ \{(1,2),(2,3),(3,9),(9,7),(7,8),(8,5),(5,6),(6,1)\} \not\subset \I^c$ as $h^{3,1}_\CMr(\b_1)= h^{1,2}_\nuu(\b_2)=-1 <0$, $h^{1,2}_\CSr(\b_3)= (\alpha_\nuu(1+\rho)-\rho)<0, h^{1,2}_\CMr(\b_9)=-1<0, h^{1,2}_\CSr(\b_7)=1>0,h^{1,2}_\CSr(\b_8)=1-\rho\alpha_\nuu >0, h^{2,3}_\nuu(\b_4) = \alpha_\CSr-1 < 0$, $h^{2,3}_\CMr(\b_5)=2\alpha_\CSr-\alpha_\CMr-c\alpha_\nuu < 0$ and $h^{1,3}_\nuu(\b_6) = 1-2\alpha_\CSr <0$ implying that RV graph has no cycle in this case.}}

On the other hand if $\alpha_\nuu \neq 0, \alpha_\nuu > \alpha_\CMr, \ \nicefrac{c}{c+2} < \alpha_\CSr < \nicefrac{2-\rho}{2+\rho}$ then $(4,5) \in \I^*$ and $(\alpha_\CSr,\nicefrac{(1-\alpha_\CSr)}{2})$ lies on both $h_{(4,5)}$ and  $\overleftrightarrow{\b_4,\b_5}$. Thus, $(\alpha_\CSr,\nicefrac{(1-\alpha_\CSr)}{2})$ is a Filippov attractor. Other Filippov attractors can also be shown in a similar way.

A close inspection of Figure \ref{fig:next_two} (Left) and \eqref{eqn_gen_e_vec_cs_cm_anu1}-\eqref{eqn_gen_e_vec_cs_cm_anu3} reveals that   $\b_v \notin \Q_v$   for each $v \ne 5$; for example, $\b_3,\b_4$ is on $\{\bromgac^1+\bromgac^2 = 1\}$, but $\Q_3,\Q_4$ is away from that line. Hence, $\b_5$ can belong to in its own region $\Q_5$ iff the point $(\alpha_{\CSr},\alpha_{\CMr})$ should lie (i) to the right side of the line $h_{(5,6)}$, (ii) above of the line $h_{(4,5)}$, (iii) above  the line $h_{(5,8)}$, and (iv) to the right to the line $h_{(5,10)}$. Hence, the conditions simplify to (i) $3\alpha_\CMr + (c+2)\alpha_\nuu <2$, (ii) $\alpha_\nuu < \alpha_\CMr$, (iii) $\alpha_{\CSr}<p+\rho\alpha_\CMr$ iff $(\rho+1) \alpha_\CMr+\alpha_\nuu > \rho$ and (iv) $\alpha_\nuu < \nicefrac{1}{c+2}$.
\eop

\end{document}